\documentclass[a4paper,reqno]{amsart}
%
%
\usepackage{graphicx}
\usepackage{mathptmx}      
%
\usepackage{enumitem}
\usepackage{amssymb}
\usepackage{mathtools}

\newtheorem{definition}[equation]{Definition}
\newtheorem{lemma}[equation]{Lemma}
\newtheorem{proposition}[equation]{Proposition}
\newtheorem{theorem}[equation]{Theorem}
\newtheorem{corollary}[equation]{Corollary}
\newtheorem{remark}[equation]{Remark}

\numberwithin{equation}{section}

%
\newcommand{\N}{\mathbb{N}}
\newcommand{\R}{\mathbb{R}}

\DeclareMathOperator{\randreg}{{\mathcal R}_\epsilon\hspace{-0.05cm}} 
\DeclareMathOperator{\randregn}{{\mathcal R}_{\epsilon_{\textit n}}\hspace{-0.05cm}} 
\DeclareMathOperator{\Teta}{{\mathcal T}_{\eta}} 
\DeclareMathOperator{\Tdelta}{{\mathcal T}_{\delta}} 
\DeclareMathOperator{\trn}{{\operatorname{tr}^n}} 
\DeclareMathOperator{\trneta}{{\operatorname{tr}^n_{\eta}}} 
\DeclareMathOperator{\trnetan}{{\operatorname{tr}^n_{\eta_n}}} 
\DeclareMathOperator{\trnetanull}{{\operatorname{tr}^n_{\eta_0}}} 
\DeclareMathOperator{\trnregdeltanull}{{\operatorname{tr}^n_{\randreg\delta(0,\cdot)}}} 
\DeclareMathOperator{\treta}{{\operatorname{tr}_{\eta}}} 
\DeclareMathOperator{\trdelta}{{\operatorname{tr}_{\delta}}} 
\DeclareMathOperator{\trdeltan}{{\operatorname{tr}_{\delta_n}}} 
\DeclareMathOperator{\trtildeetan}{{\operatorname{tr}_{{\tilde\eta}_n}}} 
\DeclareMathOperator{\trtildeeta}{{\operatorname{tr}_{\tilde\eta}}} 
\DeclareMathOperator{\trregdelta}{{\operatorname{tr}_{\randreg\delta}}} 
\DeclareMathOperator{\trregdeltan}{{\operatorname{tr}_{\randreg\delta_n}}} 
\DeclareMathOperator{\trregnetan}{{\operatorname{tr}_{\randregn\eta_n}}} 
\DeclareMathOperator{\trregeta}{{\operatorname{tr}_{\randreg\eta}}} 
\DeclareMathOperator{\tretan}{{\operatorname{tr}_{\eta_n}}} 
\DeclareMathOperator{\feta}{{\mathcal{F}_{\eta}}} 
\DeclareMathOperator{\ftildeeta}{{\mathcal{F}_{\tilde\eta}}} 
\DeclareMathOperator{\ftildeetan}{{\mathcal{F}_{{\tilde\eta}_n}}} 
\DeclareMathOperator{\fetanull}{{\mathcal{F}_{\eta_0}}} 
\DeclareMathOperator{\fetan}{{\mathcal{F}_{\eta_n}}} 
\DeclareMathOperator{\frandregdeltan}{{\mathcal{F}_{\randreg\delta_n}}} 
\DeclareMathOperator{\frandregnetan}{{\mathcal{F}_{\randregn\eta_n}}} 
\DeclareMathOperator{\aussenrandlambda}{\overline{{\mathcal R}}_\lambda\hspace{-0.05cm}} 
\DeclareMathOperator{\Tlambda}{{\mathcal T}_{\aussenrandlambda\delta}} 
\DeclareMathOperator{\Tlambdat}{{\mathcal T}_{\aussenrandlambda\delta(t)}} 
\DeclareMathOperator{\invTlambda}{{\mathcal T}_{\aussenrandlambda\delta(t)}^{-1}} 

\DeclareMathOperator{\intetat}{\int_{\Omega_{\eta(t)}}\hspace{-0.3cm}} 
\DeclareMathOperator{\intetas}{\int_{\Omega_{\eta(s)}}\hspace{-0.3cm}} 
\DeclareMathOperator{\intreps}{\int_{\Omega_{\randreg\delta(t)}}\hspace{-0.7cm}} 
\DeclareMathOperator{\intrepnull}{\int_{\Omega_{\randreg\delta(0)}}\hspace{-0.7cm}} 
\DeclareMathOperator{\intrepsecht}{\int_{\Omega_{\randreg\delta(s)}}\hspace{-0.7cm}} 
\DeclareMathOperator{\intrepsn}{\int_{\Omega_{\randreg\delta_n(t)}}\hspace{-0.8cm}} 
\DeclareMathOperator{\intregta}{\int_{\Omega_{\randreg\eta(t)}}\hspace{-0.7cm}} 
\DeclareMathOperator{\intregntan}{\int_{\Omega_{\randregn\eta_{\textit n}(t)}}\hspace{-1.0cm}} 
\DeclareMathOperator{\intdeltan}{\int_{\Omega_{\delta_n(t)}}\hspace{-0.5cm}} 
\DeclareMathOperator{\intdelta}{\int_{\Omega_{\delta(t)}}\hspace{-0.4cm}} 
\DeclareMathOperator{\intdeltans}{\int_{\Omega_{\delta_n(s)}}\hspace{-0.5cm}} 
\DeclareMathOperator{\intdeltas}{\int_{\Omega_{\delta(s)}}\hspace{-0.4cm}} 
\DeclareMathOperator{\fdeltan}{{\mathcal{F}_{\delta_n}}} 
\DeclareMathOperator{\fdelta}{{\mathcal{F}_{\delta}}} 
%
%
\begin{document}

\title[Interaction of a Newtonian Fluid with a Koiter Shell]{Global weak solutions for an Newtonian Fluid interacting with a 
  Koiter Type Shell under natural boundary conditions%
}


\author{Hannes Eberlein} \address{Hannes Eberlein 
  Mathematisches Institut, %
  Ernst-Zermelo-Str.~1, %
  79104 Freiburg, %
  Germany}
\email{hannes.eberlein@mathematik.uni-freiburg.de}
\author{Michael R\r{u}\v{z}i\v{c}ka} \address{Hannes Eberlein 
  Mathematisches Institut, %
  Ernst-Zermelo-Str.~1, %
  79104 Freiburg, %
  Germany}
\email{rose@mathematik.uni-freiburg.de}




\maketitle

\begin{abstract}
  We consider an viscous, incompressible Newtonian fluid flowing
  through a thin elastic structure. The motion of the structure is
  described by the equations of a linearised Koiter shell, whose
  motion is restricted to transverse displacements. The fluid and the
  structure are coupled by the continuity of
  velocities 
  and an equilibrium of surface forces on the interface between fluid
  and structure. On a fixed in- and outflow region we prescribe
  natural boundary conditions. We show that weak solutions exist as
  long as the shell does not 
  self-intersect.\\ 
  {\it Keywords}: {Fluid-Structure Interaction, Koiter shell, 
    Navier-Stokes equation, global weak solution, boundary
    values}\\
  {\it  2000MSC}: {76D05, 74F10}
\end{abstract}

\section{Introduction}
\label{intro}
Fluid-structure interaction (FSI) problems are common in nature, most
prominently seen in the blood flow through vessels. Blood 
generally exhibits a non-Newtonian behaviour (see \cite{cokelet87}),
while 
the vascular wall consists of many layers all having different
biomechanical properties (see \cite{Quarteroni2000}). If medium or big
blood vessels are considered one can use a simplified model. Firstly, we treat blood as 
a Newtonian
fluid. 
Since only a small part of the
circulatory system is considered we 
introduce artificial in- and outflow
boundary conditions. Secondly, we will model the vascular wall as a linearly
elastic Koiter shell whose motion is restricted to transverse
displacements. 

Due to the apparent regularity incompatibilities between the parabolic 
fluid phase and the hyperbolic dispersive solid phase, the interaction 
between an elastic structure and a fluid is exceedingly difficult, see 
e.\,g.\ \cite{MR2027753,MR2208319,MR2166981,MR2438783,MR3061763} and the 
references therein. For the existence proof of weak solutions, two different strategies 
have been proven successful. The first one is working directly on the 
fluid domain and therefore preserving the structure of the equations. 
Since by the non-cylindrical space-time domain the usual Bochner-space 
theory is not applicable, the key element in this approach is to find an 
appropriate compactness argument. This was accomplished in \cite{phdlengeler}, 
\cite{MR3147436} and \cite{MR3233099}, by generalising methods from 
\cite{MR2166981}, \cite{MR2438783}. There, the existence of a 
global-in-time weak solution to the interaction of an incompressible 
generalised Newtonian fluid completely surrounded with an linearised 
transversal Koiter shell was shown. The second approach consists of 
transforming the fluid equation by an ALE mapping to a reference domain 
and using a semi-discrete, operator splitting Lie scheme. This 
method was used in \cite{MR3017292}, \cite{MR3226949} and
\cite{MR3450736} to show the 
existence of a global-in-time weak solution to the interaction between 
a Newtonian fluid and a (semi-) linear transversal Koiter shell, where 
the flow is driven by a dynamical pressure condition and no other 
external forces apply. Furthermore, in \cite{MR3482692} the Navier slip 
boundary condition was used for the coupling between the fluid 
and an elastic structure in a two-dimensional setting. 

The present paper is based on the first authors Ph.D. thesis
\cite{phdeberlein} and extends the result of \cite{MR3147436} to the
case of an in- and outflow region. We will use the same method as in
\cite{MR3147436} and thus the structure and the arguments of the
present paper are similar to \cite{MR3147436}. Since the in- and
outflow region admits a flow through the domain, various new
difficulties have to be solved.  Moreover, since the reference domain
has no $C^4$-boundary, but only a Lipschitz one, special care has to
be taken to transfer the compactness argument to our situation. In
comparison to \cite{MR3226949} and \cite{MR3450736}, we consider a
different in- and outflow condition and a more general
geometry. Furthermore, we allow external forces acting on the fluid
and the shell.

This paper is organised as follows: In the next Subsections, we
introduce our setting and formally derive an a priori estimate for the
resulting system. In Section \ref{sec:moving_domains} we develop the
mathematical framework for our non-Lipschitz in- and outflow
domains. In particular, in Subsection \ref{subsec:general_trace} we
define a generalised trace operator and show some density results for
functions vanishing on a part of the boundary, and in Subsection
\ref{section:div_fortsetzung} we construct a suitable extension
operator of test functions for the shell equation. In Section
\ref{section:main_result}, we can finally formulate the main result of
this paper, the existence of a global-in-time weak solution. Crucial
for the proof of this result is that one can generalise 
the compactness result from \cite{MR3147436} to our setting, which is
possible due to our preparatory work
(see 
Subsection \ref{subsection:compactness}). The proof of the main
Theorem is then carried out by looking first at a decoupled,
regularised and linearised problem, then a fixed point argument to
restore the coupling, and finally a limiting process to eliminate the
regularisation.
\subsection{Koiter's energy and statement of the problem}
By $\Omega\subset \R^3$ we denote a reference domain with $\partial 
\Omega = \Gamma \cup M$, where $\Gamma$ is assumed to be the fixed 
in- and outflow region and $\partial \Gamma = \partial M \neq \emptyset$. 
Moreover, let $M$ represent the middle surface of the thin elastic shell 
of thickness $2\,\varepsilon_s>0$ in its rest state. The deformation of 
the shell is then given by the displacement $\boldsymbol\eta$ relatively 
to $M$. By $\boldsymbol\nu$ we denote the unit normal on $\partial\Omega$, 
by $g$ and $h$ the first and second fundamental form of $M$ induced by the 
ambient Euclidean space, and by $dA$ the surface measure of $M$ or 
$\partial \Omega$, respectively. As in \cite{MR3147436,MR3226949} we restrict 
the deformations to transverse displacements, i.\,e.\ $\boldsymbol\eta = 
\eta\,\boldsymbol\nu$. Following \cite{MR3147436}, we assume further that 
the elastic shell, clamped on $\partial M$, consists of a homogeneous, 
isotropic material whose linear elastic behaviour is characterised by the 
Lam\'e constants $\lambda$ and $\mu$, and the elastic energy is given by 
\emph{Koiter's energy for linearly elastic shells and transverse displacements} 
$K(\eta):= K(\eta,\eta)$ with 
\begin{align*}
  K(\eta,\zeta) := 
    \frac{1}{2}\int_M \varepsilon_s\,\langle C,\sigma(\eta)
      \otimes \sigma(\zeta)\rangle + \frac{\varepsilon_s^3}{3}
        \langle C, \xi(\eta)\otimes \xi(\zeta)\rangle\;dA.
\end{align*}
Here $C$ denotes the elasticity tensor of the shell,
\begin{align*}
  C_{\alpha\beta\gamma\delta}
    = \frac{4\mu\lambda}{\lambda + 2\mu}\,g_{\alpha\beta}\,g_{\gamma\delta}
      + 2\mu\,\big( g_{\alpha\gamma}\,g_{\beta\delta}
        + g_{\alpha\delta}\,g_{\beta\gamma}\big)
\end{align*}
and 
\begin{align*}
  \sigma(\eta)= - h\,\eta,
  \qquad
  \xi(\eta) = \nabla^2\eta - k\,\eta
\end{align*}
are the linearised strain tensors, where $k_{\alpha\beta}:= 
h^\sigma_\alpha h_{\alpha\beta}$, $\nabla$ denotes the Levi-Civita 
connection of $M$, and $\Delta$ is the corresponding Laplacian. We refer 
the interested reader to \cite{MR1757535,MR2312300} for additional 
details to the Koiter model. As shown in \cite[Theorem 4.4-2]{MR2312300}, 
$K(\eta, \zeta)$ is a symmetric bilinear form (and therefore $K(\eta)$ a 
quadratic form) which is coercive on $H_0^2(M)$. Moreover, as has been 
shown by partial integration in \cite[Chapter 3]{phdlengeler}, 
the $L^2$-gradient of this Koiter energy satisfies
\begin{align}\label{koiter_l2grad}
  2\,K(\eta,\zeta) = \int_M grad_{L^2}\,K(\eta)\,\zeta\;dA,  
    \qquad \eta,\zeta \in H^2_0(M).
\end{align}

Throughout the paper, we use standard notations for 
(vector valued) function spaces. 
We denote the fluid domain, which depends on a time $t\in I$ and is a 
priori not known, by $\Omega_{\eta(t)}\subset \R^3$. Then the motion 
of a homogeneous, incompressible, Newtonian fluid on the space-time 
domain $\Omega_\eta^I:=\bigcup_{t\in I}\{t\} \times \Omega_{\eta(t)}$ 
is governed by the system 
\begin{alignat}{2}
  \label{eqn:stokes}
    \rho\,\partial_t \mathbf{u} + \rho\,(\mathbf{u}\cdot \nabla)\mathbf{u} 
      &= \operatorname{div}\big( 2\sigma\,\mathbf{D}\mathbf{u}
        - \pi\,Id\big) + \mathbf{f} &\quad\quad&\text{in } \Omega_\eta^I,\\
  \label{eqn:inkompressible}
    \operatorname{div} \mathbf{u} &= 0 &\quad\quad&\text{in } \Omega_\eta^I,
\end{alignat}
where $\mathbf{u}$ is the velocity field, $\mathbf{D}\mathbf{u}$ the 
symmetric part of the gradient of $\mathbf{u}$, $\pi$ the pressure field, 
$\rho$ the density, $\sigma$ the dynamic viscosity, $\mathbf{f}$ an external 
body force and $Id$ the $3\times 3$ unit matrix. We will assume no-slip 
boundary conditions on the deformed boundary and natural 
boundary conditions on $\Gamma$, i.\,e.\
\begin{alignat}{2}
  \label{eqn:stokes_rand_moving}
    \mathbf{u}(\cdot,\cdot + \eta\,\boldsymbol{\nu}) 
      &=\partial_t\eta\,\boldsymbol{\nu} &\quad\quad&\text{on } I\times M,\\
  \label{eqn:stokes_rand_stress}
    \big( 2\sigma\,\mathbf{D}\mathbf{u}- \pi\,Id\big)\,\boldsymbol\nu 
      &= \frac{\rho}{2}(\mathbf{u}\cdot\boldsymbol\nu)\,\mathbf{u} 
        &\quad\quad&\text{on } I\times \Gamma.
\end{alignat}
Using the map $\Phi_{\eta(t)} : M \rightarrow \partial 
\Omega_{\eta(t)}\setminus \Gamma$, $\Phi_{\eta(t)}(q):= 
\eta(t,q)\,\boldsymbol\nu(q)$ to parametrize the deformed boundary, the 
force exerted by the fluid on this boundary is given by 
\begin{align*}
 \mathbf{F}&:= \big(-2\sigma\,\mathbf{D}\mathbf{u}\;\boldsymbol\nu_{\eta(\cdot)} 
 + \pi\,\boldsymbol{\nu}_{\eta(\cdot)}\big)\circ\Phi_{\eta(\cdot)}|\operatorname{det} d\Phi_{\eta(\cdot)}|,
\end{align*}
where $\boldsymbol\nu_{\eta(t)}$ is the outer normal of 
$\Omega_{\eta(t)}$. The external forces acting on the shell along 
the outer normal are thereby composed of $\mathbf{F}\cdot \boldsymbol\nu$ 
and some given external force $g$. Using Hamilton's principle, the 
displacement $\eta$ of the shell is a stationary point of the integrated 
difference between the kinetic and potential energy of the shell and 
therefore satisfies the corresponding Euler-Lagrange equation
\begin{alignat}{2}
  \label{eqn:koiter}
    2\,\varepsilon_s\,\rho_s\,\partial_t^2 \eta + grad_{L^2}K(\eta) 
      &= g + \mathbf{F}\cdot\boldsymbol{\nu} 
        &\quad\quad&\text{in } I\times M.
\end{alignat}
Since the shell is clamped, we have
\begin{align}
  \label{eqn:koiter:nullrand}
    \eta = 0, \qquad \nabla\eta &= \boldsymbol 0 
      \qquad \text{on } I\times \partial M.
\end{align}
Finally we prescribe the initial conditions
\begin{align}
  \label{eqn:gln:anfangswert}
    \mathbf{u}(\cdot,0) =\mathbf{u}_0 \text{ in } \Omega_{\eta_0}
  \quad \text{ and } \quad 
    \eta(\cdot,0) = \eta_0,\;
    \partial_t\eta(\cdot,0) = \eta_1 
    \text{ in } M.
\end{align}
In the following, we will analyse the system 
\eqref{eqn:stokes}--\eqref{eqn:gln:anfangswert}.
\subsection{Formal a priori estimates}
\label{sec:formal_apriori}
%
We take (sufficiently smooth) solutions $\mathbf{u}$ and $\eta$ of the 
fluid-  and shell equation.  Multiplying the fluid equation 
\eqref{eqn:stokes} with $\mathbf{u}(t)$ and integrating over 
$\Omega_{\eta(t)}$ leads to
\begin{align}\label{eqn:form_apriori_1}
  &\rho\intetat (\partial_t \mathbf{u})(t)\cdot\mathbf{u}(t)\;dx 
    + \rho\intetat\big(\mathbf{u}(t)\cdot \nabla\big)\mathbf{u}(t)\cdot 
      \mathbf{u}(t)\;dx\\
    &\quad = \intetat \operatorname{div} (2\sigma\,\mathbf{D}\mathbf{u}(t))\cdot \mathbf{u}(t)\;dx 
      - \intetat \nabla \pi(t)\cdot \mathbf{u}(t)\;dx 
      + \intetat \mathbf{f}(t)\cdot \mathbf{u}(t)\;dx.\notag
\end{align}
Using Reynolds transport theorem and our boundary conditions, i.\,e.\ 
the domain velocity equals the fluid velocity on the moving boundary 
$\partial\Omega_{\eta(t)}\setminus \Gamma$ and vanishes 
on $\Gamma$, the first term of the equation can be written as
\begin{align}\label{eqn:form_apriori_reynolds}
  \rho\intetat &(\partial_t \mathbf{u})(t)\cdot\mathbf{u}(t)\;dx\\\notag
    &= \frac{\rho}{2}\frac{d}{dt}\intetat |\mathbf{u}(t)|^2\;dx 
      - \frac{\rho}{2}\int_{\partial \Omega_{\eta(t)}\setminus{\Gamma}}
        |\mathbf{u}(t)|^2\,\mathbf{u}(t)\cdot \boldsymbol\nu_{\eta(t)}\;dA_{\eta(t)},
\end{align}
where $\boldsymbol\nu_{\eta(t)}$ 
denotes the outer normal and  $dA_{\eta(t)}$ 
the surface measure on $\partial\Omega_{\eta(t)}$. By 
partial integration and taking the divergence constraint into account, 
the convective term vanishes except a boundary term
\begin{align}\label{eqn:form_apriori_wirbel}
  \rho\intetat \big(\mathbf{u}(t) \cdot \nabla\big)\mathbf{u}(t)
      \cdot \mathbf{u}(t)\;dx 
    = \frac{\rho}{2} \int_{\partial\Omega_{\eta(t)}} |\mathbf{u}(t)|^2\, 
      \mathbf{u}(t)\cdot\boldsymbol\nu_{\eta(t)}\;dA_{\eta(t)}.
\end{align}
Likewise we get by partial integration, the divergence constraint, 
the symmetry of $\mathbf{D}\mathbf{u}$ and the the boundary condition 
\eqref{eqn:stokes_rand_stress} as well as $\boldsymbol\nu_{\eta(t)} 
= \boldsymbol{\nu}$ on the fixed boundary $\Gamma$
\begin{align}\label{eqn:form_apriori_3}
  \intetat &\operatorname{div} (2\sigma\,\mathbf{D}\mathbf{u}(t))\cdot \mathbf{u}(t)\;dx
    -\intetat \nabla \pi(t)\cdot \mathbf{u}(t)\;dx\\\notag
  &= - 2\sigma\,\intetat \mathbf{D}\mathbf{u}(t):\mathbf{D}\mathbf{u}(t)\;dx
    + \int_{\partial \Omega_{\eta(t)}\setminus \Gamma}
        \big(2\sigma\,\mathbf{D}\mathbf{u}(t)\;
          \boldsymbol\nu_{\eta(t)}\big)\cdot\mathbf{u}(t)\;dA_{\eta(t)}\\
  &\qquad - \int_{\partial\Omega_{\eta(t)}\setminus \Gamma} \pi(t)\,
        \mathbf{u}(t)\cdot\boldsymbol\nu_{\eta(t)}\;dA_{\eta(t)}
    + \frac{\rho}{2} \int_{\Gamma} |\mathbf{u}(t)|^2\, 
      \mathbf{u}(t)\cdot\boldsymbol\nu\;dA.\notag
\end{align}
Hence, using \eqref{eqn:form_apriori_reynolds}, \eqref{eqn:form_apriori_wirbel} 
and \eqref{eqn:form_apriori_3}, the equation \eqref{eqn:form_apriori_1} 
can be written as
\begin{align}\label{eqn:form_apriori_6}
  &\frac{\rho}{2}\frac{d}{dt}\intetat |\mathbf{u}(t)|^2\;dx 
    + 2\sigma\,\intetat \mathbf{D}\mathbf{u}(t):\mathbf{D}\mathbf{u}(t)\;dx\\
  &\quad = \intetat \mathbf{f}(t)\cdot\mathbf{u}(t)\;dx 
    + \int_{\partial \Omega_{\eta(t)}\setminus \Gamma}
        \big(2\sigma\,\mathbf{D}\mathbf{u}(t)\;
          \boldsymbol\nu_{\eta(t)} - \pi(t)\,\boldsymbol\nu_{\eta(t)} \big)
            \cdot\mathbf{u}(t)\;dA_{\eta(t)}.\notag
\end{align}
Multiplying equation \eqref{eqn:koiter} with 
$\partial_t \eta(t)$, integrating over $M$ and using the bilinearity of 
the Koiter-Energy as well as 
$(grad_{L^2}\,K(\eta(t)),\partial_t\eta(t))_{L^2(M)} = 
2\,K(\eta(t),\partial_t\eta(t))$, we get 
\begin{align}\label{form_apriori_7}\begin{aligned}
  \varepsilon_s\,\rho_s \frac{d}{dt} \int_M &\big|\partial_t \eta (t)\big|^2\;dA  
      + \frac{d}{dt} K\big(\partial_t \eta(t)\big)\\
  &= \int_M g(t)\;\partial_t \eta(t)\;dA 
    + \int_M \mathbf{F}(t)\cdot \boldsymbol\nu\; \partial_t \eta(t)\;dA.
\end{aligned}\end{align}
Using the definition of $\mathbf{F}$, the boundary condition 
\eqref{eqn:stokes_rand_moving} and a change of variables, adding 
\eqref{eqn:form_apriori_6} and \eqref{form_apriori_7} leads to 
the energy equality
\begin{align}\notag
  \frac{d}{dt}\Big(\frac{\rho}{2}\intetat|\mathbf{u}(t)|^2\;dx 
    &+ 2\sigma \int_0^t \intetas |\mathbf{D}\mathbf{u}(s)|^2\;dx\,ds
    + \varepsilon_s\,\rho_s \int_M |\partial_t\eta(t)|^2\;dA 
    + K\big(\eta(t)\big)\Big)\\
  &= \intetat\! \mathbf{f}(t)\cdot\mathbf{u}(t)\;dx 
    + \int_M\! g(t)\;\partial_t \eta(t)\;dA.
\label{form_apriori_9}
\end{align}
We denote the expression in the parentheses on the left-hand side of 
\eqref{form_apriori_9} by $E(t)$, and set $E_0:= E(0)$, which depends by 
our initial conditions \eqref{eqn:gln:anfangswert} only on $\eta_0$, 
$\eta_1$ and $\mathbf{u}_0$. The coercivity of the Koiter Energy 
$K$ and Gr\"onwall's inequality (see \cite[Appendix]{MR0348562}) imply 
the estimate
\begin{align}\label{form_apriori_11}
  \operatorname*{esssup}_{t\in (0,T)}\sqrt{E(t)}
    \leq \sqrt{E_0}
      + \int_0^{T}\!\! \frac{1}{\sqrt{2\,\rho}}\,\lVert\mathbf{f}(s,\cdot)
        \rVert_{L^2(\Omega_{\eta(s)})} 
      + \frac{1}{2\,\sqrt{\varepsilon_s\rho_s}}\lVert g(s,\cdot)\rVert_{L^2(M)}\;ds.
\end{align}
Again by the coercivity of the Koiter Energy the quantity 
\begin{align*}
  \lVert \mathbf{u}\rVert_{L^\infty(I;L^2(\Omega_{\eta(t)}))}^2 
    + \lVert \mathbf{D}\mathbf{u}\rVert_{L^2(I;L^2(\Omega_{\eta(t)}))}^2 
    + \lVert \partial_t\eta\rVert_{L^\infty(I;L^2(M))}^2 
    + \lVert \eta\rVert_{L^\infty(I;H^2(M))}^2 
\end{align*}
is bounded by a constant depending only on the data. This (spatial)
regularity of the displacement $\eta(t,\cdot)\in H^2(M)$ is not enough
to ensure Lipschitz continuity, but only H\"older continuity
$C^{0,\lambda}(M)$ for any $\lambda<1$. Hence, the boundary of the
moving domain $\Omega_{\eta(t)}$ is not necessarily Lipschitz and the
classical partial integration theorem, trace operators, etc. cannot be
used. In the next Section we will develop the necessary framework for
the moving domain, using a special reference domain. For the sake of
better readability, we set the constants $\epsilon_s$, $\rho_s$,
$\rho$ and $\sigma$ equal to 1 throughout the paper, but emphasize
that all the computations hold with the original constants.
\section[Moving domains]{Moving domains}\label{sec:moving_domains}
We assume that $\Omega\subset \R^3$ is a bounded domain with $\partial 
\Omega\in C^{0,1}$ and $\partial \Omega = \Gamma \cup M$, where $M,\, 
\Gamma \neq \emptyset$ are compact, oriented, embedded two-dimensional 
$C^4$-manifolds with smooth non-empty boundaries and $\partial M = 
\partial \Gamma$. Furthermore, we assume $M$ to be connected and $\Gamma$ 
to be the finite union of connected components $\Gamma_i$, where 
$\Gamma_i$ is subset of a hyperplane perpendicular to $M$, i.\,e.\ the 
continuous extension of the outer normal $\boldsymbol\nu$ on $int\,M$ to 
$\partial M\cap \partial \Gamma_i$ is perpendicular to the outer normal 
of the hyperplane. For $\alpha >0$ the open $\alpha$-tube $S_\alpha$ 
and the half-closed tube $\underline{S_\alpha}$ around $int\,M$ are 
given by
\begin{align*}
  S_\alpha &:= \{ x \in \R^3 \,|\, x = q + 
    s\,\boldsymbol{\nu}(q),\, q\in int\,M,\, -\alpha < s < \alpha\,\},\\
  \underline{S_\alpha} &:= \{ x \in \R^3 \,|\, x = q + 
    s\,\boldsymbol{\nu}(q),\, q\in M,\, -\alpha < s < \alpha\,\}.
\end{align*}
We assume\footnote{Since $int\,M$ is an embedded $C^4$-manifold, the 
tubular neighbourhood theorem already ensures the existence of an 
$C^3$-diffeomorphism from $(-\kappa,\kappa)\times int\,M$ to $S_\kappa$.} 
that there exists a $\kappa>0$ such that the map 
$\Lambda:M\times (-\kappa,\kappa) \rightarrow \underline{S_\kappa}$, 
$\Lambda(q,s):= q + s\,\boldsymbol\nu(q)$ is a $C^3$-diffeomorphism, and 
write $\Lambda^{-1}(x) = (\mathbf{q}(x),s(x))$ for the inverse mapping.
For $0<\alpha\leq \kappa$ we divide the boundary of $S_\alpha$ into 
the parts\label{definition_randstueke} 
\begin{align*}
  \partial S_\alpha
    & = \{ x\in \R^3\bigm| x = q + \alpha\,\boldsymbol{\nu}(q),\, 
      q\in int\,M\}\\
    &\qquad \cup \{ x\in \R^3\bigm| x = q - \alpha\,\boldsymbol{\nu}(q),
      \, q\in int\,M\}\\
    &\qquad\quad \cup \{ x\in \R^3 \bigm| x = q + s\,\boldsymbol{\nu}(q),
      \, q\in \partial M,\, -\alpha \leq s \leq \alpha\big\}\\
    &=: M^\alpha_+ \cup M^\alpha_- \cup \Gamma_s^\alpha,
\end{align*}
where the outer normal is again defined through $int\,M$. 
Furthermore, we can assume that the sets $M^\alpha_+$, $M^\alpha_-$ and 
$\Gamma_s^\alpha$ are disjoint and the representations through 
$x = q + \beta\,\boldsymbol{\nu}(q)$ are unique. Hence, the 
orthogonality assumption implies $\partial S_\alpha \in C^{0,1}$. We 
will require that the domain's deformation is taking place inside 
$\Omega\cup S_\kappa$. To ensure that the deformed moving boundary does 
not interfere with the fixed in- and outflow boundary $\Gamma$, we 
assume that 
\begin{align*}
  \{ x\in \R^3 \bigm| x = q + s\,\boldsymbol{\nu}(q),\, q\in 
    \Gamma\cup \Gamma_s^\kappa,\,
    s\in [0,\varepsilon)\}\cap (S_\kappa \cup \Omega)= \emptyset
\end{align*}
for some $\varepsilon>0$, where $\boldsymbol\nu$ is the 
extension of the outer normal on $int\, \Gamma$. Finally, we assume that 
for all $0<\alpha <\kappa$ it holds that $int\, \Gamma_s^{\alpha,c}\neq 
\emptyset$, where
\begin{align*}
\Gamma_s^{\alpha,c}:=\Gamma \setminus \{x\in \R^3 \,|\, x = 
q + s\,\boldsymbol{\nu}(q),\, q\in \partial M,\, -\alpha < s \leq 
\alpha\}.
\end{align*}
We call a domain $\Omega$ fulfilling these requirements an \emph{admissible 
in- and outflow domain}. Its simplest example is the straight tubular 
cylinder. For the rest of paper, we will always assume that $\Omega$ is 
an admissible in- and outflow domain.
\begin{figure}\centering 
  \includegraphics{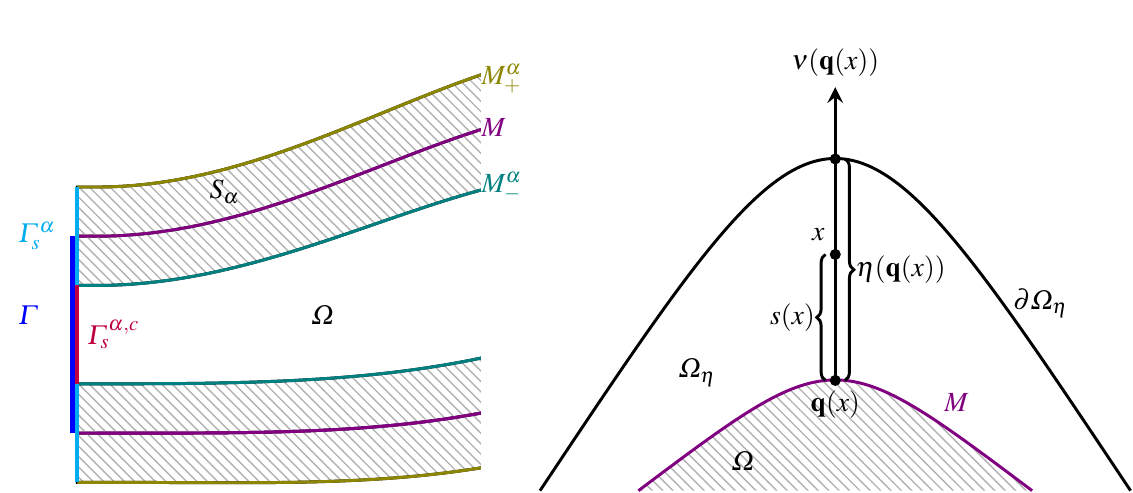}
  \caption{Notations for admissible in- and outflow domains and 
    moving domains.}
  \label{fig:randstueke}
\end{figure}
\begin{definition}\label{def:bewegtes_gebiet}
  Let $\eta: M \rightarrow (-\kappa,\kappa)$ be continuous. We set the  
  \emph{moving domain} $\Omega_{\eta}$ as
  \begin{align*}
  \Omega_{\eta}:=\big(\Omega\setminus S_\kappa\big)\cup 
    \big\{ x\in S_\kappa\bigm| s(x) < \eta\big(\mathbf{q}(x)\big)\big\},
  \end{align*}
  see Figure \ref{fig:randstueke}. Furthermore, for $0< \alpha < 
  \kappa$ we set $B_\alpha := \Omega \cup S_\alpha$.
\end{definition}
\begin{remark}\label{bem:eigenschaften_omega_eta}
  $\Omega_\eta$ and $B_\alpha$ are bounded domains, $\partial B_\alpha = 
  \Gamma_s^{\alpha,c}\cup \Gamma_s^\alpha \cup M_+^\alpha$ and $\partial 
  B_\alpha\in C^{0,1}$. With $\Gamma_\eta := 
  \partial\Omega_{\eta}\setminus\{x\in S_\kappa \,|\, 
  s(x) = \eta(\mathbf{q}(x))\}$ we have $\Gamma_\eta \subset 
  \Gamma\cup\Gamma_s^\kappa$ and in particular $\Gamma_\eta = \Gamma$ for 
  $\eta=0$ on $\partial M$.
\end{remark}
%
For a given displacement $\eta \in C^0(M)$ with $\lVert \eta 
\rVert_{L^\infty(M)}<\kappa$ we choose a cut-off function 
$\beta\in C^\infty(\R)$ with $\beta=0$ in a neighbourhood of $-1$, 
$\beta=1$ in a neighbourhood of $0$, and $\lVert \beta 
\rVert_{L^\infty([-1,0])}< \kappa\,\lVert \eta \rVert_{L^\infty(M)}^{-1}$.
We define the Hanzawa transform (see \cite{MR633045}) trough 
\begin{align*}
  \Psi_{\eta}(x) &:=
    \begin{cases}
      x + \eta(\mathbf{q}(x))\beta(\frac{s(x)}{\kappa})\boldsymbol{\nu}(\mathbf{q}(x)) 
        &x\in\overline{\Omega}\cap \underline{S_\kappa},\\
      x &x\in \overline{\Omega}\setminus \underline{S_\kappa}, 
    \end{cases}
 \end{align*}
where $\boldsymbol\nu$ on $\partial M$ is the continuous extension of 
the outer normal $\boldsymbol\nu$ on $int\,M$ to $\partial M$. By the 
choice of the cut-off function $\beta$ and the properties of the 
diffeomorphism $\Lambda$, one can show that 
$\Psi_\eta:\overline{\Omega}\rightarrow \overline{\Omega_\eta}$, as well 
as $\Phi_\eta:\partial\Omega\rightarrow \partial \Omega_\eta$ with 
$\Phi_\eta:=\Psi_\eta\lvert_{\partial\Omega}$, are homeomorphisms which  
inherit the regularity of $\eta$, i.\,e.\ $\Psi_\eta$ and $\Phi_\eta$ are 
$C^k$-diffeomorphisms if $\eta\in C^k(M)$, $k\in \{1,\,2,\,3\}$ and 
the Jacobian determinant $\operatorname{det}\, d\Psi_\eta$ is positive. 
Furthermore, the components of the Jacobians of $\Psi_\eta$, $\Phi_\eta$ 
and their inverses have the form 
\begin{align}\label{form_jacobian}
  b_0 + b_1(\eta\circ \mathbf{q}) + \mathbf{b}\cdot (\nabla\eta\circ \mathbf{q})
\end{align}
for some bounded, continuous functions $b_0$, $b_1$, $\mathbf{b}$, whose 
supports are contained in $S_\kappa$. The details of these calculations 
can be found in \cite{phdeberlein}. The cut-off function $\beta$ used in 
these definitions clearly can be chosen uniformly for a set of displacements 
fulfilling $\lVert \eta \rVert_{L^\infty(M)}\leq \alpha < \kappa$. 
Furthermore, by some easy calculations we get the following result:
\begin{lemma}\label{lem:hanzawa_konvergence}
  Let $\eta,\,\eta_n\in C^0(M)$ with $0<\lVert \eta 
  \rVert_{L^\infty(M)},\, \lVert \eta_n \rVert_{L^\infty(M)}
  <\kappa$ let $\Psi_{\eta_n}$, $\Psi_\eta$ be defined using the same 
  cut-off function $\beta$.
  \begin{enumerate}[label=\alph*)]
    \item Let $\eta_n\rightarrow \eta$ uniformly on $M$. Then 
      $\Psi_{\eta_n}$ converges uniformly to $\Psi_\eta$ on 
      $\overline{\Omega}$.
    \item Let $\eta_n\rightarrow \eta$ uniformly on $M$ and 
      $\Psi_{\eta_n}^{-1}$, $\Psi_{\eta}^{-1}$ extended by 
      $\mathbf{q}$ to $\underline{B_\kappa}:= 
      \underline{S_\kappa}\cup \overline{\Omega}$. Then 
      $\Psi_{\eta_n}^{-1}$ converges uniformly to $\Psi_\eta^{-1}$ on 
      $\underline{B_\kappa}$.
    \item Let $\eta_n\rightharpoonup \eta$ weakly in $H^2(M)$ and 
      $1\leq s <\infty$. Then $\nabla \Psi_{\eta_n}$ converges to 
      $\nabla\Psi_\eta$ in $L^s(\Omega)$ and the (canonically extended) 
      functions $(\nabla\Psi_{\eta_n}^{-1})\,\chi_{\Omega_{\eta_n}}$
      converge to \linebreak $(\nabla\Psi_{\eta}^{-1})\,\chi_{\Omega_{\eta}}$ in 
      $L^s(B_\kappa)$. Also the Jacobian determinants $\det d\Psi_{\eta_n}$ 
      converge to $\det d\Psi_\eta$ in $L^s(\Omega)$  and 
      $\det d\Psi_{\eta_n}^{-1}\,\chi_{\Omega_{\eta_n}}$ converges to 
      $\det d\Psi_\eta^{-1}\,\chi_{\Omega_\eta}$ in $L^s(B_\kappa)$.
  \end{enumerate}
\end{lemma}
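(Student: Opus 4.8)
\textbf{Proof proposal for Lemma \ref{lem:hanzawa_konvergence}.}

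The plan is to exploit the explicit structure \eqref{form_jacobian} of the Jacobians together with the fact that all the relevant functions are built from $\eta$ (or $\eta_n$) by composing with the fixed diffeomorphism $\Lambda$ and the fixed cut-off $\beta$, which localizes everything to $S_\kappa$. For part a), I would simply write $\Psi_{\eta_n}(x)-\Psi_\eta(x) = \big(\eta_n(\mathbf q(x))-\eta(\mathbf q(x))\big)\,\beta(s(x)/\kappa)\,\boldsymbol\nu(\mathbf q(x))$ on $\overline\Omega\cap\underline{S_\kappa}$ and zero elsewhere; since $\beta$ and $\boldsymbol\nu\circ\mathbf q$ are bounded and $\mathbf q$ is continuous, $\lVert\Psi_{\eta_n}-\Psi_\eta\rVert_{L^\infty(\overline\Omega)}\le C\,\lVert\eta_n-\eta\rVert_{L^\infty(M)}\to 0$.

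For part b), the idea is that $\Psi_{\eta_n}^{-1}$ and $\Psi_\eta^{-1}$ (extended by $\mathbf q$) are homeomorphisms of the compact set $\underline{B_\kappa}$ which agree with the identity outside $\underline{S_\kappa}$, and the inverse of the Hanzawa map has the same structural form: in $\Lambda$-coordinates $(q,s)$ the map $\Psi_\eta$ sends $(q,s)\mapsto(q, s+\eta(q)\beta(s/\kappa))$, so the $q$-component is untouched and the $s$-component is a strictly monotone (by the smallness condition on $\beta$) one-dimensional map whose inverse depends continuously on $\eta(q)$ and hence, uniformly, on $\eta$. Thus $\Psi_{\eta_n}^{-1}\to\Psi_\eta^{-1}$ pointwise, and since we are on a compact set and the family is equicontinuous (the modulus of continuity of the scalar inversion can be taken uniform in $n$ because $\lVert\eta_n\rVert_{L^\infty}$ is bounded away from $\kappa$), the convergence is uniform; alternatively one invokes that uniform convergence of homeomorphisms on a compact metric space together with equicontinuity of the inverses upgrades to uniform convergence of the inverses.

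For part c), weak convergence $\eta_n\rightharpoonup\eta$ in $H^2(M)$ gives, by Rellich--Kondrachov on the compact surface $M$, strong convergence $\eta_n\to\eta$ in $C^{0,\lambda}(M)$ for $\lambda<1$ and in particular uniformly, hence also $\nabla\eta_n\to\nabla\eta$ in every $L^s(M)$, $s<\infty$ (again Rellich: $H^1(M)\hookrightarrow\hookrightarrow L^s(M)$). Plugging this into the representation \eqref{form_jacobian} — each component of $\nabla\Psi_{\eta_n}$, of $\nabla\Psi_{\eta_n}^{-1}$, and of $\det d\Psi_{\eta_n}$ (the determinant is a fixed polynomial in entries of the form \eqref{form_jacobian}) is of the shape $b_0+b_1(\eta_n\circ\mathbf q)+\mathbf b\cdot(\nabla\eta_n\circ\mathbf q)$ with fixed bounded continuous $b_0,b_1,\mathbf b$ supported in $S_\kappa$ — and using that composition with the fixed $C^3$-map $\mathbf q$ together with the bounded Jacobian of $\Lambda^{-1}$ turns $L^s(M)$-convergence of $\nabla\eta_n$ into $L^s(\Omega)$-convergence of $\nabla\eta_n\circ\mathbf q$, we obtain convergence of $\nabla\Psi_{\eta_n}$ and $\det d\Psi_{\eta_n}$ in $L^s(\Omega)$. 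For the inverse quantities one additionally has to handle the moving domains: write $(\nabla\Psi_{\eta_n}^{-1})\chi_{\Omega_{\eta_n}} = \big((\nabla\Psi^{-1})\circ\text{(something)}\big)$ and control the characteristic functions via $\chi_{\Omega_{\eta_n}}\to\chi_{\Omega_\eta}$ in every $L^p(B_\kappa)$, $p<\infty$, which follows from a) / b) since the symmetric difference $\Omega_{\eta_n}\triangle\Omega_\eta$ has measure $\le C\lVert\eta_n-\eta\rVert_{L^\infty(M)}\to 0$; then a Vitali / dominated-convergence argument (entries are uniformly bounded by \eqref{form_jacobian} and the bound on $\lVert\eta_n\rVert_{L^\infty}$, and converge a.e.\ after passing to a subsequence, which suffices by the subsequence principle) closes the argument.

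The routine part is the bookkeeping with \eqref{form_jacobian} and the change of variables $\Lambda$; the only genuine point requiring care — and the main obstacle — is part c) for the inverse maps on the moving domains, namely combining the $L^s$-convergence of the ``frozen-coordinate'' expressions with the convergence of the characteristic functions $\chi_{\Omega_{\eta_n}}$, because $\nabla\Psi_{\eta_n}^{-1}$ lives naturally on $\Omega_{\eta_n}$ and must be compared, after canonical extension, with $\nabla\Psi_\eta^{-1}$ on $\Omega_\eta$; here one should first prove the measure estimate $|\Omega_{\eta_n}\triangle\Omega_\eta|\le C\lVert\eta_n-\eta\rVert_{L^\infty(M)}$ from the definition of $\Omega_\eta$ and then use uniform boundedness of the integrands to pass to the limit.
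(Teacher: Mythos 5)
Your argument is sound and follows exactly the route the paper intends (the paper itself defers the ``easy calculations'' to the first author's thesis \cite{phdeberlein}): explicit cancellation of the identity part for a), inversion of the scalar map $s\mapsto s+\eta(q)\beta(s/\kappa)$ in tubular coordinates with a modulus of continuity uniform in $n$ for b), and the structural formula \eqref{form_jacobian} combined with compact Sobolev embeddings and the symmetric-difference estimate $|\Omega_{\eta_n}\triangle\Omega_\eta|\leq C\,\lVert\eta_n-\eta\rVert_{L^\infty(M)}$ for c). One small correction in c): the entries containing $\mathbf{b}\cdot(\nabla\eta_n\circ\mathbf{q})$ are not uniformly bounded in $L^\infty$ (only in every $L^{s'}$, $s'<\infty$, via the uniform $H^2(M)$-bound), so the limit passage must go through Vitali's theorem using uniform integrability of the $s$-th powers rather than dominated convergence with a constant majorant --- which is exactly the Vitali option you already name.
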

By our formal a priori estimate, we cannot expect the deformation and 
therefore the Hanzawa transform to be Lipschitz. Hence, a change of 
variables by the Hanzawa transform does not hold in the classical sense.
Since the Hanzawa transformation inherits the regularity of the 
displacement $\eta$ and preserves the convergence of $\eta_n$ in a 
suitable way by the preceding Lemma, an approximation argument shows 
that a change of variables is still possible. However, we have a small 
loss of regularity since the Jacobian determinant $\det d\Psi_{\eta}$ is 
not $L^\infty$. A careful inspection of this approximation argument, 
which can be found in \cite{phdeberlein}, also gives a bound for the 
continuity constant.
\begin{lemma}\label{lem:einbettung_omega_eta}
  Let $\eta\in H^2(M)$ with $\lVert\eta\rVert_{L^\infty(M)}<\kappa$
  and $1<p\leq \infty$. Then for all $1\leq r<p$ the linear mapping
  \begin{align*}
    \mathbf{v}\mapsto \mathbf{v}\circ \Psi_\eta
  \end{align*}
  is  continuous from $L^p(\Omega_\eta)$ to $L^r(\Omega)$ and from 
  $W^{1,p}(\Omega_\eta)$ to $W^{1,r}(\Omega)$. The continuity constant
  only depends on $\Omega$, $p$, $r$, $\lVert\eta\rVert_{H^2(M)}$ and 
  the choice of $\beta$ in the definition of the Hanzawa transform. An 
  analogous statement holds for $\Psi_\eta^{-1}$ instead of $\Psi_\eta$.
\end{lemma}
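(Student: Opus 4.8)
The strategy is to reduce the statement to the pointwise-in-the-support structure of the Jacobians recorded in \eqref{form_jacobian}, combined with the convergence properties of Lemma \ref{lem:hanzawa_konvergence} and a density/approximation argument. Since $\Psi_\eta$ equals the identity outside $\underline{S_\kappa}$, all the analysis takes place on the tube, where $\Lambda$ is a fixed $C^3$-diffeomorphism; pulling everything back by $\Lambda$ we may work on the product $M\times(-\kappa,\kappa)$ with a fixed smooth reference metric, so the only nonsmooth ingredient is the factor $\eta\circ\mathbf q$ (and $\nabla\eta\circ\mathbf q$) appearing in the components of $d\Psi_\eta$ and $d\Psi_\eta^{-1}$ through expressions of the form $b_0+b_1(\eta\circ\mathbf q)+\mathbf b\cdot(\nabla\eta\circ\mathbf q)$ with $b_0,b_1,\mathbf b$ bounded and continuous.

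First I would prove the $L^p\to L^r$ bound. For $\mathbf v\in L^p(\Omega_\eta)$ and smooth $\eta$ the classical change of variables gives
\begin{align*}
  \int_\Omega |\mathbf v\circ\Psi_\eta|^r\,dx
    = \int_{\Omega_\eta} |\mathbf v|^r\,|\det d\Psi_\eta^{-1}|\,dy
    \le \|\mathbf v\|_{L^p(\Omega_\eta)}^r\,
      \big\||\det d\Psi_\eta^{-1}|\big\|_{L^{p/(p-r)}(\Omega_\eta)},
\end{align*}
by H\"older's inequality with exponents $p/r$ and $p/(p-r)$ (and the obvious modification $r=p-$ nothing, i.e.\ the $L^1$-norm of the determinant, when $p=\infty$). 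By \eqref{form_jacobian} the determinant $\det d\Psi_\eta^{-1}$ is a polynomial expression in $\eta\circ\mathbf q$ and $\nabla\eta\circ\mathbf q$ with bounded continuous coefficients supported in $S_\kappa$; since $H^2(M)\hookrightarrow C^0(M)$ controls $\eta\circ\mathbf q$ in $L^\infty$ and $H^2(M)\hookrightarrow W^{1,s}(M)$ for every finite $s$ controls $\nabla\eta\circ\mathbf q$, the $L^{p/(p-r)}$-norm of the determinant is bounded by a continuous function of $\|\eta\|_{H^2(M)}$ and of $\beta$. This gives the stated constant for smooth $\eta$, uniformly on $H^2$-bounded sets. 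For general $\eta\in H^2(M)$ with $\|\eta\|_{L^\infty}<\kappa$, approximate by $\eta_n\in C^\infty(M)$ with $\eta_n\to\eta$ in $H^2(M)$ and (by Sobolev embedding) uniformly, using the same $\beta$; Lemma \ref{lem:hanzawa_konvergence}(c) then gives $\det d\Psi_{\eta_n}^{-1}\chi_{\Omega_{\eta_n}}\to\det d\Psi_\eta^{-1}\chi_{\Omega_\eta}$ in $L^{p/(p-r)}(B_\kappa)$ and $\Psi_{\eta_n}\to\Psi_\eta$ uniformly, so for $\mathbf v$ first continuous on $\overline{B_\kappa}$ and then, by density in $L^p$, general, one passes to the limit in the change-of-variables identity, obtaining continuity of $\mathbf v\mapsto\mathbf v\circ\Psi_\eta$ with the limiting constant. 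The statement for $\Psi_\eta^{-1}$ is identical with the roles of $\det d\Psi_\eta$ and $\det d\Psi_\eta^{-1}$ interchanged, using the other half of Lemma \ref{lem:hanzawa_konvergence}(c).

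For the $W^{1,p}\to W^{1,r}$ bound, the chain rule gives $\nabla(\mathbf v\circ\Psi_\eta)=\big((\nabla\mathbf v)\circ\Psi_\eta\big)\,d\Psi_\eta$, so one estimates $\|\nabla(\mathbf v\circ\Psi_\eta)\|_{L^r(\Omega)}$ by H\"older again: choose an auxiliary exponent $\rho$ with $r<\rho<p$, write $\tfrac1r=\tfrac1\rho+\tfrac1t$, apply the already-established $L^p\to L^\rho$ continuity to $(\nabla\mathbf v)\circ\Psi_\eta$, and bound $d\Psi_\eta$ in $L^t(\Omega)$ — which is finite for every finite $t$, since by \eqref{form_jacobian} the entries of $d\Psi_\eta$ lie in $L^t$ with norm controlled by $\|\eta\|_{H^2(M)}$ and $\beta$ (again the point is that $\nabla\eta\in L^t(M)$ for all finite $t$). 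Since $\rho<p$ can be taken as close to $p$ as we like, we may pick $\rho$ once $r$ and $p$ are fixed and absorb everything into one constant of the required dependence. The approximation step is the same as before, now using in addition $\nabla\Psi_{\eta_n}\to\nabla\Psi_\eta$ in $L^s(\Omega)$ from Lemma \ref{lem:hanzawa_konvergence}(c) to pass to the limit in $\nabla(\mathbf v\circ\Psi_{\eta_n})$.

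The main obstacle is not any single estimate but the bookkeeping of exponents together with the fact that $\det d\Psi_\eta\notin L^\infty$: one must everywhere leave a gap between $r$ and $p$ so that the H\"older conjugate of the determinant (resp.\ of $d\Psi_\eta$) is finite, and then verify that the gap can be chosen depending only on $(p,r)$ so that the final constant has exactly the claimed dependence on $\Omega,p,r,\|\eta\|_{H^2(M)},\beta$. The other delicate point is justifying the change-of-variables formula for merely-$H^2$ displacements; here one leans entirely on Lemma \ref{lem:hanzawa_konvergence} (uniform convergence of $\Psi_{\eta_n}$ and $\Psi_{\eta_n}^{-1}$, and $L^s$-convergence of the Jacobians and of the characteristic-function-weighted inverse Jacobians) to pass to the limit from smooth $\eta_n$, exactly as indicated in the paragraph preceding the statement.
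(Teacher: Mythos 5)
Your proposal is correct and follows essentially the route the paper itself indicates (and defers to \cite{phdeberlein} for): change of variables plus H\"older with the gap $r<p$ absorbing the fact that the Jacobian determinant is only in $L^s$ for finite $s$ (via \eqref{form_jacobian} and $H^2(M)\hookrightarrow W^{1,s}(M)$), and an approximation by smooth $\eta_n$ through Lemma \ref{lem:hanzawa_konvergence} to justify the change of variables and the chain rule for merely $H^2$ displacements. The exponent bookkeeping and the dependence of the constant on $\Omega$, $p$, $r$, $\lVert\eta\rVert_{H^2(M)}$ and $\beta$ come out exactly as claimed.
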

Combining the last two results we also get the following Lemma.
\begin{lemma}\label{lem:konv_trafo}
  Let $1< p \leq \infty$, $\eta_n \rightharpoonup \eta$ weakly in $H^2(M)$ with 
  $\lVert \eta \rVert_{L^\infty(M)}<\kappa$ and let $\Psi_{\eta_n}$, 
  $\Psi_\eta$ be defined by the same cut-off function $\beta$. For 
  $\mathbf{v} \in W^{1,p}(B_\kappa)$ it holds $\mathbf{v}\circ 
  \Psi_{\eta_n} \rightarrow \mathbf{v}\circ\Psi_\eta$ in 
  $W^{1,r}(\Omega)$ for all $1\leq r < p$. For $\mathbf{v}\in 
  W^{1,p}_0(\Omega)$ it holds $\mathbf{v}\circ \Psi_{\eta_n}^{-1} 
  \rightarrow \mathbf{v} \circ \Psi_\eta^{-1}$ in $W^{1,r}(B_\kappa)$ 
  for all $1\leq r < p$, where the functions are extended by 
  $\boldsymbol 0$. Similar results hold for the convergence in the 
  appropriate $L^p$-spaces.
\end{lemma}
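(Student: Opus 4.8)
The plan is to deduce Lemma~\ref{lem:konv_trafo} by combining the pointwise/convergence properties of the Hanzawa transform from Lemma~\ref{lem:hanzawa_konvergence} with the uniform continuity estimates of Lemma~\ref{lem:einbettung_omega_eta}. For the first assertion, fix $\mathbf{v}\in W^{1,p}(B_\kappa)$ and $1\le r<p$. The key computation is the chain rule $\nabla(\mathbf{v}\circ\Psi_{\eta_n}) = \big((\nabla\mathbf{v})\circ\Psi_{\eta_n}\big)\,\nabla\Psi_{\eta_n}$, which rigorously holds since $\eta_n\in C^0(M)$ only after the approximation argument underlying Lemma~\ref{lem:einbettung_omega_eta} — but I may invoke that lemma directly. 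I would split the difference as
\begin{align*}
  \mathbf{v}\circ\Psi_{\eta_n}-\mathbf{v}\circ\Psi_\eta
    = \big(\mathbf{v}-\mathbf{v}_k\big)\circ\Psi_{\eta_n}
      + \big(\mathbf{v}_k\circ\Psi_{\eta_n}-\mathbf{v}_k\circ\Psi_\eta\big)
      + \big(\mathbf{v}_k-\mathbf{v}\big)\circ\Psi_\eta,
\end{align*}
where $\mathbf{v}_k\in C^\infty(\overline{B_\kappa})$ approximates $\mathbf{v}$ in $W^{1,p}(B_\kappa)$. The outer two terms are controlled uniformly in $n$ by Lemma~\ref{lem:einbettung_omega_eta} (the $W^{1,p}\to W^{1,r}$ bound, with constant depending only on $\sup_n\lVert\eta_n\rVert_{H^2(M)}<\infty$, which is finite since $\eta_n\rightharpoonup\eta$ in $H^2(M)$), so they are small once $k$ is large. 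For fixed smooth $\mathbf{v}_k$, the middle term tends to $0$: indeed $\Psi_{\eta_n}\to\Psi_\eta$ uniformly on $\overline{\Omega}$ by Lemma~\ref{lem:hanzawa_konvergence}a), so $\mathbf{v}_k\circ\Psi_{\eta_n}\to\mathbf{v}_k\circ\Psi_\eta$ uniformly (hence in $L^r(\Omega)$), while for the gradients one writes $\nabla(\mathbf{v}_k\circ\Psi_{\eta_n})=\big((\nabla\mathbf{v}_k)\circ\Psi_{\eta_n}\big)\nabla\Psi_{\eta_n}$ and uses that $(\nabla\mathbf{v}_k)\circ\Psi_{\eta_n}\to(\nabla\mathbf{v}_k)\circ\Psi_\eta$ uniformly together with $\nabla\Psi_{\eta_n}\to\nabla\Psi_\eta$ in $L^s(\Omega)$ for any $s<\infty$ (Lemma~\ref{lem:hanzawa_konvergence}c)); a product of a uniformly convergent bounded sequence with an $L^s$-convergent sequence converges in $L^r$ for $r<s$, and also in $L^r$ against the correct limit. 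Combining, a standard $\varepsilon/3$-argument gives $\mathbf{v}\circ\Psi_{\eta_n}\to\mathbf{v}\circ\Psi_\eta$ in $W^{1,r}(\Omega)$.

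For the second assertion, the role of the domains is reversed: one works with $\Psi_{\eta_n}^{-1}\colon\overline{\Omega_{\eta_n}}\to\overline{\Omega}$, extended by $\mathbf{q}$ to $\underline{B_\kappa}$, and with $\mathbf{v}\in W^{1,p}_0(\Omega)$ extended by $\boldsymbol 0$ to all of $B_\kappa$. The crucial point is that since $\mathbf{v}$ vanishes on $\partial\Omega$, the extension $\mathbf{v}\circ\Psi_{\eta_n}^{-1}$ (extended by $\boldsymbol 0$ outside $\Omega_{\eta_n}$) lies in $W^{1,r}(B_\kappa)$ with no jump across the moving interface, so Lemma~\ref{lem:einbettung_omega_eta} applied to $\Psi_\eta^{-1}$ (and uniformly to $\Psi_{\eta_n}^{-1}$) furnishes the uniform bound needed for the outer terms of the same three-term splitting, now with smooth compactly supported approximations $\mathbf{v}_k\to\mathbf{v}$ in $W^{1,p}_0(\Omega)$. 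For the middle term one uses Lemma~\ref{lem:hanzawa_konvergence}b) (uniform convergence $\Psi_{\eta_n}^{-1}\to\Psi_\eta^{-1}$ on $\underline{B_\kappa}$) and the second half of Lemma~\ref{lem:hanzawa_konvergence}c) (the characteristic-function-weighted gradients $(\nabla\Psi_{\eta_n}^{-1})\chi_{\Omega_{\eta_n}}\to(\nabla\Psi_\eta^{-1})\chi_{\Omega_\eta}$ in $L^s(B_\kappa)$), precisely as above; the indicator weights are exactly what track the sets on which $\nabla(\mathbf{v}_k\circ\Psi_{\eta_n}^{-1})$ is supported, since $\mathbf{v}_k$ is extended by zero. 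The $L^p$-statements are the same argument dropping all gradient terms, and are in fact simpler since no chain rule is needed.

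The main obstacle is handling the middle term's gradient, i.e.\ the convergence of the \emph{composed-then-multiplied} expression $\big((\nabla\mathbf{v}_k)\circ\Psi_{\eta_n}\big)\,\nabla\Psi_{\eta_n}$. One factor converges only uniformly (not better, because $\mathbf{q}$ and $s$ are only $C^3$), the other only in $L^s$ with $s<\infty$ (because $\nabla\Psi_{\eta_n}$ contains $\nabla\eta_n$, which converges merely weakly in $L^2(M)$, hence strongly in $L^s(\Omega)$ for finite $s$ after composition but never in $L^\infty$); their product therefore converges in $L^r$ only for $r$ strictly below $p$, which is exactly the restriction in the statement and the reason one cannot take $r=p$. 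A secondary technical point is justifying the chain rule and the membership $\mathbf{v}\circ\Psi_{\eta_n}\in W^{1,r}(\Omega)$ when $\eta_n$ is merely $H^2$ (only Hölder, not Lipschitz); this is not proved afresh but imported wholesale from Lemma~\ref{lem:einbettung_omega_eta} and the approximation argument of \cite{phdeberlein} cited just before it. Everything else is the routine $\varepsilon/3$-bookkeeping of a density argument with a uniform operator bound.
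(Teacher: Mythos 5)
Your argument is correct and is precisely the proof the paper intends: the lemma is stated with only the remark that it follows by ``combining'' Lemma~\ref{lem:hanzawa_konvergence} and Lemma~\ref{lem:einbettung_omega_eta}, and your three-term splitting with smooth approximants, the uniform operator bound for the outer terms, and the uniform-times-$L^s$ product convergence for the middle term is exactly that combination. The only nitpick is in your closing commentary: the restriction $r<p$ originates in the outer (approximation) terms via Lemma~\ref{lem:einbettung_omega_eta}, since for fixed smooth $\mathbf{v}_k$ the middle term in fact converges in $W^{1,s}(\Omega)$ for every $s<\infty$.
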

To preserve divergence constraint, we introduce the Piola transform 
$\Teta\boldsymbol\varphi$ of $\boldsymbol\varphi$ as pushforward of 
$(\det d\Psi_\eta)^{-1}\boldsymbol\varphi$ under 
$\Psi_\eta$, i.e. $\Teta\boldsymbol\varphi=\big(d\Psi_\eta\,(\det 
d\Psi_\eta)^{-1}\,\boldsymbol\varphi\big)\circ\Psi_\eta^{-1}$. Hence 
$\Teta$ with $\Teta^{-1}(\boldsymbol\varphi)= 
\big(d\Psi_\eta^{-1}\,(\det d\Psi_\eta^{-1})^{-1}\,
\boldsymbol\varphi\big)\circ \Psi_\eta$ is an isomorphism between the 
Lebesgue- and Sobolev spaces on $\Omega$, respectively $\Omega_\eta$, 
as long as the order of differentiability is not larger than one. 
Furthermore, $\Teta$ preserves the divergence-free property, as can be 
seen in \cite[Theorem 7.20]{MR1262126}.

Using Lemma \ref{lem:einbettung_omega_eta}, we can obtain the usual 
Sobolev embeddings by transforming to the reference domain, but we have 
to take into account a loss of regularity. Also, the following trace 
operator, which compares the fluid velocity and the boundary velocity 
(which is given in Lagrange coordinates) is continuous.
\begin{definition}\label{def:treta}
  Let $1<p \leq \infty$ and $\eta\in H^2(M)$ with $\lVert \eta 
  \rVert_{L^\infty(M)}<\kappa$. For $1<r<p$ we define the linear, 
  continuous trace operator $\treta$ by
  \begin{align*}
    \treta: W^{1,p}(\Omega_\eta)\rightarrow W^{1-\frac{1}{r},r}(\partial \Omega),
    \quad v\mapsto \left.(v\circ \Psi_\eta)\right\lvert_{\partial\Omega}.
  \end{align*}
\end{definition}
%
%
Since the regularity of $\partial\Omega_{\eta}$ does not guarantee the 
existence of an outer normal, we will derive an suitable analogue, which 
implies a formula of Green type. We approximate $\eta\in H^2(M)$ by 
$(\eta_{n})_{n\in \N}\subset C^2(M)$ in $H^2(M)$ and choose the same 
cut-off function $\beta$ for the definitions of $\Phi_{\eta_n}$. Since 
$int\, M$ is a two-dimensional $C^4$-manifold, there exist (locally) 
orthonormal, tangential $C^3$ vector fields $\mathbf{e}_1,\,\mathbf{e}_2$ 
with $\mathbf{e}_1\times\mathbf{e}_2 = \boldsymbol{\nu}$. We set the 
(local) vector fields $\mathbf{v}_1^n := d\Phi_{\eta_n}\mathbf{e}_1$, 
$\mathbf{v}_2^n := d\Phi_{\eta_n}\mathbf{e}_2$ and consider 
$d\Phi_{\eta_n}$ as a linear map from the parallelogram spanned by 
$\mathbf{e}_1,\,\mathbf{e}_2$ into the parallelogram spanned by 
$\mathbf{v}_1^n$, $\mathbf{v}_2^n$. Then we get
\begin{align*}
  |\mathbf{v}_1^n\times \mathbf{v}_2^n| 
    = |\operatorname{det} d\Phi_{\eta_n}|\,|\mathbf{e}_1\times \mathbf{e}_2|
    = |\operatorname{det} d\Phi_{\eta_n}|\,|\boldsymbol{\nu}|=|\operatorname{det} d\Phi_{\eta_n}|.
\end{align*}
By $\Phi_{\eta_n}\in C^2(M)$, the outer normal $\boldsymbol{\nu}_{\eta_n}$ 
of $\Omega_{\eta_n}$ admits the representation 
$\boldsymbol{\nu}_{\eta_n}\circ \Phi_{\eta_n} = 
\mathbf{v}_1^n\times \mathbf{v}_2^n\,/\, |\mathbf{v}_1^n\times 
\mathbf{v}_2^n|$. We set $\mathbf{v}^n:= \mathbf{v}_1^n\times 
\mathbf{v}_2^n$ and get  $\mathbf{v}^n = (\boldsymbol{\nu}_{\eta_n}\circ 
\Phi_{\eta_n})\,|\operatorname{det} d\Phi_{\eta_n}|$. Thus $\mathbf{v}^n$ 
is independent of the choice of $\mathbf{e}_1,\,\mathbf{e}_2$ and 
consequently defined globally on $int\, M$. For $q\in int\,M$ and a curve 
$c$ on $int\,M$ with $c(0)=q$ and $\left.\frac{d}{dt}\right\lvert_{t=0} c(t) = 
\mathbf{e}_i(q)$ we compute
\begin{align}\label{grundlagen:eqn:rand_transform_normale1}\begin{aligned}
  \mathbf{v}_i^n(q) 
  &= \left.\frac{d}{dt}\right\lvert_{t=0}\Phi_{\eta_n}(c(t))
    = \left.\frac{d}{dt}\right\lvert_{t=0} \big(c(t) 
      + \eta_n(c(t))\,\boldsymbol{\nu}(c(t))\big)\\
  &= \mathbf{e}_i(q) + d\eta_n(q)\,\mathbf{e}_i(q)\,\boldsymbol{\nu}(q) 
      + \eta_n(q)\,\left.\frac{d}{dt}\right\rvert_{t=0}\boldsymbol{\nu}(c(t))\\
  &= \mathbf{e}_i(q) + d\eta_n(q)\,\mathbf{e}_i(q)\,\boldsymbol{\nu}(q) 
      + \eta_n(q)\,\big(h_i^1(q)\,\mathbf{e}_1(q) 
      + h_i^2(q)\,\mathbf{e}_2(q)\big).
\end{aligned}\end{align}
In here, $h_i^j$ denotes the components of the Weingarten map with respect 
to the ortho\-normal basis $\mathbf{e}_1,\,\mathbf{e}_2$. Hence, we have
\begin{align}\label{grundlagen:eqn:rand_transform_normale2}\begin{aligned}
  \boldsymbol{\nu}\cdot \mathbf{v}^n 
  = 1 - (h_1^1 + h_2^2)\,\eta_n + (h_1^1h_2^2 + h_1^2h_2^1)\,\eta_n^2
  = 1 - 2\,H\, \eta_n + G\, \eta_n^2,
\end{aligned}\end{align}
where $H$ is the mean curvature and $G$ the Gauss curvature of $int\,M$.
Taking the limit $n\rightarrow \infty$ in 
\eqref{grundlagen:eqn:rand_transform_normale1} and 
\eqref{grundlagen:eqn:rand_transform_normale2} yields the convergence 
of $\mathbf{v}^n$ towards an $\mathbf{v}_{\eta}$ in $H^1(M)$, and the 
convergence of $\boldsymbol{\nu}\cdot \mathbf{v}^n$ towards 
$1 - 2\,H\, \eta + G\, \eta^2$ in $H^{2,1}(M)\cap L^\infty(M)$. This 
brings us to the following definition.
\begin{definition}\label{grundlagen:def_gamma}
  Let $\eta\in H^2(M)$ with $\lVert\eta\rVert_{L^\infty(M)}<\kappa$. We 
  call the above constructed vector field $\mathbf{v}_\eta$ with 
  $\mathbf{v}_\eta\in L^r(M)$ for $1\leq r< \infty$ \emph{scaled 
  pseudonormal} and define $\gamma(\eta)\in H^{2,1}(M)\cap L^\infty(M)$ 
  by $\gamma(\eta) := 1 - 2\,H\, \eta + G\, \eta^2$.
\end{definition}
\begin{remark}\label{grundlagen:eigenschaften_gamma} 
  We have already shown the following properties of $\mathbf{v}_\eta$ and 
  $\gamma(\eta)$.
  \begin{enumerate}[label=\alph*)]
    \item For $\eta\in C^2(M)$ with $\lVert\eta\rVert_{L^\infty(M)}<\kappa$ 
      the identities $\mathbf{v}_\eta = (\boldsymbol{\nu}_{\eta}\circ \Phi_{\eta})\,
      |\operatorname{det} d\Phi_{\eta}|$ and $\gamma(\eta) = 
      \boldsymbol{\nu}\cdot (\boldsymbol{\nu}_\eta \circ 
      \Phi_\eta)\,|\operatorname{det} d\Phi_{\eta}|$ hold on $M$.
    \item For $\eta\in H^2(M)$ with $\lVert\eta\rVert_{L^\infty(M)}<\kappa$ 
      follows $\boldsymbol\nu \cdot \mathbf{v}_\eta = \gamma(\eta)$ on $M$.
    \item Let $\eta,\,(\eta_n)_{n\in\N}\subset H^2(M)$ with 
      $\lVert\eta\rVert_{L^\infty(M)},\,\lVert\eta_n\rVert_{L^\infty(M)}<\kappa$ and $\eta_n\rightarrow \eta$ 
      uniformly on $M$. Then $\gamma(\eta_n)$ converges 
      uniformly towards $\gamma(\eta)$ on $M$.
  \end{enumerate}
\end{remark}
Using the scaled pseudonormal, we can now show a Green's type formula.
\begin{proposition}\label{prop:part_int}
  Let $\eta\in H^2(M)$, $\lVert\eta\rVert_{L^\infty(M)}<\kappa$, 
  $1<p,p'\leq \infty$ with $\frac{1}{p}+ \frac{1}{p'}=1$ and 
  $\boldsymbol{\varphi}\in W^{1,p}(\Omega_\eta)$, 
  $\psi\in W^{1,p'}(\Omega_\eta)$. Then
  \begin{align}\label{lem:normale_spur-1}
      \int_{\Omega_{\eta}}\boldsymbol{\varphi}\cdot \nabla \psi\;dx 
        + \int_{\Omega_{\eta}}\operatorname{div} \boldsymbol{\varphi}\; \psi\;dx 
      &= \int_{M} \treta\boldsymbol{\varphi}\cdot 
          \mathbf{v}_{\eta}\,\treta\psi\;dA\\
        &\qquad+\int_{\Gamma} \treta\boldsymbol{\varphi}\cdot 
          \boldsymbol{\nu}\,\treta\psi\,|\operatorname{det} d\Phi_{\eta}|\;dA.\notag
  \end{align}
\end{proposition}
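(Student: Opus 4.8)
The natural approach is approximation: reduce the claimed Green's formula to the classical divergence theorem on smooth domains, using the Hanzawa transform as the bridge. Let me sketch this.

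First, suppose $\eta \in C^2(M)$. Then $\Psi_\eta$ is a $C^2$-diffeomorphism, $\Phi_\eta$ is a $C^2$-diffeomorphism of boundaries, $\partial\Omega_\eta$ is a genuine $C^2$-manifold (off the corner $\Gamma_s$, which has measure zero and causes no trouble since $\partial\Omega \in C^{0,1}$ and we can approximate by smooth domains or simply invoke the divergence theorem on Lipschitz domains), and $\Omega_\eta$ is a bounded Lipschitz domain. For smooth $\varphi, \psi$ the classical integration-by-parts formula on $\Omega_\eta$ reads

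First, I need to reconsider: we need to prove it for arbitrary $\eta \in H^2$, so let me structure the proof in two stages.

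**Step 1 (smooth $\eta$ and smooth functions).** For $\eta \in C^2(M)$ with $\|\eta\|_{L^\infty(M)} < \kappa$, the domain $\Omega_\eta$ is Lipschitz with boundary $\partial\Omega_\eta = \Phi_\eta(M) \cup \Gamma_\eta$ and outer normal $\boldsymbol\nu_{\eta}$ defined $dA_\eta$-a.e. The classical divergence theorem (valid on Lipschitz domains) for $\varphi, \psi \in C^1(\overline{\Omega_\eta})$ gives

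First I would establish the formula in the smooth case; then I would pass to the limit. In detail:

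\medskip

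\emph{Step 1: the case $\eta\in C^2(M)$.}
For $\eta\in C^2(M)$ with $\lVert\eta\rVert_{L^\infty(M)}<\kappa$, the map $\Psi_\eta$ is a $C^2$-diffeomorphism, $\Omega_\eta$ is a bounded Lipschitz domain (away from the negligible corner set $\Gamma_s^\kappa\cap\partial\Omega_\eta$), and $\Phi_\eta$ parametrizes the moving part $\partial\Omega_\eta\setminus\Gamma_\eta$ of the boundary. By a standard density argument it suffices to treat $\boldsymbol\varphi\in C^1(\overline{\Omega_\eta})$, $\psi\in C^1(\overline{\Omega_\eta})$, since such functions are dense in $W^{1,p}(\Omega_\eta)$ and $W^{1,p'}(\Omega_\eta)$ respectively (for $p,p'<\infty$; the cases $p=\infty$ or $p'=\infty$ follow from the corresponding finite exponent by H\"older, using boundedness of $\Omega_\eta$ and of the traces), and all terms in \eqref{lem:normale_spur-1} are continuous in these topologies by Lemma~\ref{lem:einbettung_omega_eta} and Definition~\ref{def:treta}. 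For such smooth $\boldsymbol\varphi,\psi$ the classical divergence theorem on the Lipschitz domain $\Omega_\eta$ yields
\begin{align*}
  \int_{\Omega_\eta}\boldsymbol\varphi\cdot\nabla\psi\;dx
    + \int_{\Omega_\eta}\operatorname{div}\boldsymbol\varphi\;\psi\;dx
  = \int_{\partial\Omega_\eta}(\boldsymbol\varphi\cdot\boldsymbol\nu_{\eta})\,\psi\;dA_\eta.
\end{align*}
Splitting $\partial\Omega_\eta = \Phi_\eta(M)\cup\Gamma_\eta$ and parametrizing the first piece by $\Phi_\eta$, the surface-area element transforms by $dA_\eta = |\operatorname{det} d\Phi_\eta|\,dA$, and on $\Gamma_\eta=\Gamma$ we have $\boldsymbol\nu_\eta=\boldsymbol\nu$ by the orthogonality assumption and the support properties of the Hanzawa transform. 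Invoking Remark~\ref{grundlagen:eigenschaften_gamma}~a), i.e.\ $\mathbf v_\eta = (\boldsymbol\nu_\eta\circ\Phi_\eta)\,|\operatorname{det} d\Phi_\eta|$, together with the identities $\treta\boldsymbol\varphi = \boldsymbol\varphi\circ\Psi_\eta|_{\partial\Omega}$ and $\treta\psi = \psi\circ\Psi_\eta|_{\partial\Omega}$ from Definition~\ref{def:treta}, the right-hand side becomes exactly the right-hand side of \eqref{lem:normale_spur-1}. Wait --- on $\Gamma$ the factor should also carry $|\operatorname{det} d\Phi_\eta|$, which matches the stated formula since $\Phi_\eta|_{\Gamma}$ need not be the identity; this is exactly the second term. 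This establishes \eqref{lem:normale_spur-1} for $\eta\in C^2(M)$.

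\medskip

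\emph{Step 2: passage to the limit $\eta\in H^2(M)$.}
Given $\eta\in H^2(M)$ with $\lVert\eta\rVert_{L^\infty(M)}<\kappa$, pick $\eta_n\in C^2(M)$ with $\eta_n\to\eta$ in $H^2(M)$ and (by Sobolev embedding $H^2(M)\hookrightarrow C^0(M)$) uniformly on $M$; we may assume $\lVert\eta_n\rVert_{L^\infty(M)}<\kappa$ for all $n$, and use the same cut-off $\beta$ throughout. Given $\boldsymbol\varphi\in W^{1,p}(\Omega_\eta)$ and $\psi\in W^{1,p'}(\Omega_\eta)$, first transport them to the reference configuration and then to the $\Omega_{\eta_n}$: using the Piola transform $\Teta$ and an extension to $B_\kappa$, or more directly by composing with $\Psi_\eta^{-1}$ and then $\Psi_{\eta_n}$, one produces sequences $\boldsymbol\varphi_n\in W^{1,r}(\Omega_{\eta_n})$, $\psi_n\in W^{1,r'}(\Omega_{\eta_n})$ (for any fixed $r<p$, $r'<p'$ with $1/r+1/r'>1$, so the pairing still makes sense after slightly lowering both exponents) such that $\boldsymbol\varphi_n\circ\Psi_{\eta_n}\to\boldsymbol\varphi\circ\Psi_\eta$ and $\psi_n\circ\Psi_{\eta_n}\to\psi\circ\Psi_\eta$ in $W^{1,r}(\Omega)$, $W^{1,r'}(\Omega)$. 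Apply Step~1 to each $(\eta_n,\boldsymbol\varphi_n,\psi_n)$ and pass to the limit: on the left-hand side this follows from Lemma~\ref{lem:konv_trafo} after pulling the volume integrals back to $\Omega$ (where one must control the Jacobian determinants, which converge in every $L^s(\Omega)$ by Lemma~\ref{lem:hanzawa_konvergence}~c)); on the right-hand side, $\treta[n]\boldsymbol\varphi_n = \boldsymbol\varphi_n\circ\Psi_{\eta_n}|_{\partial\Omega}\to\boldsymbol\varphi\circ\Psi_\eta|_{\partial\Omega} = \treta\boldsymbol\varphi$ in $W^{1-1/r,r}(\partial\Omega)$ hence in $L^{r}(\partial\Omega)$ by the trace theorem on the \emph{fixed} Lipschitz domain $\Omega$, similarly for $\psi_n$, while $\mathbf v_{\eta_n}\to\mathbf v_\eta$ in $L^s(M)$ for every $s<\infty$ (construction of $\mathbf v_\eta$, cf.\ the passage preceding Definition~\ref{grundlagen:def_gamma}) and $|\operatorname{det} d\Phi_{\eta_n}|\to|\operatorname{det} d\Phi_\eta|$ in every $L^s(M)$, $s<\infty$, again by Lemma~\ref{lem:hanzawa_konvergence}~c). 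Taking $n\to\infty$ in the identity for $\eta_n$ yields \eqref{lem:normale_spur-1} for $\eta$, first for such approximable $\boldsymbol\varphi,\psi$ and then, by one more density argument on the fixed reference domain, for all $\boldsymbol\varphi\in W^{1,p}(\Omega_\eta)$, $\psi\in W^{1,p'}(\Omega_\eta)$.

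\medskip

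\emph{Main obstacle.} The delicate point is the bookkeeping of exponents in Step~2: the change of variables via the non-Lipschitz Hanzawa map costs an $\varepsilon$ of integrability (Lemma~\ref{lem:einbettung_omega_eta}), so one cannot keep the critical H\"older pairing $1/p+1/p'=1$ along the approximation and must argue at $1/r+1/r'$ slightly above $1$, then recover the endpoint by a separate limiting argument on the fixed domain $\Omega$ where all classical tools apply. Handling the case $p=\infty$ or $p'=\infty$ requires the same care. Everything else is a routine combination of the convergence lemmas already proved.
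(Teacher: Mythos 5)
Your overall strategy---reduce the claimed Green's formula to the classical divergence theorem on Lipschitz domains by approximating $\eta$ in $H^2(M)$ by smooth displacements---is the same as the paper's, and your Step~1 (smooth $\eta$, classical integration by parts on $\Omega_\eta$, change of variables on the boundary via $|\operatorname{det} d\Phi_\eta|$ and the scaled pseudonormal from Remark \ref{grundlagen:eigenschaften_gamma}) matches the paper's computation exactly. Where you genuinely diverge is in how the Sobolev functions are carried along the domain approximation. You transport $\boldsymbol\varphi$ and $\psi$ from $\Omega_\eta$ to $\Omega_{\eta_n}$ by composing with $\Psi_\eta^{-1}$ and then $\Psi_{\eta_n}$, which forces you into the exponent bookkeeping you yourself flag as the ``main obstacle'': each passage through the non-Lipschitz Hanzawa map costs integrability (Lemma \ref{lem:einbettung_omega_eta}), the critical pairing $1/p+1/p'=1$ is destroyed, and you must recover the endpoint by an additional, only sketched, limiting argument. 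The paper sidesteps this entirely by reversing the order of approximation: it first approximates $\boldsymbol\varphi$ and $\psi$ by functions $\boldsymbol\varphi_k,\psi_\ell\in C_0^\infty(\R^3)$---globally defined smooth functions that restrict to every $\Omega_{\eta_n}$ and to $\Omega_\eta$ with no transport and no loss of integrability---then applies the classical formula on each Lipschitz domain $\Omega_{\eta_n}$, passes $n\to\infty$ with $\boldsymbol\varphi_k,\psi_\ell$ held fixed (using Lemmas \ref{lem:hanzawa_konvergence} and \ref{lem:konv_trafo} and the $L^r(M)$-convergence of $\mathbf{v}_{\eta_n}$), and only at the very end lets $k,\ell\to\infty$ on the now fixed domain $\Omega_\eta$ at the original conjugate exponents, so there is no endpoint to recover. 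Your route can in principle be completed (Sobolev embedding rescues integrability of the products at the slightly sub-conjugate exponents $1/r+1/r'>1$, and the transported functions converge back to the originals), but it is strictly more delicate; adopting globally smooth approximants of the functions, as the paper does, makes the obstacle you identify disappear.
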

%
\begin{proof}
  Since $\Omega_\eta$ is bounded, for any $1<p<\infty$ the embedding of 
  $W^{1,\infty}(\Omega_\eta)$ into $W^{1,p}(\Omega_\eta)$ is continuous.
  Hence, it suffices to treat the case $1<p,p'<\infty$.

  We approximate $\boldsymbol{\varphi}$ by $(\boldsymbol{\varphi}_k)_{k\in \N}
  \subset C_0^\infty(\R^3)$ in $W^{1,p}(\Omega_\eta)$, $\psi$ by 
  $(\psi_\ell)_{\ell\in\N} \subset C_0^\infty(\R^3)$ in 
  $W^{1,p'}(\Omega_\eta)$ and $\eta$ by $(\eta_{n})_{n\in \N}
  \subset C^2(M)$ in $H^2(M)$. Moreover, we chose the cut-off function
  $\beta$ uniformly for all definitions of the Hanzawa transforms. 
  By partial integration (notice $\partial {\Omega_{\eta_n}}\in C^{0,1}$), 
  we get
  \begin{align*}
    \int_{\Omega_{\eta_n}}\boldsymbol{\varphi}_k\cdot \nabla \psi_\ell\;dx 
      = -\int_{\Omega_{\eta_n}}\operatorname{div} \boldsymbol{\varphi}_k\; \psi_\ell\;dx 
        + \int_{\partial\Omega_{\eta_n}}\boldsymbol{\varphi}_k\cdot 
        \boldsymbol{\nu}_{\eta_n}\,\psi_\ell\;dA_{\eta_n}.
  \end{align*}
  Since $\boldsymbol{\nu}_{\eta_n}\circ \Phi_{\eta_n} = \boldsymbol\nu$
  on $\Gamma$, a change of variables yields
  \begin{align*}
    \int_{\partial\Omega_{\eta_n}}\boldsymbol{\varphi}_k\cdot 
      \boldsymbol{\nu}_{\eta_n}\,\psi_\ell\;dA_{\eta_n} 
      &= \int_{M} \tretan\boldsymbol{\varphi}_k\cdot 
          (\boldsymbol{\nu}_{\eta_n}\circ \Phi_{\eta_n})\tretan\psi_\ell\,
          |\operatorname{det} d\Phi_{\eta_n}|\;dA\\
        &\qquad +\int_{\Gamma} \tretan\boldsymbol{\varphi}_k\cdot 
          \boldsymbol{\nu}\,\tretan\psi_\ell\,
          |\operatorname{det} d\Phi_{\eta_n}|\;dA.
  \end{align*}
  Moreover, by Remark \ref{grundlagen:eigenschaften_gamma} we get
  \begin{align*}
    \int_{M} \tretan\boldsymbol{\varphi}_k\cdot 
      (\boldsymbol{\nu}_{\eta_n}\circ \Phi_{\eta_n})\tretan\psi_\ell\,
      |\operatorname{det} d\Phi_{\eta_n}|\;dA
    =\int_{M} \tretan\boldsymbol{\varphi}_k\cdot 
      \mathbf{v}_{\eta_n}\,\tretan\psi_\ell\;dA.
  \end{align*}
  At the construction of the scaled pseudonormal we already saw that 
  $\mathbf{v}_{\eta_n} \rightarrow \mathbf{v}_\eta$ in $L^r(M)$ for any 
  $1\leq r<\infty$. Using Lemma \ref{lem:hanzawa_konvergence}, Lemma 
  \ref{lem:konv_trafo} and the definition of $\treta$, taking the 
  limit $n\rightarrow \infty$ yields
  \begin{align*}
    \int_{\Omega_{\eta}}\boldsymbol{\varphi}_k\cdot \nabla \psi_\ell\;dx 
      &= -\int_{\Omega_{\eta}}\operatorname{div} \boldsymbol{\varphi}_k\; \psi_\ell\;dx 
        +\int_{M} \treta\boldsymbol{\varphi}_k\cdot 
          \mathbf{v}_{\eta}\,\treta\psi_\ell\;dA\\
        &\qquad+\int_{\Gamma} \treta\boldsymbol{\varphi}_k\cdot 
          \boldsymbol{\nu}\,\treta\psi_\ell\,
          |\operatorname{det} d\Phi_{\eta}|\;dA.
  \end{align*}
  Now passing to the limit $k\rightarrow\infty$ and $\ell\rightarrow\infty$ 
  finally proofs \eqref{lem:normale_spur-1}.
\end{proof}
Although domains with H\"older-continuous boundary generally do not 
admit a Korn's inequality (cf.\ \cite{MR2988724}), we can -- allowing 
the typical loss of regularity -- show a similar statement. 
\begin{lemma}\label{lem:korn_type_inequality}
  Let $\eta\in H^2(M)$ with $\lVert\eta\rVert_{L^\infty(M)} < \alpha 
  < \kappa$ and $1<p<\infty$. Then for all $1\leq r<p$ there 
  exists a constant $C$ such that for all $\boldsymbol\varphi\in C^{1}
  (\overline{\Omega_\eta})$ it holds
  \begin{align*}
    \lVert \nabla \boldsymbol\varphi \rVert_{L^{r}(\Omega_{\eta})} 
      \leq C\, \big(\lVert \mathbf{D}\boldsymbol\varphi\rVert_{L^p(\Omega_{\eta})} 
        + \lVert \boldsymbol\varphi \rVert_{L^p(\Omega_\eta)}\big).
  \end{align*}
  For a fixed $N\in \N$, the constant $C$ can be chosen uniformly with 
  respect to $\eta\in H^2(M)$ satisfying $\lVert \eta \rVert_{H^2(M)}\leq N$ 
  and $\lVert\eta\rVert_{L^\infty(M)} <\alpha$.
\end{lemma}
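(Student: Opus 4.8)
The plan is to reduce the Korn-type inequality on the irregular domain $\Omega_\eta$ to the classical Korn inequality on the fixed, Lipschitz reference domain $\Omega$ by transporting $\boldsymbol\varphi$ via the Hanzawa transform $\Psi_\eta$, and then to recover the uniform constant by a contradiction/compactness argument over the set $\{\|\eta\|_{H^2(M)}\le N,\ \|\eta\|_{L^\infty(M)}<\alpha\}$. Set $\boldsymbol\psi := \boldsymbol\varphi\circ\Psi_\eta\in C^1(\overline\Omega)$ (or rather $W^{1,r}(\Omega)$, using Lemma \ref{lem:einbettung_omega_eta} to know $\boldsymbol\varphi\circ\Psi_\eta\in W^{1,r}(\Omega)$ for $1\le r<p$). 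On $\Omega$ the classical Korn inequality on $C^{0,1}$ domains gives $\|\nabla\boldsymbol\psi\|_{L^r(\Omega)}\le C_\Omega\big(\|\mathbf D\boldsymbol\psi\|_{L^r(\Omega)}+\|\boldsymbol\psi\|_{L^r(\Omega)}\big)$. The issue is that $\mathbf D(\boldsymbol\varphi\circ\Psi_\eta)$ is \emph{not} $(\mathbf D\boldsymbol\varphi)\circ\Psi_\eta$: by the chain rule $\nabla\boldsymbol\psi=\big((\nabla\boldsymbol\varphi)\circ\Psi_\eta\big)\,d\Psi_\eta$, so the symmetrised gradient picks up error terms built from $d\Psi_\eta - Id$ and hence, by \eqref{form_jacobian}, from $\eta\circ\mathbf q$ and $\nabla\eta\circ\mathbf q$.

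The key algebraic step is therefore to write, pointwise on $\Omega$,
\begin{align*}
  \mathbf D\boldsymbol\psi = \tfrac12\Big(\big((\nabla\boldsymbol\varphi)\circ\Psi_\eta\big)\,d\Psi_\eta + \big(d\Psi_\eta\big)^{\!\top}\big((\nabla\boldsymbol\varphi)^{\!\top}\circ\Psi_\eta\big)\Big)
  = \big((\mathbf D\boldsymbol\varphi)\circ\Psi_\eta\big)+ R_\eta[\boldsymbol\varphi],
\end{align*}
where $R_\eta[\boldsymbol\varphi]$ is linear in $(\nabla\boldsymbol\varphi)\circ\Psi_\eta$ with coefficients that are sums of products of bounded continuous functions and of $\eta\circ\mathbf q$, $\nabla\eta\circ\mathbf q$, all supported in $S_\kappa$. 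Thus one gets, for any $1\le r<\tilde r<p$ with $\frac1r=\frac1{\tilde r}+\frac1{\sigma}$,
\begin{align*}
  \|R_\eta[\boldsymbol\varphi]\|_{L^r(\Omega)} \le C\,\big(1+\|\nabla\eta\|_{L^\sigma(M)}\big)\,\|(\nabla\boldsymbol\varphi)\circ\Psi_\eta\|_{L^{\tilde r}(\Omega)},
\end{align*}
and $H^2(M)\hookrightarrow W^{1,\sigma}(M)$ for every $\sigma<\infty$ on the $2$-manifold $M$, so $\|\nabla\eta\|_{L^\sigma(M)}\le C(N)$. Combining the Korn inequality on $\Omega$, this error bound, Lemma \ref{lem:einbettung_omega_eta} to control $\|\boldsymbol\psi\|_{L^r(\Omega)}$ and $\|(\nabla\boldsymbol\varphi)\circ\Psi_\eta\|_{L^{\tilde r}(\Omega)}$ by $\|\boldsymbol\varphi\|_{W^{1,p}(\Omega_\eta)}$-type quantities, and finally transforming back via $\Psi_\eta^{-1}$ (again Lemma \ref{lem:einbettung_omega_eta}, now with a factor $|\det d\Psi_\eta^{-1}|$ which lies in $L^q$ for all $q<\infty$ by \eqref{form_jacobian}, absorbed by a further Hölder loss of exponent), yields the claimed estimate on $\Omega_\eta$ with a constant depending only on $\Omega,p,r$, the cut-off $\beta$ and $\|\eta\|_{H^2(M)}$.

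For the \emph{uniform} constant the cleanest route is a compactness argument. Suppose the uniform statement fails: there exist $\eta_n\in H^2(M)$ with $\|\eta_n\|_{H^2(M)}\le N$, $\|\eta_n\|_{L^\infty(M)}<\alpha$, and $\boldsymbol\varphi_n\in C^1(\overline{\Omega_{\eta_n}})$ with $\|\nabla\boldsymbol\varphi_n\|_{L^r(\Omega_{\eta_n})}=1$ but $\|\mathbf D\boldsymbol\varphi_n\|_{L^p(\Omega_{\eta_n})}+\|\boldsymbol\varphi_n\|_{L^p(\Omega_{\eta_n})}\to0$. Passing to a subsequence, $\eta_n\rightharpoonup\eta$ in $H^2(M)$ (hence $\eta_n\to\eta$ uniformly on $M$, $\|\eta\|_{L^\infty(M)}\le\alpha<\kappa$). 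Transport everything to $\Omega$: set $\boldsymbol\psi_n:=\boldsymbol\varphi_n\circ\Psi_{\eta_n}$. By Lemma \ref{lem:einbettung_omega_eta} and Lemma \ref{lem:konv_trafo} the $\boldsymbol\psi_n$ are bounded in $W^{1,r}(\Omega)$ with $\nabla\boldsymbol\psi_n$ equi-integrable, $\mathbf D\boldsymbol\psi_n\to0$ in $L^r(\Omega)$ after controlling the $R_{\eta_n}$ error terms with Lemma \ref{lem:hanzawa_konvergence}(c) (which gives $\nabla\Psi_{\eta_n}\to\nabla\Psi_\eta$ and $\det d\Psi_{\eta_n}\to\det d\Psi_\eta$ in every $L^s(\Omega)$), and $\boldsymbol\psi_n\to0$ in $L^r(\Omega)$; the classical Korn inequality on the Lipschitz domain $\Omega$ then forces $\nabla\boldsymbol\psi_n\to\mathbf 0$ in $L^r(\Omega)$, whence by Lemma \ref{lem:einbettung_omega_eta} applied to $\Psi_{\eta_n}^{-1}$ we get $\|\nabla\boldsymbol\varphi_n\|_{L^{r'}(\Omega_{\eta_n})}\to0$ for $r'<r$, and a short additional argument (or simply working from the outset with a fixed intermediate exponent) contradicts $\|\nabla\boldsymbol\varphi_n\|_{L^r(\Omega_{\eta_n})}=1$. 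The main obstacle, and the place requiring the most care, is bookkeeping the chain of Hölder-exponent losses: one must fix a ladder $r<r_1<r_2<p$ so that each transformation step (pull-back, multiplication by $d\Psi_\eta$ respectively $\det d\Psi_\eta^{-1}\in\bigcap_{s<\infty}L^s$, push-forward) costs a controlled bit of integrability and the final exponent is still exactly $r$ — together with verifying the error-term convergence $R_{\eta_n}[\boldsymbol\varphi_n]\to0$, which combines the weak $H^2$-convergence of $\eta_n$ (Lemma \ref{lem:hanzawa_konvergence}(c)) with the boundedness of $(\nabla\boldsymbol\varphi_n)\circ\Psi_{\eta_n}$ in a space slightly better than $L^r$.
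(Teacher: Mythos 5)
There is a genuine gap, and it sits exactly at the step you yourself single out as the key one. Your remainder $R_\eta[\boldsymbol\varphi]$ is linear in the \emph{full} gradient $(\nabla\boldsymbol\varphi)\circ\Psi_\eta$, with coefficients built from $d\Psi_\eta-Id$, i.e.\ from $\eta\circ\mathbf q$ and $\nabla\eta\circ\mathbf q$. These coefficients are bounded in every $L^\sigma$, $\sigma<\infty$, but they are in no sense small: $\eta$ may have $L^\infty$-norm close to $\alpha$ and $H^2$-norm up to $N$. Tracking your own ladder of exponents, every H\"older split (to pay for $d\Psi_\eta-Id\in L^\sigma$) and every change of variables (to pay for $\det d\Psi_\eta^{\pm1}\in\bigcap_{s<\infty}L^s$) pushes the exponent of the error term \emph{up}, so the estimate you arrive at has the form
\begin{align*}
  \lVert \nabla\boldsymbol\varphi\rVert_{L^r(\Omega_\eta)}
    \leq C\big(\lVert \mathbf{D}\boldsymbol\varphi\rVert_{L^p(\Omega_\eta)}
      +\lVert\boldsymbol\varphi\rVert_{L^p(\Omega_\eta)}\big)
      + C'\,\lVert\nabla\boldsymbol\varphi\rVert_{L^{r'}(\Omega_\eta)}
      \quad\text{with } r'>r,
\end{align*}
and $C'$ not small; this term can neither be absorbed into the left-hand side nor iterated away. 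The same circularity defeats the compactness version: from $\lVert\mathbf{D}\boldsymbol\varphi_n\rVert_{L^p(\Omega_{\eta_n})}\to0$ you may conclude $(\mathbf{D}\boldsymbol\varphi_n)\circ\Psi_{\eta_n}\to0$, but \emph{not} $\mathbf{D}\boldsymbol\psi_n\to0$, since $R_{\eta_n}[\boldsymbol\varphi_n]$ is a product of a merely bounded sequence $(\nabla\boldsymbol\varphi_n)\circ\Psi_{\eta_n}$ with a merely bounded, non-vanishing factor $d\Psi_{\eta_n}-Id$; hence classical Korn on $\Omega$ tells you nothing about $\nabla\boldsymbol\psi_n$. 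This is not a bookkeeping problem that a cleverer choice of intermediate exponents can repair: the symmetric gradient does not transform covariantly under $\Psi_\eta$, which is precisely why a Korn inequality cannot be transported from the reference domain in this way (and why, as the paper notes, H\"older domains in general admit no unweighted Korn inequality, cf.\ \cite{MR2988724}).

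The paper's proof avoids the transformation altogether. Since $H^2(M)\hookrightarrow C^{0,\beta}(M)$, the domain $\Omega_\eta$ has a $C^{0,\beta}$ boundary, and \cite[Theorem 3.1]{MR2988724} yields the weighted inequality $\lVert\nabla\boldsymbol\varphi\,d^{1-\beta}\rVert_{L^p(\Omega_\eta)}\leq c\,(\lVert\mathbf{D}\boldsymbol\varphi\rVert_{L^p(\Omega_\eta)}+\lVert\boldsymbol\varphi\rVert_{L^p(\Omega_\eta)})$ directly on $\Omega_\eta$, where $d$ is the distance to $\partial\Omega_\eta$. H\"older's inequality then reduces the lemma to bounding $\lVert d^{\beta-1}\rVert_{L^{pr/(p-r)}(\Omega_\eta)}$, which is done by splitting $\Omega_\eta$ into a neighbourhood of the flat in- and outflow part $\Gamma_\eta$ (an elementary one-dimensional integral), a neighbourhood of the moving boundary (where the $C^{0,1/2}$-continuity of $\eta$ converts the distance into $|\eta(q)-s|^2$ in tubular coordinates), and the interior; the uniformity of the constant in $\lVert\eta\rVert_{H^2(M)}\leq N$ is read off from these explicit estimates. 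If you wish to salvage your strategy, you would need a substitute for classical Korn on $\Omega$ that tolerates the non-symmetric remainder — which is, in effect, exactly what the weighted inequality on the H\"older domain provides.
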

\begin{proof}
  We proceed analogously to \cite[Proposition 2.13]{MR3233099}. Since 
  $H^2(M)\hookrightarrow C^{0,\beta}$ for any $0<\beta<1$, the moving 
  domain $\Omega_\eta$ possesses a $\beta$-H\"older boundary. Therefore by  
  \cite[Theorem 3.1]{MR2988724} for $\boldsymbol\varphi\in C^{1}
  (\overline{\Omega_\eta})$ the inequality
  \begin{align}\label{korn_distance}
    \lVert \nabla \boldsymbol\varphi\,d^{1-\beta}\rVert_{L^p(\Omega_\eta)}
      \leq c\Big(\lVert \mathbf{D}\boldsymbol\varphi\rVert_{L^p(\Omega_\eta)}
        + \lVert \boldsymbol\varphi \rVert_{L^p(\Omega_\eta)}\Big),
  \end{align}
  holds, where $d(x)$ is the distance from $x\in \Omega_\eta$ to 
  $\partial\Omega_\eta$. A careful inspection of the proof shows that 
  for a fixed $N\in \N$ the constant $c$ can be chosen uniformly with 
  respect to $\eta\in H^2(M)$ satisfying $\lVert \eta \rVert_{H^2(M)}\leq N$ and 
  $\lVert\eta\rVert_{L^\infty(M)} <\alpha$. By H\"older's inequality
  one gets
  \begin{align}\label{lem:korn_1}
    \lVert \nabla \boldsymbol\varphi \rVert_{L^r(\Omega_\eta)}
    \leq \lVert \nabla \boldsymbol\varphi\,d^{1-\beta}
      \rVert_{L^p(\Omega_\eta)}  \lVert d^{(\beta-1)}
        \rVert_{L^\frac{rp}{p-r}(\Omega_\eta)},
  \end{align}
  therefore we only have to estimate the second term on the right-hand 
  side. We set $\tilde r := \frac{pr}{p-r}\in (1,\infty)$ and deduce 
  $\frac{1}{2\tilde r}\in (0, \frac{1}{2})$, i.\,e.\ we can choose 
  $\beta \in (\frac{1}{2},1)$ with $1-\frac{1}{2\tilde r} <\beta 
  <1$. Moreover, for $\varepsilon>0$ we consider the partition 
  $\Omega_\eta = U_\varepsilon \cup V_\varepsilon\cup M_\varepsilon$ with
  \begin{gather*}
    U_\varepsilon :=\big\{ x\in \Omega_\eta \bigm| 
      dist(x,\Gamma_\eta)<\varepsilon\big\},\quad
    V_\varepsilon :=\big\{ x\in \Omega_\eta \bigm| 
      dist(x,\partial\Omega_\eta\setminus\Gamma_\eta)<\varepsilon\big\},\\
    M_\varepsilon :=\big\{ x\in \Omega_\eta \bigm| 
      dist(x,\partial\Omega_\eta)\geq\varepsilon\big\}.
  \end{gather*}
  Then
  \begin{align*}
    \int_{\Omega_\eta}d^{(\beta-1)\tilde r}\;dx
    &\leq \int_{U_\varepsilon}dist(x,\Gamma_\eta)^{(\beta-1)\tilde r}\;dx
    + \int_{V_\varepsilon}dist(x,
      \partial\Omega_\eta\setminus\Gamma_\eta)^{(\beta-1)\tilde r}\;dx\\
    &\qquad + \int_{M_\varepsilon}d^{(\beta-1)\tilde r}\;dx.
  \end{align*}
  We take $\varepsilon$ small enough, such that $V_\varepsilon \subset 
  S_\alpha$. By the assumptions to our in- and outflow domain, $\Gamma_\eta$ 
  consists of the connected components $\Gamma_\eta^i$, $i=1,\ldots, \ell$, 
  each part of some hyperplane. Hence, we can take $\varepsilon>0$ even 
  smaller, such that $U_\varepsilon$ decomposes disjointly into the 
  sets $U_\varepsilon^i$, $i=1,\ldots , \ell$ of the form
  \begin{align*}
    U_\varepsilon^i = \big\{ y\in \R^3 \bigm| y = q - 
    s\,\boldsymbol\nu(q)\; \text{with } q\in \Gamma_\eta^i, 
      \;s\in (0,\varepsilon)\big\}.
  \end{align*}
  Since $(\beta-1)\tilde r \in (-1,0)$, we get the estimate
  \begin{align}\label{lem:korn_2}
    \int_{U_\varepsilon}dist(x,\Gamma_\eta)^{(\beta-1)\tilde r}\;dx
    = \sum_{i=1}^\ell \int_{\Gamma_\eta^i}\int_0^\varepsilon 
      s^{(\beta-1)\tilde r}\;ds\,dq
    \leq c(\ell, \beta, \tilde r, |\Gamma_\eta|)\,
      \varepsilon^{(\beta-1)\tilde r + 1}.
  \end{align}
  The embedding $H^2(M) \hookrightarrow C^{0,\frac{1}{2}}(M)$ and the 
  properties of the square root imply for $q_1,\,q_2\in M$ and $|s|<\kappa$ 
  \begin{align*}
    |\eta(q_1) - s| 
      &\leq |\eta(q_1) - \eta(q_2)| + |\eta(q_2) - s|\\
      &\leq c(\lVert\eta\rVert_{H^2(M)})\,|q_1 - q_2|^\frac{1}{2} 
        + |\eta(q_2) - s|\\
      &\leq c(\kappa,\,\lVert\eta\rVert_{H^2(M)})\,\big(|q_1 - q_2| 
        + |\eta(q_2) - s|\big)^\frac{1}{2}.
  \end{align*}
  By the properties of the tubular neighbourhood we deduce for 
  $q\in M$, $|s|<\kappa$
  \begin{align*}
    |\eta(q)-s|^2 
      \leq c(\kappa,\,\Lambda,\,\lVert\eta\rVert_{H^2(M)})\;
        dist(q + s\,\boldsymbol\nu(q),\partial\Omega_\eta\setminus\Gamma_\eta).
  \end{align*}
  By a change of variables and $(\beta-1)\tilde r \in (-\frac{1}{2},0)$ 
  we get
  \begin{align}\label{lem:korn_3}
    \int_{V_\varepsilon}dist(x,&\partial\Omega_\eta\setminus\Gamma_\eta)
      ^{(\beta-1)\tilde r}\;dx\\
    &\leq \int_M \int_{-\alpha}^{\eta(q)} dist(q+s\,\boldsymbol\nu,
      \partial\Omega_\eta\setminus\Gamma_\eta)^{(\beta-1)\tilde r}|\operatorname{det} d\Lambda|\;ds\,dA(q)\notag\\
    &\leq c(\kappa,\,\Lambda,\,\lVert \eta\rVert_{H^2(M)})\,
      \int_M \int_{-\alpha}^{\eta(q)} |\eta(q)-s|^{2(\beta-1)\tilde r}\;ds\,dA(q)\notag\\
    &\leq c(\kappa,\,\Lambda,\,\lVert \eta\rVert_{H^2(M)})\,
      \int_M \frac{1}{2(\beta-1)\tilde r +1}(\eta(q)+\alpha)^{2(\beta-1)\tilde r+1}\;dA(q)\notag\\
    &\leq c(\alpha,\, \tilde r,\, \beta,\, \kappa,\,\Lambda,\, M,\,\lVert \eta\rVert_{H^2(M)}),\notag
  \end{align}
  where we used the embedding $H^2(M) \hookrightarrow L^\infty(M)$.
  Finally we have  
  \begin{align}\label{lem:korn_4}
    \int_{M_\varepsilon}d^{(\beta-1)\tilde r}\;dx
    \leq \varepsilon^{(\beta-1)\tilde r}|B_\kappa|.
  \end{align}
  From \eqref{lem:korn_1}, \eqref{korn_distance}, \eqref{lem:korn_2}, 
  \eqref{lem:korn_3} and \eqref{lem:korn_4} follows the claim.%
\end{proof}
To close this Section, we define the following Banach spaces.
\begin{definition}\label{def:space_V_p}
  Let $\eta\in H^2(M)$, $\lVert\eta\rVert_{L^\infty(M)} < \alpha 
  < \kappa$ and $1\leq p \leq \infty$. We set
  \begin{align*}
    V_p(\Omega_\eta) &:=\big\{ \mathbf{u}\in L^p(\Omega_\eta) \bigm| 
      \mathbf{D}\mathbf{u} \in L^p(\Omega_\eta),\;\operatorname{div} \mathbf{u} = 0\big\},\\
    \widetilde{V}_p(\Omega_\eta) &:=\big\{ \mathbf{u}\in L^p(\Omega_\eta) \bigm| 
      \mathbf{D}\mathbf{u} \in L^p(\Omega_\eta)\big\}
  \end{align*}
  and equip these spaces with the norm
  \begin{align*}
    \lVert \mathbf{u} \rVert_{V_p(\Omega_\eta)}
      := \lVert \mathbf{u} \rVert_{L^p(\Omega_\eta)}
        + \lVert \mathbf{D}\mathbf{u} \rVert_{L^p(\Omega_\eta)}.
  \end{align*}
\end{definition}
\subsection{Generalised trace operator}
\label{subsec:general_trace}
Let $U$ be an open subset of $\R^3$ and $1\leq p \leq \infty$. Following 
\cite[section II.1.2]{MR1928881}, the Banach spaces
\begin{gather*}
    E^p(U):=\big\{\boldsymbol{\varphi}\in L^p(U)\bigm| 
      \operatorname{div} \boldsymbol{\varphi}\in L^p(U)\big\},\quad 
    L^p_\sigma(U)  :=\big\{\boldsymbol{\varphi}\in E^p(U)\,|\, \operatorname{div} 
\boldsymbol{\varphi}=0\big\}
\end{gather*}
with the norm $\lVert \boldsymbol{\varphi}\rVert_{L_\sigma^p(U)} =
\lVert \boldsymbol{\varphi}\rVert_{E^p(U)} = 
\lVert\boldsymbol{\varphi}\rVert_{L^p(U)}
+ \lVert\operatorname{div}\boldsymbol{\varphi}\rVert_{L^p(U)}$ admit a 
generalised trace operator $\trn$ for the normal component as long as 
$\partial U$ is Lipschitz. In particular, this trace operator is defined 
by an approximation argument through Green's formula from Proposition
\ref{prop:part_int} as an element 
$\trn\mathbf{u}\in (H^{1-1/p',p'}(\partial U))^\ast$, and admits the 
representation
\begin{align*}
  \langle \trn \boldsymbol{\varphi}, g\rangle 
    = \int_{\partial U} \boldsymbol{\varphi}\cdot \boldsymbol\nu \;g\;dA
      \qquad \text{for } \boldsymbol{\varphi}\in 
      C^\infty(\overline{U}),\;g\in H^{1-1/p',p'}(\partial U).
\end{align*}
It should be noted, that by mollification $C_0^\infty(\overline{U})$
is dense in $E^p(U)$ 
as long as $\partial U \in C^0$ (cf.~\cite[Prop. A.1]{MR3147436}). 
To extend the trace operator to our 
deformed domain, we will use Green's formulae from Proposition 
\ref{prop:part_int}.
\begin{proposition}\label{prop:spur_normale}
  Let $\eta\in H^2(M)$, $\lVert\eta\rVert_{L^\infty(M)}< \alpha <\kappa$
  and $1<p,p'<\infty$ with $\frac{1}{p}+ \frac{1}{p'}=1$ and  
  $r,\tilde r$ with $p'< r < \tilde r <\infty$. There exists a linear, 
  continuous operator
  \begin{align*}
    \trneta:E^p(\Omega_\eta)\rightarrow (H^{1-1/r,r}(\partial\Omega))^*
  \end{align*}
  satisfying
  \begin{align*}
    \langle \trneta \boldsymbol\varphi, \treta\psi \rangle
      = \int_{\Omega_{\eta}}\boldsymbol{\varphi}\cdot \nabla \psi\;dx 
        + \int_{\Omega_{\eta}}\operatorname{div} \boldsymbol{\varphi}\; \psi\;dx 
  \end{align*}
  for all $\boldsymbol\varphi\in E^p(\Omega_\eta)$ and all $\psi\in 
  W^{1,\tilde r}(\Omega_\eta)$. For a fixed $N\in \N$, the continuity 
  constant can be chosen uniformly with respect to $\eta\in H^2(M)$ satisfying
  $\lVert \eta \rVert_{H^2(M)}\leq N$ and $\lVert\eta\rVert_{L^\infty(M)}
  <\alpha$.
\end{proposition}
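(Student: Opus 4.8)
The plan is to mimic the classical construction of the normal trace on $E^p$-spaces for Lipschitz domains, but to run the entire argument on the reference domain $\Omega$ via the Hanzawa transform, so that the non-Lipschitz character of $\partial\Omega_\eta$ never enters directly. First I would fix $\boldsymbol\varphi\in E^p(\Omega_\eta)$ and observe that, by Proposition \ref{prop:part_int} applied with $\psi\in W^{1,\tilde r}(\Omega_\eta)\subset W^{1,p'}(\Omega_\eta)$ (note $\tilde r>p'$, and $\Omega_\eta$ is bounded, so this embedding is fine for the smooth approximation step) together with mollification in $E^p$ which is legitimate since $\partial\Omega_\eta\in C^0$ (cf.\ \cite[Prop.\ A.1]{MR3147436}), the bilinear pairing
\begin{align*}
  \psi\mapsto \int_{\Omega_\eta}\boldsymbol\varphi\cdot\nabla\psi\;dx
    + \int_{\Omega_\eta}\operatorname{div}\boldsymbol\varphi\;\psi\;dx
  = \int_M \treta\boldsymbol\varphi\cdot\mathbf{v}_\eta\,\treta\psi\;dA
    + \int_\Gamma \treta\boldsymbol\varphi\cdot\boldsymbol\nu\,\treta\psi\,|\det d\Phi_\eta|\;dA
\end{align*}
depends only on the boundary data $\treta\psi\in W^{1-1/r,r}(\partial\Omega)$ and not on the particular extension $\psi$. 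Here the key point is that the right-hand side only sees $\treta\psi$: two functions $\psi_1,\psi_2\in W^{1,\tilde r}(\Omega_\eta)$ with $\treta\psi_1=\treta\psi_2$ give the same value, because their difference $\psi_1-\psi_2$ transforms under $\Psi_\eta$ to a $W^{1,r}(\Omega)$-function vanishing on $\partial\Omega$ (using Lemma \ref{lem:einbettung_omega_eta} and Definition \ref{def:treta}), hence lies in $W_0^{1,r}(\Omega)$, and for such functions the left-hand side vanishes — which one checks by approximating in $W_0^{1,r}(\Omega)$ by $C_0^\infty(\Omega)$-functions, pushing forward via $\Teta$ or simply via $\Psi_\eta$, and using that $\Omega_\eta$ is bounded so smooth compactly supported test functions are legitimate in the integration-by-parts identity of Proposition \ref{prop:part_int} (the boundary integrals drop out).

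Next I would show surjectivity of the trace: every $g\in W^{1-1/r,r}(\partial\Omega)$ is of the form $\treta\psi$ for some $\psi\in W^{1,\tilde r}(\Omega_\eta)$, with a controlled norm. This I would get by taking a standard $W^{1,r}(\Omega)$-extension $\tilde\psi$ of $g$ into the \emph{reference} domain (Lipschitz boundary, so the usual extension operators apply), and then setting $\psi:=\tilde\psi\circ\Psi_\eta^{-1}\in W^{1,\tilde r}(\Omega_\eta)$? — careful, the loss of regularity in Lemma \ref{lem:einbettung_omega_eta} goes the wrong way here. Instead I would choose the extension $\tilde\psi$ to live in $W^{1,s}(\Omega)$ for some $s>\tilde r$ (possible since $g\in W^{1-1/r,r}\subset W^{1-1/\tilde r,\tilde r}\subset\ldots$ — actually one must take $r$ itself large enough; this is exactly why the statement carries the chain $p'<r<\tilde r<\infty$, giving room), transport it by the inverse Hanzawa map, and invoke the "analogous statement for $\Psi_\eta^{-1}$" in Lemma \ref{lem:einbettung_omega_eta} to land in $W^{1,\tilde r}(\Omega_\eta)$ with norm bounded by $c(\lVert\eta\rVert_{H^2})\,\lVert g\rVert_{W^{1-1/r,r}(\partial\Omega)}$. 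Combining with the displayed identity, the map $g\mapsto (\text{RHS value})$ is a bounded linear functional on $W^{1-1/r,r}(\partial\Omega)$ whose norm is controlled by $\lVert\boldsymbol\varphi\rVert_{L^p(\Omega_\eta)}\lVert\nabla\psi\rVert_{L^{p'}(\Omega_\eta)} + \lVert\operatorname{div}\boldsymbol\varphi\rVert_{L^p(\Omega_\eta)}\lVert\psi\rVert_{L^{p'}(\Omega_\eta)}\le c\,\lVert\boldsymbol\varphi\rVert_{E^p(\Omega_\eta)}\lVert g\rVert_{W^{1-1/r,r}(\partial\Omega)}$; this functional is $\trneta\boldsymbol\varphi$, and the identity in the statement holds by construction for all $\psi\in W^{1,\tilde r}(\Omega_\eta)$ (with $g=\treta\psi$).

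The uniformity of the continuity constant over the ball $\lVert\eta\rVert_{H^2(M)}\le N$, $\lVert\eta\rVert_{L^\infty(M)}<\alpha$ then falls out because every ingredient is uniform on this set: the continuity constants in Lemma \ref{lem:einbettung_omega_eta} and Definition \ref{def:treta} are stated to depend only on $\Omega$, $p$, $r$, $\lVert\eta\rVert_{H^2(M)}$ and $\beta$ (and $\beta$ can be chosen uniformly for $\lVert\eta\rVert_{L^\infty}\le\alpha<\kappa$, as remarked after \eqref{form_jacobian}), the reference-domain extension operator is $\eta$-independent, and $\lVert\,|\det d\Phi_\eta|\,\rVert$ and $\lVert\mathbf{v}_\eta\rVert_{L^{r'}(M)}$ are bounded in terms of $\lVert\eta\rVert_{H^2(M)}$ via the representation \eqref{form_jacobian} and Definition \ref{grundlagen:def_gamma}. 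I expect the main obstacle to be bookkeeping the exponents: one must make sure that the auxiliary extension $\tilde\psi$ can be taken regular enough that, \emph{after} the loss of regularity in transporting between $\Omega$ and $\Omega_\eta$, it still sits in $W^{1,\tilde r}(\Omega_\eta)$ with $\tilde r>r>p'$, and simultaneously that the pairing on the boundary makes sense (so $\mathbf{v}_\eta\,\treta\psi\in L^1(M)$, i.e.\ one needs $\treta\psi\in L^{r}$ against $\mathbf{v}_\eta\in L^{r'}$ with $r$ on the correct side) — this is precisely the role of the margin between $r$ and $\tilde r$, and it is the one place where the argument is not purely formal.
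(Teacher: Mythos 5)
Your overall strategy is the paper's in mirror image: both proofs rest on the Green formula of Proposition \ref{prop:part_int} together with a reference-domain extension operator $F:H^{1-1/r,r}(\partial\Omega)\to W^{1,r}(\Omega)$ transported by $\Psi_\eta^{-1}$. The paper first defines $\trneta\boldsymbol\varphi$ for $\boldsymbol\varphi\in C^1(\overline{\Omega_\eta})$ through the \emph{boundary} integrals $\int_M\treta\boldsymbol\varphi\cdot\mathbf v_\eta\,b\;dA+\int_\Gamma\treta\boldsymbol\varphi\cdot\boldsymbol\nu\,b\,|\det d\Phi_\eta|\;dA$, obtains the $E^p$-bound by rewriting these as volume integrals via $F\circ\Psi_\eta^{-1}$, and then extends by density in $\boldsymbol\varphi$; you instead define the functional through the \emph{volume} integrals for general $\boldsymbol\varphi\in E^p(\Omega_\eta)$, which forces you to prove independence of the extension $\psi$ and to reach all of $H^{1-1/r,r}(\partial\Omega)$. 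Your well-definedness argument (pull $\psi_1-\psi_2$ back to $W^{1,r}_0(\Omega)$ using $r<\tilde r$, approximate by $C_0^\infty(\Omega)$, push forward losing down to $W^{1,p'}(\Omega_\eta)$ using $p'<r$) is sound and is exactly the intended use of the two strict inequalities.

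The genuine gap is in your surjectivity step. The inclusion chain $W^{1-1/r,r}(\partial\Omega)\subset W^{1-1/\tilde r,\tilde r}(\partial\Omega)\subset\cdots$ you invoke is false: on the compact two-dimensional boundary the embedding $W^{1-1/q,q}(\partial\Omega)\hookrightarrow W^{1-1/r,r}(\partial\Omega)$ holds precisely for $q\ge r$, i.e.\ it goes the \emph{opposite} way, so a general $g\in H^{1-1/r,r}(\partial\Omega)$ is not the trace of any $\psi\in W^{1,\tilde r}(\Omega_\eta)$, let alone of a $W^{1,s}(\Omega)$-extension with $s>\tilde r$. Consequently your functional is only defined on the proper dense subspace $\treta\big(W^{1,\tilde r}(\Omega_\eta)\big)$, and even there the bound you write controls the extension by a stronger boundary norm rather than by $\lVert g\rVert_{H^{1-1/r,r}(\partial\Omega)}$, so the continuous extension to the whole space is not yet justified. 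The repair is exactly the paper's construction: take $\psi_g:=(Fg)\circ\Psi_\eta^{-1}$, which by Lemma \ref{lem:einbettung_omega_eta} (with $p'<r$) lands only in $W^{1,p'}(\Omega_\eta)$ but with norm $\le c(N,\alpha)\,\lVert g\rVert_{H^{1-1/r,r}(\partial\Omega)}$ and satisfies $\treta\psi_g=g$; this defines $\trneta\boldsymbol\varphi$ on all of $H^{1-1/r,r}(\partial\Omega)$ with the correct bound, and your well-definedness argument, applied to the difference of $\psi_g$ and an arbitrary $\psi\in W^{1,\tilde r}(\Omega_\eta)$ with the same trace (whose pullback still lies in $W^{1,r}_0(\Omega)$), then yields the stated identity for every such $\psi$. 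The uniformity discussion at the end is fine.
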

\begin{proof}
  We will define $\trneta \boldsymbol\varphi$ for $\boldsymbol\varphi\in 
  C^1(\overline{\Omega_\eta})$. Then, by density, the claim follows. 
  For $b\in H^{1-1/r,r}(\partial\Omega)$ we set
  \begin{align*}
    \langle \trneta \boldsymbol\varphi, b \rangle 
      := \int_{M} \treta\boldsymbol{\varphi}\cdot \mathbf{v}_\eta\, b\;dA
        + \int_{\Gamma} \treta\boldsymbol{\varphi}\cdot 
          \boldsymbol{\nu}\,b\,|\operatorname{det} d\Phi_{\eta}|\;dA,
  \end{align*}    
  i.\,e.\ $\trneta \boldsymbol\varphi \in (H^{1-1/r,r}(\partial\Omega))^*$. 
  Since $\Omega$ is a bounded Lipschitz domain, there exists a linear, 
  continuous extension operator $F:H^{1-1/r,r}(\partial\Omega) \rightarrow 
  W^{1,r}(\Omega)$ (see \cite[Satz 6.41]{dobrowolski}). By 
  Lemma \ref{lem:einbettung_omega_eta} and $p'< r$, the map $F\circ 
  \Psi_\eta^{-1}: H^{1-1/r,r}(\partial \Omega) \rightarrow 
  W^{1,p'}(\Omega_\eta)$ is linear and continuous with an appropriately 
  bounded continuity constant. Using the definition of $\treta$ and 
  the extension property of $F$, we have 
  \begin{align*}
    \treta ((F\circ \Psi_\eta^{-1})(b))
    = (Fb)\circ \Psi_\eta^{-1}\circ\Psi_{\eta}\lvert_{\partial\Omega}
    = (Fb)\lvert_{\partial\Omega}= b.
  \end{align*}
  Thus, by definition of $\tretan$ and Proposition \ref{prop:part_int} 
  we have  
  \begin{align*}
    \big|\langle \trneta \boldsymbol\varphi, b \rangle\big|
    &= \Bigg| \int_{M} \treta\boldsymbol{\varphi}\cdot 
        \mathbf{v}_\eta\, \treta ((F\circ \Psi_\eta^{-1})(b))\;dA\\
      &\qquad+ \int_{\Gamma} \treta\boldsymbol{\varphi}\cdot 
        \boldsymbol{\nu}\,\treta ((F\circ \Psi_\eta^{-1})(b))\,
          |\operatorname{det} d\Phi_{\eta}|\;dA\,\Bigg|\\
    &= \Bigg|\int_{\Omega_{\eta}}\boldsymbol{\varphi}\cdot 
        \nabla ((F\circ \Psi_\eta^{-1})(b))\;dx 
      + \int_{\Omega_{\eta}}\operatorname{div} \boldsymbol{\varphi}\; 
        ((F\circ \Psi_\eta^{-1})(b))\;dx\,\Bigg|\\
    &\leq c\,\lVert \boldsymbol\varphi\rVert_{E^p(\Omega_\eta)}\,
      \lVert b\rVert_{H^{1-1/r,r}(\partial \Omega)}.
  \end{align*}
  By this inequality, we can extend $\trneta$ continuously to the 
  space $E^p(\Omega_\eta)$, which shows the first part of our claim. 
  For the second part, we observe that $r<\tilde r$ and therefore 
  $\treta\psi \in H^{1-1/r,r}(\partial \Omega)$ for $\psi\in 
  W^{1,\tilde r}(\Omega_\eta)$. Thus, an approximation of 
  $\boldsymbol\varphi$, the definition of $\tretan$ for smooth functions 
  and \eqref{lem:normale_spur-1} shows the desired identity.
\end{proof}
To obtain a trace operator defined only on a part of the boundary, we 
restrict $\trneta$ to the space of test functions vanishing on the rest 
of the boundary. For a measurable subset $\gamma \subset \partial \Omega$ 
and $1<r<\infty$ we therefore set 
$W^{1,r}_{\partial \Omega \setminus \gamma}(\Omega) := \{ u \in 
W^{1,r}(\Omega) \,|\, u\lvert_{\partial\Omega\setminus\gamma} = 0 \}$
and $H^{1-1/r,r}_{00}(\gamma)$ as the image of the classical trace 
operator of $W^{1,r}_{\partial \Omega \setminus \gamma}(\Omega)$.
Hence $H^{1-1/r,r}_{00}(\gamma)$ is a closed subspace of\label{def_testraum_trn}
$H^{1-1/r,r}(\partial \Omega)$.
\begin{definition}\label{bem:def_trace_eta_n_part}
  For a measurable subset $\gamma\subset \partial \Omega$, we set 
  \begin{align*}
    \trneta\big\rvert_\gamma:E^p(\Omega_\eta)\rightarrow (H^{1-1/r,r}_{00}(\gamma))^*,
      \quad \langle \trneta\big\rvert_\gamma \boldsymbol\varphi, b \rangle 
        := \langle \trneta \boldsymbol\varphi, b \rangle
  \end{align*}
  as the restriction of $\trneta$ to $H^{1-1/r,r}_{00}(\gamma)$.
\end{definition}
\begin{remark}\label{bem:kompatibilitaet_spuroperatoren}
  Let $\eta\in H^2(M)$, $\lVert\eta\rVert_{L^\infty(M)}<\kappa$ and 
  $1<p,p'<\infty$ with $\frac{1}{p} + \frac{1}{p'}=1$ and 
  $p'<r<\infty$. Then for all $\boldsymbol\varphi\in 
  W^{1,p}(\Omega_\eta)$ and all $b\in H^{1-1/r,r}(\partial\Omega)$ one has
  \begin{align*}
    \langle \trneta\boldsymbol\varphi, b \rangle 
      = \int_{M} \treta\boldsymbol{\varphi}\cdot \mathbf{v}_\eta\, b\;dA
        + \int_{\Gamma} \treta\boldsymbol{\varphi}\cdot 
          \boldsymbol{\nu}\,b\,|\operatorname{det} d\Phi_{\eta}|\;dA.
  \end{align*}
  For $\boldsymbol\varphi \in W^{1,p}(\Omega_\eta)$ with 
  $\treta\boldsymbol\varphi = \xi\,\boldsymbol\nu$ for some 
  $\xi \in L^p(\partial\Omega)$, Remark \ref{grundlagen:eigenschaften_gamma} 
  implies
  \begin{align*}
    \langle \trneta\boldsymbol\varphi, b \rangle 
      = \int_{M} \xi\,b\,\gamma(\eta)\;dA
        + \int_{\Gamma} \xi\,b\,|\operatorname{det} d\Phi_{\eta}|\;dA,
        \qquad b\in H^{1-1/r,r}(\partial\Omega).
  \end{align*}
  Moreover, for an open subset $V\subset \R^3$ with $\Omega_\eta \subset V$ 
  and $\gamma:= \Phi_\eta^{-1}(\partial \Omega_\eta \setminus \partial 
  V)\subset \partial \Omega$ measurable, a function 
  $\mathbf{u}\in E^p(\Omega_\eta)$ with $\trneta\lvert_\gamma = 0$ 
  can be extended by $\boldsymbol 0$ to $\overline{\mathbf{u}}\in E^p(V)$.
  This can be seen by taking $\varphi\in C_0^\infty(V)$ and observing the 
  identity
  \begin{align*}
    \langle \operatorname{div} \overline{\mathbf{u}},\varphi\rangle
    &= -\int_{\Omega_\eta}\mathbf{u}\cdot \nabla\varphi\;dx\\
    &= -\langle \trneta\lvert_\gamma \mathbf{u},\treta\varphi\rangle 
      + \int_{\Omega_\eta} \varphi\operatorname{div}\mathbf{u}\;dx
    = \int_V \varphi\,(\operatorname{div}\mathbf{u})\,\chi_{U}\;dx.
  \end{align*}
\end{remark}
%
\begin{definition}\label{def:H_sigma_M}
  Let $\eta\in H^2(M)$ with $\lVert\eta\rVert_{L^\infty(M)}<\kappa$
  and $\gamma\subset \partial \Omega$ measurable. We set 
  \begin{align*}
    H_{\gamma}(\Omega_\eta):=
      \big\{ \boldsymbol\varphi \in  L^2_\sigma(\Omega_\eta)
      \bigm| \trneta\lvert_{\gamma}\,\boldsymbol\varphi= \boldsymbol 0
      \big\}
  \end{align*}
  and equip this space with the $L^2(\Omega_\eta)$-norm. Since 
  $H_{\gamma}(\Omega_\eta)$ is a closed subspace of 
  $L^2_\sigma(\Omega_\eta)$ and $L^2(\Omega_\eta)$ respectively, it is 
  a separable Hilbert space.
\end{definition}
This space admits the following density result.
\begin{lemma}\label{lem:density_part_normal_trace}
  Let $\eta\in H^2(M)$ with $\lVert\eta\rVert_{L^\infty(M)}<\alpha 
  <\kappa$. The set 
  \begin{align*}
    {\mathcal{V}}_{M}(\Omega_\eta)
      :=\{ \boldsymbol\varphi\in C_0^\infty(\overline{\Omega_\eta})
        \,|\, \operatorname{div} \boldsymbol\varphi = 0,\,
          \boldsymbol\varphi = \boldsymbol 0 \text{ in a neighbourhood of } \Phi_\eta(M) \}
  \end{align*}
  is dense in $H_{M}(\Omega_\eta)$.
\end{lemma}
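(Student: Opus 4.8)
The plan is to prove the density statement by a two-step approximation: first reduce to the case of smooth divergence-free vector fields that vanish near $\Phi_\eta(M)$ but are allowed to be nonzero near $\Gamma$, and then handle the tangential/normal behaviour on $\Gamma$. The natural route is to transport everything to the fixed Lipschitz reference domain $\Omega$ by the Piola transform $\Teta$, which is an isomorphism on $L^2$ (Lemma before Definition \ref{def:treta}) and preserves the divergence-free constraint. Under $\Teta^{-1}$ the condition $\trneta|_M\boldsymbol\varphi=\boldsymbol 0$ translates, via the Green formula of Proposition \ref{prop:part_int} and the representation in Remark \ref{bem:kompatibilitaet_spuroperatoren}, into a vanishing weighted normal trace on $M\subset\partial\Omega$; since the weight $\gamma(\eta)$ is bounded below away from $0$ (because $\lVert\eta\rVert_{L^\infty(M)}<\kappa$, so $1-2H\eta+G\eta^2$ stays positive after possibly shrinking $\kappa$), this is equivalent to the plain normal trace $\trn$ vanishing on $M$. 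Thus it suffices to show that $\{\boldsymbol\psi\in C^\infty_0(\overline\Omega)\mid \operatorname{div}\boldsymbol\psi=0,\ \boldsymbol\psi=\boldsymbol 0 \text{ near } M\}$ is dense in $\{\boldsymbol\psi\in L^2_\sigma(\Omega)\mid \trn\boldsymbol\psi|_M=0\}$.

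**The reference-domain density.**
For this I would argue as follows. Take $\boldsymbol\psi\in L^2_\sigma(\Omega)$ with $\trn\boldsymbol\psi|_M=0$. By Remark \ref{bem:kompatibilitaet_spuroperatoren} (the extension-by-zero observation) $\boldsymbol\psi$ extends by $\boldsymbol 0$ across $M$ to a function $\overline{\boldsymbol\psi}\in L^2_\sigma(B_\kappa)$, because near $M$ we can glue $\Omega$ to the tube $S_\kappa$ and the vanishing partial normal trace kills the boundary distributional term. Now translate $\overline{\boldsymbol\psi}$ slightly in the direction of the (extended) outer normal $\boldsymbol\nu$ on $M$, using a smooth vector field that points from $M$ into $S_\kappa$: the translated functions converge to $\overline{\boldsymbol\psi}$ in $L^2_\sigma$ and, for small translation parameter, are supported away from $M$ and still divergence-free. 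Finally mollify (on $B_\kappa$, where the translated function has support compactly inside): mollification preserves the divergence-free property and, by the classical density $C^\infty_0(\overline\Omega)$ dense in $E^2(\Omega)$ noted after Definition \ref{bem:def_trace_eta_n_part}, produces the required smooth approximants, which are still zero in a neighbourhood of $M$. Restricting back to $\overline\Omega$ and applying $\Teta$ returns approximants in $\mathcal V_M(\Omega_\eta)$, and the $L^2$-convergence is preserved because $\Teta$ is bounded on $L^2$.

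**Main obstacle.**
The delicate point is the translation step on the non-Lipschitz moving boundary: one must choose the perturbing vector field and the gluing so that the translated, extended function is genuinely divergence-free (no spurious boundary measure appears) and has support strictly inside the union domain, uniformly as the parameter tends to $0$. This is where the geometric hypotheses on the reference domain enter — the orthogonality of $\Gamma$ to $M$ along $\partial M$ and the product structure of the tube $S_\kappa$ via $\Lambda$ — ensuring that pushing off $M$ along $\boldsymbol\nu$ stays inside $B_\kappa$ and does not collide with $\Gamma$. Equivalently one may bypass explicit translation by cutting off: multiply $\overline{\boldsymbol\psi}$ by $1-\chi_\varepsilon$ where $\chi_\varepsilon$ is a cutoff that is $1$ near $M$, and then correct the divergence by solving a Bogovskiĭ problem on the thin collar $\{0<\operatorname{dist}(\cdot,M)<2\varepsilon\}$; the correction is small in $L^2$ by the Bogovskiĭ estimate on that collar. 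Controlling that correction uniformly as $\varepsilon\to0$, given only the Hölder (not Lipschitz) regularity of the collar's outer face, is the real technical core, and it is handled exactly as in the Korn-type estimate of Lemma \ref{lem:korn_type_inequality}, i.e.\ by working in the tubular coordinates $\Lambda$ where the collar is a genuine product $M\times(0,2\varepsilon)$ and the Bogovskiĭ operator behaves well.
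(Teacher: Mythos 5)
There is a genuine gap, in fact two. First, your reduction to the reference domain rests on the claim that $\Teta$ is an isomorphism on $L^2$. For $\eta\in H^2(M)$ this is false: the entries of $d\Psi_\eta$ have the form \eqref{form_jacobian} and involve $\nabla\eta\circ\mathbf{q}$, which lies in every $L^p$ but not in $L^\infty$, so $\Teta$ maps $L^2(\Omega)$ only into $L^r(\Omega_\eta)$ for $r<2$ (the paper invokes the isomorphism property only for $C^2$, resp.\ $C^4$, displacements, e.g.\ in Lemma \ref{lem:ehrling}). Approximating in $L^r$ for $r<2$ on the reference side does not give the required $L^2(\Omega_\eta)$-density after transforming back. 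Second, and more seriously, the step that actually produces approximants supported away from $\Phi_\eta(M)$ is not carried out. A ``translation along $\boldsymbol\nu$'' by a non-constant vector field does not preserve $\operatorname{div}\boldsymbol\varphi=0$ (only a rigid translation does, and $M$ is curved), and your fallback — cutting off on a collar of width $\varepsilon$ and correcting with a Bogovski\u{\i} operator there — founders on the fact that the Bogovski\u{\i} norm on a collar degenerates as $\varepsilon\to0$ while $\|\nabla\chi_\varepsilon\cdot\overline{\boldsymbol\psi}\|_{L^2}$ is only $O(\varepsilon^{-1})\|\overline{\boldsymbol\psi}\|_{L^2(\mathrm{collar})}$; for a general $L^2$ field there is no smallness to compensate. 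The appeal to Lemma \ref{lem:korn_type_inequality} does not help: that is a weighted Korn inequality and contains no Bogovski\u{\i}-type estimate on thin collars.

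For comparison, the paper avoids approximating across the H\"older boundary altogether. It extends $\mathbf{u}$ by zero into $B_\alpha$ (legitimate precisely because $\trneta\lvert_M\mathbf{u}=0$, Remark \ref{bem:kompatibilitaet_spuroperatoren}), extends further to a compactly supported field $\widetilde{\mathbf{u}}\in L^2_\sigma(B)$ on a larger smooth domain via \cite{MR1809290}, restricts to $U=B\setminus\overline{(B_\alpha\setminus\Omega_\eta)}$ — so that $\Phi_\eta(M)$ becomes part of $\partial U$ and the restriction has vanishing weak boundary values on all of $\partial U$ — and then proves density of $C_0^\infty(U)$ divergence-free fields by the Hahn--Banach/De Rham duality argument of Temam. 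No mollification, translation, or divergence correction near the moving boundary is ever needed. If you want to keep a constructive flavour, you would at minimum have to replace the translation by a genuine flow-pushforward (Piola transform of a flow off $M$) and quantify its effect in $L^2$, which is essentially as hard as the problem itself.
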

\begin{proof}
  By Remark \ref{bem:kompatibilitaet_spuroperatoren} we have
  ${\mathcal{V}}_{M}(\Omega_\eta)\subset H_{M}(\Omega_\eta)$. Let 
  $\mathbf{u}\in H_{M}(\Omega_\eta)$. At first, we will construct a 
  suitable extension of $\mathbf{u}$ which vanishes on the whole 
  boundary. Again by Remark \ref{bem:kompatibilitaet_spuroperatoren}, 
  the extension $\overline{\mathbf{u}}$ by zero gives 
  $\overline{\mathbf{u}}\in L^2_\sigma(B_\alpha)$. Since $B_\alpha$ 
  is an bounded Lipschitz domain, $B_\alpha \subset\subset B$ for some 
  smooth bounded domain $B$, see Figure \ref{fig:extension_over_boundary}. 
  \begin{figure}\centering
  \includegraphics{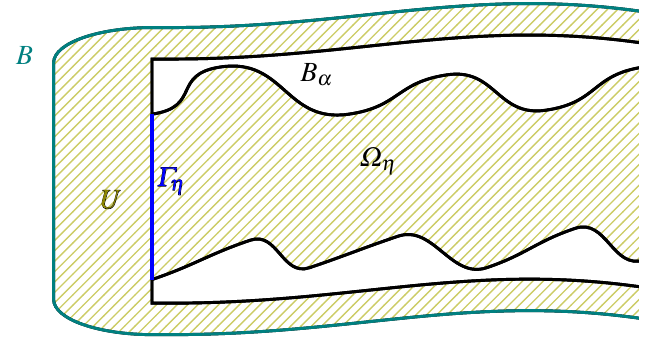}
  \caption{Extension of the fluid domain.}
  \label{fig:extension_over_boundary}
  \end{figure}
  Moreover, by \cite{MR1809290} (with $\lambda_j=0$ since $B_\alpha$ 
  is connected), there exists an extension $\widetilde{\mathbf{u}}\in 
  L^2_\sigma(B)$ of $\overline{\mathbf{u}}$, whose support is compactly 
  contained in $B$. In particular, this function vanishes on the boundary, 
  i.\,e.\ we have
  \begin{align}\label{extension_B_trace}
    \int_B \widetilde{\mathbf{u}} \cdot \nabla \psi \;dx = 0
      \qquad \forall \psi \in W^{1,2}(B).
  \end{align}
  Next, we restrict $\widetilde{\mathbf{u}}$ to the set $U:=B\setminus
  \overline{(B_\alpha\setminus \Omega_\eta)}$ and define 
  $\mathbf{v}:= \left.\widetilde{\mathbf{u}}\right\lvert_{U}$. Therefore 
  $\mathbf{v}\in L_\sigma^2(U)$. Taking $\psi \in W^{1,r}(U)$ (with 
  $2 < r <\infty$ from the definition of $\tretan$), by \cite[Satz 6.10]
  {dobrowolski} we have the extension $E(\psi)\in W^{1,r}(B)$. Hence, 
  \eqref{extension_B_trace} and the extension properties imply
  \begin{align*}
    \int_U \mathbf{v}\cdot \nabla \psi\;dx 
      &= \int_{U\setminus \overline{B_\alpha}} \mathbf{v}\cdot \nabla \psi\;dx 
        + \int_{U\cap B_\alpha} \mathbf{v}\cdot \nabla \psi\;dx\\
      &= \int_{B} \mathbf{v}\cdot \nabla E(\psi)\;dx
        - \int_{B_\alpha} \mathbf{v}\cdot \nabla E(\psi)\;dx 
        + \int_{\Omega_\eta} \mathbf{v}\cdot \nabla \psi\;dx\\
      &= \int_{\Omega_\eta} \mathbf{u}\cdot \nabla \left( \psi - E(\psi) \right)\;dx\\
      &= 0,
  \end{align*}
  since $(\psi - E(\psi))\lvert_{\Gamma_\eta} = 0$. Again this identity 
  can be interpreted as vanishing boundary values for $\mathbf{v}$.
  We set
  \begin{align*}
    F: L^2_\sigma(U) \rightarrow \big(W^{1,r}(U)\big)^\ast,\qquad
      \langle F\mathbf{u}, \varphi\rangle := 
        \int_U \mathbf{u} \cdot \nabla \varphi\;dx.
  \end{align*}
  Then $F$ is linear and continuous, i.\,e.\ $H(U):= 
  \{ \mathbf{u}\in L^2_\sigma(U) \,|\, F\mathbf{u} = 0 \}$ is a closed 
  subspace of $L^2_\sigma(U)$ and therefore -- equipped with the 
  $L^2$ scalar product -- a Hilbert space. We will now show the density of 
  \begin{align*}
    {\mathcal{V}}(U)
      :=\left\{ \boldsymbol\varphi\in C_0^\infty(U)\bigm|
          \operatorname{div} \boldsymbol\varphi = 0 \right\}
  \end{align*}
  in $H(U)$. This implies our claim by restriction of the approximating 
  functions to $\Omega_\eta$. We argue similarly as in 
  \cite[Theorem 1.6]{MR0609732}. Obviously it holds
  ${\mathcal{V}}(U)\subset H(U)$. We will show that every functional
  $G\in H(U)^\ast$ vanishing on ${\mathcal{V}}(U)$ is the zero functional. 
  Hahn-Banach's theorem then implies the density. Let $G\in H(U)^\ast$ 
  with $\langle G, \boldsymbol\varphi\rangle = 0$ for all 
  $\boldsymbol\varphi\in {\mathcal{V}}(U)$. By Riesz theorem, there 
  exists $\mathbf{g}\in H(U)$ with
  \begin{align*}
    \langle G, \mathbf{u}\rangle = \int_U \mathbf{g}\cdot \mathbf{u}\;dx
    \quad\quad\text{for all } \mathbf{u}\in H(U).
  \end{align*}
  In particular, $\int_U \mathbf{g}\cdot \boldsymbol\varphi\;dx 
  = 0$ for all $\boldsymbol\varphi\in {\mathcal{V}}(U)$. By the theorem 
  of De Rham (see \cite[Theorem IV.2.4]{MR2986590}) there exists 
  $p\in L^2_{loc}(U)$ with $\mathbf{g} = \nabla p$, i.\,e.\ $\nabla p \in 
  L^2(U)$. Approximating $p$ through a sequence $p_n\in C^1(\overline{U})$ 
  with $\nabla p_n \rightarrow \nabla p$ in $L^2(U)$, we can deduce for all 
  $\mathbf{u}\in H(U)$ (and therefore $F\mathbf{u} = 0$)
  \begin{align*}
    \langle G, \mathbf{u}\rangle 
      = \int_U \mathbf{g}\cdot \mathbf{u}\;dx
      = \lim_{n\rightarrow \infty}\int_U \nabla p_n \cdot \mathbf{u}\;dx
      = \lim_{n\rightarrow \infty}\langle F\mathbf{u}, p_n \rangle
      = 0,
  \end{align*}
  which proofs the claim.
\end{proof}
\begin{remark}
  Let $\Omega_{\eta-\rho}=(\Omega\setminus 
  S_\kappa\cup \{ x\in S_\kappa\bigm| s(x) < 
  \eta(\mathbf{q}(x)) -\rho\}$. Then $\boldsymbol\varphi\in 
  {\mathcal{V}}_{M}(\Omega_\eta)$ already implies 
  $supp\; \boldsymbol\varphi \subset \Omega_{\eta-\rho}$ for some 
  $0<\rho$ small enough.
\end{remark}
The following result ensures that we can approximate functions of 
$H_M(\Omega_\eta)$, while keeping the support uniformly away from the 
moving boundary. This Lemma adapts \cite[Lemma A.13]{MR3147436} to 
our situation and is crucial for the compactness theorem.
\begin{lemma}\label{lem:density_for_compactness}
  Let $0<\alpha<\kappa$ and $C,\,\varepsilon>0$ be given. Then there exists 
  $\rho>0$ such that for all $\eta\in H^2(M)$ with $\lVert \eta 
  \rVert_{H^2(M)}\leq C$, $\lVert \eta \rVert_{L^\infty(M)}\leq 
  \alpha$ and for all $\boldsymbol\varphi \in H_{M}(\Omega_\eta)$ 
  with $\lVert \boldsymbol\varphi \rVert_{L^2(\Omega_\eta)}\leq 1$ there 
  exists an function $\Psi = \Psi(\eta,\boldsymbol\varphi) \in 
  H_{M}(\Omega_\eta)$ with $supp\;\Psi \subset \Omega_{\eta-\rho}$, $\lVert 
  \Psi \rVert_{L^2({\Omega_\eta})} \leq 2$ and $\lVert\boldsymbol\varphi
  -\Psi\rVert_{(H^\frac{1}{4}(\R^3))^\ast} < \varepsilon$. As usual, 
  $\boldsymbol\varphi-\Psi \in H^{1/4}(\R^3)^\ast$ is realised through 
  the extension by $\boldsymbol 0$ to $\R^3$ and the $L^2$
  Riesz representation.
\end{lemma}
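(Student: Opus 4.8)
The plan is to reduce the statement to a uniform version of the density Lemma \ref{lem:density_part_normal_trace}, combined with an explicit cut-off near the moving boundary $\Phi_\eta(M)$, and then to control the cut-off error in the negative norm $(H^{1/4}(\R^3))^\ast$. Fix $\eta$ and $\boldsymbol\varphi\in H_M(\Omega_\eta)$ with $\lVert\boldsymbol\varphi\rVert_{L^2(\Omega_\eta)}\le 1$. First I would use Lemma \ref{lem:density_part_normal_trace} to pick $\boldsymbol\varphi_\delta\in\mathcal V_M(\Omega_\eta)$ with $\lVert\boldsymbol\varphi-\boldsymbol\varphi_\delta\rVert_{L^2(\Omega_\eta)}$ as small as we like; by the preceding Remark such $\boldsymbol\varphi_\delta$ already has $\operatorname{supp}\boldsymbol\varphi_\delta\subset\Omega_{\eta-\rho'}$ for some $\rho'>0$ depending on $\boldsymbol\varphi_\delta$. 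This alone does not suffice, because $\rho'$ is not uniform in $\eta$ and $\boldsymbol\varphi$; so instead I would apply a controlled cut-off directly to $\boldsymbol\varphi$. Using the Hanzawa coordinates $(\mathbf q(x),s(x))$ on $S_\kappa$, define for small $\rho>0$ a scalar cut-off $\theta_\rho\in C^\infty$ with $\theta_\rho=1$ where $s(x)<\eta(\mathbf q(x))-2\rho$, $\theta_\rho=0$ where $s(x)>\eta(\mathbf q(x))-\rho$, and $\theta_\rho\equiv1$ outside $S_\kappa$, with $\lvert\nabla\theta_\rho\rvert\lesssim\rho^{-1}$; here the Lipschitz bound on $\theta_\rho$ in the $s$-direction is uniform, and the tangential derivatives of $\eta$ enter only through $\nabla\eta\in L^2$, which is the source of the loss of regularity.

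The naive product $\theta_\rho\boldsymbol\varphi$ is not divergence-free, so I would correct it: set $\Psi:=\theta_\rho\boldsymbol\varphi-\mathcal B_\rho(\boldsymbol\varphi\cdot\nabla\theta_\rho)$, where $\mathcal B_\rho$ is a Bogovskii-type operator on the thin shell $\{\eta-2\rho<s<\eta-\rho\}$ (or on a fixed collar $S_\alpha$, after transporting by the Hanzawa map to the reference collar where domains are uniformly Lipschitz). Since $\int\boldsymbol\varphi\cdot\nabla\theta_\rho\,dx=\int\operatorname{div}(\theta_\rho\boldsymbol\varphi)\,dx=\langle\trneta|_M\boldsymbol\varphi,1\rangle-(\text{boundary terms on }\Gamma)$, and $\trneta|_M\boldsymbol\varphi=0$, the compatibility condition for Bogovskii is satisfied; the correction $\mathbf w_\rho:=\mathcal B_\rho(\boldsymbol\varphi\cdot\nabla\theta_\rho)$ is supported in the thin shell, is divergence-free after subtraction, and by the scaling of the Bogovskii operator on a shell of width $\rho$ one gets $\lVert\mathbf w_\rho\rVert_{L^2}\lesssim\rho\,\lVert\boldsymbol\varphi\cdot\nabla\theta_\rho\rVert_{L^1}\lesssim\lVert\boldsymbol\varphi\rVert_{L^2(\text{shell})}$, which is small because the shell has measure $O(\rho)$ and tends to $0$ with $\rho$. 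This gives $\Psi\in H_M(\Omega_{\eta})$ with $\operatorname{supp}\Psi\subset\Omega_{\eta-\rho}$ and $\lVert\Psi\rVert_{L^2(\Omega_\eta)}\le\lVert\boldsymbol\varphi\rVert_{L^2}+\lVert\mathbf w_\rho\rVert_{L^2}\le2$ once $\rho$ is small enough. The uniformity in $\eta$ (for $\lVert\eta\rVert_{H^2(M)}\le C$, $\lVert\eta\rVert_{L^\infty(M)}\le\alpha$) follows because all estimates are performed after pulling back by the Hanzawa transform to the fixed collar, using Lemma \ref{lem:einbettung_omega_eta} and Lemma \ref{lem:korn_type_inequality}, whose constants are uniform in that range.

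It remains to estimate $\lVert\boldsymbol\varphi-\Psi\rVert_{(H^{1/4}(\R^3))^\ast}=\lVert(1-\theta_\rho)\boldsymbol\varphi+\mathbf w_\rho\rVert_{(H^{1/4}(\R^3))^\ast}$. Both terms are supported in the thin shell $T_\rho$ of width $\lesssim\rho$; for any $g\in H^{1/4}(\R^3)$ one writes $\langle(1-\theta_\rho)\boldsymbol\varphi,g\rangle=\int_{T_\rho}\boldsymbol\varphi\cdot g\,dx$ and estimates by Hölder $\le\lVert\boldsymbol\varphi\rVert_{L^2(T_\rho)}\lVert g\rVert_{L^2(T_\rho)}$, then uses that $\lVert g\rVert_{L^2(T_\rho)}\lesssim\rho^{1/4}\lVert g\rVert_{H^{1/4}(\R^3)}$ by the trace/interpolation inequality for thin tubular shells (the measure of $T_\rho$ is $O(\rho)$ and a tube of width $\rho$ carries an $H^{1/4}$-to-$L^2$ embedding constant $O(\rho^{1/4})$ — indeed for a slab of width $\rho$ one has $\lVert g\rVert_{L^2(\text{slab})}\lesssim\rho^{1/4}\lVert g\rVert_{H^{1/4}}$). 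Hence $\lVert(1-\theta_\rho)\boldsymbol\varphi\rVert_{(H^{1/4})^\ast}\lesssim\rho^{1/4}\lVert\boldsymbol\varphi\rVert_{L^2(\Omega_\eta)}\le\rho^{1/4}$, and similarly for $\mathbf w_\rho$; choosing $\rho$ small enough depending only on $C$, $\alpha$, $\varepsilon$ makes this $<\varepsilon$. The main obstacle is twofold: first, making the Bogovskii correction work on a collar whose near boundary $\Phi_\eta(M)$ is only Hölder (handled by performing it in the reference collar via the Hanzawa transform, where the relevant domains are uniformly Lipschitz, and then pushing forward — one must check $\mathcal T_\eta$-type transport keeps the support in the shell and the bounds uniform); and second, getting the thin-shell $H^{1/4}\to L^2$ estimate with the correct power of $\rho$ and with a constant uniform in $\eta$, which again is cleanest after flattening by $\Lambda$ and $\Psi_\eta$ and using that $\Psi_\eta$ is a quasi-isometry with controlled constants in the given $H^2$-ball.
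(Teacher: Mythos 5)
Your route is a direct construction (cut-off plus Bogovski\u{i} correction plus a thin-shell negative-norm estimate), whereas the paper argues by contradiction: it extracts weak limits $\eta_n\rightharpoonup\eta$, $\boldsymbol\varphi_n\rightharpoonup\boldsymbol\varphi$, shows the limit lies in $H_M(\Omega_\eta)$, applies the \emph{non-uniform} density Lemma \ref{lem:density_part_normal_trace} once, and concludes via the compact embedding $L^2\hookrightarrow\hookrightarrow (H^{1/4}(\R^3))^\ast$. The negative norm in the conclusion is precisely what makes that soft argument work; you correctly observe that the single-$\eta$ density lemma is not uniform, but the fix is compactness, not a quantitative construction.

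As written, your construction has a genuine gap at the Bogovski\u{i} step. The compatibility condition $\int_{T_\rho}\boldsymbol\varphi\cdot\nabla\theta_\rho\,dx=0$ does \emph{not} follow from $\trneta\lvert_M\boldsymbol\varphi=0$: elements of $H_M(\Omega_\eta)$ have vanishing normal trace only on the moving part $\Phi_\eta(M)$, while the thin shell $T_\rho=\{\eta-2\rho<s<\eta\}$ abuts the fixed boundary $\Gamma_\eta\subset\Gamma\cup\Gamma_s^\kappa$ along a strip near $\partial M$, through which the flux of $\boldsymbol\varphi$ is uncontrolled. Hence $\int\operatorname{div}(\theta_\rho\boldsymbol\varphi)\,dx$ is generically nonzero, and no corrector with vanishing trace supported in the shell exists; the excess flux would have to be routed out through $\Gamma$ as in the Stokes construction of Lemma \ref{lem:fortsetzung_stationaer}, which destroys the simple shell scaling. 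Even granting compatibility, the bound $\lVert\mathbf w_\rho\rVert_{L^2}\lesssim\rho\,\lVert\boldsymbol\varphi\cdot\nabla\theta_\rho\rVert_{L^1}$ is not a standard Bogovski\u{i} estimate ($L^1$ data is excluded, and on a domain of width $\rho$ but tangential extent $O(1)$ the constant degenerates rather than improves); moreover $\nabla\theta_\rho$ contains $\rho^{-1}\nabla(\eta\circ\mathbf q)\notin L^\infty$, and the shell's upper boundary is only H\"older, so neither a direct Bogovski\u{i} solve nor a transport to the reference collar (the Hanzawa map is not bi-Lipschitz) yields bounds uniform in $\lVert\eta\rVert_{H^2(M)}\le C$. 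These are exactly the difficulties the paper's compactness argument is designed to avoid, so the construction would need substantial additional work before it could replace it.
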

\begin{proof}
  We suppose the claim is false. Hence, we find sequences 
  $(\rho_n)_{n\in\N}\searrow 0$ with $0<\rho_n <\kappa-\alpha$,  
  $\eta_n\in H^2(M)$ with $\lVert \eta_n \rVert_{H^2(M)}\leq C$, 
  $\lVert \eta_n \rVert_{L^\infty(M)}\leq \alpha$ and $\boldsymbol\varphi_n
  \in H_{M}(\Omega_{\eta_n})$ with $\lVert \boldsymbol\varphi_n 
  \rVert_{L^2(\Omega_{\eta_n})}\leq 1$ such that for all
  $\Psi \in H_{M}(\Omega_{\eta_n})$ satisfying $supp\;\Psi \subset 
  \Omega_{\eta_n-\rho_n}$ and\linebreak  $\lVert \Psi \rVert_{L^2(\Omega_{\eta_n})} 
  \leq 2$ we have $\lVert\boldsymbol\varphi_n 
  -\Psi\rVert_{(H^{1/4}(\R^3))^\ast} \geq \varepsilon$. Therefore we find 
  a (not further denoted) subsequence with 
  \begin{alignat*}{4}
    \eta_n &\rightharpoonup \eta &\quad&\text{weakly in } H^2(M),
    \qquad &\boldsymbol\varphi_n &\rightharpoonup \boldsymbol\varphi
    &\quad&\text{weakly in } L^2(\R^3),\\
    \eta_n &\rightarrow \eta &\quad&\text{uniformly in } M.&&
  \end{alignat*}
  In particular, we have $\lVert \eta\rVert_{H^2(M)}\leq C$, $\lVert 
  \eta\rVert_{L^\infty(M)}\leq \alpha$, $\lVert \boldsymbol\varphi
  \rVert_{L^2(\R^3)}\leq 1$ and $\boldsymbol\varphi = \boldsymbol 0$ 
  in $\Omega_\eta^c$. Since the smooth extension by zero of 
  $\mu\in C_0^\infty(\Omega_\eta)$ implies 
  $\tretan \mu \in H^{1-1/r,r}_{00}(M)$, we have by the weak 
  convergence of $\boldsymbol\varphi_n$ and Green's formula from 
  Proposition \ref{prop:spur_normale} 
  \begin{align*}
    \langle \operatorname{div} \boldsymbol\varphi, \mu\rangle
      &= - \lim_{n\rightarrow\infty}\int_{\R^3} 
          \boldsymbol\varphi_n \cdot \nabla \mu\;dx\\
      &= \lim_{n\rightarrow\infty}\Big(\int_{\R^3} 
          \operatorname{div} \boldsymbol\varphi_n\, \mu\;dx
        - \langle \trnetan\boldsymbol\varphi_n,\tretan \mu\rangle\Big)
      = 0,
  \end{align*}
  i.\,e.\ $\boldsymbol\varphi\in L^2_\sigma(\Omega_\eta)$. For  
  $\boldsymbol\varphi\in H_M(\Omega_\eta)$, by density it suffices to 
  show $\langle \trneta\boldsymbol\varphi, b\lvert_{\partial\Omega}\rangle
  =0$ for all $b\in C^\infty(\overline{\Omega})$ vanishing in a 
  neighbourhood of $\Gamma$, see \cite{MR2852305}. For such a function 
  $b$ we get, again by Green's formula from Proposition
\ref{prop:part_int},
  \begin{align*}
    \langle \trneta \boldsymbol\varphi, b\vert_{\partial\Omega}\rangle
    = \langle \trneta \boldsymbol\varphi, \treta(b\circ\Psi_\eta^{-1})\rangle
    = \int_{\Omega_{\eta}}\boldsymbol{\varphi}\cdot
        \nabla (b\circ\Psi_\eta^{-1})\;dx.
  \end{align*}
  We would like to pass to the sequence $\boldsymbol\varphi_n$ on the 
  right hand side, but first we have to extend $b\circ\Psi_\eta^{-1}$ 
  appropriately.  Using the operator from 
  \cite[Satz 6.10]{dobrowolski}, which is locally defined as a reflection, 
  we can extend $b\in C^\infty(\overline{\Omega})\subset W^{1,r_1}(\Omega)$ 
  to $\tilde b \in W^{1,r_1}(B_\alpha)$ satisfying $\tilde b=0$ on 
  $\Gamma\cup \Gamma_s^\alpha$ for some $r<r_1<\infty$. Furthermore, by 
  a change of variables through the inverse Hanzawa transform (which can 
  be extended smoothly using the same construction on the set $B_\alpha 
  \setminus \overline{\Omega_\eta}$), we obtain a function 
  $\overline{b}\in W^{1,r}(B_\alpha)$ satisfying $(b\circ\Psi_\eta^{-1})
  = \overline{b}$ in $\overline{\Omega_\eta}$ and 
  $\tretan\lvert_{\Gamma} \overline{u}= 0$. Hence, we have 
  \begin{align*}
    \langle \trneta \boldsymbol\varphi, b\vert_{\partial\Omega}\rangle
      = \int_{\Omega_{\eta}}\boldsymbol{\varphi}\cdot 
          \nabla \overline{b}\;dx
      &= \lim_{n\rightarrow\infty}\int_{\R^3}\boldsymbol\varphi_n \cdot 
          \nabla\overline{b}\;dx\\
      &= \lim_{n\rightarrow\infty} \langle \trnetan\lvert_{M} 
          \boldsymbol\varphi_n, \tretan \overline{b}\rangle
      = 0,
  \end{align*}
  i.\,e.\ $\boldsymbol\varphi\in H_{M}(\Omega_\eta)$. By 
  Lemma \ref{lem:density_part_normal_trace} there exists $\Psi\in 
  {\mathcal{V}}_{M}(\Omega_\eta) \subset H_{M}(\Omega_\eta)$ 
  with $supp\; \Psi \subset \Omega_{\eta-\rho_0}$ for some 
  $0<\rho_0<\kappa-\alpha$ and $\lVert \boldsymbol\varphi 
  - \Psi \rVert_{L^2(\Omega_\eta)}<\min\{\frac{1}{2},
  \frac{\varepsilon}{2}\}$, i.\,e.\ $\lVert \Psi \rVert_{L^2(\R^3)} \leq 2$. 
  Using the uniform convergence, it follows that $supp\; \Psi \subset 
  \Omega_{\eta_n - \rho_n}$ for $n$ big enough. By our assumption 
  we have
  \begin{align}\label{eqn:uniform_density1}\begin{aligned}
    \varepsilon 
      \leq \lVert\boldsymbol\varphi_n 
          - \Psi\rVert_{(H^\frac{1}{4}(\R^3))^\ast}
      &\leq \lVert \boldsymbol\varphi_n 
          -\boldsymbol\varphi\rVert_{(H^\frac{1}{4}(\R^3))^\ast}
        + \lVert \boldsymbol\varphi -\Psi\rVert_{(H^\frac{1}{4}(\R^3))^\ast}\\
      &\leq \lVert \boldsymbol\varphi_n 
          - \boldsymbol\varphi\rVert_{(H^\frac{1}{4}(\R^3))^\ast}
        + \lVert \boldsymbol\varphi -\Psi\rVert_{L^2(\R^3)}\\
      &< \lVert \boldsymbol\varphi_n 
          - \boldsymbol\varphi\rVert_{(H^\frac{1}{4}(\R^3))^\ast}
        + \frac{\varepsilon}{2}.
  \end{aligned}\end{align}
  Choosing a ball $B$ big enough, the compact embedding 
  $H^\frac{1}{4}(B) \hookrightarrow\hookrightarrow L^2(B)$ and the 
  theorem of Schauder imply the compactness of $L^2(B)
  \hookrightarrow\hookrightarrow H^\frac{1}{4}(B)^\ast$. Hence, 
  $\boldsymbol\varphi_n \rightarrow \boldsymbol\varphi$ in 
  $H^\frac{1}{4}(B)^\ast$. Using the extensions by zero, this convergence 
  holds also in $(H^\frac{1}{4}(\R^3))^\ast$, i.\,e.\ 
  $\lVert \boldsymbol\varphi_n - 
  \boldsymbol\varphi\rVert_{(H^\frac{1}{4}(\R^3))^\ast}
  <\frac{\varepsilon}{2}$ for $n$ big enough, a contradiction to 
  \eqref{eqn:uniform_density1}.
\end{proof}
\subsection{Time-dependent function spaces}\label{sec:time_spaces}
We will use an obvious substitute for the classical Bochner spaces in 
our moving domain. For $I:=(0,T)$ with $0<T<\infty$ and 
a continuous $\eta:\overline{I}\times M \rightarrow (-\kappa,\kappa)$ 
we define the bounded domain 
$\Omega_\eta^I:=\cup_{t\in I}\{t\}\times \Omega_{\eta(t)}$ and set for 
$1\leq p,r\leq \infty$
\begin{align*}
  L^p(I;L^r(\Omega_{\eta(t)})) 
    &:= \left\{ v\in L^1(\Omega_\eta^I)\;\left|\;
      \begin{aligned} 
        &v(t,\cdot)\in L^r(\Omega_{\eta(t)}) 
          \text{ for almost all } t\in I,\\ 
        &\lVert v(t,\cdot) \rVert_{L^r(\Omega_{\eta(t)})}\in L^p(I)
      \end{aligned}\right.\right\},\\
  L^p(I;W^{1,r}(\Omega_{\eta(t)})) 
    &:= \Big\{ v\in L^p(I;L^r(\Omega_{\eta(t)}))\bigm| 
      \nabla v\in L^p(I;L^r(\Omega_{\eta(t)}))\Big\},\\
  H^{1}(I;L^p(\Omega_{\eta(t)})) 
    &:= \Big\{ \mathbf{v}\in L^2(I;L^p(\Omega_{\eta(t)}))\bigm| 
      \partial_t \mathbf{v}\in L^2(I;L^p(\Omega_{\eta(t)}))\Big\},\\
  L^p(I;\widetilde{V}_{r}(\Omega_{\eta(t)})) 
    &:= \Big\{ \mathbf{v}\in L^p(I;L^r(\Omega_{\eta(t)}))\bigm| 
      \mathbf{D} \mathbf{v}\in L^p(I;L^r(\Omega_{\eta(t)}))\Big\},\\
  L^p(I;V_{r}(\Omega_{\eta(t)})) 
    &:= \Big\{ \mathbf{v}\in L^p(I;\widetilde{V}_{r}(\Omega_{\eta(t)})) 
      \bigm| \operatorname{div} \mathbf{v}=0\Big\}.
\end{align*}
It should be noted, that $\nabla$, $\operatorname{div}$ and $\mathbf{D}$ are acting  
only with respect to the space variable and the derivatives are (partial) 
weak derivatives on $\Omega_\eta^I$. These spaces, equipped 
with the canonical norms, are Banach spaces. Moreover, motivated by the 
formal a priori estimate, we define\label{definition_raum_auslenkungen} 
the following spaces 
\begin{align*}
  Y^I &:= W^{1,\infty}(I;L^2(M))\cap L^\infty(I;H_0^2(M)),\\
  \widetilde{Y}^I &:= W^{1,\infty}(I;L^2(M))\cap L^\infty(I;H^2(M))
\end{align*}
and for $\eta\in \widetilde{Y}^I$ with 
$\lVert \eta \rVert_{L^\infty(I\times M)} <\kappa$ 
\begin{align*}
  X^I_{\eta} &:= L^\infty(I;L^2(\Omega_{\eta(t)}))\cap
    L^2(I;V_2(\Omega_{\eta(t)})),\\
  \widetilde{X}^I_{\eta} &:= L^\infty(I;L^2(\Omega_{\eta(t)}))\cap
    L^2(I;\widetilde{V}_2(\Omega_{\eta(t)})).
\end{align*}
It means that we renounce the vanishing boundary values for $\widetilde{Y}^I$ 
and the divergence constraint for $\widetilde{X}^I_{\eta}$.

For $\eta \in C^0(\overline{I}\times M)$ and an appropriate cut-off 
function $\beta$, applying the (stationary) Hanzawa transformation 
$\Psi_{\eta(t)}$ at every time $t\in I$ also defines a map between 
$I\times \Omega$ and $\Omega_\eta^I$. Clearly, we have the following 
result.
\begin{proposition}\label{prop:hanzawa_instationaer}
  Let $\eta \in C^0(\overline{I}\times M)$ with $\lVert \eta 
  \rVert_{L^\infty(I\times M)} < \kappa$. Then
  \begin{align*}
    &&&&&\Psi_\eta:\overline{I}\times \overline{\Omega}
        \rightarrow \overline{\Omega_\eta^I}, 
      &&(t,x)\mapsto (t,\Psi_{\eta(t)}(x)),&&&&\\
    &&&&&\Phi_\eta:\overline{I}\times \partial \Omega
        \rightarrow\cup_{t\in\overline{I}}\{t\}\times 
            \partial\Omega_{\eta(t)}, 
    &&(t,x)\mapsto (t,\Phi_{\eta(t)}(x)),&&&&
  \end{align*}
  are homeomorphisms and $C^k$-diffeomorphisms for 
  $\eta\in C^k(\overline{I}\times M)$, $k\in \{1,2,3\}$.
\end{proposition}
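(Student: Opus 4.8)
The plan is to deduce the instationary statement from the already established stationary one (the properties of $\Psi_{\eta(t)}$ and $\Phi_{\eta(t)}$ recorded after the definition of the Hanzawa transform) by a slicing argument, using the joint continuity of $\eta$ together with the convergence assertions of Lemma~\ref{lem:hanzawa_konvergence}. First I would observe that $\Psi_\eta$ is well defined and bijective: since $\lVert\eta(t,\cdot)\rVert_{L^\infty(M)}<\kappa$ for every $t$, in each slice $\{t\}\times\overline\Omega$ the map $\Psi_\eta$ equals $\{t\}\times\Psi_{\eta(t)}$ with $\Psi_{\eta(t)}\colon\overline\Omega\to\overline{\Omega_{\eta(t)}}$ a bijection, and the inverse is $(t,y)\mapsto(t,\Psi_{\eta(t)}^{-1}(y))$. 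The same holds for $\Phi_\eta$ by restriction to $\partial\Omega$, using $\Phi_{\eta(t)}=\Psi_{\eta(t)}|_{\partial\Omega}$ and that $\Phi_{\eta(t)}$ maps $\partial\Omega$ onto $\partial\Omega_{\eta(t)}$.

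Continuity of $\Psi_\eta$ is immediate from the explicit formula: on $\overline\Omega\cap\underline{S_\kappa}$ one has $\Psi_{\eta(t)}(x)=x+\eta(t,\mathbf{q}(x))\,\beta(s(x)/\kappa)\,\boldsymbol\nu(\mathbf{q}(x))$ and $\Psi_{\eta(t)}(x)=x$ otherwise, so $(t,x)\mapsto\Psi_{\eta(t)}(x)$ is jointly continuous because $\eta\in C^0(\overline I\times M)$ while $\mathbf{q}$, $s$, $\boldsymbol\nu$, $\beta$ depend on $x$ alone; by restriction the same is true for $\Phi_\eta$. The one point requiring care is continuity of the inverse. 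Since $\overline I\times M$ is compact, $\eta$ is uniformly continuous, so $t_n\to t$ forces $\eta(t_n,\cdot)\to\eta(t,\cdot)$ uniformly on $M$. Given $(t_n,y_n)\to(t,y)$ in $\overline{\Omega_\eta^I}$, I would write, using the extensions by $\mathbf{q}$ to $\underline{B_\kappa}$ so that every term is defined,
\[
  \Psi_{\eta(t_n)}^{-1}(y_n)-\Psi_{\eta(t)}^{-1}(y)
  =\bigl(\Psi_{\eta(t_n)}^{-1}-\Psi_{\eta(t)}^{-1}\bigr)(y_n)
  +\bigl(\Psi_{\eta(t)}^{-1}(y_n)-\Psi_{\eta(t)}^{-1}(y)\bigr).
\]
The second summand tends to $0$ by continuity of the fixed-time map $\Psi_{\eta(t)}^{-1}$, and the first tends to $0$ by Lemma~\ref{lem:hanzawa_konvergence}~b), which yields uniform convergence $\Psi_{\eta(t_n)}^{-1}\to\Psi_{\eta(t)}^{-1}$ on $\underline{B_\kappa}$. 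Hence $\Psi_\eta^{-1}$, and by restriction $\Phi_\eta^{-1}$, is continuous, and both maps are homeomorphisms.

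For the regularity statement, fix $k\in\{1,2,3\}$ and $\eta\in C^k(\overline I\times M)$. The explicit formula shows that $(t,x)\mapsto\Psi_{\eta(t)}(x)$ is $C^k$; the restriction $k\le3$ enters precisely here, since $\mathbf{q}$ and $s$, built from the tubular chart $\Lambda$, are only $C^3$. In $(t,x)$-coordinates the differential of $\Psi_\eta$ is block triangular with diagonal blocks $1$ (for the time variable) and $d_x\Psi_{\eta(t)}$, the latter invertible with positive determinant by the stationary statement; so $\Psi_\eta$ is a local $C^k$-diffeomorphism by the inverse function theorem and, being bijective, a global $C^k$-diffeomorphism. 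Equivalently, one may apply the implicit function theorem to $\Psi_{\eta(t)}(x)-y=0$ to obtain joint $C^k$-dependence of $\Psi_{\eta(t)}^{-1}(y)$. The claim for $\Phi_\eta$ follows by the same argument carried out in charts of $\partial\Omega$, or simply by restricting the local inverses of $\Psi_\eta$. I do not expect a genuine obstacle here: the proposition is essentially a gluing of the stationary results over $t$, and the only step needing attention is the joint continuity — and, in the smooth case, the joint $C^k$-dependence — of the inverse map, which is exactly what Lemma~\ref{lem:hanzawa_konvergence}~b) (respectively positivity of $\det d\Psi_{\eta(t)}$ together with the inverse/implicit function theorem) is there to supply.
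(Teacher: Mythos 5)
Your proof is correct. Note that the paper itself offers no argument for this proposition --- it is prefaced by ``Clearly, we have the following result'' --- so your slicing argument is precisely the kind of verification the authors suppress: bijectivity and joint continuity slice by slice from the explicit formula, continuity of the inverse from the uniform continuity of $\eta$ on the compact set $\overline{I}\times M$ combined with Lemma~\ref{lem:hanzawa_konvergence}~b), and the $C^k$ statement from the block-triangular differential together with the inverse function theorem (with the restriction $k\leq 3$ correctly traced to the $C^3$ regularity of $\Lambda^{-1}=(\mathbf{q},s)$). The only simplification worth pointing out is that, for the homeomorphism claim, you could skip the two-summand estimate entirely: $\Psi_\eta$ is a continuous bijection from the compact space $\overline{I}\times\overline{\Omega}$ onto a Hausdorff space, hence automatically a homeomorphism. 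Your route via Lemma~\ref{lem:hanzawa_konvergence}~b) is nonetheless valid and has the merit of reusing machinery the paper has already established.
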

As shown in \cite{MR3147436}, $\widetilde{Y}^I$ embeds into a space of 
H\"older-continuous functions.
\begin{lemma}\label{lem:hoelder-einbettung}
  For $\frac{1}{2}<\lambda < 1$ the following embeddings are 
  continuous
  \begin{align*}
    \widetilde{Y}^I \hookrightarrow C^{0,1-\lambda}(\overline{I};H^{2\lambda}(M))
      \hookrightarrow C^{0,1-\lambda}(\overline{I};C^{0,2\lambda -1}(M))
      \hookrightarrow\hookrightarrow C^0(\overline{I}\times M).
  \end{align*}
\end{lemma}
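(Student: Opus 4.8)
The plan is to establish the three embeddings one at a time, from left to right, since the composite then follows by concatenation. The first embedding $\widetilde{Y}^I \hookrightarrow C^{0,1-\lambda}(\overline I; H^{2\lambda}(M))$ is the analytic heart of the matter and I would obtain it by interpolation in time. The point is that $\widetilde Y^I = W^{1,\infty}(I;L^2(M)) \cap L^\infty(I;H^2(M))$: a function $\eta \in \widetilde Y^I$ is Lipschitz in time with values in $L^2(M)$ and bounded in time with values in $H^2(M)$. For $t_1, t_2 \in \overline I$ I would estimate $\lVert \eta(t_1) - \eta(t_2)\rVert_{H^{2\lambda}(M)}$ by the Gagliardo–Nirenberg / Hilbert-space interpolation inequality
\begin{align*}
  \lVert \eta(t_1) - \eta(t_2)\rVert_{H^{2\lambda}(M)}
    \leq C\,\lVert \eta(t_1) - \eta(t_2)\rVert_{L^2(M)}^{1-\lambda}\,
      \lVert \eta(t_1) - \eta(t_2)\rVert_{H^2(M)}^{\lambda},
\end{align*}
which is valid since $H^{2\lambda}(M) = [L^2(M), H^2(M)]_\lambda$ for $M$ a compact $C^4$-manifold. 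The first factor is bounded by $\lVert \eta\rVert_{W^{1,\infty}(I;L^2(M))}^{1-\lambda}\,|t_1 - t_2|^{1-\lambda}$ using the Lipschitz bound in time, and the second factor is bounded by $(2\lVert \eta\rVert_{L^\infty(I;H^2(M))})^{\lambda}$. Together this gives the Hölder seminorm bound with exponent $1-\lambda$, and the sup bound $\lVert \eta(t)\rVert_{H^{2\lambda}(M)} \leq \lVert \eta\rVert_{L^\infty(I;H^2(M))}$ is immediate, so $\eta \in C^{0,1-\lambda}(\overline I; H^{2\lambda}(M))$ with norm controlled by $\lVert\eta\rVert_{\widetilde Y^I}$.

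The second embedding is purely spatial and applies pointwise in $t$: since $2\lambda > 1$ and $M$ is a two-dimensional manifold, the Sobolev embedding $H^{2\lambda}(M) \hookrightarrow C^{0,2\lambda-1}(M)$ holds (this is the fractional Sobolev embedding $H^s \hookrightarrow C^{0,s-1}$ in dimension $2$ for $1 < s < 2$, via e.g. Morrey-type estimates on charts and a partition of unity). Post-composing a function in $C^{0,1-\lambda}(\overline I;\cdot)$ with a bounded linear map between the target spaces preserves the time-Hölder class, so $\widetilde Y^I \hookrightarrow C^{0,1-\lambda}(\overline I;C^{0,2\lambda-1}(M))$. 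For the third, compact embedding $C^{0,1-\lambda}(\overline I;C^{0,2\lambda-1}(M)) \hookrightarrow\hookrightarrow C^0(\overline I \times M)$ I would invoke Arzelà–Ascoli on the compact metric space $\overline I \times M$: a bounded set in the Hölder space on the left is uniformly bounded and equicontinuous in both variables jointly (the modulus of continuity in $t$ is controlled by the time-Hölder norm, the modulus in $q \in M$ by the spatial-Hölder norm), hence precompact in $C^0(\overline I \times M)$.

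The main obstacle is really just bookkeeping: one must make sure the interpolation identity $H^{2\lambda}(M) = [L^2(M),H^2(M)]_\lambda$ and the Sobolev embedding $H^{2\lambda}(M)\hookrightarrow C^{0,2\lambda-1}(M)$ are correctly invoked on the manifold $M$ (rather than on $\R^2$), which is harmless here because $M$ is a compact $C^4$-manifold with smooth boundary and these results transfer via finitely many charts and a subordinate partition of unity; there is no regularity deficit and no issue with the boundary $\partial M$. The only genuine subtlety is checking that the restriction $\frac12 < \lambda < 1$ is exactly what makes both $2\lambda > 1$ (needed for the continuous Sobolev embedding into Hölder functions, and to keep $2\lambda - 1 > 0$) and $1 - \lambda > 0$ (needed for a nontrivial time-Hölder exponent) simultaneously hold — so the stated range of $\lambda$ is sharp for this chain of embeddings, and no further argument beyond the assembly above is needed.
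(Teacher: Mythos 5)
Your argument is correct and is essentially the standard proof of this lemma (the paper itself only cites \cite{MR3147436} for it, and the proof there is exactly this chain: the interpolation inequality $\lVert u\rVert_{H^{2\lambda}}\leq C\lVert u\rVert_{L^2}^{1-\lambda}\lVert u\rVert_{H^2}^{\lambda}$ combined with the Lipschitz-in-time bound, the two-dimensional Sobolev--Morrey embedding $H^{2\lambda}(M)\hookrightarrow C^{0,2\lambda-1}(M)$, and Arzel\`a--Ascoli on $\overline{I}\times M$). The only detail worth making explicit is that for $\eta\in L^\infty(I;H^2(M))$ which is Lipschitz into $L^2(M)$ the bound $\lVert\eta(t)\rVert_{H^2(M)}\leq\lVert\eta\rVert_{L^\infty(I;H^2(M))}$ holds for \emph{every} $t$ (not just a.e.) by weak lower semicontinuity along a sequence $t_n\to t$ from the full-measure set, which is needed to apply the interpolation inequality at arbitrary $t_1,t_2$.
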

As in the stationary case, the Hanzawa transform preserves convergence. 
%
\begin{lemma}\label{lem:inst.konv_hanzawa}
  Let $(\eta_n)_{n\in\N}\subset \widetilde{Y}^I$ with $\lVert \eta_n 
  \rVert_{L^\infty(I\times M)}\leq \alpha <\kappa$ be a bounded sequence and 
  $\eta_n \rightarrow \eta$ uniformly in $\overline{I}\times M$. Moreover, 
  let the cut-off function $\beta$ be chosen uniformly for the corresponding 
  Hanzawa transforms. Then
  \begin{enumerate}[label=\alph*)]
    \item $\Psi_{\eta_n}\rightarrow \Psi_\eta$ uniformly 
      in $\overline{I}\times \overline{\Omega}$.
    \item $\Psi_{\eta_n}^{-1}\rightarrow \Psi_\eta^{-1}$ uniformly in 
      $\overline{I}\times \overline{B_\alpha}$, where $\Psi_{\eta_n}^{-1}$, 
      $\Psi_{\eta}^{-1}$ are extended by $\mathbf{q}$ to 
      $\overline{I}\times \overline{B_\alpha}$.
    \item Let $1\leq s <\infty$. Then $\nabla \Psi_{\eta_n}$
      converges towards $\nabla\Psi_\eta$ in $L^s(I\times \Omega)$, and 
      $(\nabla\Psi_{\eta_n}^{-1})\,\chi_{\Omega_{\eta_n}}$ towards 
      $(\nabla\Psi_{\eta}^{-1})\,\chi_{\Omega_{\eta}}$ in 
      $L^s(I\times B_\alpha)$. Moreover, we have 
      $\det d\Psi_{\eta_n} \rightarrow \det d\Psi_\eta$ in 
      $L^s(I\times \Omega)$ and $\det d\Psi_{\eta_n}^{-1}\,\chi_{\Omega_{\eta_n}}
      \rightarrow \det d\Psi_\eta^{-1}\,\chi_{\Omega_\eta}$ in $L^s(I\times B_\alpha)$.
  \end{enumerate}
\end{lemma}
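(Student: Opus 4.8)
The plan is to reduce the instationary statement to the stationary convergence results already established (Lemma \ref{lem:hanzawa_konvergence}), using the uniform convergence $\eta_n\to\eta$ in $\overline I\times M$ as the pivot. First I would observe that, since $\lVert\eta_n\rVert_{L^\infty(I\times M)}\le\alpha<\kappa$, all the Hanzawa transforms $\Psi_{\eta_n(t)}$, $\Psi_{\eta(t)}$ are well-defined for every $t\in\overline I$ with the common cut-off $\beta$, and that by the explicit formula \eqref{form_jacobian} the maps $(t,x)\mapsto\Psi_{\eta_n}(t,x)$ and their gradients depend on $(t,x)$ only through the bounded continuous geometric coefficients $b_0,b_1,\mathbf b$ (which are independent of $t$) and through the values $\eta_n(t,\mathbf q(x))$, $\nabla\eta_n(t,\mathbf q(x))$.

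For part a) the uniform convergence $\eta_n\to\eta$ on $\overline I\times M$ plugged into the formula $\Psi_{\eta_n}(t,x)=x+\eta_n(t,\mathbf q(x))\beta(s(x)/\kappa)\boldsymbol\nu(\mathbf q(x))$ gives, by the uniform boundedness of $\beta$ and $\boldsymbol\nu$, that $\Psi_{\eta_n}\to\Psi_\eta$ uniformly on $\overline I\times\overline\Omega$; this is literally the estimate behind Lemma \ref{lem:hanzawa_konvergence}(a) made uniform in $t$, which is automatic since the bound there depends only on $\lVert\eta_n-\eta\rVert_{L^\infty}$. Part b) follows the same way: extend $\Psi_{\eta_n}^{-1}$ by $\mathbf q$ to $\overline I\times\overline{B_\alpha}$ as in Lemma \ref{lem:hanzawa_konvergence}(b), and note that the inverse again has components of the form \eqref{form_jacobian} in the variables $\eta_n(t,\cdot)$, $\nabla\eta_n(t,\cdot)$; uniform-in-$t$ convergence of the data plus the uniform Lipschitz dependence of $\Psi^{-1}$ on $(\eta,\nabla\eta)$ on the compact parameter set $\{\lVert\eta\rVert_{L^\infty}\le\alpha\}$ gives the uniform convergence. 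Alternatively one can invoke Lemma \ref{lem:hanzawa_konvergence}(a),(b) pointwise in $t$ together with an equicontinuity argument, but the direct estimate is cleaner.

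For part c) I would argue by the formula \eqref{form_jacobian} once more: $\nabla\Psi_{\eta_n}(t,x)-\nabla\Psi_\eta(t,x)=b_1(x)\,(\eta_n-\eta)(t,\mathbf q(x))+\mathbf b(x)\cdot(\nabla\eta_n-\nabla\eta)(t,\mathbf q(x))$, so
\begin{align*}
  \lVert \nabla\Psi_{\eta_n}-\nabla\Psi_\eta\rVert_{L^s(I\times\Omega)}^s
  \le c\int_I\!\int_M \big(|\eta_n-\eta|^s+|\nabla\eta_n-\nabla\eta|^s\big)\,dA\,dt,
\end{align*}
using that $\mathbf q$ pushes the measure on $\Omega$ forward to a bounded density against $dA\,ds$ on $M\times(-\kappa,\kappa)$ and that the supports lie in $S_\kappa$. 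The term with $|\eta_n-\eta|^s$ goes to zero by uniform (hence $L^s(I\times M)$) convergence. For the term with $\nabla\eta_n-\nabla\eta$ one uses that $(\eta_n)\subset\widetilde Y^I$ is bounded in $L^\infty(I;H^2(M))$, so by the Rellich embedding $H^2(M)\hookrightarrow\hookrightarrow W^{1,q}(M)$ for every $q<\infty$, combined with the uniform $L^\infty(I\times M)$ convergence (which pins down the limit), one extracts that $\nabla\eta_n\to\nabla\eta$ in $L^q(I\times M)$ for every $q<\infty$ — concretely, bound $\int_I\lVert\nabla\eta_n(t)-\nabla\eta(t)\rVert_{L^s(M)}^s\,dt$ by interpolating between the uniform convergence and the $L^\infty(I;H^2(M))$ bound, or apply the Aubin–Lions/Simon-type argument that a bounded sequence in $L^\infty(I;H^2)$ converging uniformly in $C^0$ converges in $L^s(I;W^{1,s})$. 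The Jacobian-determinant statements then follow because $\det d\Psi_{\eta_n}$ is a fixed polynomial in the entries of $d\Psi_{\eta_n}$, so $\det d\Psi_{\eta_n}\to\det d\Psi_\eta$ in $L^s$ by the established $L^{s'}$-convergence of the entries for all $s'<\infty$ together with their uniform $L^\infty$ bounds (a product-convergence argument: if $a_n\to a$ in all $L^{s'}$ and $\lVert a_n\rVert_\infty\le c$, then $a_n^2\to a^2$ in $L^s$, and likewise for sums of such products). For the inverse quantities one inserts the characteristic functions $\chi_{\Omega_{\eta_n}}$: these converge to $\chi_{\Omega_\eta}$ in every $L^s(I\times B_\alpha)$ because $\chi_{\Omega_{\eta_n}}(t,x)=\chi_{\{s(x)<\eta_n(t,\mathbf q(x))\}}$ and the symmetric difference $\{s(x)<\eta_n(t,\mathbf q(x))\}\triangle\{s(x)<\eta(t,\mathbf q(x))\}$ has measure controlled by $\lVert\eta_n-\eta\rVert_{L^\infty}|M|\to0$; then multiply the $L^s$-convergent extensions $(\nabla\Psi_{\eta_n}^{-1})$, $\det d\Psi_{\eta_n}^{-1}$ (convergent on the fixed domain $B_\alpha$ by the same polynomial-in-the-entries reasoning applied on $B_\alpha$) by these characteristic functions, using again that a product of an $L^s$-convergent sequence with an $L^\infty$-bounded a.e.-convergent sequence converges in $L^r$ for $r<s$, and absorb the loss of exponent by starting from a larger $s$. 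The main obstacle is precisely this last point — upgrading the uniform convergence of $\eta_n$ to $L^s$-convergence of the \emph{gradients} $\nabla\eta_n$ — which is where the $\widetilde Y^I$-boundedness and a compactness (Aubin–Lions) argument are essential; everything else is the bookkeeping of products and the measure estimate for the characteristic functions.
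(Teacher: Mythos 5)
Your proposal is correct and follows essentially the same route as the paper: parts a) and b) from the uniform convergence of $\eta_n$ and the explicit form of the Hanzawa transform, and part c) by combining the representation \eqref{form_jacobian} of the Jacobian components with the strong convergence $\eta_n\to\eta$ in $L^s(I;W^{1,s}(M))$, which the paper likewise extracts from the $\widetilde{Y}^I$-bound via Aubin--Lions. Your additional details (the product-convergence argument for the determinants and the measure estimate for the characteristic functions) are exactly the bookkeeping the paper leaves implicit.
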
 
\begin{proof}
  First we remark that using weak and weak-* convergent subsequences, 
  we can deduce $\eta \in \widetilde{Y}^I$ with 
  $\lVert \eta \rVert_{L^\infty(I\times M)}\leq \alpha$. The first two 
  claims follow from the uniform convergence of $\eta_n$ and the 
  definition of the Hanzawa transform (one should note that for the 
  inverse functions, a case differentiation has to be made, see 
  \cite[Lemma 2.13]{phdeberlein}). For $s\geq 2$, the embeddings $H^2(M) 
  \hookrightarrow\hookrightarrow W^{1,s}(M) \hookrightarrow\hookrightarrow 
  L^2(M)$ are continuous and compact. By the definition of $\widetilde{Y}^I$ 
  and Aubin-Lions Lemma, $\eta_n$ converges strongly towards $\eta$ in 
  $L^s(I;W^{1,s}(M))$. Taking into account the characterisations of the 
  (spatial) Jacobians \eqref{form_jacobian} of the transforms, the 
  claimed convergences follow.
\end{proof}
\begin{remark}
  We can transfer the results of the preceding Sections to the 
  time-dependent case by either repeating the proofs using 
  Proposition \ref{prop:hanzawa_instationaer} and Lemma 
  \ref{lem:inst.konv_hanzawa} or applying them at every time 
  $t\in I$. In particular, for $\eta \in C^2(I\times M)$ the 
  corresponding time-dependent Piola transform $\Teta$ is an isomorphism 
  between the Lebesgue and Sobolev spaces on $I\times \Omega$ respectively 
  $\Omega_\eta^I$, as long as the order of differentiability is not 
  larger than one. 
\end{remark}
The next Lemma shows that our spaces are closed under the compatibility
condition $\treta \mathbf{u} = \partial_t\eta\,\boldsymbol\nu$ on 
$I\times M$ in a suitable sense.
\begin{lemma}\label{moving_boundary_data}
  Let $(\delta_n)_{n\in \N}\subset \widetilde{Y}^I$,  
  $(\eta_n)_{n\in \N}\subset  Y^I$ and $(\mathbf{u}_n)_{n\in\N}\subset 
  X^I_{\delta_n}$ be bounded sequences with $\lVert \delta_n 
  \rVert_{L^\infty(I\times M)} \leq \alpha<\kappa$, 
  $\trdeltan\mathbf{u}_n = \partial_t \eta_n\,\boldsymbol\nu$ on $I\times M$ 
  and
  \begin{align*}\begin{alignedat}{4}
    \delta_n  &\rightarrow \delta 
      &\quad&\text{uniformly in } I \times M,
    \qquad&\mathbf{u}_n &\rightharpoonup \mathbf{u} 
      &\quad&\text{weakly in } L^2(I,L^2(\R^3)),\\
    \partial_t\eta_n &\rightharpoonup \partial_t\eta 
      &\quad&\text{weakly in } L^2(I,L^2(M))&&
  \end{alignedat}\end{align*}
  for some $\delta \in \widetilde{Y}^I$, $\eta\in Y^I$, 
  $\mathbf{u}\in X^I_\delta$, where $\mathbf{u}_n$, $\mathbf{u}$ are 
  spatially extended by zero. Then $\trdelta\mathbf{u} = 
  \partial_t\eta\,\boldsymbol\nu$ on $I\times M$.
\end{lemma}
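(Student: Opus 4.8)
The plan is to pass to the limit in the weak formulation of the trace identity $\trdeltan\mathbf u_n = \partial_t\eta_n\,\boldsymbol\nu$, using the generalised normal trace $\trnetan$ as the rigorous vehicle, since it is the only trace object we control under mere weak convergence of $\mathbf u_n$. First I would test the identity against a suitable class of smooth functions on $\partial\Omega$ supported near $M$: for $b\in C^\infty(\overline\Omega)$ vanishing in a neighbourhood of $\Gamma$, the hypothesis $\trdeltan\mathbf u_n = \partial_t\eta_n\,\boldsymbol\nu$ together with Remark~\ref{bem:kompatibilitaet_spuroperatoren} gives
\begin{align*}
  \langle \trnetanull\lvert_M \mathbf u_n(t), b\rvert_{\partial\Omega}\rangle
    = \int_M \partial_t\eta_n(t)\,\gamma(\delta_n(t))\,b\rvert_M\;dA
    \qquad\text{for a.e.\ } t\in I.
\end{align*}
I would then integrate this in time against an arbitrary $\phi\in C_0^\infty(I)$. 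The left-hand side is, after extending $b\circ\Psi_{\delta_n(t)}^{-1}$ to an admissible test function $\overline b_n\in W^{1,r}(B_\alpha)$ vanishing on $\Gamma\cup\Gamma_s^\alpha$ (exactly as in the proof of Lemma~\ref{lem:density_for_compactness}, via \cite[Satz 6.10]{dobrowolski} and the extended inverse Hanzawa transform), equal to $\int_I\phi\int_{\R^3}\mathbf u_n\cdot\nabla\overline b_n\;dx\,dt$ plus a divergence term that vanishes because $\operatorname{div}\mathbf u_n=0$.

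The main work is then the limit passage in $\int_I\int_{\R^3}\phi\,\mathbf u_n\cdot\nabla\overline b_n\;dx\,dt$. Here I would combine the weak convergence $\mathbf u_n\rightharpoonup\mathbf u$ in $L^2(I;L^2(\R^3))$ with the \emph{strong} convergence $\overline b_n\to\overline b$: the latter follows from Lemma~\ref{lem:inst.konv_hanzawa}(c) and Lemma~\ref{lem:konv_trafo} applied to the fixed function $b$, so that $\nabla(b\circ\Psi_{\delta_n}^{-1})\chi_{\Omega_{\delta_n}}\to\nabla(b\circ\Psi_{\delta}^{-1})\chi_{\Omega_{\delta}}$ in, say, $L^2(I\times B_\alpha)$ (one must check the extensions across $\partial\Omega_{\delta_n}$ converge strongly as well, which is where the uniform convergence $\delta_n\to\delta$ is used). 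Weak $\times$ strong then gives
\begin{align*}
  \int_I\phi\int_{\R^3}\mathbf u_n\cdot\nabla\overline b_n\;dx\,dt
    \longrightarrow
  \int_I\phi\int_{\Omega_\delta}\mathbf u\cdot\nabla(b\circ\Psi_\delta^{-1})\;dx\,dt,
\end{align*}
which by Proposition~\ref{prop:spur_normale} (the divergence term again vanishing since $\mathbf u\in X^I_\delta$) equals $\int_I\phi\,\langle\trnetanull\lvert_M\mathbf u,b\rvert_{\partial\Omega}\rangle\,dt$.

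On the right-hand side I would pass to the limit using $\partial_t\eta_n\rightharpoonup\partial_t\eta$ weakly in $L^2(I;L^2(M))$ against the strongly convergent test function $\gamma(\delta_n)\,b\rvert_M\,\phi \to \gamma(\delta)\,b\rvert_M\,\phi$ in $L^2(I\times M)$; the strong convergence $\gamma(\delta_n)\to\gamma(\delta)$ (uniformly, hence in $L^2$) is Remark~\ref{grundlagen:eigenschaften_gamma}(c), which needs only uniform convergence of $\delta_n$. Equating the two limits yields, for every $\phi\in C_0^\infty(I)$ and every such $b$,
\begin{align*}
  \int_I\phi(t)\,\langle\trnetanull\lvert_M\mathbf u(t), b\rvert_{\partial\Omega}\rangle\;dt
    = \int_I\phi(t)\int_M \partial_t\eta(t)\,\gamma(\delta(t))\,b\rvert_M\;dA\,dt,
\end{align*}
whence $\langle\trnetanull\lvert_M\mathbf u(t),b\rvert_{\partial\Omega}\rangle=\int_M\partial_t\eta(t)\,\gamma(\delta(t))\,b\rvert_M\,dA$ for a.e.\ $t$. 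By the characterisation in Remark~\ref{bem:kompatibilitaet_spuroperatoren}, since $\mathbf u(t)\in W^{1,\cdot}$ is not available one argues instead that this identity for all admissible $b$ forces $\trdelta\mathbf u(t)$, \emph{a priori} only a functional, to coincide with $\partial_t\eta(t)\,\boldsymbol\nu$; more precisely, $\mathbf u\in X^I_\delta$ means $\mathbf u(t)\in V_2(\Omega_{\delta(t)})$ has a genuine $W^{1-1/r,r}$ trace $\trdelta\mathbf u(t)$, and testing the displayed identity against $b$ running over a dense set of $H^{1-1/r,r}_{00}(M)$-functions together with $\boldsymbol\nu\cdot\mathbf v_{\delta(t)}=\gamma(\delta(t))$ (Remark~\ref{grundlagen:eigenschaften_gamma}(b)) pins down $\trdelta\mathbf u(t)=\partial_t\eta(t)\,\boldsymbol\nu$ on $M$. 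I expect the main obstacle to be precisely this bookkeeping of the extension operators: ensuring that $\overline b_n$ converges strongly in $W^{1,r}(B_\alpha)$ across the jump in the domains $\Omega_{\delta_n}$, for which one must verify that the reflection-type extension of \cite[Satz 6.10]{dobrowolski} composed with the (smoothly extended) inverse Hanzawa transform depends continuously on $\delta_n$ in the topology provided by Lemma~\ref{lem:inst.konv_hanzawa}.
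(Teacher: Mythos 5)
Your argument, as written, proves strictly less than the lemma asserts. Everything you extract from the limit passage is an identity for the \emph{generalised normal trace}: for a.e.\ $t$ and all admissible scalar test functions $b$ you obtain $\langle \operatorname{tr}^n_{\delta(t)}\lvert_M\,\mathbf u(t),\, b\rangle=\int_M\partial_t\eta(t)\,\gamma(\delta(t))\,b\;dA$. By Remark \ref{bem:kompatibilitaet_spuroperatoren} this pins down only the single scalar quantity $\trdelta\mathbf u\cdot\mathbf v_{\delta}$, i.e.\ the component of the boundary values along the scaled pseudonormal of the deformed boundary. The conclusion of the lemma, however, is the full vector identity $\trdelta\mathbf u=\partial_t\eta\,\boldsymbol\nu$ on $I\times M$, which in addition requires the two remaining (tangential) components of $\trdelta\mathbf u$ to be zero. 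A scalar pairing against scalar functions $b$ cannot see these, so your final step (``\dots pins down $\trdelta\mathbf u(t)=\partial_t\eta(t)\,\boldsymbol\nu$\dots'') is exactly where the gap sits. Note also that your proof never uses the boundedness of $\mathbf u_n$ in $X^I_{\delta_n}$ beyond the divergence constraint; with only weak $L^2$ convergence and divergence-freeness the tangential part of the trace of the limit is genuinely not determined.

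To close the gap you must exploit the gradient bounds, which is what the paper does: pull everything back to the fixed reference domain and show, by splitting $\mathbf u_n\circ\Psi_{\delta_n}-\mathbf u\circ\Psi_{\delta}$ into a weak-times-strong term and a term handled by Lemma \ref{lem:konv_trafo}, that $\mathbf u_n\circ\Psi_{\delta_n}\rightharpoonup\mathbf u\circ\Psi_{\delta}$ weakly in $L^{3/2}(I\times\Omega)$; then invoke the Korn-type inequality (Lemma \ref{lem:korn_type_inequality}) together with Lemma \ref{lem:einbettung_omega_eta} to upgrade this to weak convergence in $L^{2}(I;W^{1,5/3}(\Omega))$. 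Since $\Omega$ is a fixed Lipschitz domain, the classical trace operator is weakly continuous there, so $\trdeltan\mathbf u_n\rightharpoonup\trdelta\mathbf u$ weakly in $L^2(I\times M)$ as full vector-valued traces, and the claim follows from the hypothesis $\trdeltan\mathbf u_n=\partial_t\eta_n\,\boldsymbol\nu$ and uniqueness of weak limits. Your normal-trace computation is then a correct but redundant special case of this stronger convergence.
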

\begin{proof}
  Again we choose the cut-off function $\beta$ for the Hanzawa transforms 
  uniformly. Let $\boldsymbol\varphi \in L^3(I\times \Omega)$. By H\"older's 
  inequality and a change of variables, we get
  \begin{align}\label{eqn:weak_convergence_compat}\begin{aligned}
    \int_I\int_\Omega &\big(\mathbf{u}_n\circ \Psi_{\delta_n}
      - \mathbf{u}\circ \Psi_{\delta}\big)\cdot\boldsymbol\varphi\;dx\,dt\\
    &= \int_I\int_\Omega \big(\mathbf{u}_n - \mathbf{u}\big)
      \circ \Psi_{\delta_n}\cdot\boldsymbol\varphi\;dx\,dt
      + \int_I\int_\Omega \big(\mathbf{u}\circ \Psi_{\delta_n} 
        - \mathbf{u}\circ \Psi_{\delta}\big)\cdot\boldsymbol\varphi\;dx\,dt\\
    &\leq \int_I\int_{\Omega_{\delta_n(t)}} \big(\mathbf{u}_n - \mathbf{u}\big)
      \cdot(\boldsymbol\varphi\circ \Psi_{\delta_n}^{-1})\,
        |\operatorname{det} d\Psi_{\delta_n}|\;dx\,dt\\
      &\qquad + \lVert \mathbf{u}\circ \Psi_{\delta_n} - 
        \mathbf{u}\circ \Psi_{\delta}\rVert_{L^{3/2}(I\times\Omega)}
        \lVert \boldsymbol\varphi\rVert_{L^3(I\times\Omega)}.
  \end{aligned}\end{align}
  By Lemma \ref{lem:inst.konv_hanzawa} and the time-dependent variant 
  of Lemma \ref{lem:konv_trafo}, we have $\mathbf{u}\circ \Psi_{\delta_n} 
  \rightarrow \mathbf{u}\circ \Psi_{\delta}$ in $L^{3/2}(I\times \Omega)$ 
  and $(\boldsymbol\varphi\circ \Psi_{\delta_n}^{-1})\,|\operatorname{det} 
  d\Psi_{\delta_n}|\rightarrow (\boldsymbol\varphi\circ \Psi_{\delta}^{-1})\,
  |\operatorname{det} d\Psi_{\delta}|$ in $L^2(I\times \R^3)$. Together 
  with the weak convergence of $\mathbf{u}_n$, \eqref{eqn:weak_convergence_compat} 
  implies the weak convergence of $\mathbf{u}_n\circ \Psi_{\delta_n}$ 
  towards $\mathbf{u}\circ \Psi_{\delta}$ in $L^{3/2}(I\times \Omega)$. 
  Furthermore, by our Korn-type inequality and the time-dependent 
  version of Lemma \ref{lem:einbettung_omega_eta}, $\mathbf{u}_n\circ 
  \Psi_{\delta_n}$ is bounded uniformly in $L^2(I; W^{1,5/3}(\Omega))$, 
  i.\,e.\ $\mathbf{u}_n\circ \Psi_{\delta_n}$ converges weakly towards 
  $\mathbf{u}\circ \Psi_{\delta}$ in $L^2(I,W^{1,5/3}(\Omega))$. By 
  linearity and continuity of the trace operator and the 
  definition of $\treta$ the sequence $\trdeltan\mathbf{u}_n$ 
  converges weakly towards $\trdelta\mathbf{u}$ in $L^2(I\times M)$. 
  Hence, the compatibility condition $\trdeltan\mathbf{u}_n= 
  \partial_t \eta_n\,\boldsymbol\nu$ on $I\times M$ and the weak 
  convergence of $\partial_t\eta_n$ imply $\trdelta\mathbf{u} = 
  \partial_t \eta\,\boldsymbol\nu$ on $I\times M$.
\end{proof}
\begin{remark}\label{grundlagen:Spuroperator_Raum_Testfkt}
  In the usual Bochner spaces, control over the (generalised) time 
  derivative implies continuity in time, i.\,e.\ for $\mathbf{u}\in 
  H^1(I;L^2(\Omega))$ the pointwise evaluation in time 
  $\mathbf{u}(t,\cdot)\in L^2(\Omega)$ is well defined. In our setting, 
  i.\,e.\ $\eta \in \widetilde{Y}^I$ with $\lVert \eta 
  \rVert_{L^\infty(I\times M)} \leq \alpha < \kappa$, a function 
  $\boldsymbol\varphi\in H^1(I;L^2(\Omega_{\eta(t)})) \cap 
  L^2(I;W^{1,2}(\Omega_{\eta(t)}))$ satisfying $\treta \boldsymbol\varphi 
  = b\,\boldsymbol\nu$ on $I\times M$ for some $b\in H^1(I;L^{2}(M))$ can 
  be extended by $(b\,\boldsymbol{\nu})\circ \mathbf{q}$ to obtain a 
  function in the Bochner space $H^1(I;L^2(B_\alpha))$ (see 
  \cite[Lemma 2.72]{phdeberlein}, \cite[Remark A.14]{MR3147436}). Hence, 
  the evaluation in time is at least for such functions well defined.
\end{remark}
\subsection{Regularization of the displacement}
To avoid the usual loss of regularity by a transformation to and from the 
reference domain, we construct a regularisation of the displacements. 
Since the initial data has to be adapted to the regularised moving domain, 
special care has to be taken. That means, we use a special mollification 
kernel and approximate from ``above''. Hence, our regularisation cannot 
be linear and does not preserve zero boundary conditions.

Let  $(\varphi_k,U_k)_{k=1}^{N}$ be a finite atlas of $M$ with 
subordinate partition of unity $(\psi_k)_{k=1}^{N}$. We extend a given 
$\delta \in C^0(\overline{I}\times M)$ constantly by $\delta(0,\cdot)$ and 
$\delta(T,\cdot)$ to $(-\infty,0)\times M$ and $(T,\infty)\times M$, 
respectively. By the generalised reflection 
$E_k(\delta\circ\varphi_k^{-1})$ from \cite[Satz 6.10]{dobrowolski}, we 
extend $\delta\circ \varphi_k^{-1}$ further to $\R^3$ (note that without 
loss of regularity $\varphi_k(U_k)$ is smooth). Let $\omega\in 
C_0^\infty(\R^3)$ with $\omega\geq 0$, 
$\int_{-\infty}^\infty\int_{\R^2}\omega(t,z)\,dz\,dt 
=1$ and $supp\,\omega \subset \left\{(t,z)\in \R\times \R^2 \,|\,
0<t<1,\, \left|z\right|< 1\right\}$. Moreover, for $\epsilon >0$ we set
$\omega_\epsilon :=\epsilon^{-3}\omega(\epsilon^{-1}\cdot)$ and 
$\randreg\delta:\overline{I}\times M\rightarrow\R$, 
\begin{align*}
  \randreg \delta(t,x):=\sum_{k=1}^N 
    \Big(w_\epsilon\ast E_k(\delta\circ\varphi_k^{-1})
      \Big)\circ \varphi_k(t,x)\,\psi_k(x) + \epsilon^\frac{1}{2}.
\end{align*}
In here, the summand with index $k$ is extended by $0$ to $M$. By basic 
properties of the mollification (see e.\,g.\ \cite[Proposition II.2.25]
{MR2986590}) and the reflection we get:
\begin{proposition}\label{prop:reg_rand:wohldef}
  Let $\epsilon>0$. The map $\randreg: C^0(\overline{I}\times M) 
  \rightarrow C^4(\overline{I}\times M)$, $\delta \mapsto \randreg \delta$
  is continuous and satisfies for all $\delta_1,\,
  \delta_2\in C^0(\overline{I}\times M)$ the estimates
  \begin{gather*}
    \lVert \randreg\delta_1 \rVert_{C^4(\overline{I}\times M)}
      \leq c_\epsilon\,\lVert \delta_1 \rVert_{C^0(\overline{I}\times M)}
        + \epsilon^\frac{1}{2},\qquad
    \rVert \randreg\delta_1 \rVert_{L^\infty(I\times M)}
      \leq \lVert \delta_1 \rVert_{L^\infty(I\times M)} 
        + \epsilon^\frac{1}{2},\\
    \rVert \randreg\delta_1 - \randreg\delta_2\rVert_{L^\infty(I\times M)}
      \leq \lVert \randreg(\delta_1 - \delta_2)\rVert_{L^\infty(I\times M)}
          + \epsilon^\frac{1}{2}.
  \end{gather*}
\end{proposition}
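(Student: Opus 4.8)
The plan is to isolate the only non-linear feature, the additive constant $\epsilon^{1/2}$, and reduce everything to two estimates on the linear core of the construction. I would write $\randreg\delta = R\delta + \epsilon^{1/2}$, where $R\delta := \sum_{k=1}^N \big(\omega_\epsilon \ast E_k(\tilde\delta\circ\varphi_k^{-1})\big)\circ\varphi_k\,\psi_k$ (each summand extended by $0$ to $M$) and $\tilde\delta\in C^0(\R\times M)$ is the constant-in-time extension of $\delta$. Every operation entering $R$ --- the constant time-extension, the composition with $\varphi_k^{-1}$, the reflection $E_k$, the convolution with $\omega_\epsilon$, multiplication by $\psi_k$ and the finite sum --- is linear, so $R$ is linear and $\randreg$ is affine. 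This settles the third estimate at once: $\randreg\delta_1-\randreg\delta_2 = R(\delta_1-\delta_2) = \randreg(\delta_1-\delta_2)-\epsilon^{1/2}$, and the triangle inequality does the rest.

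For $C^4$-regularity and the first estimate I would argue as follows. One has $\lVert\tilde\delta\rVert_{C^0(\R\times M)} = \lVert\delta\rVert_{C^0(\overline I\times M)}$ (the constant extension does not enlarge the supremum), and the reflection $E_k$ (extending in the spatial variables; the unbounded time-slab is harmless since $\omega$ has compact support) is bounded $C^0\to C^0$, so $\lVert E_k(\tilde\delta\circ\varphi_k^{-1})\rVert_{C^0}\le c\,\lVert\delta\rVert_{C^0(\overline I\times M)}$. Convolution with $\omega_\epsilon$ makes this $C^\infty$, and for $|\alpha|\le 4$ one has $\lVert D^\alpha(\omega_\epsilon \ast g)\rVert_{C^0} = \lVert (D^\alpha\omega_\epsilon)\ast g\rVert_{C^0}\le\lVert D^\alpha\omega_\epsilon\rVert_{L^1}\lVert g\rVert_{C^0}\le c\,\epsilon^{-4}\lVert g\rVert_{C^0}$. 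Composing with the fixed $C^4$-charts $\varphi_k$ and multiplying by the fixed $C^\infty$ cut-offs $\psi_k$ (chain and product rules, finitely many $k$) gives $\lVert R\delta\rVert_{C^4(\overline I\times M)}\le c_\epsilon\lVert\delta\rVert_{C^0(\overline I\times M)}$; hence $\randreg\delta\in C^4(\overline I\times M)$, and adding $\epsilon^{1/2}$ only changes the $C^0$-part of the norm, which gives the first bound. Since $R$ is linear and bounded from $C^0$ to $C^4$ it is continuous, and so is the affine map $\randreg$.

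The main obstacle is the second ($L^\infty$) estimate, where $c_\epsilon$ has to be replaced by $1$ --- the crude route through the operator norm of $E_k$ fails, because reflection extensions are not $L^\infty$-contractions. My plan is to localise the convolution away from $\partial\varphi_k(U_k)$. Since $\operatorname{supp}\psi_k$ is compact in the open set $U_k$, the set $\varphi_k(\operatorname{supp}\psi_k)$ is compact in the open set $\varphi_k(U_k)$, so $\epsilon_0:=\min_k\operatorname{dist}\big(\varphi_k(\operatorname{supp}\psi_k),\partial\varphi_k(U_k)\big)>0$; for $\epsilon<\epsilon_0$ (which I would assume throughout, refining the atlas if need be) and $x\in\operatorname{supp}\psi_k$, the $\epsilon$-ball around $(t,\varphi_k(x))$ entering $\omega_\epsilon\ast E_k(\tilde\delta\circ\varphi_k^{-1})$ stays inside $\R\times\varphi_k(U_k)$, so there $E_k(\tilde\delta\circ\varphi_k^{-1})$ agrees with $\tilde\delta\circ\varphi_k^{-1}$, and every value used --- a time-slice of $\delta$, or $\delta(0,\cdot)\circ\varphi_k^{-1}$ coming from the constant extension --- has modulus at most $\lVert\delta\rVert_{L^\infty(I\times M)}$. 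As $\omega_\epsilon\ge 0$ and $\int\omega_\epsilon = 1$, the convolution is a weighted average of such values, so $\big|(\omega_\epsilon\ast E_k(\tilde\delta\circ\varphi_k^{-1}))(t,\varphi_k(x))\big|\le\lVert\delta\rVert_{L^\infty(I\times M)}$ on $\operatorname{supp}\psi_k$; then $\psi_k\ge 0$, $\sum_k\psi_k\equiv 1$ and the vanishing of the $k$-th summand off $\operatorname{supp}\psi_k$ give $|R\delta|\le\lVert\delta\rVert_{L^\infty(I\times M)}$ pointwise, whence $\lVert\randreg\delta\rVert_{L^\infty(I\times M)}\le\lVert\delta\rVert_{L^\infty(I\times M)}+\epsilon^{1/2}$. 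I expect the chain- and product-rule bookkeeping in the $C^4$-bound to be purely routine; the delicate point is exactly this localisation, which is what forces the smallness of $\epsilon$.
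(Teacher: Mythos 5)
Your argument is correct and, since the paper dispenses with a proof entirely (it merely invokes ``basic properties of the mollification and the reflection''), it supplies exactly the details that are missing: the affine structure $\randreg\delta=R\delta+\epsilon^{1/2}$ with $R$ linear immediately gives the third estimate, the $L^1$-bound on $D^\alpha\omega_\epsilon$ together with the fixed $C^4$-charts and cut-offs gives the $C^4$-bound and the continuity, and the localisation of the convolution inside $\varphi_k(U_k)$ is indeed the essential point for the sharp constant $1$ in the $L^\infty$-estimate, because the generalised reflection of \cite{dobrowolski} must preserve $H^2$ (cf.\ Remark \ref{bem:randreg}) and is therefore a higher-order reflection whose coefficients are not a convex combination, so it is \emph{not} an $L^\infty$-contraction. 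The one caveat is your treatment of the resulting smallness condition $\epsilon<\epsilon_0$: ``refining the atlas if need be'' is not available, since the atlas and partition of unity are fixed once and for all in the definition of $\randreg$, so changing them changes the operator; for $\epsilon$ above the threshold the second estimate can genuinely fail near $\partial\varphi_k(\operatorname{supp}\psi_k)$. Strictly speaking the proposition as stated for \emph{all} $\epsilon>0$ therefore requires either this implicit smallness restriction or a first-order (contractive) reflection; this is harmless for the paper, since every subsequent use already restricts to $0<\epsilon<\epsilon_0(\alpha,\eta_0)$ and ultimately sends $\epsilon\to 0$, but you should state the threshold as a property of the fixed atlas rather than as something you can adjust a posteriori.
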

\begin{remark}\label{bem:randreg}
  Let $(\delta_n)_{n\in\N} \subset \widetilde{Y}^I$ be a bounded sequence.
  Then $\randreg\delta_n$ is also bounded in $\widetilde{Y}^I$ 
  independently of $0<\epsilon<\epsilon_0$ and $n\in\N$ (see 
  \cite[Th{\'e}or\`{e}me 1.8.1]{droniou}) and \cite[Proposition II.2.25]
  {MR2986590}), but does not converge in this space.
\end{remark}
%
Now we show the effect of the special mollification kernel
and the translation by $\epsilon^\frac{1}{2}$.
\begin{proposition}\label{prop:reg_rand:anfangswert}
  Let $\eta_0\in H_0^2(M)$ and $\delta \in 
  C^0(\overline{I}\times M)$ with $\delta(0,\cdot) = \eta_0(\cdot)$. 
 \begin{enumerate}[label=\alph*)]
  \item $\randreg\delta(0,\cdot)$ is independent of $\delta$ apart from 
    $\delta(0,\cdot)=\eta_0$.
  \item There exits $0<\epsilon_1=\epsilon_1(\eta_0)$ such that
    $\randreg\delta(0,\cdot) > \eta_0$ for all $0<\epsilon<\epsilon_1$.
 \end{enumerate}
\end{proposition}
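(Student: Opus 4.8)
The plan exploits the two features that $\randreg$ was designed to have: the mollifier $\omega$ is supported in $\{t>0\}$, and to the left of $t=0$ the displacement $\delta$ has been extended by the constant $\delta(0,\cdot)=\eta_0$. Write $\omega_\epsilon$ for the scaled kernel, so $\operatorname{supp}\omega_\epsilon\subset\{0<\tau<\epsilon,\ |\zeta|<\epsilon\}$ and $\int_{\R^3}\omega_\epsilon=1$, and note that $E_k$ does not touch the time variable, hence commutes with restriction to time slices. For (a), evaluating the mollification in the definition of $\randreg\delta$ at $t=0$ gives, for every $z\in\R^2$,
\[
  \big(\omega_\epsilon\ast E_k(\delta\circ\varphi_k^{-1})\big)(0,z)
    =\int_0^\epsilon\!\!\int_{\R^2}\omega_\epsilon(\tau,\zeta)\,
       \big(E_k(\delta\circ\varphi_k^{-1})\big)(-\tau,z-\zeta)\,d\zeta\,d\tau .
\]
Since $-\tau<0$, the slice $(\delta\circ\varphi_k^{-1})(-\tau,\cdot)$ equals $\eta_0\circ\varphi_k^{-1}$ by the constant extension, so $\big(E_k(\delta\circ\varphi_k^{-1})\big)(-\tau,\cdot)=E_k(\eta_0\circ\varphi_k^{-1})$, independently of $\tau$ and of $\delta$. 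Multiplying by the $\psi_k$, summing over $k$ and adding $\epsilon^{1/2}$ shows that $\randreg\delta(0,\cdot)$ depends on $\delta$ only through $\eta_0=\delta(0,\cdot)$, which is (a).

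For (b), by (a) I may compute with any admissible $\delta$, and the identity above together with $\int_{\R^3}\omega_\epsilon=1$, $\sum_k\psi_k\equiv1$ and the fact that $E_k(\eta_0\circ\varphi_k^{-1})$ coincides with $\eta_0\circ\varphi_k^{-1}$ on $\varphi_k(U_k)$ yields
\begin{align*}
  \randreg\delta(0,x)-\eta_0(x)
   =\epsilon^{1/2}+\sum_{k=1}^N\psi_k(x)\!\int_{\R^3}\!\omega_\epsilon(\tau,\zeta)
     \Big[&E_k(\eta_0\circ\varphi_k^{-1})(\varphi_k(x)-\zeta)\\
     &-E_k(\eta_0\circ\varphi_k^{-1})(\varphi_k(x))\Big]\,d\zeta\,d\tau .
\end{align*}
The plan is now to bound each bracket by a power of $\epsilon$ that beats $\epsilon^{1/2}$. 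Fix $r>4$. Then $\eta_0\in H^2(M)\hookrightarrow W^{1,r}(M)$, so each $\eta_0\circ\varphi_k^{-1}$ lies in $W^{1,r}$ of the (smooth) chart image, and by \cite[Satz 6.10]{dobrowolski} its reflection lies in $W^{1,r}$ of a bounded neighbourhood $\tilde U_k$ of $\varphi_k(\operatorname{supp}\psi_k)$; by the two-dimensional Sobolev embedding this reflection is Hölder continuous of exponent $\lambda:=1-\tfrac{2}{r}>\tfrac12$ on $\tilde U_k$, with seminorm bounded by $C_0(\eta_0):=c\,\lVert\eta_0\rVert_{H^2(M)}$ (uniformly in the finitely many $k$).

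Since $|\zeta|<\epsilon$ on $\operatorname{supp}\omega_\epsilon$, once $\epsilon$ is below a threshold $\epsilon_0$, depending only on the atlas and the partition of unity, so that all points $\varphi_k(x)-\zeta$ stay in $\tilde U_k$, each bracket is bounded in modulus by $C_0(\eta_0)\,\epsilon^{\lambda}$, uniformly in $x\in M$; therefore
\[
  \randreg\delta(0,x)-\eta_0(x)\ \ge\ \epsilon^{1/2}-C_0(\eta_0)\,\epsilon^{\lambda}\ >\ 0
  \qquad\text{for all }x\in M,
\]
as soon as $0<\epsilon<\epsilon_1(\eta_0):=\min\{\epsilon_0,\ C_0(\eta_0)^{-1/(\lambda-1/2)}\}$, which is (b). I expect the last chain of inequalities to be the only real point: the mollification error has to be dominated by a power $\epsilon^{\lambda}$ with $\lambda>\tfrac12$, and this is exactly what the embedding $H^2(M)\hookrightarrow C^{0,\lambda}$ for some $\lambda>\tfrac12$ (equivalently $W^{1,r}$, $r>4$) buys us — for a merely continuous $\eta_0$ the modulus of continuity could decay more slowly than $\epsilon^{1/2}$ and the argument would break. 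Everything else, i.e.\ the slicewise action of $E_k$, the support and mass of $\omega_\epsilon$, and the partition-of-unity bookkeeping, is routine.
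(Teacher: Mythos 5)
Your proof is correct and follows essentially the same route as the paper's: part (a) via the one-sided time support of the kernel combined with the constant extension of $\delta$ by $\eta_0$ to negative times, and part (b) by bounding the mollification error through a H\"older estimate with exponent strictly greater than $\tfrac12$ (the paper uses $H_0^2(M)\hookrightarrow C^{0,3/4}(M)$ directly, you derive an exponent $1-\tfrac{2}{r}>\tfrac12$ via $W^{1,r}$, $r>4$, which is the same mechanism), so that $\epsilon^{1/2}$ dominates. No gaps.
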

\begin{proof}
  By the extension of $\delta$ through $\delta(0,\cdot) = \eta_0$ to 
  $(-\infty,0)\times M$ and the properties of the kernel $w_\epsilon$, 
  we have for $z\in \varphi_k(U_k)$
  \begin{align*}
    (w_\epsilon\ast E_k(\delta\circ\varphi_k^{-1}))(0,z)
      &= \int_\infty^\infty \int_{\R^2} w_\epsilon(s,y)\,
        E_k(\delta\circ\varphi_k^{-1})(0-s,z-y)\;dy\,ds\\
      &= \int_0^\epsilon \int_{B_\epsilon(0)} w_\epsilon(s,y)\,
        E_k(\eta_0\circ\varphi_k^{-1})(z-y)\;dy\,ds.
  \end{align*}
  Hence, $\randreg\delta(0,\cdot)$ is independent of $\delta$ apart from 
  $\delta(0,\cdot)=\eta_0$. By the continuous embedding $H_0^2(M) 
  \hookrightarrow C^{0,\frac{3}{4}}(M)$ and the properties of the 
  generalised reflection we have $E_k(\eta_0\circ\varphi_k^{-1})\in 
  C^{0,\frac{3}{4}}(\R^2)$, i.\,e.\ 
  \begin{align*}
    \lvert E_k(\eta_0\circ\varphi_k^{-1})(z_1) - 
      E_k(\eta_0\circ\varphi_k^{-1})(z_2)\rvert
        \leq c_k\,\lvert z_1 - z_2 \rvert^\frac{3}{4}
          \qquad \forall\,z_1,\,z_2\in \R^2.
  \end{align*}
  Together with the identity above, this implies for $z\in \varphi_k(U_k)$
  \begin{align*}
    |(w_\epsilon&\ast E_k(\delta\circ\varphi_k^{-1}))(0,z)
      -\eta_0\circ\varphi_k^{-1}(z)|\\
    &= \Big|\int_0^\epsilon \int_{B_\epsilon(0)} w_\epsilon(s,y)\,\Big(
        E_k(\eta_0\circ\varphi_k^{-1})(z-y) - 
          E_k(\eta_0\circ\varphi_k^{-1})(z)\Big)\;dy\,ds\Big|\\
    &\leq c_k\,\int_0^\epsilon \int_{B_\epsilon(0)} 
        w_\epsilon(s,y)\,|y|^\frac{3}{4}\;dy\,ds\\
    &\leq c_k\,\epsilon^\frac{3}{4}.
  \end{align*}
  Hence, $\randreg\delta(0,\cdot) - \eta_0 \geq \epsilon^\frac{1}{2} - 
  \sum_{k=1}^N c_k\,\epsilon^\frac{3}{4}$, i.\,e. there exists $0<\epsilon_1 
  = \epsilon_1(c_1,\ldots,c_k)$ with $\randreg\delta > \eta_0$ for 
  $0<\epsilon<\epsilon_1$.
\end{proof}
In the definition of $\randreg \delta$, only $\delta$ and 
$w_\epsilon$ depend (non trivially) on time. Thus, the classical properties of 
mollification and reflection imply the following convergences.
\begin{lemma}\label{lem:reg_rand:konv}
  Let $\delta \in C^0(\overline{I}\times M)$ and $\epsilon>0$. Then
  \begin{enumerate}[label=\alph*)]
    \item $\randreg \delta\rightarrow\delta$ uniformly on 
      $\overline{I}\times M$ for $\epsilon \rightarrow 0$.
    \item If $\partial_t \delta \in L^2(I\times M)$ then 
      $\partial_t\randreg\delta \rightarrow \partial_t\delta$ 
      in $L^2(I\times M)$ for $\epsilon\rightarrow 0$.
    \item Let $\eta_0\in H_0^2(M)$ and $\delta\in 
      C^0(\overline{I}\times M)$ with $\delta(0,\cdot) = \eta_0$.
      Then $\randreg \delta(0,\cdot)$ converges uniformly on $M$ towards 
      $\eta_0$ for $\epsilon \rightarrow 0$. This convergence is independent 
      of the particular choice of $\delta$.
  \end{enumerate}
\end{lemma}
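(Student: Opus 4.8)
The plan is to exploit the explicit structure of $\randreg\delta$ as a finite sum of (mollified, reflected) pieces glued by a partition of unity, so that each of the three convergences reduces to a standard statement about mollification in $\R^3$ (or $\R^2$) together with the continuity of the generalised reflection operators $E_k$. Throughout I would keep in mind the two special features of the construction: the kernel $\omega_\epsilon$ is supported in the half-space $\{0<t<1\}$ (so it sees only forward-in-time values), and there is an additive shift $\epsilon^{1/2}$, which tends to $0$ and therefore does not affect any of the limits.

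For part (a): fix a chart index $k$. The function $E_k(\delta\circ\varphi_k^{-1})$ lies in $C^0$ on a neighbourhood of $\varphi_k(\overline{I}\times U_k)$ in $\R^3$ (using that $\delta$ has been extended constantly in time past $0$ and $T$, so it is globally uniformly continuous on the relevant compact set), hence $\omega_\epsilon * E_k(\delta\circ\varphi_k^{-1}) \to E_k(\delta\circ\varphi_k^{-1})$ uniformly on compacta as $\epsilon\to 0$ by the classical mollifier estimate. Composing with the (fixed, smooth) chart map $\varphi_k$, multiplying by the bounded cut-off $\psi_k$, summing over the finitely many $k$, and adding $\epsilon^{1/2}\to 0$, one gets $\randreg\delta \to \sum_k (E_k(\delta\circ\varphi_k^{-1})\circ\varphi_k)\,\psi_k = \sum_k (\delta\circ\varphi_k^{-1}\circ\varphi_k)\,\psi_k = \delta\sum_k\psi_k = \delta$ uniformly on $\overline I\times M$, where I used that $E_k$ is an extension (so it agrees with $\delta\circ\varphi_k^{-1}$ on $\varphi_k(U_k)$) and that $(\psi_k)$ is a partition of unity.

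For part (b): the only time dependence in $\randreg\delta$ beyond $\delta$ itself sits in $\omega_\epsilon$, and $\partial_t$ commutes with the spatial chart composition and with the multiplication by the time-independent $\psi_k$; moreover $\partial_t$ of the convolution equals the convolution of $\partial_t$ (the derivative can be moved onto the argument of $E_k$, which is legitimate since $\partial_t\delta\in L^2$ and the reflection $E_k$ is a bounded operator on the relevant Sobolev/Lebesgue spaces, so $\partial_t E_k(\delta\circ\varphi_k^{-1}) = E_k(\partial_t\delta\circ\varphi_k^{-1})$ in $L^2$). Then $\partial_t\randreg\delta = \sum_k (\omega_\epsilon * E_k(\partial_t\delta\circ\varphi_k^{-1}))\circ\varphi_k\,\psi_k + 0$, and the classical fact that mollification converges in $L^2$ together with boundedness of $\varphi_k$-composition and of multiplication by $\psi_k$ gives $\partial_t\randreg\delta \to \sum_k (E_k(\partial_t\delta\circ\varphi_k^{-1})\circ\varphi_k)\,\psi_k = \partial_t\delta$ in $L^2(I\times M)$, exactly as in (a).

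For part (c): this is essentially a restriction-to-$t=0$ version of (a), and here the half-space support of $\omega_\epsilon$ is what makes the statement clean. By Proposition \ref{prop:reg_rand:anfangswert}(a) (or directly by the computation reproduced just above its proof), $(\omega_\epsilon * E_k(\delta\circ\varphi_k^{-1}))(0,\cdot)$ only involves $\delta(s,\cdot)$ for $s\in[0,\epsilon]$, and by the constant-in-time extension this equals a purely spatial mollification of $E_k(\eta_0\circ\varphi_k^{-1})$ against the marginal kernel $\int_0^\epsilon\omega_\epsilon(s,\cdot)\,ds$; since $E_k(\eta_0\circ\varphi_k^{-1})$ is continuous (indeed Hölder, by $H_0^2(M)\hookrightarrow C^{0,3/4}$), standard mollification gives uniform convergence to $E_k(\eta_0\circ\varphi_k^{-1})$, and reassembling via $\psi_k$ and dropping $\epsilon^{1/2}$ yields $\randreg\delta(0,\cdot)\to\eta_0$ uniformly on $M$, with the limit manifestly independent of the choice of $\delta$ extending $\eta_0$. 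I do not expect any real obstacle here; the only point requiring a little care is the bookkeeping at the chart boundaries (making sure that where $\psi_k$ is supported, the relevant mollified quantities are genuinely close to $\delta\circ\varphi_k^{-1}\circ\varphi_k$ on that piece), but since each $\psi_k$ has support compactly inside $U_k$ and the convolutions converge uniformly on compacta, this causes no trouble. The mildly delicate step, if any, is justifying $\partial_t E_k = E_k\partial_t$ in part (b), which follows from linearity and boundedness of $E_k$ on $W^{1,2}$ (or by approximating $\delta$ by smooth functions and passing to the limit).
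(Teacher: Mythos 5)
Your proposal is correct and follows exactly the route the paper intends: the paper omits the proof entirely, remarking only that ``the classical properties of mollification and reflection imply the following convergences,'' and your chart-by-chart reduction to standard mollifier estimates (uniform convergence for continuous data in (a) and (c), $L^2$ convergence after commuting $\partial_t$ with $E_k$ in (b), and the half-space support of $\omega_\epsilon$ together with the constant-in-time extension for the $t=0$ statement in (c), matching the computation in Proposition \ref{prop:reg_rand:anfangswert}) is precisely the content being invoked. No gaps.
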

%
%
\subsection{Divergence-free extension operator}
\label{section:div_fortsetzung}
%
%
%
To extend a test function to the shell equation defined on $M$ to a 
test function of the fluid equation, we have to construct a 
divergence-free extension. As in our moving domains, we first look at 
the situation in the stationary case.
%
%
\begin{lemma}\label{lem:fortsetzung_stationaer}
  Let $\eta\in H^2(M)$, $0<\alpha<\kappa$ with $\lVert \eta 
  \rVert_{L^\infty(M)} < \alpha$ and $\frac{3}{2}\leq p \leq 3$.
  Then there exists a linear, continuous extension operator
  \begin{align*}
    \feta: W^{1,p}_0(M) \rightarrow W^{1,p}(B_\alpha)
  \end{align*}
  which satisfies $\operatorname{div} \feta b = 0$ and $\treta(\feta b\lvert_{\Omega_\eta}) 
  =  b\, \boldsymbol\nu$ on $M$. For a fixed $N\in \N$, the continuity 
  constant can be chosen uniformly with respect to $\eta\in H^2(M)$ satisfying
  $\lVert \eta \rVert_{H^2(M)}\leq N$ and $\lVert\eta\rVert_{L^\infty(M)}
  <\alpha$.
\end{lemma}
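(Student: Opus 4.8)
The plan is to construct the extension $\feta$ by composing three maps: a fixed extension on the reference configuration, correction by a divergence-compensating term, and transport via the Piola transform $\mathcal{T}_\eta$. The core point is that the Piola transform preserves the divergence-free property and intertwines normal traces appropriately (as recorded in the discussion after Lemma \ref{lem:konv_trafo} and in Remark \ref{bem:kompatibilitaet_spuroperatoren}), so it suffices to build a divergence-free extension on the \emph{undeformed} collar $B_\alpha$ whose normal trace on $M$ equals $b\,\boldsymbol\nu$, and then push it forward.

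\medskip

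\emph{Step 1 (extension on the reference domain).} Given $b\in W^{1,p}_0(M)$, first extend it off $M$ into the collar $S_\kappa \subset B_\alpha$ using the tubular coordinates $\Lambda(q,s) = q + s\,\boldsymbol\nu(q)$: set, for a cut-off $\zeta\in C^\infty_0(-\kappa,\kappa)$ with $\zeta\equiv 1$ near $0$, a preliminary vector field $\mathbf{w}_0(x) := b(\mathbf{q}(x))\,\zeta(s(x))\,\boldsymbol\nu(\mathbf{q}(x))$ on $S_\kappa$, extended by $\boldsymbol 0$ outside. Since $b$ vanishes on $\partial M$ this field lies in $W^{1,p}(B_\alpha)$ (using $\partial B_\alpha \in C^{0,1}$ and the $C^3$-regularity of $\Lambda$, the composition with tubular coordinates costs no regularity), and its normal trace on $M$ is $b\,\boldsymbol\nu$ as required. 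However $\mathbf{w}_0$ is not divergence-free.

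\medskip

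\emph{Step 2 (divergence correction).} Compute $g := \operatorname{div}\mathbf{w}_0 \in L^p(B_\alpha)$; it is supported in the collar and, because $b\in W^{1,p}_0(M)$, it satisfies $\int_{B_\alpha} g\,dx = \int_{\partial B_\alpha}\mathbf{w}_0\cdot\boldsymbol\nu\,dA = 0$ (the trace of $\mathbf{w}_0$ on $M_+^\alpha$, $\Gamma_s^\alpha$, $\Gamma_s^{\alpha,c}$ all vanish for $\zeta$ supported away from $\pm\alpha$). By the Bogovski\u\i{} operator on the bounded Lipschitz domain $B_\alpha$ there is $\mathbf{w}_1\in W^{1,p}_0(B_\alpha)$ with $\operatorname{div}\mathbf{w}_1 = g$ and $\|\mathbf{w}_1\|_{W^{1,p}(B_\alpha)} \le C(B_\alpha,p)\,\|g\|_{L^p(B_\alpha)} \le C\,\|b\|_{W^{1,p}(M)}$; moreover $\mathbf{w}_1$ can be taken supported in the collar, hence vanishing near $\Gamma_s^{\alpha,c}$ and with zero trace on all of $\partial B_\alpha$. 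Then $\mathbf{W} := \mathbf{w}_0 - \mathbf{w}_1$ is divergence-free on $B_\alpha$, lies in $W^{1,p}(B_\alpha)$ with the asserted bound, and still has normal trace $b\,\boldsymbol\nu$ on $M$ (indeed full trace $b\,\boldsymbol\nu$ there, since $\mathbf{w}_1|_M = \boldsymbol 0$).

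\medskip

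\emph{Step 3 (transport to the deformed collar).} Set $\feta b := \mathcal{T}_\eta \mathbf{W}$, the Piola transform. Since $3/2 \le p \le 3$ and $\Omega_\eta\subset B_\alpha$, the components of $d\Psi_\eta$, $d\Psi_\eta^{-1}$ and the Jacobians have the structure \eqref{form_jacobian} with $H^2(M)\hookrightarrow C^{0,\lambda}$ coefficients, so by Lemma \ref{lem:einbettung_omega_eta} (and the pointwise product estimates for the structured Jacobians) $\mathcal{T}_\eta$ maps $W^{1,p}(B_\alpha)$ continuously into $W^{1,p}(B_\alpha)$ with a constant depending only on $\Omega$, $p$, $\|\eta\|_{H^2(M)}$ and $\beta$ — this uses $p\le 3$ so that the $L^\infty$-failure of $\det d\Psi_\eta$ is absorbed, which is exactly why the range of $p$ is restricted. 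By the standard property of the Piola transform (\cite[Theorem 7.20]{MR1262126}) we get $\operatorname{div}(\feta b) = 0$, and by its behaviour on traces together with $\mathbf{W}|_M = b\,\boldsymbol\nu$ and $\Phi_\eta(M) = \partial\Omega_\eta\setminus\Gamma_\eta$ we obtain $\treta(\feta b|_{\Omega_\eta}) = b\,\boldsymbol\nu$ on $M$, as recorded in Remark \ref{bem:kompatibilitaet_spuroperatoren}. Linearity is clear from linearity of each step (the Bogovski\u\i{} operator and $\mathcal{T}_\eta$ are linear). The uniform-in-$\eta$ bound follows by tracking constants: Steps 1--2 are $\eta$-independent, and Lemma \ref{lem:einbettung_omega_eta} provides the uniform constant for Step 3 once $\|\eta\|_{H^2(M)}\le N$.

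\medskip

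\textbf{Main obstacle.} The delicate point is Step 3: making the Piola transform continuous on $W^{1,p}$ \emph{without} loss of Sobolev exponent. Lemma \ref{lem:einbettung_omega_eta} as stated only gives $W^{1,p}\to W^{1,r}$ for $r<p$, so one must verify by hand that the specific structure \eqref{form_jacobian} of the Jacobian entries — bounded continuous functions plus terms involving $\eta\circ\mathbf{q}$ and $\nabla\eta\circ\mathbf{q}$, with $\eta\in H^2(M)\hookrightarrow C^{0,\lambda}$ and $\nabla\eta\in H^1(M)\hookrightarrow L^q$ for all $q<\infty$ on a $2$-manifold — allows the products $(d\Psi_\eta)\,\mathbf{W}$ and $\nabla\big((d\Psi_\eta)\,\mathbf{W}\big)$ to be controlled in $L^p(B_\alpha)$ for $p\le 3$. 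Here one uses that $\nabla\eta$, though only in $L^q$ for finite $q$, multiplies $\mathbf{W}\in W^{1,p}\hookrightarrow L^{p^*}$ with $p^* = 3p/(3-p)$ (or $L^q$ for all $q$ when $p=3$), and Hölder's inequality closes since $1/p = 1/q' + 1/p^*$ is solvable with $q' < \infty$ precisely when $p\le 3$. The divergence-free cancellation in Step 3 and the trace identity are then formal consequences once one argues by density in $\eta$ (approximating $\eta\in H^2(M)$ by $C^2$ displacements and invoking Lemma \ref{lem:konv_trafo}, exactly as in the proof of Proposition \ref{prop:part_int}).
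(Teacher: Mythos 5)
Your route --- a reference-collar extension, a Bogovskii-type divergence correction, and a Piola push-forward --- is genuinely different from the paper's. The paper instead writes down an explicit divergence-free field directly in the \emph{deformed} collar, $(\feta b)(x) = \exp\big(-\int_{\eta(\mathbf{q}(x))}^{s(x)}\operatorname{div}(\boldsymbol\nu\circ\mathbf{q})\,d\tau\big)(b\,\boldsymbol\nu)(\mathbf{q}(x))$, and then extends it past $M_-^\alpha$ into the rest of $B_\alpha$ by solving a Stokes problem with compatible boundary data on the Lipschitz domain $\Omega\setminus S_\alpha$; the restriction $\tfrac32\le p\le 3$ comes solely from the Stokes solvability in Lipschitz domains, not from Hölder arithmetic on Jacobians. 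Your Step 3, however, contains a gap that I do not see how to close: the claimed continuity of $\Teta$ from $W^{1,p}$ to $W^{1,p}$ \emph{without} loss of exponent is false. The gradient of $\Teta\mathbf{W}$ contains the term $\nabla\big(d\Psi_\eta(\det d\Psi_\eta)^{-1}\big)\mathbf{W}$, and differentiating the Jacobian entries \eqref{form_jacobian} produces $\nabla^2\eta\circ\mathbf{q}$, which lies only in $L^2$. Pairing this with $\mathbf{W}\in W^{1,p}\hookrightarrow L^{p^*}$, $p^*=3p/(3-p)$, yields an exponent $r$ with $1/r = 1/2 + 1/p - 1/3 = 1/p + 1/6 > 1/p$, i.e.\ $r<p$ strictly, throughout the admissible range (and for $p=3$ one still only reaches $r<2$). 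Your ``main obstacle'' paragraph accounts only for the first-order terms $\nabla\eta\in L^q$, $q<\infty$, and misses these second-order terms entirely. Avoiding them is precisely the point of the paper's explicit formula: in \eqref{eqn:characterisation_spatial_derivative} the spatial derivative of $\feta b$ involves only \emph{first} derivatives of $\eta$, never $\nabla^2\eta$, which is why no exponent is lost there.

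Two further defects of Step 3. First, the Piola transform does not preserve the vector trace on $M$: by \eqref{eqn:piola_rand_ansatzfunktionen} and the relation $d\Psi_\eta\boldsymbol\nu = g\,\boldsymbol\nu$ on $M$, one gets $\treta(\Teta\mathbf{W}) = g\,(\det d\Psi_\eta\lvert_M)^{-1}\,b\,\boldsymbol\nu$, and the scalar factor is not identically $1$. To repair this you would have to pre-multiply $b$ by the reciprocal factor --- exactly the correction $\mathbf{T}_\delta$ of Section \ref{kap:ansatz}, which the paper only deploys for $C^4$ displacements --- and that factor depends on $\eta$ and again degrades the regularity of the boundary datum. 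Second, the lemma requires an extension defined on the \emph{fixed} domain $B_\alpha\supset\Omega_\eta$ (this is used later, e.g.\ in Lemma \ref{lem:fortsetzung_konvergenzen_stationaer} and in the compactness argument), whereas $\Teta$ as defined maps functions on $\Omega$ to functions on $\Omega_\eta$ only; you would additionally need an $\eta$-dependent diffeomorphism of $B_\alpha$ onto itself, which is neither constructed in the paper nor supplied in your argument. Steps 1 and 2 of your proposal are fine on their own, but the transport step as written does not deliver the operator claimed in the lemma.
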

\begin{proof}
Let $b\in W^{1,p}_0(M)$. We define for $x\in S_\alpha$
\begin{align*}
  (\feta b)(x) := \exp \Big( -\int_{\eta(\mathbf{q}(x))}^{s(x)} 
    \big(\operatorname{div} (\boldsymbol\nu\circ \mathbf{q})\big)(\mathbf{q}(x)
      +\tau \boldsymbol\nu(\mathbf{q}(x)))\;d\tau \Big)\, 
        (b\,\boldsymbol\nu)(\mathbf{q}(x))
\end{align*}
and get with the same arguments as in \cite[Proposition 2.19]{MR3147436}, 
i.\,e.\ using properties of the tubular neighbourhood, that $\feta$ 
is a linear and continuous operator from $W_0^{1,p}(M)$ to 
$W^{1,p}(S_\alpha)$ satisfying $\operatorname{div} \feta b = 0$. In particular, the 
continuity constant is bounded as claimed in the Lemma and we have the 
identity
\begin{align}\begin{aligned}\label{eqn:characterisation_spatial_derivative}
  \partial_i \feta b 
    &= \exp \Big( -\int_{\eta\circ \mathbf{q}}^{s} 
      \operatorname{div} (\boldsymbol\nu\circ \mathbf{q})\circ\big(\mathbf{q}+
        \tau (\boldsymbol\nu \circ \mathbf{q})\big)\;d\tau \Big) 
      \Bigg[ \partial_i\big((b\,\boldsymbol\nu)\circ \mathbf{q}\big)\\
    &\quad + (b\,\boldsymbol\nu)\circ \mathbf{q}\, \Bigg( 
      -\int_{\eta\circ \mathbf{q}}^s \partial_i\Big(\operatorname{div} 
        (\boldsymbol\nu\circ \mathbf{q})\circ\big(\mathbf{q}+\tau 
          (\boldsymbol\nu \circ \mathbf{q})\big)\Big)\;d\tau\\
    &\qquad + \operatorname{div}(\boldsymbol\nu \circ \mathbf{q})\circ 
      \big(\mathbf{q}+ (\eta\,\boldsymbol\nu )\circ \mathbf{q}\big)
        \partial_i(\eta\circ \mathbf{q})\\
    &\qquad\quad - \operatorname{div}(\boldsymbol\nu \circ 
          \mathbf{q})\circ \big(\mathbf{q}+ s (\boldsymbol\nu
            \circ \mathbf{q})\big)\partial_i s \Bigg)\Bigg].
\end{aligned}\end{align}
Since 
$\Omega$ is an admissible in- and outflow domain, the boundary
of $S_\alpha$ is given by the disjoint sets $M^\alpha_-$, $M^\alpha_+$ 
and $\Gamma_s^\alpha$, where the common boundary of $S_\alpha$ and 
$\Omega \setminus S_\alpha$ is given by $M^\alpha_{-}$, see Figure 
\ref{fig:randstueke}. Since $S_\alpha$ is an Lipschitz domain, an 
approximation argument shows $\feta b = 0$ on $\Gamma_s^\alpha$ and 
\begin{align}\label{eqn:trace_feta}
  \left.(\feta b\circ \Psi_\eta)\right|_{M} = b\,\boldsymbol\nu
    \text{ on } M.
\end{align}
Moreover, by the assumptions on the domain, $int\,\Gamma_s^{\alpha,c} 
= \partial (\Omega\setminus\overline{S_\alpha})\setminus M_-^\alpha$) is 
not empty. Hence, there exists an open set $B\subset\subset 
\Gamma_s^{\alpha,c}$ and a function
$\mu:\partial(\Omega\setminus S_\alpha) \rightarrow \R$ which is smooth 
on $B$, $supp\;\mu \subset\subset  B$ 
and $\int_{\partial(\Omega\setminus S_\alpha)} \mu\;dA=1$. We set
\begin{align*}
  \boldsymbol{\xi}:\partial(\Omega\setminus S_\alpha)\rightarrow \R^3,
  \quad \boldsymbol{\xi}(s) := 
  \begin{cases}
    \feta b(s)\quad\quad &s\in M_{-}^\alpha,\\
    \int_{M_-^\alpha} \feta b\cdot \boldsymbol{\nu}^\alpha \;dA \;
      \mu(s)\;\boldsymbol{\nu}^{\alpha,c}(s) &s\in \Gamma_s^{\alpha,c},
  \end{cases}
\end{align*}
where $\boldsymbol\nu^{\alpha}$ is the outer normal on $S_\alpha$ and 
$\boldsymbol\nu^{\alpha,c}$ the outer normal on the Lipschitz domain 
$\Omega\setminus S_\alpha$. Using the trace operator\footnote{Formally we 
extend $\feta b\in W^{1,p}(S_\alpha)$ in an $\epsilon$-neighbourhood 
``behind $\Gamma$'' by $\boldsymbol 0$, and apply the trace operator for 
the extension of $\feta b$ in $H^{1-1/p,p}$ by zero to 
$\Gamma_s^{\alpha,c}$.}, we have $\boldsymbol{\xi}\in H^{1-1/p,p}(\partial 
(\Omega\setminus S_\alpha))$ with 
\begin{align}\label{eqn:stokes_fortsetzung_xi_norm}
  \lVert\boldsymbol \xi\rVert_{H^{1-1/p,p}(\partial 
    (\Omega\setminus S_\alpha))}
  \leq c(\alpha,p,M, \mu)\,\lVert \feta b \rVert_{W^{1,p}(S_\alpha)}. 
\end{align}
Moreover, by $\boldsymbol{\nu}^{\alpha,c} = -\boldsymbol{\nu}^\alpha$ on
$M_-^\alpha$ the identity
\begin{align*}
  \int_{\partial(\Omega\setminus S_\alpha)} \boldsymbol{\xi}\cdot 
      \boldsymbol{\nu}^{\alpha,c}
   &= \int_{M_-^\alpha}\feta b\cdot \boldsymbol{\nu}^\alpha \;dA 
        \int_{\Gamma_s^{\alpha,c}} \mu\;
        \boldsymbol{\nu}^{\alpha,c}\cdot \boldsymbol{\nu}^{\alpha,c}\;ds 
        + \int_{M^\alpha_-} \feta b\cdot \boldsymbol{\nu}^{\alpha,c}\;dA
   = 0
\end{align*}
follows. Hence, taking the unique\footnote{Unique in the set of solutions
satisfying $(\mathbf{u})^*\in L^2(\partial(\Omega\setminus S_\alpha))$, 
where $(\mathbf{u})^*$ is the non-tangential maximal function of 
$\mathbf{u}$, see \cite{MR975121}.} weak solution of the Stokes 
equation for a vanishing forcing term and the boundary data 
$\boldsymbol{\xi}$ (see \cite[Lemma 2.4]{MR1386766} and 
\cite[Theorem 14]{MR2665048}), we can extend $\feta b\in W^{1,p}(S_\alpha)$
to $\feta b \in W^{1,p}(B_\alpha)$. By the linearity of the Stokes 
equation, the continuity of the solution operator and 
\eqref{eqn:stokes_fortsetzung_xi_norm}, the continuity constant is 
again bounded as claimed. It should be noted that the restriction on 
$p$ in the theorem comes from the existence result of Stokes equation 
in Lipschitz domains.
\end{proof}
Using the same construction as in the preceding Lemma but with the 
unique very weak solution from \cite[Theorem 0.3]{MR1223521}), we can 
also extend a given $L^p(M)$ function. Since this very weak solution 
coincides with the unique weak solution of the Stokes system if regular 
data is given, a simple approximation argument and Remark  
\ref{bem:kompatibilitaet_spuroperatoren} show that the extended 
function is divergence-free and fulfils an appropriate identity for the 
normal trace.
\begin{proposition}\label{lem:fortsetzung_stationaer_schwach}
  Let $\eta\in H^2(M)$ with $\lVert \eta \rVert_{L^\infty(M)} < \alpha 
  <\kappa$ and $2\leq p < \infty$. Then there exists an linear, 
  continuous extension operator
  \begin{align*}
    \feta: L^p(M) \rightarrow L^p_\sigma(B_\alpha)
  \end{align*}
  with $\trneta\lvert_M (\feta b)\lvert_{\Omega_\eta} = b\,
  \gamma(\eta)$. If $b\in W^{1,p}_0(M)$ for some $p\in [2,3]$, 
  $\feta b$ is equivalent to the operator constructed in
  Lemma \ref{lem:fortsetzung_stationaer}. For a fixed $N\in \N$, the 
  continuity constant can be chosen uniformly with respect to $\eta\in H^2(M)$ 
  satisfying $\lVert \eta \rVert_{H^2(M)}\leq N$ and 
  $\lVert\eta\rVert_{L^\infty(M)}<\alpha$.
\end{proposition}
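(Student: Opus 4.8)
The plan is to mimic the two–step construction of Lemma~\ref{lem:fortsetzung_stationaer} literally on the tube $S_\alpha$, and to replace, on the solid part $\Omega\setminus S_\alpha$, the weak solution of the Stokes system by the unique very weak solution from \cite[Theorem 0.3]{MR1223521}, which accepts the (merely integrable) boundary data produced on $M^\alpha_-$. This yields an operator that agrees with the one of Lemma~\ref{lem:fortsetzung_stationaer} on regular data, but is defined on all of $L^p(M)$ with values in $L^p_\sigma(B_\alpha)$ instead of $W^{1,p}(B_\alpha)$.

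For $b\in L^p(M)$ I would define $\feta b$ on $S_\alpha$ by the explicit formula of Lemma~\ref{lem:fortsetzung_stationaer}. Its exponential prefactor and all its derivatives are bounded by a constant depending only on $\alpha$, $M$ and $\sup_{S_\kappa}|\operatorname{div}(\boldsymbol\nu\circ\mathbf q)|$ (the bound $\|\eta\|_{L^\infty(M)}<\alpha$ enters only as an integration limit, so this step is $\eta$–independent), hence $b\mapsto\feta b$ is linear and continuous from $L^p(M)$ into $L^p(S_\alpha)$; differentiating the formula in the normal direction shows in addition that $\partial_s(\feta b)\in L^p(S_\alpha)$. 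Consequently $\feta b$ possesses an $L^p$ normal trace on $M^\alpha_-$ and a vanishing normal trace on $\Gamma_s^\alpha$, and $\operatorname{div}\feta b=0$ in $S_\alpha$ — the latter by approximating $b$ in $L^p(M)$ by smooth functions, for which \eqref{eqn:characterisation_spatial_derivative} gives the classical computation, and using that $\operatorname{div}$ is continuous from $L^p(S_\alpha)$ into $W^{-1,p}(S_\alpha)$.

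Next I would define $\boldsymbol\xi$ on $\partial(\Omega\setminus S_\alpha)$ exactly as in Lemma~\ref{lem:fortsetzung_stationaer}: equal to $\feta b$ on $M^\alpha_-$ and equal to $\bigl(\int_{M^\alpha_-}\feta b\cdot\boldsymbol\nu^\alpha\,dA\bigr)\,\mu\,\boldsymbol\nu^{\alpha,c}$ on $\Gamma_s^{\alpha,c}$ with $\mu$ a fixed bump function. As there, $\int_{\partial(\Omega\setminus S_\alpha)}\boldsymbol\xi\cdot\boldsymbol\nu^{\alpha,c}\,dA=0$, so \cite[Theorem 0.3]{MR1223521} produces the unique very weak solution on the Lipschitz domain $\Omega\setminus S_\alpha$ with vanishing forcing and boundary datum $\boldsymbol\xi$; call it $\feta b$ on $\Omega\setminus S_\alpha$. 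Its norm is controlled by $\|\boldsymbol\xi\|\lesssim\|\feta b\|_{L^p(S_\alpha)}\lesssim\|b\|_{L^p(M)}$ with a constant depending only on $\Omega\setminus S_\alpha$, $p$, $\alpha$, $\mu$ — again $\eta$–independent. Testing $\operatorname{div}\feta b$ against $\phi\in C_0^\infty(B_\alpha)$ and applying Green's formula (Proposition~\ref{prop:part_int}) on $S_\alpha$ and on $\Omega\setminus S_\alpha$ separately, the only surviving boundary term lives on the common interface $M^\alpha_-$, and it cancels since $\boldsymbol\nu^{\alpha,c}=-\boldsymbol\nu^\alpha$ there while the normal trace of the Stokes solution on $M^\alpha_-$ equals $\boldsymbol\xi\cdot\boldsymbol\nu^{\alpha,c}=\feta b\cdot\boldsymbol\nu^{\alpha,c}$ by construction; hence the glued field lies in $L^p_\sigma(B_\alpha)$ and $\feta\colon L^p(M)\to L^p_\sigma(B_\alpha)$ is continuous with a constant uniform in $\eta$.

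It remains to identify the normal trace and to check consistency. For $b\in W^{1,p}_0(M)$ with $p\in[2,3]$ the datum $\boldsymbol\xi$ lies in $H^{1-1/p,p}(\partial(\Omega\setminus S_\alpha))$ by \eqref{eqn:stokes_fortsetzung_xi_norm}, so the very weak solution coincides with the weak one and $\feta b$ agrees with the operator of Lemma~\ref{lem:fortsetzung_stationaer}; in particular, for such $b$ (and for $b\in C_0^\infty(M)\subset W^{1,\min(p,3)}_0(M)$) we have $\feta b\in W^{1,\min(p,3)}(B_\alpha)$ with $\treta(\feta b\lvert_{\Omega_\eta})=b\,\boldsymbol\nu$ on $M$ and $\treta(\feta b\lvert_{\Omega_\eta})=\boldsymbol 0$ on $\Gamma$, whence Remark~\ref{bem:kompatibilitaet_spuroperatoren} yields $\trneta\big\rvert_M(\feta b\lvert_{\Omega_\eta})=b\,\gamma(\eta)$. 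Finally, approximating an arbitrary $b\in L^p(M)$ by $b_k\in C_0^\infty(M)$ in $L^p(M)$, continuity of $\feta$ gives $\feta b_k\to\feta b$ in $L^p_\sigma(B_\alpha)$, hence $\feta b_k\lvert_{\Omega_\eta}\to\feta b\lvert_{\Omega_\eta}$ in $E^p(\Omega_\eta)$ (the divergence is zero throughout), so $\trneta\big\rvert_M(\feta b_k\lvert_{\Omega_\eta})\to\trneta\big\rvert_M(\feta b\lvert_{\Omega_\eta})$ by continuity of $\trneta\big\rvert_M$ (Proposition~\ref{prop:spur_normale}, Definition~\ref{bem:def_trace_eta_n_part}); since $\gamma(\eta)\in L^\infty(M)$ also $b_k\gamma(\eta)\to b\,\gamma(\eta)$ in $L^p(M)\hookrightarrow(H^{1-1/r,r}_{00}(M))^\ast$, and the identity follows for all $b\in L^p(M)$. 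The main obstacle I anticipate is the bookkeeping in the third paragraph: pinning down the precise boundary space in which the very weak Stokes theory of \cite{MR1223521} is applied, verifying that the normal trace of $\feta b\lvert_{S_\alpha}$ on $M^\alpha_-$ lands there together with the zero–mean compatibility, and confirming that the normal trace of the resulting Stokes solution on $M^\alpha_-$ equals $\boldsymbol\xi\cdot\boldsymbol\nu^{\alpha,c}$ so that the gluing is genuinely divergence free; everything else is a routine adaptation of Lemma~\ref{lem:fortsetzung_stationaer} plus the density argument above.
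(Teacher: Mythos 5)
Your proposal follows essentially the same route as the paper, which itself only sketches this proof: reuse the explicit formula on $S_\alpha$, replace the weak Stokes solution on $\Omega\setminus S_\alpha$ by the very weak solution of \cite[Theorem 0.3]{MR1223521}, note coincidence with Lemma \ref{lem:fortsetzung_stationaer} for regular data, and obtain the normal-trace identity by density of smooth functions together with Remark \ref{bem:kompatibilitaet_spuroperatoren} and the continuity of $\trneta$. The additional details you supply (the gluing of the divergence across $M^\alpha_-$ and the $L^p$-approximation argument) are exactly the ``simple approximation argument'' the paper alludes to, so the proposal is correct and matches the paper's approach.
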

By the construction of our extension operators, convergence of the 
displacements $\eta_n$ imply also the ``uniform'' convergence of the 
extended functions.
\begin{lemma}\label{lem:fortsetzung_konvergenzen_stationaer}
  Let $2\leq p < \infty$, $\eta_n,\,\eta\in H^2(M)$ with
  $\lVert \eta_n \rVert_{L^\infty(M)}, \lVert \eta \rVert_{L^\infty(M)} 
  <\kappa$ and suppose that $\eta_n$ converges uniformly towards $\eta$ on $M$. Then 
  $\fetan b \rightarrow \feta b$ in $L^p(B_\alpha)$ uniformly with 
  respect to $b\in L^p(M)$ and $b\in H_0^2(M)$ satisfying $\lVert b \rVert_{L^p(M)}\leq 1$ 
  and $\lVert b\rVert_{H_0^2(M)}\leq 1$, respectively.
\end{lemma}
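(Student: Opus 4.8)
The plan is to decompose $B_\alpha = S_\alpha \cup (\Omega\setminus S_\alpha)$ (a disjoint decomposition up to the interface $M_-^\alpha$, a null set) and to estimate $\fetan b-\feta b$ separately on the two pieces, reducing everything to two elementary facts: on the tube $S_\alpha$ the only dependence of $\feta b$ on $\eta$ sits in a scalar exponential factor that is Lipschitz in $\eta$ with respect to the uniform norm, and on $\Omega\setminus S_\alpha$ the function $\feta b$ solves a Stokes problem whose (very weak) solution depends continuously on its boundary data. First I would dispose of the $H_0^2$-case: since $H_0^2(M)\hookrightarrow L^\infty(M)\hookrightarrow L^p(M)$, the unit ball of $H_0^2(M)$ lies in a fixed ball of $L^p(M)$, and for $p\in[2,3]$ and $b\in W^{1,p}_0(M)$ the extension of Proposition~\ref{lem:fortsetzung_stationaer_schwach} agrees with the one of Lemma~\ref{lem:fortsetzung_stationaer}, so no inconsistency of notation arises; hence it suffices to prove the $L^p(B_\alpha)$-convergence, uniformly over $\{b\in L^p(M):\lVert b\rVert_{L^p(M)}\le 1\}$, for the extension operator of Proposition~\ref{lem:fortsetzung_stationaer_schwach}.

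On $S_\alpha$ one has $\feta b(x)=e_\eta(x)\,(b\,\boldsymbol\nu)(\mathbf q(x))$ with
\[
  e_\eta(x):=\exp\Big(-\int_{\eta(\mathbf q(x))}^{s(x)}\big(\operatorname{div}(\boldsymbol\nu\circ\mathbf q)\big)\big(\mathbf q(x)+\tau\,\boldsymbol\nu(\mathbf q(x))\big)\,d\tau\Big),
\]
and $\eta$ enters only through the lower limit of integration. As $\Lambda$ is a $C^3$-diffeomorphism, $\operatorname{div}(\boldsymbol\nu\circ\mathbf q)$ is bounded on $\overline{S_\kappa}$, so all occurring exponents stay in a fixed bounded set; applying the mean value theorem to $\exp$ and then to the inner integral gives $\lvert e_{\eta_n}(x)-e_\eta(x)\rvert\le C\,\lvert\eta_n(\mathbf q(x))-\eta(\mathbf q(x))\rvert\le C\lVert\eta_n-\eta\rVert_{L^\infty(M)}$ with $C=C(\kappa,\Lambda)$. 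Since $\Lambda$ has bounded Jacobian, the change of variables $x=\Lambda(q,s)$ yields $\lVert(b\,\boldsymbol\nu)\circ\mathbf q\rVert_{L^p(S_\alpha)}\le C\lVert b\rVert_{L^p(M)}$, and therefore
\[
  \lVert\fetan b-\feta b\rVert_{L^p(S_\alpha)}\le\lVert e_{\eta_n}-e_\eta\rVert_{L^\infty(S_\alpha)}\lVert(b\,\boldsymbol\nu)\circ\mathbf q\rVert_{L^p(S_\alpha)}\le C\lVert\eta_n-\eta\rVert_{L^\infty(M)}\lVert b\rVert_{L^p(M)},
\]
which tends to $0$ uniformly over the unit ball of $L^p(M)$.

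On $\Omega\setminus S_\alpha$, recall that $\feta b$ is the (very weak) Stokes solution with zero forcing term and boundary datum $\boldsymbol\xi_\eta$, equal to $\feta b$ on $M_-^\alpha$ and to $\big(\int_{M_-^\alpha}\feta b\cdot\boldsymbol\nu^\alpha\,dA\big)\,\mu\,\boldsymbol\nu^{\alpha,c}$ on $\Gamma_s^{\alpha,c}$, and that the corresponding solution operator is linear and continuous (this continuity was already used to bound the continuity constant of $\feta$ in Proposition~\ref{lem:fortsetzung_stationaer_schwach}, and by linearity it applies to the difference $\boldsymbol\xi_{\eta_n}-\boldsymbol\xi_\eta$, which again satisfies the flux compatibility condition). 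On $M_-^\alpha$ one has $s(x)\equiv-\alpha$, so $\boldsymbol\xi_\eta|_{M_-^\alpha}$ is given by the explicit formula above evaluated at $s=-\alpha$, and the estimate of the previous paragraph (now with $s=-\alpha$) gives $\lVert\boldsymbol\xi_{\eta_n}-\boldsymbol\xi_\eta\rVert_{L^p(M_-^\alpha)}\le C\lVert\eta_n-\eta\rVert_{L^\infty(M)}\lVert b\rVert_{L^p(M)}$; for the $\Gamma_s^{\alpha,c}$-component, Hölder's inequality bounds the difference of the two averaging constants by $\lvert M_-^\alpha\rvert^{1/p'}\lVert\fetan b-\feta b\rVert_{L^p(M_-^\alpha)}$, hence $\lVert\boldsymbol\xi_{\eta_n}-\boldsymbol\xi_\eta\rVert_{L^p(\Gamma_s^{\alpha,c})}\le C\lVert\eta_n-\eta\rVert_{L^\infty(M)}\lVert b\rVert_{L^p(M)}$ as well. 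Feeding these bounds into the continuity of the solution operator yields $\lVert\fetan b-\feta b\rVert_{L^p(\Omega\setminus S_\alpha)}\le C\lVert\eta_n-\eta\rVert_{L^\infty(M)}\lVert b\rVert_{L^p(M)}$, and adding the two contributions proves the claim; the $H_0^2$-statement then follows from the reduction of the first paragraph.

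The only genuinely delicate point is the bookkeeping around the boundary datum on $M_-^\alpha$: one must use that the tube-extension is given by a \emph{pointwise} formula, so that its trace on $M_-^\alpha$ makes sense and converges in $L^p$ even though only uniform — not $H^2$-weak — convergence of $\eta_n$ is available, and then invoke the continuity statement for the (very weak) Stokes problem on the fixed Lipschitz domain $\Omega\setminus S_\alpha$. Everything else is a change of variables together with the Lipschitz bound $C\lVert\eta_n-\eta\rVert_{L^\infty(M)}$ for the scalar factor $e_\eta$.
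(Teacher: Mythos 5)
Your proof is correct and takes essentially the same route as the paper's: uniform convergence of the scalar exponential factor on the tube $S_\alpha$ (uniformly in $b$ in the relevant unit ball), followed by linearity and continuity of the (very weak) Stokes solution operator applied to the difference of boundary data $\boldsymbol\xi_{\eta_n}-\boldsymbol\xi_\eta$ on $\Omega\setminus S_\alpha$. Your explicit Lipschitz bound $C\,\lVert\eta_n-\eta\rVert_{L^\infty(M)}$ and the reduction of the $H_0^2$-case to the $L^p$-case via the embedding $H_0^2(M)\hookrightarrow L^p(M)$ merely make quantitative what the paper states more tersely.
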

\begin{proof}
  By the uniform convergence of $\eta_n$, 
  \begin{align*}
    \int_{\eta_n\circ \mathbf{q}}^{s}\big(\operatorname{div} (\boldsymbol\nu\circ 
      \mathbf{q})\big)(\mathbf{q}+\tau\, \boldsymbol\nu \circ 
        \mathbf{q})\;d\tau
      \rightarrow
        \int_{\eta \circ \mathbf{q}}^{s}\big(\operatorname{div} (\boldsymbol\nu\circ 
          \mathbf{q})\big)(\mathbf{q}+\tau\, \boldsymbol\nu \circ 
            \mathbf{q})\;d\tau
  \end{align*}
  uniformly in $S_\alpha$. Therefore, $\fetan b$ converges uniformly 
  to $\feta b$ in $S_\alpha$, where the convergence is also uniform with 
  respect to $b\in L^p(M)$ satisfying $\lVert b \rVert_{L^p(M)}\leq 1$ (or 
  $b\in H_0^2(M)$ satisfying $\lVert b\rVert_{H_0^2(M)}\leq 1$). Moreover,
  by the definition of the boundary data $\boldsymbol{\xi}$ and the 
  linearity and continuity of the solution operator to the Stokes system,
  this convergence carries over to the whole extension operator.
\end{proof}
Next, we look at the induced time-dependent extension operator.
\begin{lemma}\label{lem:fortsetzung_instationaer}
  Let $\eta\in \widetilde{Y}^I$ with $\lVert \eta \rVert_{L^\infty(I\times M)} 
  <\alpha <\kappa$ and $2\leq p \leq 3$. The application of the  
  operator from Lemma \ref{lem:fortsetzung_stationaer} at almost all 
  times defines a linear, continuous operator
  \begin{align*}
    \feta: H^1(I;L^2(M))\cap &L^p(I;H_0^2(M))\\
      &\rightarrow H^1(I;L^2(B_\alpha))\cap 
          C(\overline{I};H^1(B_\alpha))\cap 
              L^p(I;W^{1,p}(B_\alpha)),
  \end{align*}
  which satisfies $\operatorname{div} \feta b = 0$ in $I\times B_\alpha$ and 
  $\treta \feta b\lvert_{\Omega_\eta^I}  = b\,\boldsymbol\nu$ on 
  $I\times M$. Fixing $N\in\N$, the continuity constant can be 
  chosen uniformly with respect to $\eta\in \widetilde{Y}^I$ satisfying 
  $\lVert \eta  \rVert_{L^\infty(I\times M)} <\alpha$ and 
  $\lVert \eta \rVert_{\widetilde{Y}^I} <N$.
\end{lemma}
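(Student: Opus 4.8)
The plan is to apply the stationary operator $\feta$ of Lemma~\ref{lem:fortsetzung_stationaer} at almost every time, with the cut-off function $\beta$ chosen once and for all (admissible since $\lVert\eta\rVert_{L^\infty(I\times M)}<\alpha<\kappa$), and to upgrade the pointwise-in-time bounds to the claimed time regularity. For $b\in H^1(I;L^2(M))\cap L^p(I;H_0^2(M))$ set $(\feta b)(t,\cdot):=\feta[b(t,\cdot)]$. As $\eta\in\widetilde Y^I$ with $\lVert\eta\rVert_{\widetilde Y^I}<N$ gives $\lVert\eta(t)\rVert_{H^2(M)}<N$ for a.e.\ $t$, the stationary continuity constant is uniform in $t$; together with $H_0^2(M)\hookrightarrow W_0^{1,p}(M)$ and measurability of $t\mapsto\feta b(t)$ (which follows from the continuity of the stationary construction in $\eta$, Lemma~\ref{lem:fortsetzung_konvergenzen_stationaer}, and its linearity in $b$), this yields at once $\feta b\in L^p(I;W^{1,p}(B_\alpha))$ with the asserted bound, $\operatorname{div}\feta b=0$ in $I\times B_\alpha$, and, by \eqref{eqn:trace_feta} applied at a.e.\ $t$, $\treta\feta b\lvert_{\Omega_\eta^I}=b\,\boldsymbol\nu$ on $I\times M$.

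The core is the time derivative. On $S_\alpha$ the explicit formula reads $\feta b(t,x)=G_\eta(t,x)\,(b(t)\,\boldsymbol\nu)(\mathbf q(x))$ with $G_\eta(t,x)=\exp\big(-\int_{\eta(t,\mathbf q(x))}^{s(x)}(\operatorname{div}(\boldsymbol\nu\circ\mathbf q))(\mathbf q(x)+\tau\,\boldsymbol\nu(\mathbf q(x)))\,d\tau\big)$. Since $\operatorname{div}(\boldsymbol\nu\circ\mathbf q)$ is bounded, $|G_\eta|\le C$ and $G_\eta$ is differentiable in $t$ with $|\partial_t G_\eta|\le C\,|\partial_t\eta(t)\circ\mathbf q|$, so $\partial_t\feta b=(\partial_t G_\eta)(b\,\boldsymbol\nu)\circ\mathbf q+G_\eta(\partial_t b\,\boldsymbol\nu)\circ\mathbf q$ on $I\times S_\alpha$. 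Using the tubular (coarea) estimate $\lVert f\circ\mathbf q\rVert_{L^2(S_\alpha)}\le C\lVert f\rVert_{L^2(M)}$, $H_0^2(M)\hookrightarrow L^\infty(M)$, and $p\ge 2$, $|I|<\infty$, one obtains $\lVert\partial_t\feta b\rVert_{L^2(I;L^2(S_\alpha))}\le C(N)\big(\lVert\partial_t\eta\rVert_{L^\infty(I;L^2(M))}\lVert b\rVert_{L^p(I;H_0^2(M))}+\lVert\partial_t b\rVert_{L^2(I;L^2(M))}\big)$. On $\Omega\setminus S_\alpha$ one uses that the stationary construction yields $\feta b\lvert_{\Omega\setminus S_\alpha}=T(\feta b\lvert_{S_\alpha})$ for a fixed linear operator $T$ \emph{independent of $\eta$ and of $t$}: it builds a boundary datum $\boldsymbol\xi$ from $\feta b\lvert_{S_\alpha}$ (its (normal) trace on $M_-^\alpha$ together with the auxiliary function $\mu$ on $\Gamma_s^{\alpha,c}$) and solves the Stokes system on the fixed Lipschitz domain $\Omega\setminus S_\alpha$. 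By Lemma~\ref{lem:fortsetzung_stationaer}, $T$ is bounded on the divergence-free subspace of $W^{1,p}(S_\alpha)$ into $W^{1,p}(\Omega\setminus S_\alpha)$; by the very weak Stokes theory used in Proposition~\ref{lem:fortsetzung_stationaer_schwach} it is also bounded $L^2_\sigma(S_\alpha)\to L^2_\sigma(\Omega\setminus S_\alpha)$. Since $\partial_t(\feta b\lvert_{S_\alpha})$ is divergence-free and lies in $L^2(I;L^2(S_\alpha))$, and $T$ commutes with the Bochner time derivative, $\partial_t(\feta b\lvert_{\Omega\setminus S_\alpha})=T(\partial_t\feta b\lvert_{S_\alpha})\in L^2(I;L^2(\Omega\setminus S_\alpha))$; together with $\feta b\in L^p(I;W^{1,p}(B_\alpha))\subset L^2(I;L^2(B_\alpha))$ this gives $\feta b\in H^1(I;L^2(B_\alpha))$.

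For the continuity $\feta b\in C(\overline I;H^1(B_\alpha))$ I would use that $b\in H^1(I;L^2(M))\cap L^p(I;H_0^2(M))\hookrightarrow C(\overline I;H_0^1(M))$ (classical interpolation, using $p\ge2$ and $|I|<\infty$) and that by Lemma~\ref{lem:hoelder-einbettung} $\eta\in\widetilde Y^I\hookrightarrow C(\overline I;H^{2\lambda}(M))$ for any $\lambda\in(\tfrac12,1)$; fixing $\lambda$ close enough to $1$, a term-by-term inspection of the explicit formula and of \eqref{eqn:characterisation_spatial_derivative}, combined with the Sobolev embeddings on the two-dimensional manifold $M$ and Hölder's inequality, shows that $\feta b\lvert_{S_\alpha}$ and $\nabla(\feta b\lvert_{S_\alpha})$ are, in $L^2(S_\alpha)$, finite sums of products of $\mathbf q$-compositions of functions continuous in $t$, whence $\feta b\lvert_{S_\alpha}\in C(\overline I;H^1(S_\alpha))$. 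Applying $T$ (bounded from the divergence-free subspace of $W^{1,2}(S_\alpha)$ to $W^{1,2}(\Omega\setminus S_\alpha)$, the case $p=2$ of Lemma~\ref{lem:fortsetzung_stationaer}) gives $\feta b\lvert_{\Omega\setminus S_\alpha}\in C(\overline I;H^1(\Omega\setminus S_\alpha))$, and since the two pieces share the trace $\boldsymbol\xi\lvert_{M_-^\alpha}$ they glue to a function in $C(\overline I;H^1(B_\alpha))$. All constants involved are the stationary ones (uniform over $\lVert\eta(t)\rVert_{H^2(M)}\le N$, $\lVert\eta(t)\rVert_{L^\infty(M)}<\alpha$), the interpolation-embedding constant (depending only on $|I|$ and $p$) and $\lVert\operatorname{div}(\boldsymbol\nu\circ\mathbf q)\rVert_{C^0}$, giving the asserted uniformity. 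The main obstacle is precisely the $\Omega\setminus S_\alpha$-part of the time derivative: $\partial_t(\feta b\lvert_{S_\alpha})$ carries no spatial regularity (because $\partial_t b$ carries none), so its boundary trace is ill-defined; the remedy is to exploit that the Stokes extension operator $T$ is linear, $t$-independent, and bounded on divergence-free $L^2$ fields, so that it simply commutes with $\partial_t$.
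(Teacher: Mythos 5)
Your proposal follows essentially the same route as the paper: apply the stationary operator at a.e.\ time, read off $L^p(I;W^{1,p}(B_\alpha))$ and $C(\overline I;H^1(B_\alpha))$ from the explicit formula on $S_\alpha$ together with the embeddings $H^1(I;L^2(M))\cap L^p(I;H_0^2(M))\hookrightarrow C(\overline I;H^1_0(M))$ and $\widetilde Y^I\hookrightarrow C(\overline I;W^{1,4}(M))$, compute $\partial_t\feta b$ explicitly on $I\times S_\alpha$ (your estimate via $H_0^2(M)\hookrightarrow L^\infty(M)$ and $p\ge 2$ matches the paper's use of $L^p(I;H^2(M))\hookrightarrow L^2(I;L^\infty(M))$), and then transport the time derivative through the Stokes extension.

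One step is imprecisely formulated. You assert that the extension map $T$ is bounded $L^2_\sigma(S_\alpha)\to L^2_\sigma(\Omega\setminus S_\alpha)$ and hence commutes with $\partial_t$; but $T$ is built from the \emph{full} vector trace of its argument on $M_-^\alpha$, which is not defined for a general divergence-free $L^2$ field (only the normal component is, as a distribution), so $T$ cannot be extended to all of $L^2_\sigma(S_\alpha)$ — a point you yourself raise in your last sentence without fully resolving it. The correct repair, and what the paper does, is to use the product structure of $\partial_t\feta b=(\partial_t G_\eta)(b\,\boldsymbol\nu)\circ\mathbf q+G_\eta(\partial_t b\,\boldsymbol\nu)\circ\mathbf q$: its restriction to $I\times M_-^\alpha$ is given by the same explicit formula (not by a Sobolev trace) and is bounded in $L^2(I;L^2(M_-^\alpha))$. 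Since the very weak Stokes solution operator is linear, $t$-independent and continuous from the $L^2$ boundary data on $\partial(\Omega\setminus S_\alpha)$ into $L^2(\Omega\setminus S_\alpha)$, the difference quotients of $\feta b$ on $\Omega\setminus S_\alpha$ are the Stokes solutions to the difference quotients of $\boldsymbol\xi$ and converge in $L^2$, which yields $\partial_t\feta b\in L^2(I;L^2(B_\alpha))$. With this reformulation your argument is complete and coincides with the paper's.
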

\begin{proof}
  By $p\geq 2$ and the parabolic embedding (see 
  \cite[Theorem II.5.14.]{MR2986590}) as well as the identification of 
  the resulting interpolation space by \cite[Proposition 3.1]{MR2744150}, 
  the embedding  
  \begin{align*}
    H^1(I;L^2(M))\cap L^p(I;H_0^2(M)) 
      \hookrightarrow C(\overline{I};H^1_0(M))    
  \end{align*}
  is continuous. Furthermore, using Lemma \ref{lem:hoelder-einbettung} 
  and Sobolev's embedding
  \begin{align*}
    \widetilde{Y}^I &\hookrightarrow C(\overline{I};W^{1,4}(M)),\\
    H^1(I;L^2(M))\cap L^p(I;H_0^2(M)) 
      &\hookrightarrow L^p(I;W^{1,p}_0(M))
  \end{align*}
  are continuous. By definition of $\feta b$, the characterisation
  of the spatial derivatives \eqref{eqn:characterisation_spatial_derivative} 
  and the extension to $B_\alpha$ through the solution of 
  the Stokes system, we have $\feta b \in C(\overline{I};H^1(B_\alpha))\cap 
  L^p(I;W^{1,p}(B_\alpha))$ with an appropriately bounded continuity 
  constant. Again by the definition of $\feta b$ in $I\times S_\alpha$, 
  we have for the time derivative
  \begin{align}\label{eqn:characterisation_time_derivative}\begin{aligned}
    \partial_t \feta b &= \exp \Big( -\int_{\eta\circ \mathbf{q}}^{s} 
        \operatorname{div} (\boldsymbol\nu\circ \mathbf{q})\circ\big(\mathbf{q} 
          +\tau (\boldsymbol\nu \circ \mathbf{q})\big)\;d\tau \Big) 
        \Big[ \big((\partial_t b)\,\boldsymbol\nu\big)\circ \mathbf{q}\\
      &\quad\quad + (b\,\boldsymbol\nu)\circ \mathbf{q}\, 
        \Big(\operatorname{div}(\boldsymbol\nu \circ \mathbf{q})\circ 
        \big(\mathbf{q}+ (\eta\,\boldsymbol\nu )\circ \mathbf{q}\big)
        (\partial_t\eta)\circ \mathbf{q} \Big)\Big].
  \end{aligned}\end{align}
  Taking into account $L^p(I;H^2(M)) \hookrightarrow 
  L^2(I;L^\infty(M))$, we deduce that both $\partial_t \feta b\lvert_{I\times S_\alpha}
  \in L^2(I;L^2(S_\alpha))$ and the trace $\partial_t 
  \feta b\lvert_{I\times M_-^\alpha} \in L^2(I;L^2(M_-^\alpha))$ 
  are appropriately bounded. Using the  characterisation of the generalised 
  time derivative by the difference quotient (see \cite[Proposition A.6]{MR0348562} 
  and \cite[Proposition A.7]{MR0348562}) and the properties of the 
  solution operator of the Stokes system, $\partial_t \feta b \in 
  L^2(I,L^2(B_\alpha))$ follows with a continuity constant bounded as 
  claimed.
\end{proof}
Analogously we can argue for the induced operator from 
Proposition \ref{lem:fortsetzung_stationaer_schwach}.
\begin{corollary}\label{kor:fortsetzung_instationaer_weak}
  Let $0<\alpha <\kappa$, $\eta\in \widetilde{Y}^I$ with 
  $\lVert \eta \rVert_{L^\infty(I\times M)} <\alpha$. Moreover, let 
  $1\leq p \leq \infty$ and $2\leq q <\infty$. The application of the 
  extension operator from Proposition 
  \ref{lem:fortsetzung_stationaer_schwach} at almost all 
  times defines an linear, continuous extension operator
  \begin{align*}
    \feta: L^p(I;L^q(M)) 
        &\rightarrow L^p(I;L^q_\sigma(B_\alpha)),
  \end{align*}
  which satisfies $\trneta\lvert_M (\feta b)\lvert_{\Omega_\eta^I} = b\,
  \gamma(\eta)$ on $I\times M$. For higher spatial regularity, 
  the operator coincides with the one from Lemma 
  \ref{lem:fortsetzung_instationaer}. Moreover,  
  $b\in C(\overline{I};L^q(M))$ implies $\feta b \in C(\overline{I}; 
  L^q_\sigma(B_\alpha))$. For a fixed $N\in\N$, the continuity constant 
  can be chosen uniformly with respect to $\eta\in \widetilde{Y}^I$ 
  satisfying $\lVert \eta  \rVert_{L^\infty(I\times M)} <\alpha$ and 
  $\lVert \eta \rVert_{\widetilde{Y}^I} <N$.
\end{corollary}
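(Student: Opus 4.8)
The plan is to mirror the proof of Lemma \ref{lem:fortsetzung_instationaer} essentially verbatim, with the explicit formula on $I\times S_\alpha$ and the weak Stokes extension replaced by the very weak Stokes solution underlying Proposition \ref{lem:fortsetzung_stationaer_schwach}, so that the whole statement is reduced to a pointwise-in-time application of the stationary result together with a measurability argument. First I would fix $b\in L^p(I;L^q(M))$ and, for almost every $t\in I$, set $\feta b(t,\cdot):=\mathcal F_{\eta(t)}b(t,\cdot)$. Since $\lVert\eta(t,\cdot)\rVert_{H^2(M)}\le\lVert\eta\rVert_{\widetilde Y^I}<N$ and $\lVert\eta(t,\cdot)\rVert_{L^\infty(M)}<\alpha$ uniformly in $t$, Proposition \ref{lem:fortsetzung_stationaer_schwach} gives $\feta b(t,\cdot)\in L^q_\sigma(B_\alpha)$ with the pointwise bound $\lVert\feta b(t,\cdot)\rVert_{L^q(B_\alpha)}\le C\,\lVert b(t,\cdot)\rVert_{L^q(M)}$, where $C$ depends only on $\Omega$, $p$, $q$, $\alpha$, $N$ and the cut-off function, in particular neither on $t$ nor on $\eta$ within the admitted class.

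The key technical step is Bochner measurability of $t\mapsto\feta b(t,\cdot)$ with values in $L^q_\sigma(B_\alpha)$. By Lemma \ref{lem:hoelder-einbettung} the map $t\mapsto\eta(t,\cdot)$ is continuous from $\overline I$ into $C^0(M)$, and Lemma \ref{lem:fortsetzung_konvergenzen_stationaer} then shows that for each fixed $v\in L^q(M)$ the map $t\mapsto\mathcal F_{\eta(t)}v$ is continuous from $\overline I$ into $L^q(B_\alpha)$. Hence $t\mapsto\chi_E(t)\,\mathcal F_{\eta(t)}v$ is measurable for every measurable $E\subset I$, so by linearity $t\mapsto\feta b(t,\cdot)$ is measurable for simple $b$; a density argument in $L^p(I;L^q(M))$ combined with the uniform pointwise bound extends this to arbitrary $b$. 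Integrating the pointwise bound over $I$ yields $\feta b\in L^p(I;L^q_\sigma(B_\alpha))$ with the asserted operator bound, which is $\eta$-uniform because the stationary constant is; linearity of $\feta$ is inherited from the stationary operator.

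It remains to transfer the structural identities and the regularity statements. For almost every $t$ one has $\operatorname{div}\mathcal F_{\eta(t)}b(t,\cdot)=0$ and $\trneta\lvert_M(\mathcal F_{\eta(t)}b(t,\cdot))\lvert_{\Omega_{\eta(t)}}=b(t,\cdot)\,\gamma(\eta(t))$ by Proposition \ref{lem:fortsetzung_stationaer_schwach}; since the weak spatial divergence and the normal trace operator act only in the space variable, testing against $\varphi\in C_0^\infty(I\times B_\alpha)$ and using Fubini promotes these to $\operatorname{div}\feta b=0$ on $I\times B_\alpha$ and to $\trneta\lvert_M(\feta b)\lvert_{\Omega_\eta^I}=b\,\gamma(\eta)$ on $I\times M$. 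When $b(t,\cdot)\in W^{1,q}_0(M)$ with $q\in[2,3]$, the coincidence of the two stationary extension operators stated in Proposition \ref{lem:fortsetzung_stationaer_schwach} yields coincidence of the induced time-dependent operators with the one from Lemma \ref{lem:fortsetzung_instationaer}. Finally, for $b\in C(\overline I;L^q(M))$ and $t_n\to t$, I would write $\mathcal F_{\eta(t_n)}b(t_n,\cdot)-\mathcal F_{\eta(t)}b(t,\cdot)=\mathcal F_{\eta(t_n)}(b(t_n,\cdot)-b(t,\cdot))+(\mathcal F_{\eta(t_n)}-\mathcal F_{\eta(t)})b(t,\cdot)$, bound the first summand by $C\,\lVert b(t_n,\cdot)-b(t,\cdot)\rVert_{L^q(M)}\to0$ and send the second to zero via Lemma \ref{lem:fortsetzung_konvergenzen_stationaer} (using its uniformity over $b$ in the unit ball of $L^q(M)$), obtaining $\feta b\in C(\overline I;L^q_\sigma(B_\alpha))$. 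The only real obstacle is the measurability-in-time bookkeeping; once Lemma \ref{lem:fortsetzung_konvergenzen_stationaer} and the $C^0$-in-time regularity of $\eta$ are in hand, everything else is a routine pointwise-in-time argument.
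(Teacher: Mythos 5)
Your proposal is correct and follows the same route the paper intends: the corollary is stated as the almost-everywhere-in-time application of the stationary operator from Proposition \ref{lem:fortsetzung_stationaer_schwach}, and the paper leaves the details as "analogous" to Lemma \ref{lem:fortsetzung_instationaer}. Your write-up simply makes explicit the measurability-in-time and uniform-bound bookkeeping (via Lemma \ref{lem:hoelder-einbettung} and Lemma \ref{lem:fortsetzung_konvergenzen_stationaer}) that the paper omits, and all of those steps check out.
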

The next Lemma states some convergence properties of the time-dependent 
extension operators.
\begin{lemma}\label{lem:fortsetzung_konvergenzen}
  Let $0<\alpha <\kappa$, $N\in\N$ and $\eta,\, \eta_n \in \widetilde{Y}^I$ 
  with $\lVert \eta \rVert_{\widetilde{Y}^I},\, \lVert \eta_n 
  \rVert_{\widetilde{Y}^I} <N$ and \linebreak $\lVert \eta 
  \rVert_{L^\infty(I\times M)},\, \lVert \eta_n \rVert_{L^\infty(I\times M)} 
  <\alpha$ for all $n\in\N$.
  \begin{enumerate}[label=\alph*)]
    \item Let $1\leq p \leq \infty$, $2\leq q < \infty$ and $b\in 
      L^p(I;L^q(M))$. If $\eta_n$ converges uniformly towards $\eta$ in $I\times M$, 
      then $\fetan b \rightarrow \feta b$ in $L^p(I;L^q(B_\alpha))$. 
      Moreover, for $b\in C(\overline{I};L^q(M))$, one has $\fetan b \rightarrow 
      \feta b$ in $C(\overline{I};L^q(B_\alpha))$.
    \item If $b_n$ converges weakly towards $b$ in $L^2(I\times M)$,
      $\fetan b_n$ converges weakly towards $\feta b$ in 
      $L^2(I\times B_\alpha)$.
    \item Let $2\leq p \leq 3$ and $b\in H^1(I;L^2(M))\cap 
      L^p(I;H_0^2(M))$. If $\eta_n\rightarrow \eta$ uniformly 
      on $I\times M$ and $\partial_t\eta_n\rightarrow \partial_t \eta$ in 
      $L^2(I,L^2(M))$, then $\fetan b \rightarrow \feta b$ in 
      $H^1(I;L^2(B_\alpha))\cap L^p(I;W^{1,p}(B_\alpha))\cap 
      C(\overline{I};L^4(B_\alpha))$.
  \end{enumerate}
\end{lemma}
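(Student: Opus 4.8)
The plan is to prove each of the three convergence statements (a), (b), (c) by going through the two-step construction of the extension operator $\feta$: first the explicit formula on the tube $S_\alpha$, and then the extension to $B_\alpha$ via the solution operator of the Stokes system, which is linear and continuous and therefore preserves all the relevant convergences. Throughout, I would fix the cut-off function $\beta$ uniformly for all the Hanzawa transforms $\Psi_{\eta_n}$ and $\Psi_\eta$, so that the stationary results Lemma~\ref{lem:fortsetzung_konvergenzen_stationaer} and Lemma~\ref{lem:inst.konv_hanzawa} apply.

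For part (a), I would essentially lift the stationary Lemma~\ref{lem:fortsetzung_konvergenzen_stationaer} to the time-dependent setting. Since $\eta_n\to\eta$ uniformly on $\overline I\times M$, the integral $\int_{\eta_n\circ\mathbf q}^{s}\operatorname{div}(\boldsymbol\nu\circ\mathbf q)\circ(\mathbf q+\tau(\boldsymbol\nu\circ\mathbf q))\,d\tau$ converges uniformly on $\overline I\times S_\alpha$, hence the exponential prefactor in the definition of $\feta b$ converges uniformly, and therefore $\fetan b\to\feta b$ pointwise-uniformly on $\overline I\times S_\alpha$ with a bound uniform in $b$ with $\|b\|_{L^q(M)}\le 1$. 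Integrating in time (using $|I|<\infty$ and dominated convergence, the $L^p(I;L^q)$ norm being controlled by the uniform bound on $[0,T]$) gives $\fetan b\to\feta b$ in $L^p(I;L^q(S_\alpha))$; the extension to $B_\alpha$ via the continuous Stokes solution operator then yields convergence in $L^p(I;L^q(B_\alpha))$. The statement for $b\in C(\overline I;L^q(M))$ follows the same way, replacing $L^p(I;\cdot)$ by the sup-norm in time.

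For part (b), weak convergence, I would argue by duality. The extension operators $\fetan$ are uniformly bounded from $L^2(I\times M)$ to $L^2(I\times B_\alpha)$ by Corollary~\ref{kor:fortsetzung_instationaer_weak} (with $N$ fixed), so $\fetan b_n$ is bounded in $L^2(I\times B_\alpha)$ and has a weakly convergent subsequence; it suffices to identify the limit. For a fixed test function $\boldsymbol\varphi\in C_0^\infty(I\times B_\alpha)$ (or $L^2$), I would split $\int \fetan b_n\cdot\boldsymbol\varphi = \int (\fetan - \feta)b_n\cdot\boldsymbol\varphi + \int \feta b_n\cdot\boldsymbol\varphi$. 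The second term converges to $\int\feta b\cdot\boldsymbol\varphi$ since $\feta$ is a \emph{fixed} bounded linear operator (its adjoint applied to $\boldsymbol\varphi$ is a fixed $L^2$ function, against which $b_n\rightharpoonup b$ tests). The first term: the operators $\fetan-\feta$ converge to $0$ strongly on each fixed function by part (a), but one needs this uniformly enough to beat the boundedness of $b_n$; the clean way is to note from the explicit tube formula that $\|(\fetan-\feta)b_n\|_{L^2(I\times S_\alpha)}\le \varepsilon_n\|b_n\|_{L^2(I\times M)}$ with $\varepsilon_n\to 0$ (the prefactor difference is uniformly small), and the Stokes extension is uniformly continuous, so $\|(\fetan-\feta)b_n\|_{L^2(I\times B_\alpha)}\to 0$, killing the first term. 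Since the limit is independent of the subsequence, the whole sequence converges.

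Part (c) is the delicate one and I expect it to be \textbf{the main obstacle}. Here one must upgrade to the norm of $H^1(I;L^2(B_\alpha))\cap L^p(I;W^{1,p}(B_\alpha))\cap C(\overline I;L^4(B_\alpha))$. On the tube $S_\alpha$, the spatial derivatives of $\fetan b$ are given by the explicit formula \eqref{eqn:characterisation_spatial_derivative} and the time derivative by \eqref{eqn:characterisation_time_derivative}; both involve the exponential prefactor (converging uniformly, with uniformly bounded spatial derivatives), the data $b$ and its derivatives $\nabla b$, $\partial_t b$, and crucially the terms $\partial_i(\eta_n\circ\mathbf q)$ and $(\partial_t\eta_n)\circ\mathbf q$. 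Uniform convergence of $\eta_n$ alone does not control these; this is precisely why the hypotheses include $\eta_n\to\eta$ uniformly \emph{and} $\partial_t\eta_n\to\partial_t\eta$ in $L^2(I;L^2(M))$, plus the uniform bound in $\widetilde Y^I$. I would use the latter: $\eta_n$ bounded in $\widetilde Y^I$ plus uniform convergence and Aubin--Lions (as in the proof of Lemma~\ref{lem:inst.konv_hanzawa}) gives $\nabla\eta_n\to\nabla\eta$ strongly in $L^p(I;L^p(M))$, which handles the spatial-derivative terms; the hypothesis on $\partial_t\eta_n$ together with $L^p(I;H^2(M))\hookrightarrow L^2(I;L^\infty(M))$ handles the products $(b\,\boldsymbol\nu)\circ\mathbf q\,(\partial_t\eta_n)\circ\mathbf q$ in \eqref{eqn:characterisation_time_derivative}. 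Assembling these, $\fetan b\to\feta b$ in $H^1(I;L^2(S_\alpha))\cap L^p(I;W^{1,p}(S_\alpha))$; the $C(\overline I;L^4)$ convergence then comes from the parabolic embedding $H^1(I;L^2)\cap L^p(I;W^{1,p})\hookrightarrow C(\overline I;H^1)\hookrightarrow C(\overline I;L^4)$ used in the proof of Lemma~\ref{lem:fortsetzung_instationaer}, applied to the difference $\fetan b - \feta b$. Finally the Stokes extension operator, being linear and continuous with constants uniform in $n$ (for fixed $N$) and, as shown in Lemma~\ref{lem:fortsetzung_instationaer}, mapping into the required intersection space, transports the convergence from $S_\alpha$ to $B_\alpha$, completing the proof.
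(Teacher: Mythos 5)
Your overall strategy coincides with the paper's: treat the explicit formula on the tube $S_\alpha$ first and then transport every convergence through the fixed, linear, continuous Stokes solution operator on $\Omega\setminus S_\alpha$. Parts (a) and (b) are correct; for (a) the paper argues by density from the $C(\overline I;L^q)$ case using the uniform operator bounds of Corollary \ref{kor:fortsetzung_instationaer_weak}, whereas you use dominated convergence plus the uniformity in $b$ from Lemma \ref{lem:fortsetzung_konvergenzen_stationaer} -- both work; your splitting in (b) is just an explicit version of the paper's remark that the exponential factor converges uniformly, so that $\fetan-\feta$ tends to zero in operator norm while $\feta b_n\rightharpoonup\feta b$ by weak continuity of a fixed bounded operator.

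In part (c), however, the H\"older bookkeeping as you state it does not close, in two places. For the spatial derivatives, the problematic term in \eqref{eqn:characterisation_spatial_derivative} is $(b\,\boldsymbol\nu)\circ\mathbf q\,\partial_i(\eta_n\circ\mathbf q)$: with $\nabla\eta_n\to\nabla\eta$ only in $L^p(I;L^p(M))$ and $b\in L^p(I;L^\infty(M))$, H\"older in time yields convergence of the product merely in $L^{p/2}(I;L^p)$, not $L^p(I;L^p)$. The paper instead uses the parabolic embedding $b\in L^{2p}(I;L^{2p}(M))$ together with Aubin--Lions in the stronger form $\eta_n\to\eta$ in $L^{2p}(I;W^{1,2p}(M))$ (available since $H^2(M)\hookrightarrow\hookrightarrow W^{1,2p}(M)$), so that a $2p$--$2p$ pairing lands in $L^p$. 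For the time derivative, the term $(b\,\boldsymbol\nu)\circ\mathbf q\,(\partial_t\eta_n)\circ\mathbf q$ in \eqref{eqn:characterisation_time_derivative} must converge in $L^2(I;L^2(S_\alpha))$; pairing $b\in L^2(I;L^\infty)$ with $\partial_t\eta_n\to\partial_t\eta$ in $L^2(I;L^2)$ only gives $L^1(I;L^2)$. The paper first upgrades the hypothesis by interpolation, $\lVert\partial_t\eta_n-\partial_t\eta\rVert_{L^6(I;L^2)}\le\lVert\cdot\rVert_{L^\infty(I;L^2)}^{2/3}\lVert\cdot\rVert_{L^2(I;L^2)}^{1/3}$ (using the uniform $\widetilde Y^I$ bound), and pairs this with $b\in L^3(I;L^\infty(M))$, which follows from $H^1(I;L^2)\cap L^p(I;H^2_0)\hookrightarrow L^3(I;H^{4/3}(M))\hookrightarrow L^3(I;L^\infty(M))$; then $1/3+1/6=1/2$ closes the estimate. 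Both repairs use only tools you already invoke, so this is a quantitative gap rather than a wrong approach, but as written the stated exponents do not suffice.
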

\begin{proof}
  \hspace*{1em}\textit{a)} Let $b\in C(\overline{I};L^q(M))$. By the 
    definition of $\feta b$ on $I\times S_\alpha$ and the extension to 
    $I\times B_\alpha$ through the solution of the Stokes system, 
    $\fetan b$ converges towards $\feta b$ in 
    $C(\overline{I}; L^q(B_\alpha))$. By density and the uniform bound 
    on the continuity constants from Corollary 
    \ref{kor:fortsetzung_instationaer_weak}, the claim follows.\newline
  \hspace*{1em}\textit{b)} By the definition of $\feta b$ on   
    $I\times S_\alpha$, $\fetan b_n$ converges weakly towards $\feta b$ 
    in $L^2(I\times S_\alpha)$ (note that the integral term converges 
    uniformly). Also the boundary values for the Stokes  
    equation $\boldsymbol\xi_n$ converge weakly towards $\boldsymbol\xi$
    in $L^2(I\times M_-^\alpha)$. Since the solution operator of the 
    Stokes equation is linear and continuous, the weak convergence 
    carries over to the whole extension operator.\newline
  \hspace*{1em}\textit{c)} Using the continuous embedding
    \begin{align*}
      H^1(I;L^2(M))\cap L^p(I;H_0^2(M)) 
        \hookrightarrow C(\overline{I};H_0^1(M))
        \hookrightarrow C(\overline{I};L^4(B_\alpha))
    \end{align*}
    (see the proof of Lemma \ref{lem:fortsetzung_instationaer} and 
    Sobolev's embedding), \textit{a)} implies $\fetan b \rightarrow \feta b$ in 
    $C(\overline{I};L^4(B_\alpha))$. Moreover, by the parabolic embedding
    (see \cite[Chapter I, Proposition 3.1]{MR1230384}) it follows 
    $b\in L^{2p}(I;L^{2p}(M))$. Since $H^2(M) \hookrightarrow\hookrightarrow 
    W^{1,2p}(M) \hookrightarrow\hookrightarrow L^2(M)$ are continuous and 
    compact, Aubin-Lions-Simons Lemma (see \cite[Theorem II.5.16]{MR2986590}) 
    implies that the embedding
    \begin{align*}
      \left\{v\in L^{2p}(I;H^2(M))\,\left|\, 
          \partial_t v \in L^{2}(I; L^2(M))\right.\right\}
        \hookrightarrow\hookrightarrow
      L^{2p}(I;W^{1,2p}(M))
    \end{align*}
    is also compact, i.\,e.\ $\eta_n$ converges strongly towards $\eta$ 
    in $L^{2p}(I;W^{1,2p}(M))$. Hence, by the characterisation of the 
    spatial derivative \eqref{eqn:characterisation_spatial_derivative}, 
    $\partial_i \fetan b$ converges towards $\partial_i \feta b$ in 
    $L^p(I;L^p(S_\alpha))$. To treat the time derivative, we use the 
    interpolation inequality 
    \begin{align*}
      \lVert \partial_t \eta_n - \partial_t \eta\rVert_{L^6(I;L^2(M))}
      \leq \lVert \partial_t \eta_n - \partial_t \eta
          \rVert^\frac{2}{3}_{L^\infty(I;L^2(M))}
        \;\lVert \partial_t \eta_n - \partial_t \eta
          \rVert^\frac{1}{3}_{L^2(I;L^2(M))}
    \end{align*}
    to show the strong convergence of $\partial_t \eta_n$ towards 
    $\partial_t \eta$ in $L^6(I;L^2(M))$. Arguing again by interpolation 
    and Sobolev's embedding, the embeddings $$H^1(I;L^2(M))\cap L^p(I;H_0^2(M)) 
    \hookrightarrow L^3(I;H^{4/3}(M)) \hookrightarrow L^3(I;L^\infty(M))$$ 
    are continuous, i.\,e.\ $b\in L^3(I;L^\infty(M))$. By the characterisation 
    of the time derivative \eqref{eqn:characterisation_time_derivative}, 
    $\partial_t \fetan b$ converges towards $\partial_t \feta b$ in 
    $L^2(I;L^2(S_\alpha))$. Using the properties of the Stokes operator, 
    these convergences carry over to the extension operator defined on 
    $I\times B_\alpha$, which finishes the proof.
\end{proof}
\section{Main Result}
\label{section:main_result}
Since our regularity is not sufficient to treat the force-coupling 
boundary term $\mathbf{F}$, we add the weak formulations of the 
shell and the fluid and couple the test functions in a way that the 
force-coupling term vanishes. This implies that the test functions 
depend on the solution of the shell equation. For $\eta\in \widetilde{Y}^I$ 
with $\lVert \eta \rVert_{L^\infty(I\times M)} <\kappa$, we therefore define 
the canonically normed space $T^I_{\eta}$ as the set of all couples 
$(b,\boldsymbol\varphi)$, where 
\begin{align*}
  b&\in H^1(I;L^2(M))\cap L^3(I;H_0^2(M)),\\
  \boldsymbol\varphi&\in W_\eta:= H^1(I;L^2(\Omega_{\eta(t)}))\cap 
    L^{3}(I;W^{1,3}(\Omega_{\eta(t)}))
      \cap L^\infty(I;L^4(\Omega_{\eta(t)})),
\end{align*}
with $b(T,\cdot)=0$, $\boldsymbol\varphi(T,\cdot)=\boldsymbol 0$, 
$\operatorname{div} \boldsymbol\varphi = 0$ (see Remark 
\ref{grundlagen:Spuroperator_Raum_Testfkt}) and for which 
$\boldsymbol\varphi -\feta b$ can be approximated by functions 
$\boldsymbol\varphi_n\in W_\eta$, $\boldsymbol\varphi_n(T,\cdot)= 
\boldsymbol 0$, $\operatorname{div} \boldsymbol\varphi_n = 0$ vanishing 
in a neighbourhood of the moving boundary. This implies $\treta 
\boldsymbol\varphi = b\,\boldsymbol\nu$ on $I\times M$. We call 
$(\mathbf{f},\, g,\, \mathbf{u}_0,\, \eta_0,\, \eta_1)$ 
\emph{admissible data} if $\mathbf{f}\in L^2((0,\infty)\times B_\kappa)$, 
$g\in L^2((0,\infty)\times M)$, $\eta_0\in H_0^2(M)$ with 
$\lVert \eta_0 \rVert_{L^\infty(M)}<\kappa$, $\eta_1 \in L^2(M)$ and 
$\mathbf{u}_0\in L^2_{\sigma}(\Omega_{\eta_0})$ with
\begin{align*}
  \langle \trnetanull\lvert_{M}\mathbf{u}_0,b\rangle 
    = \int_M \eta_1\,\gamma(\eta_0)\,b\;dA.
\end{align*}
Hence, our notion of a weak solution to 
\eqref{eqn:stokes}--\eqref{eqn:gln:anfangswert} is the following.
\begin{definition}[weak solution]\label{weak_solution}
  Let $(\mathbf{f},\, g,\, \mathbf{u}_0,\, \eta_0,\, \eta_1)$ be 
  admissible data. We call the couple $(\eta,\mathbf{u})$ a \emph{weak 
  solution} to \eqref{eqn:stokes}--\eqref{eqn:gln:anfangswert} 
  on the interval $I:=(0,T)$ if $\eta \in Y^I$ with 
  $\lVert \eta \rVert_{L^\infty(I\times M)} <\kappa$, $\eta(0,\cdot) = 
  \eta_0$ and $\mathbf{u}\in X^I_\eta$ with $\treta \mathbf{u} 
  = \partial_t\eta\,\boldsymbol\nu$ on $I\times M$ satisfies
  \begin{align}\label{eqn:schwache_formulierung_def}\begin{aligned}
    - &\int_I\intetat\mathbf{u}\cdot\partial_t\boldsymbol\varphi\;dx\,dt    
      - \frac{1}{2}\int_I\int_M (\partial_t\eta)^2\,b\,\gamma(\eta)\;dA\,dt
    - 2\int_I\int_M \partial_t\eta\,\partial_t b\;dA\,dt\\
      &+ 2\int_I K(\eta,b)\;dt
      + 2\int_I\intetat \mathbf{D}\mathbf{u}
        :\mathbf{D}\boldsymbol\varphi\;dx\,dt\\
    &\quad + \frac{1}{2}\int_I\intetat(\mathbf{u}\cdot \nabla)
        \mathbf{u}\cdot\boldsymbol\varphi\;dx\,dt
      - \frac{1}{2}\int_I\intetat(\mathbf{u}\cdot \nabla)\boldsymbol
        \varphi\cdot\mathbf{u}\;dx\,dt\\
    &\qquad = \int_I\int_M g\,b\;dA\,dt
      + \int_I\intetat\mathbf{f}\cdot\boldsymbol\varphi\;dx\,dt
      + \int_{\Omega_{\eta_0}}\hspace{-0.2cm}\mathbf{u}_0\cdot 
        \boldsymbol\varphi(0,\cdot)\;dx 
      + 2\int_M \eta_1\,b(0,\cdot)\;dA\raisetag{2.5cm}
  \end{aligned}\end{align}
  for all $(b,\boldsymbol\varphi)\in T^I_\eta$.
\end{definition}
We remark that by our regularity the appearing integrals are 
well-defined, as we will show exemplary for the first convective term. 
By the Korn-type inequality from Lemma \ref{lem:korn_type_inequality}, 
we have $\mathbf{u}\in L^2(I;W^{1,\frac{13}{7}}(\Omega_{\eta(t)}))$. 
Moreover, Sobolev's embedding yields \linebreak 
$\mathbf{u}\in L^2(I;L^\frac{52}{11}(\Omega_{\eta(t)}))$. Since 
$\frac{7}{13} + \frac{11}{52} + \frac{1}{4}=1$, H\"older's inequality
yields
\begin{align*}
  \Big|\int_I\intetat(\mathbf{u}&\cdot \nabla)\mathbf{u}
      \cdot\boldsymbol\varphi\;dx\,dt\Big|\\
    &\leq \int_I \lVert \mathbf{u}(t,\cdot)\rVert_{L^{52/11}(\Omega_{\eta(t)})}
      \lVert \nabla \mathbf{u}(t,\cdot)\rVert_{L^{13/7}(\Omega_{\eta(t)})}
      \lVert \boldsymbol\varphi(t,\cdot)\rVert_{L^4(\Omega_{\eta(t)})}\;dt\\
    &\leq c\,\lVert \mathbf{u}\rVert^2_{X^I_\eta}
      \lVert \boldsymbol\varphi\rVert_{T^I_\eta}.
\end{align*}
%
%
Our main theorem is the following.
\begin{theorem}\label{main_theorem}
  Let $\Omega$ be an admissible in- and outflow domain and 
  $(\mathbf{f},\, g,\, \mathbf{u}_0,\, \eta_0,\, \eta_1)$ admissible 
  data. There exists a time $0<T^\ast\leq \infty$ such that for all $0< T < 
  T^\ast$ a weak solution $(\eta,\mathbf{u})$ of 
  \eqref{eqn:stokes}--\eqref{eqn:gln:anfangswert} exists on the interval 
  $I:=(0,T)$ and satisfies
  \begin{align}\label{lem:eqn_esssup}
    \operatorname*{esssup}_{t\in (0,T)}\sqrt{E(t)}
    &\leq \sqrt{E_0} 
        + \int_0^{T}\!\! \frac{1}{\sqrt{2}}\,\lVert\mathbf{f}(s,\cdot)
        \rVert_{L^2(\Omega_{\eta(s)})}
        + \frac{1}{2}
          \lVert g(s,\cdot) \rVert_{L^2(M)}\;ds.
  \end{align}
  If the admissible data is sufficiently small, we have $T^\ast = 
  \infty$. If $T^\ast< \infty$, we find a $T<T^\ast$ and a weak solution 
  $(\eta,\mathbf{u})$ such that the maximal displacement 
  $\lVert \eta \rVert_{L^\infty(I\times M)}$ is arbitrary close to 
  $\kappa$.  
\end{theorem}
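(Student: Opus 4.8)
The plan is to follow the three-layer approximation scheme outlined in the introduction, mirroring \cite{MR3147436}. First I would set up the \emph{regularised, decoupled, linearised} problem: given a displacement $\delta\in C^0(\overline I\times M)$ with $\|\delta\|_{L^\infty(I\times M)}$ bounded away from $\kappa$, replace the moving domain $\Omega_{\eta(t)}$ by the smoothed domain $\Omega_{\randreg\delta(t)}$ (whose boundary is $C^4$ by Proposition \ref{prop:reg_rand:wohldef}), linearise the convective term by freezing a given velocity field in the transport slot, and solve the resulting linear fluid–shell system by a Galerkin method in the separable Hilbert space $H_M(\Omega_{\randreg\delta(t)})$. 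The extension operator $\feta$ from Lemma \ref{lem:fortsetzung_instationaer}/Corollary \ref{kor:fortsetzung_instationaer_weak} is used to build the coupled test functions so that the surface-force term $\mathbf F$ never appears explicitly; the Korn-type inequality (Lemma \ref{lem:korn_type_inequality}) and the trace/Green machinery of Section \ref{sec:moving_domains} give the a~priori bound. At this level the initial datum $\mathbf u_0$, which lives on $\Omega_{\eta_0}$, must be transplanted to $\Omega_{\randreg\delta(0,\cdot)}$; here Proposition \ref{prop:reg_rand:anfangswert}(b) — the approximation ``from above'' and the $\epsilon^{1/2}$ shift — is exactly what makes $\Omega_{\eta_0}\subset\subset\Omega_{\randreg\delta(0,\cdot)}$, so the extension by zero of (a Piola transform of) $\mathbf u_0$ is admissible.

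Second I would restore the coupling via a fixed-point argument. Define the map $\delta\mapsto\eta$ sending the frozen displacement to the shell component of the solution of the regularised-linearised problem (with the frozen velocity also updated), and show it has a fixed point on a ball of $\widetilde Y^I$ intersected with $\{\|\cdot\|_{L^\infty(I\times M)}\le\alpha<\kappa\}$ by Schauder's theorem: the energy estimate \eqref{form_apriori_11} gives the self-map property for $T$ small (or for small data, for all $T$), and compactness of the map comes from the Aubin–Lions embeddings for $\widetilde Y^I$ (Lemma \ref{lem:hoelder-einbettung}) together with the convergence lemmas for the Hanzawa transform (Lemma \ref{lem:inst.konv_hanzawa}), the extension operator (Lemma \ref{lem:fortsetzung_konvergenzen}), and the boundary-compatibility closure (Lemma \ref{moving_boundary_data}). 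The regularisation of the displacement is crucial here because it decouples the (fixed, smooth) fluid domain from the fixed-point variable, avoiding the loss of regularity under transformation to the reference configuration.

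Third comes the limit $\epsilon\to0$ that removes the regularisation. One passes to subsequential limits using the uniform energy bound; the new and genuinely hard input is the \emph{compactness theorem} for the velocity — an analogue of the Lengeler–Růžička/Muha–Čanić argument adapted in Subsection \ref{subsection:compactness} — which upgrades weak convergence of $\mathbf u_\epsilon$ to strong $L^2$ convergence on the space-time domain so that the convective terms pass to the limit. This rests on Lemma \ref{lem:density_for_compactness}, which produces approximations of divergence-free fields in $H_M(\Omega_\eta)$ with support uniformly away from the moving boundary and small error in $(H^{1/4}(\R^3))^\ast$; combined with the Lions–Aubin-type time-regularity coming from the equation itself, this yields the needed compactness despite the merely Hölder boundary. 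Along the way $\randreg\delta_\epsilon\to\eta$ uniformly (Lemma \ref{lem:reg_rand:konv}) and the initial data converge (Proposition \ref{prop:reg_rand:anfangswert}, Lemma \ref{lem:reg_rand:konv}(c)), so $\eta(0,\cdot)=\eta_0$ and $\mathbf u(0,\cdot)=\mathbf u_0$ in the limit. The energy inequality \eqref{lem:eqn_esssup} survives the limit by weak lower semicontinuity applied to \eqref{form_apriori_11}.

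Finally, the statement about $T^\ast$ is a continuation/blow-up argument: let $T^\ast$ be the supremum of times for which the construction yields $\|\eta\|_{L^\infty(I\times M)}<\kappa$; the self-map radius in the Schauder step can be kept strictly below $\kappa$ as long as the accumulated forcing and initial energy in \eqref{lem:eqn_esssup} keep $\sqrt{E(t)}$ (hence, by coercivity of $K$ and the embedding $H^2(M)\hookrightarrow L^\infty(M)$, the displacement) controlled; for small data this control is uniform in $t$, giving $T^\ast=\infty$, while if $T^\ast<\infty$ the construction must approach the constraint, i.e.\ for suitable $T<T^\ast$ one has $\|\eta\|_{L^\infty(I\times M)}$ arbitrarily close to $\kappa$. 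I expect the main obstacle to be the $\epsilon\to0$ compactness step: verifying that the preparatory density result Lemma \ref{lem:density_for_compactness} interfaces correctly with the time-translation estimates on the non-cylindrical domains $\Omega_{\randreg\delta_\epsilon}^I$, uniformly in $\epsilon$, so that the Lions–Aubin-type argument for the velocity actually closes.
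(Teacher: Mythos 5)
Your overall architecture coincides with the paper's: decoupled/regularised/linearised problem solved by Galerkin, a fixed point to restore the coupling, the compactness theorem to pass $\epsilon\to0$, and a continuation argument for $T^\ast$. Two steps as you describe them, however, would not close.

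First, the fixed point. You define ``the map $\delta\mapsto\eta$ sending the frozen displacement to the shell component of the solution'' and invoke Schauder. This presupposes that the linearised problem has a \emph{unique} weak solution (or at least a continuous selection), which is not available: the solutions are produced by a Galerkin limit and no uniqueness is proved. The paper circumvents this by working with the \emph{set-valued} map $F:(\delta,\mathbf v)\mapsto\{\text{all weak solutions }(\eta,\mathbf u)\}$ on a closed convex subset of $C(\overline I\times M)\times L^2(I\times\R^3)$ and applying the Bohnenblust--Karlin theorem, for which one must verify nonemptiness, convexity of the values (this uses linearity of the decoupled problem), closedness of the graph, and relative compactness of the image. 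You would either have to prove uniqueness for the linearised system or switch to this set-valued framework. Second, and related: since your fixed-point variable includes the frozen velocity, compactness of the solution map in the $\mathbf v$-slot requires \emph{strong} $L^2(I\times\R^3)$ convergence of the velocities $\mathbf u_n$ along sequences $(\delta_n,\mathbf v_n)$. The Aubin--Lions/H\"older embeddings you cite only give compactness for the displacement component; the velocity component already needs the full compactness result (Lemma \ref{lem:kompaktheit}), which you reserve for the $\epsilon\to0$ stage. In the paper that lemma is invoked twice: once to show $F(D)$ is relatively compact and has closed graph, and once in the limit $\epsilon\to0$. A minor further point: a Galerkin scheme ``in $H_M(\Omega_{\randreg\delta(t)})$'' is not directly meaningful because that space is time-dependent; one needs the time-dependent basis $(W_k,\mathbf W_k)$ obtained by Piola-transforming a fixed basis on the reference domain, which is what makes the resulting integro-differential ODE system well-posed while preserving the divergence constraint and the kinematic coupling.
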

\begin{remark}
  For $T^\ast<\infty$ the displacement of our solution is arbitrary
  close to $\kappa$, at which point different parts of the shell could 
  touch each other, i.\,e.\ reach a situation which is not covered by our
  mathematical model.
\end{remark}

\subsection{Compactness}
\label{subsection:compactness}
By the non-linearity of the convective term, the weak convergences
implied by the formal a priori estimate for some approximate solutions
are insufficient to pass to the limit but a compactness argument is
required. Because of our noncylindrical domain, classical arguments
like Aubin-Lions \cite{MR0259693} are not applicable.  In particular,
the representation of the dual spaces and an appropriate notion of a
generalised time derivative are not clear. The proof of Aubin-Lions
Lemma uses basically of the fundamental theorem of calculus, an
application of Ehrling's Lemma and an Arzela-Ascoli argument.
Lengeler used in \cite{phdlengeler} (see also \cite{MR3147436}) the weak 
formulation of his problem instead of the fundamental theorem of calculus 
(and some modified Ehrling Lemma) to prove compactness. A careful analysis 
of his proof shows that replacing his extension operator
${\mathcal{F}_{\eta_n}}{\mathcal{M}_{\eta_n}}$ by our extension
operator $\feta$ and using the framework developed above, especially the
density result from Lemma \ref{lem:density_for_compactness}, the
result also holds in our situation. More precisely, we have the
following generalisation of \cite[Proposition 3.8]{MR3147436}:
\begin{lemma}\label{lem:kompaktheit}
  Let $0<\alpha<\kappa$ and $(\mathbf{f},\, g,\, \mathbf{u}_0,\, 
  \eta_0,\, \eta_1)$ be admissible data, $(\delta_n)_{n\in \N}\subset 
  \widetilde{Y}^I$ a bounded sequence with $\lVert \delta_n 
  \rVert_{L^\infty(I\times M)} <\alpha$ and assume that
  \begin{align*}
    \delta_n  \rightarrow \delta \qquad 
      \text{uniformly in } I \times M
  \end{align*}
  for some $\delta \in \widetilde{Y}^I$  with $\lVert \delta 
  \rVert_{L^\infty(I\times M)}<\alpha$. Moreover, let 
  $\big((\eta_n,\mathbf{u}_n,\mathbf{v}_n,\mathbf{u}_0^n, 
  \eta_1^n)\big)_{n\in\N}$ be a bounded sequence in $Y^I\times 
  X^I_{\delta_n}\times \widetilde{X}^I_{\delta_n}
  \times L^2(\Omega_{\delta_n(0)})\times L^2(M)$ satisfying 
  \begin{align}\label{lem:komp_equation}\begin{aligned}
    - &\int_I\!\intdeltan\mathbf{u}_n\cdot\partial_t\boldsymbol\varphi\;dx\,dt 
      - \frac{1}{2}\int_I\!\int_M (\partial_t\eta_n)\,(\partial_t\delta_n)\,b\,
        \gamma(\delta_n)\;dA\,dt\\
    &- 2 \int_I\!\int_M\! \partial_t\eta_n\,
          \partial_t b\;dA\,dt 
      + 2\int_I\! K(\eta_n,b)\;dt
      + 2\int_I\!\intdeltan\! \mathbf{D}\mathbf{u}_n:
        \mathbf{D}\boldsymbol\varphi\;dx\,dt\\
    &\quad + \frac{1}{2}\int_I\!\intdeltan(\mathbf{v}_n\cdot \nabla)
        \mathbf{u}_n\cdot\boldsymbol\varphi\;dx\,dt 
      - \frac{1}{2}\int_I\!\intdeltan(\mathbf{v}_n\cdot \nabla)
        \boldsymbol\varphi\cdot\mathbf{u}_n\;dx\,dt\\
    &\qquad= \int_I\!\int_M g\,b\;dA
      + \int_I\!\intdeltan\mathbf{f}\cdot\boldsymbol\varphi\;dx\,dt
      + \int_{\Omega_{\delta_n(0)}}\hspace{-0.5cm}\mathbf{u}_0^n
        \cdot \boldsymbol\varphi(0,\cdot)\;dx 
      +2 \int_M \eta_1^n\,b(0,\cdot)\;dA\raisetag{2.7cm}
  \end{aligned}\end{align}
  for all $(b,\boldsymbol\varphi)\in T^I_{\delta_n}$. Furthermore, let 
  $\trdeltan\mathbf{u}_n= \partial_t \eta_n\,\boldsymbol\nu$ on 
  $I\times M$ and 
  \begin{align}\label{lem:L^2_komp_vor_konv}\begin{alignedat}{2}
    \partial_t\eta_n &\rightharpoonup \partial_t\eta 
      &\qquad&\text{weakly in } L^2(I,L^2(M)),\\
    \mathbf{u}_n &\rightharpoonup \mathbf{u} 
      &&\text{weakly in } L^2(I,L^2(\R^3))
  \end{alignedat}\end{align}
  for some $\eta\in Y^I$, $\mathbf{u}\in X^I_\delta$, where 
  $\mathbf{u}_n$, $\mathbf{u}$ are spatially extended by zero. Then 
  $(\partial_t\eta_n,\mathbf{u}_n)$ converges strongly
  towards $(\partial_t\eta,\mathbf{u})$ in $L^2(I\times M)\times 
  L^2(I\times \R^3)$.
\end{lemma}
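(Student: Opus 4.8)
The plan is to adapt the compactness argument of \cite[Proposition 3.8]{MR3147436} to the present non-cylindrical, non-Lipschitz setting, using the preparatory results developed in Section \ref{sec:moving_domains}. The key idea is to replace the use of the fundamental theorem of calculus (which underlies the classical Aubin--Lions lemma) by the weak formulation \eqref{lem:komp_equation} itself, combined with a suitably adapted Ehrling-type inequality. First I would record the consequences of the a priori bounds: by the uniform boundedness of $(\eta_n, \mathbf{u}_n, \mathbf{v}_n)$ in $Y^I\times X^I_{\delta_n}\times \widetilde X^I_{\delta_n}$ together with the Korn-type inequality (Lemma \ref{lem:korn_type_inequality}) and the embedding Lemma \ref{lem:einbettung_omega_eta}, the transformed sequences $\mathbf{u}_n\circ\Psi_{\delta_n}$ are bounded in $L^2(I;W^{1,r}(\Omega))$ for suitable $r<2$ and in $L^\infty(I;L^2(\Omega))$; similarly $\eta_n$ is bounded in $Y^I\hookrightarrow C^{0,1-\lambda}(\overline I;H^{2\lambda}(M))$. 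By Lemma \ref{lem:inst.konv_hanzawa} and Lemma \ref{lem:konv_trafo} the Hanzawa transforms $\Psi_{\delta_n}$ converge in the appropriate senses, so the problem of proving strong $L^2$-convergence of $(\partial_t\eta_n,\mathbf{u}_n)$ reduces, after transformation, to a compactness statement on fixed domains, except that the time-derivative information for $\mathbf{u}_n$ is only encoded implicitly through \eqref{lem:komp_equation}.

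Next I would set up the contradiction/Ehrling scheme: it suffices to show that for every $\varepsilon>0$ there is a constant $C_\varepsilon$ such that along the sequence
\begin{align*}
  \|(\partial_t\eta_n-\partial_t\eta,\mathbf{u}_n-\mathbf{u})\|_{L^2(I\times M)\times L^2(I\times\R^3)}^2
    \le \varepsilon + C_\varepsilon\,\|(\partial_t\eta_n-\partial_t\eta,\mathbf{u}_n-\mathbf{u})\|_{Z}^2,
\end{align*}
where $Z$ is a space into which the relevant spaces embed compactly (concretely, the $H^{1/4}(\R^3)^\ast$-type negative space appearing in Lemma \ref{lem:density_for_compactness}, paired with a negative Sobolev space on $M$ for the shell part). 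The $\varepsilon$-term comes from the modified Ehrling lemma; the negative-norm term is then shown to tend to zero. To control the negative-norm term one tests \eqref{lem:komp_equation} with carefully chosen pairs $(b,\boldsymbol\varphi)\in T^I_{\delta_n}$: for the fluid part, one uses the density Lemma \ref{lem:density_for_compactness} to approximate a divergence-free test function supported away from the moving boundary, so that it lies in $T^I_{\delta_n}$ for all large $n$; plugging in such test functions and using the weak convergences \eqref{lem:L^2_komp_vor_konv} together with the strong convergence of the coefficients (Hanzawa Jacobians, $\gamma(\delta_n)$, extension operators $\feta$ via Lemma \ref{lem:fortsetzung_konvergenzen}) lets one pass to the limit in every term and identify the limit. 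For the shell part one tests with pairs of the form $(b,\feta b)$, using that $\boldsymbol\varphi-\feta b$ can be approximated by functions vanishing near the moving boundary, so that the force-coupling term cancels exactly as in the definition of a weak solution.

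The main obstacle, and where the bulk of the work lies, is the rigorous justification that the limit $(\eta,\mathbf{u})$ satisfies the same weak formulation and that all the ``error'' terms generated by the $n$-dependence of the domain $\Omega_{\delta_n(t)}$ — the convective terms $(\mathbf{v}_n\cdot\nabla)\mathbf{u}_n\cdot\boldsymbol\varphi$, the boundary term $\tfrac12\int_M(\partial_t\eta_n)(\partial_t\delta_n)b\,\gamma(\delta_n)$, and the time-derivative pairing $\int\mathbf{u}_n\cdot\partial_t\boldsymbol\varphi$ — converge, since this is exactly the step where the non-Lipschitz geometry and the loss of regularity in the change of variables (Lemma \ref{lem:einbettung_omega_eta}) bite. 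Here I would exploit that the test functions supplied by Lemma \ref{lem:density_for_compactness} have support uniformly away from the moving interface, so that on their supports the Hanzawa transforms are diffeomorphisms with controlled Jacobians and the $L^s$-convergences of Lemma \ref{lem:inst.konv_hanzawa} apply without the regularity loss; combined with the compact embedding $H^{1/4}(B)\hookrightarrow\hookrightarrow L^2(B)$ (hence $L^2(B)\hookrightarrow\hookrightarrow H^{1/4}(B)^\ast$) this yields the strong convergence in the negative norm, and Ehrling upgrades it to strong convergence in $L^2(I\times M)\times L^2(I\times\R^3)$. The shell component is handled analogously but is genuinely easier, since there the domain is fixed and one only needs the compact embedding $H^2_0(M)\hookrightarrow\hookrightarrow L^2(M)$ together with the weak-formulation control on $\partial_t^2\eta_n$ coming from \eqref{lem:komp_equation} tested with $(b,\feta b)$.
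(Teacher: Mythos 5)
Your overall strategy---replacing the fundamental theorem of calculus by the weak formulation, an Ehrling-type inequality, the density result of Lemma \ref{lem:density_for_compactness}, and the compactness of $L^2(B)\hookrightarrow\hookrightarrow H^{1/4}(B)^\ast$---is the right one and matches the paper's. However, two central steps of your plan do not work as written. First, the Ehrling inequality you propose,
\begin{align*}
\lVert(\partial_t\eta_n-\partial_t\eta,\mathbf{u}_n-\mathbf{u})\rVert_{L^2}^2\le\varepsilon+C_\varepsilon\,\lVert(\partial_t\eta_n-\partial_t\eta,\mathbf{u}_n-\mathbf{u})\rVert_Z^2,
\end{align*}
presupposes a compact embedding, uniform in $n$, of the energy spaces (extended by zero to $\R^3$) into $L^2$, so that the positive-norm term can be absorbed into $\varepsilon$. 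No such embedding is available: the zero-extension of a $W^{1,r}(\Omega_{\delta_n(t)})$ function has no positive Sobolev regularity across the moving boundary, and the domains vary with $n$. The paper's Lemma \ref{lem:ehrling} is instead an Ehrling inequality for the \emph{dual pairings} $\int_{\Omega_\eta}\mathbf{v}\cdot\feta b\;dx$ and $\int_{\Omega_\eta}\mathbf{v}\cdot\Tdelta\boldsymbol\varphi\;dx$, comparing the supremum over the $L^2(M)$- (resp.\ $H_M(\Omega)$-) unit ball with that over the $H_0^2(M)$- (resp.\ $X(\Omega)$-) unit ball. Accordingly, the quantity that actually gets squeezed is not a negative norm of $\mathbf{u}_n-\mathbf{u}$ but the terms $\int_I\intdeltan\mathbf{u}_n\cdot(\mathbf{u}_n-\fdeltan\partial_t\eta_n)\;dx\,dt$ and $\int_I\intdeltan\mathbf{u}_n\cdot\fdeltan\partial_t\eta_n\;dx\,dt+2\int_I\int_M|\partial_t\eta_n|^2\;dA\,dt$ arising from an algebraic expansion of $\lVert\mathbf{u}_n-\mathbf{u}\rVert^2_{L^2}+2\lVert\partial_t\eta_n-\partial_t\eta\rVert^2_{L^2}$; the point of this decomposition, which your plan lacks, is that $\mathbf{u}_n-\fdeltan\partial_t\eta_n$ has vanishing normal trace on the moving boundary, so that Lemma \ref{lem:density_for_compactness} applies to this \emph{solution-dependent} quantity (not to the test functions, as you suggest).

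Second, you never explain how to extract equicontinuity in time from \eqref{lem:komp_equation}, which is the actual replacement for the missing control on $\partial_t\mathbf{u}_n$. Testing \eqref{lem:komp_equation} as it stands only yields information integrated over all of $I$ with test functions vanishing at $t=T$, whereas the convergence of the dual pairings has to be established at (almost) every time. The paper derives the time-localized identity \eqref{lem:komp_equation_neu} on $(0,s)$ by testing with $(\tau_\varepsilon^s b,\tau_\varepsilon^s\boldsymbol\varphi)$ and letting $\varepsilon\to0$; the $L^{12/11}(I)$ bounds on the integrands then give uniform $C^{0,1/12}(\overline I)$ bounds on $s\mapsto\intdeltans\mathbf{u}_n(s,\cdot)\cdot\boldsymbol\varphi(s,\cdot)\;dx+2\int_M\partial_t\eta_n(s,\cdot)\,b(s,\cdot)\;dA$, and an Arzela-Ascoli plus diagonal argument over a countable dense set of times yields uniform convergence of these pairings over the unit ball of the smooth test space. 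Without this device (or an equivalent), the Arzela-Ascoli/negative-norm step of your plan cannot be carried out, so the proposal has a genuine gap at its core even though the ingredients you list are the correct ones.
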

Note that the structure of the proof of Lemma
\ref{lem:kompaktheit} and of the compactness result in
\cite {phdlengeler}, \cite{MR3147436} is the same, but lengthy and technically
demanding. The proof of Lemma \ref{lem:kompaktheit} can be found in \cite[Lemma
4.4]{phdeberlein}. Since already the proof in \cite {MR3147436} is
densely written we give, for the convenience of the reader, full details in the Appendix.
%
Note that by the assumptions of the preceding Lemma, $\trdelta\mathbf{u} = 
\partial_t \eta\,\boldsymbol\nu$ is implied by Lemma 
\ref{moving_boundary_data}. This fact is missing in
\cite{phdlengeler}, \cite{MR3147436}.  
%
\subsection{Construction of basis functions}\label{kap:ansatz}
Since we do not transform our system to the reference domain, we have to 
construct an appropriate set of basis functions, at least in case of a 
given ``smooth'' deformation $\delta \in C^4(I\times M)$ with 
$\lVert \delta \rVert_{L^\infty(I\times M)} <\kappa$ . Therefore we chose a 
basis $(\widehat{Y}_k)_{k\in\N}$ of $H^2_0(M)$ and a basis 
$(\widehat{\mathbf{X}}_k)_{k\in \N}$ of the canonically normed space
\begin{align*}
  X(\Omega):= \{ \boldsymbol\varphi\in W^{1,3}(\Omega) \,|\, 
      \operatorname{div} \boldsymbol\varphi = 0,\; \boldsymbol\varphi\lvert_M = 0\}.
\end{align*}
We extend $\widehat{Y}_k$ to ${\mathcal{F}_{0}}\widehat{Y}_k \in X(\Omega)$ 
by Lemma \ref{lem:fortsetzung_stationaer} and set $(\widehat{W}_{2k}, 
\widehat{\mathbf{W}}_{2k}) := (0,\widehat{\mathbf{X}}_k)$, \linebreak 
$(\widehat{W}_{2k-1},\widehat{\mathbf{W}}_{2k-1}):=(\widehat{Y}_k,
{\mathcal{F}_{0}}\widehat{Y}_k)$. Hence $\widehat{\mathbf{W}}_k\lvert_M 
= \widehat{W}_k\,\boldsymbol\nu$ on $M$. Further we define the space 
$\widehat{T}(I,\Omega)$ as the canonically normed set of all pairs
\begin{align*}
  (b,\boldsymbol{\varphi})\in \Big[H^1(I;L^2(M))
      \cap L^3(I;H_0^2(M))\Big]\times \Big[H^1(I;L^2(\Omega))
        \cap L^{3}(I;W^{1,3}(\Omega))\Big]
\end{align*}
satisfying $b(T,\cdot) =0$ in $M$, $\boldsymbol{\varphi}(T,\cdot)= 
\boldsymbol{0}$ in $\Omega$, $\operatorname{div}\boldsymbol{\varphi}=0$ 
in $I\times\Omega$ and $\left.\boldsymbol{\varphi}\right\lvert_{I\times M} 
= b\,\boldsymbol{\nu}$. 
\begin{lemma}\label{lem:approx_dicht_ansatzf}
    The set
    \begin{align*}
      span\Big\{ (\varphi \widehat{W}_k,\varphi\widehat{\mathbf{W}}_k)
        \bigm| \varphi\in C_0^1([0,T)),\,k\in \N\Big\}
    \end{align*}
    is dense in the space $\widehat{T}(I,\Omega)$.
  \end{lemma}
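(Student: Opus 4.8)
The plan is to decouple the statement into two independent density results — one on $M$, one inside $\Omega$ — by means of the extension operator $\mathcal{F}_0$ from Lemma~\ref{lem:fortsetzung_stationaer} (with $\eta=0$), and then to prove a separated‑variables (``Galerkin in time'') approximation for each piece. As a preliminary one checks that every building block $(\varphi\widehat{W}_k,\varphi\widehat{\mathbf{W}}_k)$ indeed lies in $\widehat{T}(I,\Omega)$: for an odd index it is $(\varphi\widehat{Y}_k,\varphi\,\mathcal{F}_0\widehat{Y}_k)$, which is divergence free and has trace $\varphi\,\widehat{Y}_k\,\boldsymbol\nu$ on $I\times M$ by Lemma~\ref{lem:fortsetzung_stationaer} (applicable with $p=3$ since $H^2_0(M)\hookrightarrow W^{1,3}_0(M)$ in dimension two), for an even index it is $(0,\varphi\widehat{\mathbf{X}}_k)$ with $\widehat{\mathbf{X}}_k\in X(\Omega)$, and $\varphi\in C_0^1([0,T))$ supplies the vanishing at $t=T$. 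It then remains to establish density.

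Given $(b,\boldsymbol\varphi)\in\widehat{T}(I,\Omega)$, the first step is to cut off near $t=T$: multiplying by $\chi_\tau\in C^\infty([0,T])$ with $\chi_\tau\equiv1$ on $[0,T-2\tau]$, $\chi_\tau\equiv0$ on $[T-\tau,T]$ and $|\chi_\tau'|\le c/\tau$, one has $(\chi_\tau b,\chi_\tau\boldsymbol\varphi)\in\widehat{T}(I,\Omega)$ and, using $b(T,\cdot)=0$, $\boldsymbol\varphi(T,\cdot)=\boldsymbol 0$ and the elementary bound $\lVert b(t,\cdot)\rVert_{L^2(M)}\le(T-t)^{1/2}\lVert\partial_t b\rVert_{L^2((t,T);L^2(M))}$ (and likewise for $\boldsymbol\varphi$), one gets $(\chi_\tau b,\chi_\tau\boldsymbol\varphi)\to(b,\boldsymbol\varphi)$ in $\widehat{T}(I,\Omega)$ as $\tau\to0$, so one may assume $b,\boldsymbol\varphi$ vanish for $t$ near $T$. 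Next, set $\boldsymbol\psi:=\boldsymbol\varphi-\mathcal{F}_0 b$, where $\mathcal{F}_0 b$ is the time‑dependent extension of Lemma~\ref{lem:fortsetzung_instationaer} (with $\eta=0$, $p=3$): it is divergence free on $I\times B_\alpha$, has trace $b\,\boldsymbol\nu$ on $I\times M$, depends linearly and continuously on $b$, and — being computed at each time slice — vanishes for $t$ near $T$ as well. Restricting to $\Omega\subset B_\alpha$, the field $\boldsymbol\psi\in H^1(I;L^2(\Omega))\cap L^3(I;W^{1,3}(\Omega))$ is divergence free with zero trace on $I\times M$, i.e.\ $\boldsymbol\psi(t,\cdot)\in X(\Omega)$ for a.e.\ $t$, and vanishes near $T$. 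By linearity and continuity of $\mathcal{F}_0$ it now suffices to find finite sums $\sum_k\varphi_k\widehat{Y}_k\to b$ in $H^1(I;L^2(M))\cap L^3(I;H^2_0(M))$ and $\sum_k\mu_k\widehat{\mathbf{X}}_k\to\boldsymbol\psi$ in $H^1(I;L^2(\Omega))\cap L^3(I;X(\Omega))$ with $\varphi_k,\mu_k\in C_0^1([0,T))$; then $\sum_k\varphi_k(\widehat{W}_{2k-1},\widehat{\mathbf{W}}_{2k-1})+\sum_k\mu_k(\widehat{W}_{2k},\widehat{\mathbf{W}}_{2k})$ converges to $(b,\mathcal{F}_0 b+\boldsymbol\psi)=(b,\boldsymbol\varphi)$ in $\widehat{T}(I,\Omega)$.

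Both remaining approximations follow from one abstract fact: if $V\hookrightarrow H$ continuously, $(\widehat{e}_k)_{k\in\N}$ has dense linear span in $V$, and $w\in H^1(I;H)\cap L^3(I;V)$ vanishes for $t$ near $T$, then $w$ is a limit in $H^1(I;H)\cap L^3(I;V)$ of finite sums $\sum_k\varphi_k\widehat{e}_k$ with $\varphi_k\in C_0^1([0,T))$. To prove it, extend $w$ in the $t$‑variable to a neighbourhood of $[0,T]$ (by $0$ beyond $T$, possible since $w$ vanishes there, and by even reflection beyond $0$) and mollify in $t$; acting only on $t$, this produces $w_\epsilon\in C^1([0,T];V)$ that still vanishes near $T$ and satisfies $w_\epsilon\to w$ in $H^1(I;H)\cap L^3(I;V)$. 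The Bernstein polynomials of $w_\epsilon\in C^1([0,T];V)$ converge to it in $C^1([0,T];V)$ and have the form $t\mapsto\sum_j t^j v_j^{(\epsilon)}$ with $v_j^{(\epsilon)}\in V$ (linear combinations of point values of $w_\epsilon$); replacing each $v_j^{(\epsilon)}$ by a finite linear combination of the $\widehat{e}_k$ and multiplying by a fixed $\theta\in C_0^\infty([0,T))$ equal to $1$ on the $t$‑support of $w_\epsilon$ yields $\sum_k\varphi_k\widehat{e}_k$, $\varphi_k\in C_0^1([0,T))$, approximating $\theta w_\epsilon=w_\epsilon$ — hence $w$ — in $C^1([0,T];V)\hookrightarrow H^1(I;H)\cap L^3(I;V)$. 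Applying this with $(V,H)=(H^2_0(M),L^2(M))$ and basis $(\widehat{Y}_k)$, and with $(V,H)=(X(\Omega),L^2(\Omega))$ and basis $(\widehat{\mathbf{X}}_k)$, provides the two sequences required above.

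The main obstacle is this ``Galerkin in time'' step: matching the $L^3(I;V)$ and the $H^1(I;H)$ norms simultaneously while keeping the coefficient functions $C^1$ and compactly supported in $[0,T)$. The delicate points there are the interaction between the vanishing near $T$ and the time mollification (dealt with by cutting off first, the cut‑off error being controlled by $\lVert w(t)\rVert_H\lesssim(T-t)^{1/2}$), the fact that a $V$‑valued Bernstein polynomial has coefficients in $V$, and the restoration of compact support in time after passing to polynomials (via the fixed cut‑off $\theta$). The rest is bookkeeping around the splitting $\boldsymbol\varphi=\mathcal{F}_0 b+\boldsymbol\psi$ and the uniform continuity of the extension operators of Lemmas~\ref{lem:fortsetzung_stationaer} and~\ref{lem:fortsetzung_instationaer}.
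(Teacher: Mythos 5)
Your proof is correct, and its skeleton coincides with the paper's: both arguments split $\boldsymbol\varphi={\mathcal F_0}b+(\boldsymbol\varphi-{\mathcal F_0}b)$, use the linearity and (uniform-in-time) continuity of the extension operator to reduce the fluid part of ${\mathcal F_0}b$ to the shell part, and are then left with two separated-variables density statements, one for $b$ in $H^1(I;L^2(M))\cap L^3(I;H^2_0(M))$ against the basis $(\widehat Y_k)$ and one for the trace-free remainder against $(\widehat{\mathbf X}_k)$. Where you genuinely diverge is in how those separated-variables approximations are manufactured. The paper mollifies in time and then approximates the \emph{time derivative} $\partial_t\widetilde b_\epsilon$ in $L^3(I;H^2_0(M))$ by sums $\sum_k\alpha_k^{\epsilon,\ell}\widehat Y_k$ with $\alpha_k^{\epsilon,\ell}\in C_0^1([0,T))$; integrating the coefficients from $t$ to $T$ recovers an approximant of $b$ whose errors in $L^\infty(I;H^2_0(M))$ and in $H^1(I;L^2(M))$ are both controlled by the single quantity $\lVert\partial_t\widetilde b_\epsilon-\sum_k\alpha_k^{\epsilon,\ell}\widehat Y_k\rVert_{L^3(I;H^2_0(M))}$, and the endpoint condition at $t=T$ comes for free from the integral $\int_t^T$. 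You instead enforce the endpoint condition by an explicit cut-off near $T$ (correctly justified by $\lVert b(t)\rVert_{L^2(M)}\le (T-t)^{1/2}\lVert\partial_t b\rVert_{L^2((t,T);L^2(M))}$), mollify in time to land in $C^1([0,T];V)$, and then use Banach-space-valued Bernstein polynomials together with a fixed temporal cut-off $\theta$ to produce coefficients in $C_0^1([0,T))$. Both mechanisms are sound; the paper's is slightly slicker in that differentiating, approximating, and integrating backwards handles the vanishing at $T$ and the two norms in one stroke, whereas yours is more elementary and self-contained, resting only on density of $span\{\widehat e_k\}$ in $V$ and the (standard, but worth stating) uniform $C^1$ convergence of Bernstein polynomials for Banach-valued functions.
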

  \begin{proof}
  Obviously, the set is contained in $\widehat{T}(I,\Omega)$. Let 
  $(b,\boldsymbol{\varphi})\in \widehat{T}(I,\Omega)$. We 
  approximate $b$ by functions $\widetilde{b}_\epsilon\in 
  C_0^\infty([0,\infty);H^2_0(M))$ such that $\widetilde{b}_{\epsilon}
  (T,\cdot)=0$ and $\partial_t\,\widetilde{b}_{\epsilon}(T,\cdot)=0$, 
  using classical mollifications. Since $(\widehat{Y}_k)_{k\in \N}$ is 
  a basis of $H_0^2(\Omega)$ and $\partial_t\widetilde{b}_{\epsilon} \in 
  L^2(I;H^2_0(M))$, for $\ell\in \N$ we get functions 
  $\alpha^{\epsilon,\ell}_k\in C_0^1([0,T))$, $1\leq k\leq \ell$ 
  with
  \begin{align*}
    \sum_{k=1}^\ell \alpha^{\epsilon,\ell}_k\, \widehat{Y}_k 
      \rightarrow \partial_t\widetilde{b}_{\epsilon} 
        \quad\quad \text{in } L^3(I;H^2_0(M)).
  \end{align*}
  By the inequality
  \begin{align*}
    \Big\lVert \widetilde{b}_{\epsilon}(t,\cdot) 
        + \sum_{k=1}^\ell\int_t^T\!\!\!\! \alpha^{\epsilon,\ell}_k(s)\;ds\; 
          \widehat{Y}_k(\cdot)\Big\rVert_{H_0^2(M)}
      \leq \int_0^T\!\Big\lVert \partial_t\widetilde{b}_{\epsilon}(s,\cdot) 
        - \sum_{k=1}^\ell\! \alpha^{\epsilon,\ell}_k(s)\,
          \widehat{Y}_k(\cdot)\Big\rVert_{H_0^2(M)}\;ds
  \end{align*}
  and an appropriate coupling of $\epsilon$ and $\ell$ we have 
  $\sum_{k=1}^{\ell(\epsilon)}\int_{t}^T \alpha^{\epsilon,
  \ell(\epsilon)}_k(s)\;ds\; \widehat{Y}_k \rightarrow b$ \linebreak
  in $H^1(I;L^2(M))\cap L^3(I;H^2_0(M))$. By the linearity and continuity 
  of the operator ${\mathcal{F}_{0}}$, \linebreak $\big(\sum_{k=1}^{\ell(\epsilon)}
  \int_{t}^T \alpha^{\epsilon,\ell(\epsilon)}_k(s)\;ds\; 
  {\mathcal{F}_{0}}\widehat{Y}_k(x)\big)$ converges to 
  ${\mathcal{F}_{0}}\,b$ in $H^1(I;L^2(\Omega))\cap L^{3}(I;W^{1,3}(\Omega))$.  \linebreak 
  Hence, it now suffices to approximate $(0,\boldsymbol\varphi - 
  {\mathcal{F}_{0}}\,b)$ appropriately. This can be done analogously 
  to the approximation of $b$. The missing details can be found in
  \cite[Lemma 5.3]{phdeberlein}.

\end{proof}
Using the Piola transform, we map the space $\widehat{T}(I,\Omega)$ to 
the moving domain. Since we have to preserve the compatibility constraint, 
we have to construct an compatible diffeomorphism for the structure part.
By the definition of our trace operator and the Piola transform, we have 
\begin{align}\label{eqn:piola_rand_ansatzfunktionen}\begin{aligned}
  \trdelta\Tdelta\boldsymbol{\varphi} 
    = \left.(\Tdelta\boldsymbol{\varphi} \circ \Psi_{\delta})
      \right\lvert_{I\times M}
    = \left.(d\Psi_{\delta}\, (\operatorname{det} d\Psi_{\delta})^{-1} \boldsymbol{\varphi})
      \right\lvert_{I\times M}.
\end{aligned}\end{align}
By the definition of the Hanzawa transform, $\Psi_\delta(t,x)
= x + \delta(t,\mathbf{q}(x))\,\boldsymbol\nu(\mathbf{q}(x))$ in a 
neighbourhood of $\overline{I}\times M$. Hence, the differential 
$d\Psi_{\delta}$ only scales the outer normal $\boldsymbol\nu$ on the 
boundary, i.\,e\ there exists $g:\overline{I}\times M \rightarrow \R$ with
\begin{align*}
 d\Psi_{\delta}(t,x)\,\boldsymbol{\nu}(x) 
  = g(t,x)\,\boldsymbol{\nu}(x),\quad\quad (t,x)\in \overline{I}\times M.
\end{align*}
Since $g(t,x) = d\Psi_{\delta}(t,x)\,\boldsymbol{\nu}(x) \cdot \boldsymbol
\nu(x)$, we have $g\in C^2(\overline{I}\times M)$ and $g\neq 0$. Therefore, 
the map $\mathbf{T}_\delta(b,\boldsymbol{\varphi}) := 
(g\,(\operatorname{det} d\Psi_{\delta}\lvert_{\overline{I}\times M})^{-1} b,
\Tdelta\boldsymbol{\varphi})$ is an isomorphism from $\widehat{T}(I,\Omega)$ 
into the canonically normed space $T(I,\Omega_\delta)$ of the couples 
\begin{align*}
(b,\boldsymbol{\varphi})\in \big(H^1(I;L^2(M))\cap L^3(I;H_0^2(M))\big)
\times \big(H^1(I;L^2(\Omega_{\delta(t)}))\cap L^{3}(I;W^{1,3}
(\Omega_{\delta(t)}))\big)
\end{align*}
satisfying $b(T,\cdot) =0$, $\boldsymbol{\varphi}(T,\cdot)=\boldsymbol{0}$, 
$\operatorname{div}\boldsymbol{\varphi}=0$ and 
$\trdelta\boldsymbol{\varphi} = b\,\boldsymbol{\nu}$ on $I\times M$.
%
By our construction, the basis functions
$(W_k, \mathbf{W}_k):= \mathbf{T}_\delta\big((\widehat{W}_{k},
\widehat{\mathbf{W}}_{k})\big)$,
$k \in \N$, have the following properties:
\begin{proposition}\label{prop:approx_eig_ansatz}
The following assertions hold.
\begin{enumerate}[label=\alph*)]
  \item $W_k\in C(\overline{I};H^2_0(M)) \cap C^2(\overline{I};L^2(M))$ 
    and $\mathbf{W}_k\in L^\infty(I;W^{1,3}(\Omega_{\delta(t)}))\cap 
    H^1(I;L^2(\Omega_{\delta(t)}))$. Moreover, we have 
    $\mathbf{W}_k(t,\cdot)\in W^{1,3}(\Omega_{\delta(t)})$ for all $t\in 
    \overline{I}$.
  \item $\operatorname{div} \mathbf{W}_k = 0$ and $\trdelta \mathbf{W}_k = 
    W_k\,\boldsymbol{\nu}$ on $I\times M$. In particular, 
    $\trdelta\mathbf{W}_k = \boldsymbol{0}$ on $I\times M$ for 
    $k\in \N$, $k$ even.
  \item The set $span\{ (\varphi\, W_k,\varphi\,\mathbf{W}_k) \,|\, 
    \varphi\in C_0^1([0,T)),\,k\in \N \}$ is dense in $T(I,\Omega_\delta)$.
  \item For $t\in \overline{I}$ the functions $(W_k(t,\cdot))_{k\in \N,\; k \; 
    \text{odd}}$ form a basis of $H_0^2(M)$.
  \item For $t\in \overline{I}$ the functions 
    $(\mathbf{W}_k(t,\cdot))_{k\in \N,\; k \; \text{even}}$ form a basis 
    of the functions from $W^{1,3}(\Omega_{\delta(t)})$ vanishing on the 
    moving boundary.
  \item If for $t\in \overline{I}$ the linear combination $\sum_{i=1,\, 
    i \text{ odd}}^\ell \alpha_i\, W_k(t,\cdot)$ converges in $H_0^2(M)$ 
    (or $L^2(M)$), then $\sum_{i=1,\, i \text{ odd}}^\ell \alpha_i\, 
    \mathbf{W}_k(t,\cdot)$ converges in $W^{1,3}(\Omega_\delta(t))$ 
    (or $L^2(\Omega_{\delta(t)})$).
\end{enumerate}
\end{proposition}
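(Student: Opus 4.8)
The plan is to verify the six assertions separately, using throughout the explicit representation $W_k = g\,(\operatorname{det} d\Psi_\delta\lvert_{\overline I\times M})^{-1}\,\widehat W_k$ and $\mathbf W_k = \Tdelta\widehat{\mathbf W}_k$ coming directly from the definition of $\mathbf T_\delta$, together with the facts that $\widehat W_k$ and $\widehat{\mathbf W}_k$ are time-independent, that $\widehat W_k$ is either $0$ (for $k$ even) or an element of $H^2_0(M)$ (for $k$ odd), and that $\widehat{\mathbf W}_k\in X(\Omega)$ in either case.

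For (a) I would use that $\delta\in C^4(I\times M)$, so by Proposition \ref{prop:hanzawa_instationaer} the Hanzawa transform $\Psi_\delta$ is a $C^4$-diffeomorphism and hence $d\Psi_\delta$, $\operatorname{det} d\Psi_\delta$ and $g=d\Psi_\delta\boldsymbol\nu\cdot\boldsymbol\nu$ are of class $C^2$, with $g$ and $\operatorname{det} d\Psi_\delta$ bounded away from zero. Multiplying the fixed element $\widehat W_k\in H^2_0(M)$ by this $C^2$ coefficient keeps $W_k$ in $C(\overline I;H^2_0(M))\cap C^2(\overline I;L^2(M))$ (the $H^2_0$-condition on $\partial M$ is preserved because a $C^2$ factor does not destroy the vanishing of the function and its gradient on $\partial M$). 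For $\mathbf W_k$ I would invoke the time-dependent Piola transform — by the Remark after Lemma \ref{lem:inst.konv_hanzawa} an isomorphism of the relevant Lebesgue and $W^{1,3}$-spaces of order at most one — and note that the time-independence of $\widehat{\mathbf W}_k$ together with compactness of $\overline I$ gives $\mathbf W_k(t,\cdot)\in W^{1,3}(\Omega_{\delta(t)})$ for every $t$ and $\mathbf W_k\in L^\infty(I;W^{1,3}(\Omega_{\delta(t)}))$, while differentiating the transform in $t$ touches only its $C^2$ coefficients, yielding $\partial_t\mathbf W_k\in L^2(I;L^2(\Omega_{\delta(t)}))$, i.e. $\mathbf W_k\in H^1(I;L^2(\Omega_{\delta(t)}))$. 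For (b) I would combine the fact that the Piola transform preserves divergence-freeness (\cite[Theorem 7.20]{MR1262126}) with the boundary identity \eqref{eqn:piola_rand_ansatzfunktionen}: using $\widehat{\mathbf W}_k\lvert_M=\widehat W_k\boldsymbol\nu$ and $d\Psi_\delta\boldsymbol\nu=g\boldsymbol\nu$ on $I\times M$ gives $\trdelta\mathbf W_k=g(\operatorname{det} d\Psi_\delta)^{-1}\widehat W_k\,\boldsymbol\nu=W_k\,\boldsymbol\nu$, and the even-index case is simply $\widehat W_k=0$. For (c) I would observe that $\mathbf T_\delta$ commutes with multiplication by scalar functions of $t$ alone — obvious for the first component, and for the second because $\Psi_\delta^{-1}(t,\cdot)$ leaves the $t$-variable untouched — so the generating set in (c) is the $\mathbf T_\delta$-image of the generating set of Lemma \ref{lem:approx_dicht_ansatzf}, and density transfers since $\mathbf T_\delta:\widehat T(I,\Omega)\to T(I,\Omega_\delta)$ is an isomorphism.

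For (d) and (f), fix $t\in\overline I$ and set $m_t:=g(t,\cdot)(\operatorname{det} d\Psi_{\delta(t)}\lvert_M)^{-1}\in C^2(M)$, which is nowhere zero, so multiplication by $m_t$ is a topological isomorphism of $H^2_0(M)$ and of $L^2(M)$. Since $W_{2k-1}(t,\cdot)=m_t\,\widehat Y_k$ and $(\widehat Y_k)_k$ is a basis of $H^2_0(M)$, (d) follows. For (f), convergence of $\sum_{i\text{ odd}}\alpha_i W_i(t,\cdot)=m_t\sum_k\alpha_{2k-1}\widehat Y_k$ in $H^2_0(M)$ (respectively $L^2(M)$) is equivalent to convergence of $\sum_k\alpha_{2k-1}\widehat Y_k$ there, after which continuity of $\mathcal F_0$ — from Lemma \ref{lem:fortsetzung_stationaer} via $H^2_0(M)\hookrightarrow W^{1,3}_0(M)$, and from Proposition \ref{lem:fortsetzung_stationaer_schwach} in the $L^2$-case, the two extension operators agreeing on $W^{1,3}_0(M)$ — and continuity of the Piola transform $\mathcal T_{\delta(t)}$ (order one, respectively zero) propagate the convergence to $\sum_k\alpha_{2k-1}\mathbf W_{2k-1}(t,\cdot)$. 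For (e), the Piola transform $\mathcal T_{\delta(t)}:W^{1,3}(\Omega)\to W^{1,3}(\Omega_{\delta(t)})$ is an isomorphism preserving divergence-freeness and, by \eqref{eqn:piola_rand_ansatzfunktionen}, carrying fields vanishing on $M$ onto fields whose trace on the moving boundary vanishes; hence it maps the basis $(\widehat{\mathbf X}_k)_k$ of $X(\Omega)$ onto a basis of the space of $W^{1,3}(\Omega_{\delta(t)})$-fields that are divergence-free and vanish on $\Phi_{\delta(t)}(M)$.

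The only genuinely nontrivial bookkeeping is the time-regularity in (a), where one must track how $\partial_t$ and the spatial derivatives interact with the composition $\Tdelta\widehat{\mathbf W}_k=(d\Psi_\delta(\operatorname{det} d\Psi_\delta)^{-1}\widehat{\mathbf W}_k)\circ\Psi_\delta^{-1}$ — this is where the hypothesis $\delta\in C^4$ is actually consumed. All remaining assertions follow directly from the isomorphism properties of $\mathbf T_\delta$, of the Piola transform, and of multiplication by the smooth nonvanishing factor $m_t$, none of which requires computation beyond what the preceding sections already provide.
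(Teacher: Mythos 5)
Your proposal is correct and follows exactly the route the paper intends: the paper states these properties as immediate consequences of the construction of $\mathbf{T}_\delta$ (isomorphism property, commutation with time-dependent scalars, the boundary identity \eqref{eqn:piola_rand_ansatzfunktionen}, and multiplication by the nonvanishing $C^2$ factor $g\,(\det d\Psi_\delta)^{-1}$), and your argument is precisely the detailed verification of that. The only cosmetic imprecision is calling $\Psi_\delta$ a $C^4$-diffeomorphism — Proposition \ref{prop:hanzawa_instationaer} only guarantees $C^3$ for $\delta\in C^4(\overline{I}\times M)$ — but since your argument consumes only $C^2$ regularity of $d\Psi_\delta$, $\det d\Psi_\delta$ and $g$, nothing is affected.
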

\subsection{The decoupled, regularised and linearised problem}
\label{chapter:entreglin}
To obtain a weak solution of \eqref{eqn:stokes}--\eqref{eqn:gln:anfangswert}, 
we first decouple the dependency of the moving domain from the solution 
of the shell equation, i.\,e. we prescribe some displacement $\delta$ 
with $\delta(0,\cdot) = \eta_0$. We will later restore this coupling by 
a fixed point argument, hence we have to choose $\delta$ in a space which 
$Y^I$ embeds compactly into. Therefore, we prescribe $\delta \in 
C(\overline{I}\times M)$ with $\delta= 0$ on $I\times \partial M$ and 
$\lVert \delta \rVert_{L^\infty(I\times M)}\leq \alpha <\kappa$. We 
further regularise the displacement, therefore we also have to adapt the 
initial fluid velocity $\mathbf{u}_0 \in L^2_{\sigma}(\Omega_{\eta_0})$ 
and by the compatibility condition also the initial velocity of the shell 
equation $\eta_1\in L^2(M)$. To avoid the usual loss of regularity by 
transformation, we use the regularisation from Proposition 
\ref{prop:reg_rand:wohldef} which approximates $\delta$ at $t=0$ from 
``above''. By Proposition \ref{prop:reg_rand:wohldef} and Proposition 
\ref{prop:reg_rand:anfangswert}, there exists $0<\epsilon_0 = 
\epsilon_0(\alpha,\eta_0)$ such that
\begin{align}\label{eqn:def_epsilon_0}
  \eta_0 = \delta(0,\cdot) \leq \randreg\delta(0,\cdot),\qquad\qquad
  \lVert \randreg \delta \rVert_{L^\infty(I\times M)}
    <\frac{\alpha+\kappa}{2}<\kappa
\end{align}
holds for all $0<\epsilon<\epsilon_0$ and all $\delta \in 
C(\overline{I}\times M)$ satisfying $\delta(0,\cdot) = \eta_0$,
$\lVert \delta \rVert_{L^\infty(I\times M)}\leq \alpha$, i.\,e.\ 
$\Omega_{\eta_0}\subset \Omega_{\randreg\delta(0,\cdot)}$. Using 
$\fetanull \eta_1\in L^2_\sigma(B_\alpha)$ from Proposition 
\ref{lem:fortsetzung_stationaer_schwach}, we set\label{def_approx_anfangsdaten}
\begin{align*}
  \mathbf{u}_0^\epsilon :=
    \begin{cases}
      \mathbf{u}_0 \quad&\text{in } \Omega_{\eta_0},\\
      \fetanull \eta_1 &\text{in } \Omega_{\randreg\delta(0,\cdot)}
        \setminus\Omega_{\eta_0}
    \end{cases}
\end{align*}
and have, by the compatibility condition, $\mathbf{u}_0^\epsilon\in 
L^2_\sigma(\Omega_{\randreg\delta(0,\cdot)})$. Furthermore, defining
\begin{align*}
  \eta_1^\epsilon 
    := \exp \Big( -\int_{\eta_0}^{\randreg\delta(0,\cdot)} 
        \operatorname{div} (\boldsymbol\nu)(\cdot +\tau \boldsymbol\nu)\;d\tau \Big)
        \,\eta_1,
\end{align*}
we have $\eta_1^\epsilon\in L^2(M)$ and, by the Definition of 
$\fetanull \eta_1$ the compatibility condition,  \linebreak $\trnregdeltanull\lvert_{M}
\mathbf{u}_0^\epsilon =\eta_1\,\gamma(\randreg\delta(0,\cdot))$. Furthermore, 
using Lemma \ref{lem:reg_rand:konv}, we deduce
\begin{alignat}{2}\label{eqn:modification_start_konvergenz}\begin{aligned}
  \mathbf{u}_0^\epsilon &\rightarrow \mathbf{u}_0
      &\qquad &\text{in } L^2(\R^3),\\
  \eta_1^\epsilon &\rightarrow \eta_1
      &\qquad &\text{in } L^2(M),
\end{aligned}\end{alignat}
where we extended $\mathbf{u}_0$ and $\mathbf{u}_0^\epsilon$ by zero. 
Especially we have for all $0<\epsilon<\epsilon_0$
\begin{align}\label{eqn:modification_epsilon}
  \lVert \mathbf{u}_0^\epsilon\rVert_{L^2(\R^3)}
    \leq 2\,\lVert \mathbf{u}_0\rVert_{L^2(\R^3)}
  \quad \text{ and } \quad 
  \lVert \eta_1^\epsilon\rVert_{L^2(M)} 
    \leq 2\,\lVert \eta_1\rVert_{L^2(M)}.
\end{align}
Since we want to use the Galerkin method, we further linearise 
the convective terms by introducing a prescribed $\mathbf{v}\in 
L^2(I\times \R^3)$, where this regularity is motivated by our 
compactness result. Using the classical regularisation, we set 
$\randreg \mathbf{v} := w_\epsilon \ast \mathbf{v}$, where 
$w\in C_0^\infty(\R^4)$ is a kernel with $w\geq 0$, $supp\,w 
\subset B_1(0)$ and $\int_{\R^4} w \;dx= 1$, $w_\epsilon(x):= 
\epsilon^{-4}\,w(x/\epsilon)$ and $\mathbf{v}$ is extended by zero to 
$\R^4$. By the properties of the smoothing operator, we have \label{def_randreg_v}
$\randreg \mathbf{v}\in C^\infty(\overline{I}\times \R^3)$ (see 
\cite[Proposition II.2.25]{MR2986590}). 
\begin{definition}\label{def:schwache_loesung_entreglin}
  Let $0<\alpha<\kappa$, $(\mathbf{f},\, g,\, \mathbf{u}_0,\, 
  \eta_0,\, \eta_1)$ be admissible data, $\delta \in 
  C^0(\overline{I}\times M)$ with $\delta = 0$ 
  on $I\times \partial M$, $\lVert \delta \rVert_{L^\infty(I\times M)} 
  \leq \alpha$, $\delta(0,\cdot) = \eta_0$ on $M$, $\mathbf{v}\in 
  L^2(I\times\R^3)$ and $0<\epsilon<\epsilon_0$. We call the couple 
  $(\eta,\mathbf{u})\in Y^I\times X^I_{\randreg\delta}$ a \emph{weak 
  solution of the decoupled, regularised and linearised problem}, if 
  $\eta(0,\cdot)=\eta_0$ on $M$, $\trregdelta\mathbf{u}= 
  \partial_t \eta\,\boldsymbol\nu$ on $I\times M$ and
  \begin{align}\label{eqn:schwache_loesung_entreglin}\begin{aligned}
  - &\int_I\intreps\mathbf{u}\cdot\partial_t\boldsymbol\varphi\;dx\,dt    
    - \frac{1}{2}\int_I\int_M (\partial_t\eta)\,
      (\partial_t\randreg\delta)\,b\,\gamma(\randreg\delta)\;dA\,dt\\
  &- 2\int_I\int_M \partial_t\eta\,\partial_t b\;dA\,dt
    + 2\,\int_I K(\eta,b)\;dt
    + 2\int_I\intreps \mathbf{D}\mathbf{u}
      :\mathbf{D}\boldsymbol\varphi\;dx\,dt\\
  &\quad+ \frac{1}{2}\int_I\intreps(\randreg\mathbf{v}\cdot \nabla)
      \mathbf{u}\cdot\boldsymbol\varphi\;dx\,dt
    - \frac{1}{2}\int_I\intreps(\randreg\mathbf{v}\cdot \nabla)\boldsymbol
      \varphi\cdot\mathbf{u}\;dx\,dt\\
  &\qquad = \int_I\int_M g\,b\;dA\,dt 
    + \int_I\intreps\mathbf{f}\cdot\boldsymbol\varphi\;dx\,dt
    +\int_{\Omega_{\randreg\delta(0)}}\hspace{-0.7cm}
      \mathbf{u}_0^\epsilon\cdot \boldsymbol\varphi(0,\cdot)\;dx 
    +2\int_M \eta_1^\epsilon\,b(0,\cdot)\;dA\raisetag{2.7cm}
  \end{aligned}\end{align}
  holds for all $(b,\boldsymbol\varphi)\in T^I_{\randreg\delta}$.
\end{definition}
In analogy to the energies defined in the formal a priori estimate, we 
set\label{def_approx_energien} 
\begin{align*}
  E(\randreg\delta,\eta,\mathbf{u},t)
    &:= \frac{1}{2}\intreps |\mathbf{u}(t,\cdot)|^2\;dx 
    + 2\,\int_0^t\intrepsecht
        |\mathbf{D}\mathbf{u}(t,\cdot)|^2\;dx\,ds\\
    &\qquad + \int_M |\partial_t\eta(t,\cdot)|^2\;dA 
    + K(\eta(t,\cdot)),\\
  E_0(\randreg\delta(0,\cdot),\eta_0,\eta_1^\epsilon, 
      \mathbf{u}_0^\epsilon)
    &:= \frac{1}{2}\intrepnull |\mathbf{u}_0^\epsilon|^2\;dx 
    + \int_M |\eta_1^\epsilon|^2\;dA 
    + K(\eta_0).
\end{align*}
Our existence result for the decoupled, regularised and linearised 
problem is the following:
\begin{lemma}\label{lem:existence_approx_entreglin}
  Let $0<\alpha<\kappa$, $0<T<\infty$, $I:=(0,T)$ and 
  $(\mathbf{f},\, g,\, \mathbf{u}_0,\, \eta_0,\, \eta_1)$ be admissible 
  data. Let $\epsilon_0 = \epsilon_0(\alpha,\eta_0)$ be as given above, 
  $\delta \in C^0(\overline{I}\times M)$ with $\delta = 0$ 
  on $I\times \partial M$, $\lVert \delta \rVert_{L^\infty(I\times M)}
  \leq \alpha$, $\delta(0,\cdot) = \eta_0$ on $M$, $\mathbf{v}\in 
  L^2(I\times\R^3)$ and $0<\epsilon <\epsilon_0$. Then there exists 
  a weak solution $(\eta,\mathbf{u})\in Y^I\times X^I_{\randreg\delta}$ 
  to the the decoupled, regularised and linearised problem which, for all 
  $0<T_1\leq T$, satisfies the energy inequality
  \begin{align}\label{lem:eqn_esssup_reglin}
    \operatorname*{esssup}_{t\in (0,T_1)}\sqrt{E(\randreg\delta,\eta,\mathbf{u},t)}
      &\leq \sqrt{E_0(\randreg\delta(0,\cdot),\eta_0, 
          \eta_1^\epsilon,\mathbf{u}_0^\epsilon)}\\
      &\quad + \int_0^{T_1}\!\!\! \frac{1}{\sqrt{2}} \lVert 
        \mathbf{f}(s,\cdot) \rVert_{L^2(\Omega_{\randreg\delta(s)})} + 
          \frac{1}{2}\lVert g(s,\cdot) \rVert_{L^2(M)}\;ds.\notag
  \end{align}
  In particular, $(\eta,\mathbf{u})$ is uniformly bounded in 
  $Y^I\times X^I_{\randreg\delta}$ independently of $\delta$, 
  $\mathbf{v}$ and $\epsilon$ given the conditions $\delta(0,\cdot) = 
  \eta_0$ and $\lVert \delta \rVert_{L^\infty(I\times M)}\leq \alpha$.
\end{lemma}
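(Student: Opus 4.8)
The plan is to carry out a Galerkin approximation with the basis functions $(W_k,\mathbf u\text{-companions } \mathbf W_k)$ constructed in Subsection~\ref{kap:ansatz}; these are at our disposal precisely because the regularised displacement $\randreg\delta$ belongs to $C^4(\overline I\times M)$, which is one of the purposes of the regularisation. I would look for approximate solutions with $\mathbf u^N(t,\cdot)=\sum_{k=1}^{2N}\alpha_k^N(t)\,\mathbf W_k(t,\cdot)$, structural velocity $\partial_t\eta^N(t,\cdot)=\sum_{k\text{ odd}}^{2N-1}\alpha_k^N(t)\,W_k(t,\cdot)$ and $\eta^N(t,\cdot)=\eta_0+\int_0^t\partial_t\eta^N(s,\cdot)\,ds$, so that the structural and the fluid unknowns share the coefficients of the odd-indexed basis functions. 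By Proposition~\ref{prop:approx_eig_ansatz}~b) this ansatz has $\operatorname{div}\mathbf u^N=0$, $\trregdelta\mathbf u^N=\partial_t\eta^N\,\boldsymbol\nu$ on $I\times M$ and $\eta^N(0,\cdot)=\eta_0$ built in, and with Proposition~\ref{prop:approx_eig_ansatz}~a) one checks $(\eta^N,\mathbf u^N)\in Y^I\times X^I_{\randreg\delta}$. Inserting $(\eta^N,\mathbf u^N)$ into a Galerkin version of~\eqref{eqn:schwache_loesung_entreglin}, tested against the pairs $(\varphi\,W_j,\varphi\,\mathbf W_j)$ with $\varphi\in C_0^1([0,T))$ and $1\le j\le 2N$, and moving the time derivative off $\boldsymbol\varphi$ by an integration by parts in time (Reynolds' transport theorem, legitimate since $\Omega_{\randreg\delta(t)}$ is now smooth), one obtains a linear system of (integro-)differential equations for $\alpha^N=(\alpha_1^N,\dots,\alpha^N_{2N})$ — linear because $\randreg\mathbf v$ is a prescribed smooth field, so the viscous and both convective terms, the Koiter term and the coupling term with $\partial_t\randreg\delta$ are all linear (up to the fixed contributions $K(\eta_0,\cdot)$ and the data). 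The coefficients are continuous in $t$ by Proposition~\ref{prop:approx_eig_ansatz}~a) and the mass matrix is positive definite by the basis properties d),~e); hence a unique solution exists on all of $\overline I$.

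Next I would establish the uniform energy estimate by testing the Galerkin system at each time $\tau$ with the solution itself, i.e.\ with the admissible combination $(\partial_t\eta^N(\tau,\cdot),\mathbf u^N(\tau,\cdot))$, and repeating \emph{mutatis mutandis} the computation of Subsection~\ref{sec:formal_apriori}. The two convective terms drop by their antisymmetric structure, while the boundary contributions from Reynolds' theorem applied to $\frac{d}{dt}\int_{\Omega_{\randreg\delta(t)}}|\mathbf u^N|^2\,dx$ combine with the coupling term $-\tfrac12\int_M(\partial_t\eta^N)(\partial_t\randreg\delta)(\partial_t\eta^N)\,\gamma(\randreg\delta)\,dA$ — using $\trregdelta\mathbf u^N=\partial_t\eta^N\,\boldsymbol\nu$ on $I\times M$ and the characterisation of the scaled pseudonormal in Remark~\ref{grundlagen:eigenschaften_gamma} (cf.\ also Remark~\ref{bem:kompatibilitaet_spuroperatoren}). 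This yields the energy identity
\[
  \frac{d}{dt}E(\randreg\delta,\eta^N,\mathbf u^N,t)
    =\int_{\Omega_{\randreg\delta(t)}}\mathbf f\cdot\mathbf u^N\;dx
      +\int_M g\,\partial_t\eta^N\;dA ,
\]
and, using the coercivity of $K$ to bound $\|\mathbf u^N(t,\cdot)\|_{L^2}$ and $\|\partial_t\eta^N(t,\cdot)\|_{L^2}$ by a constant times $\sqrt{E}$ and then dividing by $2\sqrt{E}$, it gives~\eqref{lem:eqn_esssup_reglin} with $(\eta,\mathbf u)$ replaced by $(\eta^N,\mathbf u^N)$. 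Since $\|\randreg\delta\|_{L^\infty(I\times M)}<\tfrac{\alpha+\kappa}{2}<\kappa$ by~\eqref{eqn:def_epsilon_0} — so $\Omega_{\randreg\delta(t)}\subset B_\kappa$ and $\mathbf f,g$ may be restricted — and $\|\mathbf u_0^\epsilon\|_{L^2}\le2\|\mathbf u_0\|_{L^2}$, $\|\eta_1^\epsilon\|_{L^2}\le2\|\eta_1\|_{L^2}$ by~\eqref{eqn:modification_epsilon}, this bounds $(\eta^N,\mathbf u^N)$ in $Y^I\times X^I_{\randreg\delta}$ uniformly in $N$, and also in $\delta,\mathbf v,\epsilon$ under the stated constraints.

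Finally I would pass to the limit $N\to\infty$. From the uniform energy bound a subsequence satisfies $\eta^N\rightharpoonup\eta$ weakly-$\ast$ in $W^{1,\infty}(I;L^2(M))\cap L^\infty(I;H_0^2(M))$ and, after extension by zero, $\mathbf u^N\rightharpoonup\mathbf u$ in $L^2(I;L^2(\R^3))$ together with $\mathbf D\mathbf u^N\rightharpoonup\mathbf D\mathbf u$ weakly in $L^2(I\times\Omega_{\randreg\delta(t)})$ and $\mathbf u^N\rightharpoonup\mathbf u$ weakly-$\ast$ in $L^\infty(I;L^2(\Omega_{\randreg\delta(t)}))$; in particular $\eta\in Y^I$, $\mathbf u\in X^I_{\randreg\delta}$ and $\eta(0,\cdot)=\eta_0$. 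Because the decoupled, regularised and linearised problem has \emph{no} nonlinearity in the unknowns — the convective terms are linear in $\mathbf u$ with the fixed smooth coefficient $\randreg\mathbf v$ — weak convergence alone suffices to pass to the limit in every term of the Galerkin identity, and the density in Proposition~\ref{prop:approx_eig_ansatz}~c), together with the continuity of the occurring forms on $T^I_{\randreg\delta}$, shows that $(\eta,\mathbf u)$ satisfies~\eqref{eqn:schwache_loesung_entreglin} for all $(b,\boldsymbol\varphi)\in T^I_{\randreg\delta}$. The relation $\trregdelta\mathbf u=\partial_t\eta\,\boldsymbol\nu$ on $I\times M$ survives the limit by Lemma~\ref{moving_boundary_data}, weak lower semicontinuity of the norms defining $E$ gives~\eqref{lem:eqn_esssup_reglin} for $(\eta,\mathbf u)$, and the asserted uniform bound in $Y^I\times X^I_{\randreg\delta}$ then follows from~\eqref{lem:eqn_esssup_reglin} and the coercivity of $K$.

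The step I expect to be most delicate is making the energy estimate for the Galerkin solutions rigorous: although $\Omega_{\randreg\delta(t)}$ is smooth, one must carefully handle the time derivative of integrals over the moving domain and verify the precise combination of the Reynolds boundary contributions with the structural coupling term in the genuinely \emph{decoupled} setting, where the fluid velocity equals $\partial_t\eta^N$ on $M$ while the domain moves with velocity $\partial_t\randreg\delta$; and one must work with the strong, pointwise-in-time form of the Galerkin system, since the time-integrated weak formulation~\eqref{eqn:schwache_loesung_entreglin} uses test functions vanishing at $t=T$ and therefore cannot be tested with the solution directly.
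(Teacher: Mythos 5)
Your proposal is correct and follows essentially the same route as the paper: a Galerkin scheme in the strong, pointwise-in-time form with the basis $(W_k,\mathbf W_k)$ on the smooth domain $\Omega_{\randreg\delta(t)}$, reduction to a linear integro-differential system, an energy estimate obtained by testing with the solution where Reynolds' boundary term cancels against the $\tfrac12\int_M(\partial_t\eta)^2(\partial_t\randreg\delta)\gamma(\randreg\delta)$ coupling term and the convective terms cancel by antisymmetry, followed by weak limits, the density statement of Proposition~\ref{prop:approx_eig_ansatz}, Lemma~\ref{moving_boundary_data} for the trace condition, and lower semicontinuity for the energy inequality. The only details you leave implicit that the paper spells out are the approximation of $(\eta_1^\epsilon,\mathbf u_0^\epsilon)$ and of the data $(\mathbf f,g)$ in the Galerkin initial values and right-hand side; these are routine.
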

\begin{proof}
  We use the Galerkin method with the constructed basis functions
  $(W_k,\mathbf W_k)$, $k\in\N$. Therefore, for a fixed $n\in \N$, we
  seek functions $\alpha_n^k:[0,T]\rightarrow \R$,
  $1\leq k \leq n$, satisfying
\begin{align}\label{eqn:reg_galerkin}\begin{aligned}
  &\intreps \partial_t \mathbf{u}_n\cdot\mathbf{W}_j\;dx 
    + \frac{1}{2}\int_M(\partial_t \eta_n)\,(\partial_t\randreg\delta)
      \,W_j\,\gamma(\randreg\delta)\;dA\\
  &\quad + 2 \int_M\partial_t^2\eta_n\,W_j\;dA 
    + 2K(\eta_n,W_j)
    + 2\intreps \mathbf{D}\mathbf{u}_n:\mathbf{D}\mathbf{W}_j\;dx\\
  &\qquad + \frac{1}{2}\intreps (\randreg\mathbf{v}\cdot \nabla)
      \mathbf{u}_n\cdot \mathbf{W}_j\;dx
    - \frac{1}{2}\intreps(\randreg\mathbf{v}\cdot \nabla)
      \mathbf{W}_j\cdot \mathbf{u}_n\;dx\\ 
  &\hspace{2.5cm} = \int_M g_n\,W_j\;dA
    + \intreps \mathbf{f}_n\cdot\mathbf{W}_j\;dx
\end{aligned}\end{align}
for all $1\leq j \leq n$ and all $t\in [0,T]$, where
\begin{align}\label{eqn:definition_galerkin_function}\begin{aligned}
 \mathbf{u}_n(t,x):=\sum_{k=1}^n\alpha^k_n(t)\mathbf{W}_k(t,x),
 \quad \eta_n(t,x) := \int_0^t\sum_{k=1}^n\alpha_n^k(s)W_k(s,x)\;ds + \eta_0(x)
\end{aligned}\end{align}
and $g_n$, $\mathbf{f}_n$ are suitable approximations of $g$ and 
$\mathbf{f}$, respectively, with
\begin{alignat}{3}\label{konv_approx_reg_data}\begin{aligned}
  g_n &\in C^0(\overline{I}\times M), \quad\quad 
    &g_n &\rightarrow g \quad 
      &&\text{in }L^2(I\times M),\\
 \mathbf{f}_n &\in C^0(\overline{I}\times \overline{B_\kappa}),\quad\quad 
    &\mathbf{f}_n &\rightarrow \mathbf{f}\quad 
        &&\text{in }L^2(I\times B_\kappa).
\end{aligned}\end{alignat}
Using the compatibility condition $\trnregdeltanull\lvert_{M}
\mathbf{u}_0^\epsilon =\eta_1\,\gamma(\randreg\delta(0,\cdot))$ and 
the properties of our basis functions as in \cite{MR3147436}, 
\cite{phdeberlein}, we can find $\alpha^n_{k,0}$ with
\begin{align}\label{eqn:konv_anfangswerte}
  \sum_{k=1}^n\! \big(\alpha^n_{k,0} W_k(0,\cdot),
    \alpha^n_{k,0} \mathbf{W}_k(0,\cdot)\big) 
    \rightarrow 
      (\eta_1^\epsilon,\mathbf{u}_0^\epsilon)
\end{align}
in $L^2(M)\times L^2(\Omega_{\randreg\delta(0)})$. An easy computation 
using the linear independence and regularity of the basis functions 
(see \cite{phdeberlein}) shows that equivalently we can search for a 
solution to the system of ordinary integro-differential equations
\begin{align}\label{approx_gal_problem}
  \dot{\boldsymbol{\alpha}}_n(t) 
    = \widetilde{\mathbf{A}}(t,\boldsymbol{\alpha}(t)) 
      + \int_0^t\widetilde{\mathbf{B}}(t,s,\boldsymbol{\alpha}(s))\;ds,
\end{align}
where $\widetilde{\mathbf{A}} \in C^0([0,\infty)\times \R^n; \R^n)$, 
$\widetilde{\mathbf{B}} \in C^0([0,\infty)\times [0,\infty)\times \R^n;\R^n)$ 
are affine linear in the last component. By \cite[Theorem 1.1.1]{MR1336142} 
and the classical extension argument, we get a solution $\boldsymbol{\alpha}_n = 
(\alpha_n^1(\cdot),\ldots, \alpha_n^n(\cdot)) \in 
C^1([0,T],\R^n)$ of \eqref{approx_gal_problem} to the initial value  
$\boldsymbol{\alpha}_n(0)=(\alpha_{1,0}^n,\ldots,\alpha_{n,0}^n)$, i.\,e.\  
$\mathbf{u}_n \in L^\infty(I,W^{1,3}(\Omega_{\randreg\delta(t)}))\cap 
H^1(I,L^2(\Omega_{\randreg\delta(t)}))$ and 
$\eta_n \in C^0(\overline{I},H^2_0(M)) \cap C^2(\overline{I},L^2(M))$ 
which satisfies (\ref{eqn:reg_galerkin}) for all $1\leq j \leq n$ and all 
$t\in [0,T]$. Furthermore, Proposition \ref{prop:approx_eig_ansatz} 
implies
\begin{align}\label{approx:gal_rand_m}
  \trregdelta (\mathbf{u}_n) 
    = \sum_{k=1}^n \alpha_n^k\,\trregdelta(\mathbf{W}_k) 
    = \sum_{k=1}^n \alpha_n^k\, W_k\,\boldsymbol{\nu}
    =\partial_t \eta_n\, \boldsymbol{\nu},
\end{align}
$\operatorname{div} \mathbf{u}_n = \sum_{k=1}^n \alpha_n^k\,\operatorname{div} \mathbf{W}_k 
=0$, and therefore $(\eta_n,\mathbf{u}_n)\in Y^I\times 
X^I_{\randreg\delta}$. Also, by \eqref{eqn:definition_galerkin_function},
we have $\eta_n(0,\cdot) = \eta_0$.

In order to derive a uniform energy estimate, we multiply 
(\ref{eqn:reg_galerkin}) with $\alpha_n^j(t)$, take the sum over 
$j=1,\ldots, n$ and get
\begin{align}\label{eqn:galerkin_energy1}\begin{aligned}
  \intreps &\partial_t \mathbf{u}_n\cdot\mathbf{u}_n\;dx 
    + \frac{1}{2}\int_M(\partial_t \eta_n)^2\,(\partial_t\randreg\delta)
      \,\gamma(\randreg\delta)\;dA + 2\int_M (\partial_t^2\eta_n)\,
      (\partial_t \eta_n)\;dA\\
  &\qquad\quad + 2K(\eta_n,\partial_t\eta_n)
    + 2\intreps \mathbf{D}\mathbf{u}_n:\mathbf{D}\mathbf{u}_n\;dx
    = \int_M g_n\,\partial_t \eta_n\;dA
    + \intreps \mathbf{f}_n\cdot\mathbf{u}_n\;dx.
\end{aligned}\end{align}
Thus, Reynolds’ transport theorem yields
\begin{align*}
  \frac{d}{dt}\intreps |\mathbf{u}_n|^2\;dx 
    &= \intreps \partial_t \,|\mathbf{u}_n|^2\;dx 
      + \int_{\partial\Omega_{\randreg\delta(t)}}\hspace{-0.7cm} 
        |\mathbf{u}_n|^2\, \mathbf{v}_{\randreg\delta(t)}\cdot
          \boldsymbol\nu_{\randreg\delta(t)}\;dA_{\randreg\delta(t)},
\end{align*}
where $\boldsymbol\nu_{\randreg\delta(t)}$ is the outer normal of 
$\Omega_{\randreg\delta(t)}$ and $\mathbf{v}_{\randreg\delta(t)}$ the 
boundary velocity. By our assumption for the moving domain, the 
deformations happen only along the outer normal on $M$. In particular, 
the outer normal on $\partial M$ is perpendicular to the normal on 
$\Gamma_{\randreg\delta(t)}$ by the orthogonality assumption for the 
reference domain. Hence, transforming the boundary integral 
to the boundary of the reference domain and taking into account the 
compatibility condition (\ref{approx:gal_rand_m}) and Remark 
\ref{grundlagen:eigenschaften_gamma}, the identity
\begin{align*}
  \int_{\partial\Omega_{\randreg\delta(t)}}\hspace{-0.7cm} 
      |\mathbf{u}_n|^2\, &\mathbf{v}_{\randreg\delta(t)}\cdot
        \boldsymbol\nu_{\randreg\delta(t)}\;dA_{\randreg\delta(t)}
    = \int_M (\partial_t\eta_n)^2\,(\partial_t\randreg\delta)\,
      \gamma(\randreg\delta)\;dA
\end{align*}
follows. Arguing as in our formal a priori estimate, we deduce 
the energy inequality
\begin{align}\label{eqn:esssup_galerkin}
  \operatorname*{esssup}_{t\in (0,T_1)}\sqrt{E(\randreg\delta,\eta_n,\mathbf{u}_n,t)}
    &\leq \sqrt{E_0(\randreg\delta(0,\cdot),\eta_0, 
        \partial_t\eta_n(0,\cdot),\mathbf{u}_n(0,\cdot))}\\
    &\qquad + \int_0^{T_1}\!\! \frac{1}{\sqrt{2}}  
      \lVert \mathbf{f}_n(s,\cdot) \rVert_{L^2(\Omega_{\randreg\delta(s)})}
    + \frac{1}{2} \lVert g_n(s,\cdot) \rVert_{L^2(M)}\;ds\notag
\end{align}
for all $0<T_1\leq T$. Hence, by the coercivity of the Koiter energy and 
the convergences \eqref{konv_approx_reg_data}, \eqref{eqn:konv_anfangswerte} 
and \eqref{eqn:modification_epsilon} the couples $(\eta_n,\mathbf{u}_n)$ are uniformly 
bounded in $Y^I\times X^I_{\randreg\delta}$ as claimed in the Lemma.
Using the compact embedding $Y^I\hookrightarrow\hookrightarrow 
C^0(\overline{I}\times M)$, we get for a 
subsequence 
\begin{alignat}{2}\label{eqn:approx_konv_eta_n}\begin{aligned}
  \eta_n &\overset{\ast}{\rightharpoondown} \eta
    &\quad &\text{weakly-* in } 
      L^\infty(I,H_0^2(M))\text{ and uniformly in } \overline{I}\times M,\\
  \partial_t\eta_n &\overset{\ast}{\rightharpoondown}\partial_t\eta 
    &&\text{weakly-* in } L^\infty(I,L^2(M)),
\end{aligned}\end{alignat}
thus  $\eta\in Y^I$. By our Korn-type inequality, the spatial extensions of 
$\mathbf{u}_n$, $\nabla\mathbf{u}_n$ and $\mathbf{D}\mathbf{u}_n$ by 
zero are uniformly bounded in $L^\infty(I;L^2(\R^3))$, 
$L^2(I;L^{13/7}(\R^3))$ and $L^2(I;L^2(\R^3))$, respectively. For the
convenience we use the notation $\nabla\mathbf{u}$ and 
$\mathbf{D}\mathbf{u}$ also outside $\Omega_{\randreg\delta}^I$ but we
emphasize that the usual meaning of the symbols only hold on the inside. Hence,
\begin{alignat}{2}\label{eqn:approx_konv_u_n}\begin{aligned}
  \mathbf{u}_n &\overset{\ast}{\rightharpoondown} \mathbf{u} 
    \quad\quad&&\text{weakly-* in } L^\infty(I;L^2(\R^3)),\\
  \nabla\mathbf{u}_n &\rightharpoonup \nabla\mathbf{u}
    &&\text{weakly in } L^2(I;L^{13/7}(\R^3)),\\
  \mathbf{D}\mathbf{u}_n &\rightharpoonup \mathbf{D}\mathbf{u}
    &&\text{weakly in } L^2(I;L^2(\R^3)),
\end{aligned}\end{alignat}
Since  
$\operatorname{div} \mathbf{u}_n = 0$ in $\Omega_{\randreg\delta}^I$ the 
convergences imply $\operatorname{div} \mathbf{u} = 0$ in 
$\Omega_{\randreg\delta}^I$, i.\,e.\ $\mathbf{u}\in X^I_{\randreg\delta}$. 
Moreover, \eqref{eqn:esssup_galerkin} and the lower semi-continuity of  
Koiter's energy and the norms imply \eqref{lem:eqn_esssup_reglin} and the 
uniform bound on $(\eta,\mathbf{u})$ as claimed by the Lemma. To show 
(\ref{eqn:schwache_loesung_entreglin}), we first take 
$\varphi\in C_0^1([0,T))$. Using again Reynolds’ transport 
theorem, the orthogonality assumption for the reference domain, the 
compatibility condition (\ref{approx:gal_rand_m}) and Remark 
\ref{grundlagen:eigenschaften_gamma}, we have
\begin{align*}
    \int_I\intreps \partial_t (\mathbf{u}_n\cdot\varphi\,\mathbf{W}_j)\;dx\,dt
    &= -\int_I\int_{M} \partial_t \eta_n\, \varphi\,
        W_j\,(\partial_t\randreg\delta)\,\gamma(\randreg\delta)\;dA\,dt\\
       &\qquad -\int_{\Omega_{\randreg\delta(0)}}\hspace{-0.7cm} 
        \mathbf{u}_n(0,\cdot)\cdot(\varphi(0)\,\mathbf{W}_j(0,\cdot))\;dx.
\end{align*}
Therefore, by multiplying \eqref{eqn:reg_galerkin} with $\varphi$, 
integration over $I$ and integration by parts with respect to 
time, we get
\begin{align*}
  -&\int_I\intreps \mathbf{u}_n\cdot \partial_t (\varphi\,\mathbf{W}_j)\;dx\,dt
    - \frac{1}{2}\int_I\int_M(\partial_t \eta_n)\,(\partial_t\randreg\delta)
      \,\varphi\,W_j\,\gamma(\randreg\delta)\;dA\,dt\\
  &- 2\int_I\int_M\partial_t\eta_n\,\partial_t(\varphi\,W_j)\;dA\,dt 
    + 2\int_I K(\eta_n,\varphi\,W_j)\;dt
    + 2\int_I\intreps \mathbf{D}\mathbf{u}_n\!:
      \!\mathbf{D}(\varphi\,\mathbf{W}_j)\;dx\,dt\\
  &\quad + \frac{1}{2}\int_I\intreps (\randreg\mathbf{v}\cdot \nabla)\,
      \mathbf{u}_n\cdot (\varphi\,\mathbf{W}_j)\;dx\,dt
    - \frac{1}{2}\int_I\intreps(\randreg\mathbf{v}\cdot \nabla)
      \,(\varphi\,\mathbf{W}_j)\cdot \mathbf{u}_n\;dx\,dt\\ 
  &\qquad = \int_I\int_M g_n\,\varphi\,W_j\;dA\,dt
    + \int_{\Omega_{\randreg\delta(0)}}\hspace{-0.7cm} 
    \mathbf{u}_n(0,\cdot)\cdot(\varphi(0)\,\mathbf{W}_j(0,\cdot))\;dx\\
  &\qquad\qquad + \int_I\intreps \mathbf{f}_n\cdot(\varphi\,\mathbf{W}_j)\;dx\,dt
    + 2\int_M \partial_t\eta_n(0,\cdot)\,\varphi(0)\,W_j(0,\cdot)\;dA.
\end{align*}
By the linearisation of the convective term and \eqref{konv_approx_reg_data}, 
\eqref{eqn:konv_anfangswerte}, \eqref{eqn:approx_konv_eta_n} and 
\eqref{eqn:approx_konv_u_n} we can pass to the limit $n\rightarrow 
\infty$. Thus, $(\eta,\mathbf{u})$ satisfy 
\eqref{eqn:schwache_loesung_entreglin} for all test functions
\begin{align*}
  (b,\boldsymbol\varphi)\in span\big\{ (\varphi W_k,\varphi\mathbf{W}_k)\bigm| 
    \varphi\in C_0^1([0,T)),\,k\in \N\big\}.
\end{align*}
By Proposition \ref{prop:approx_eig_ansatz}, this set is dense in 
$T^I_{\randreg\delta}$, hence $(\eta,\mathbf{u})$ fulfils 
(\ref{eqn:schwache_loesung_entreglin}) for all $(b,\boldsymbol\varphi)
\in T^I_{\randreg\delta}$. Since \eqref{eqn:definition_galerkin_function} 
implies $\eta_n(0,\cdot) = \eta_0$, we deduce that $\eta(0,\cdot) = \eta_0$, 
using \eqref{eqn:approx_konv_eta_n} as well. Moreover, by 
\eqref{approx:gal_rand_m} and Lemma \ref{moving_boundary_data}, it follows 
that $\trregdelta\mathbf{u} = \partial_t \eta\,\boldsymbol{\nu}$ and 
the proof is complete.
\end{proof}
\subsection{The regularised problem}\label{kap:reg_prob}
To restore the coupling between the displacement and the domain as well 
as the convective term, we use a set-valued fixed-point theorem, 
i.\,e.\ the Bohnenblust-Karlin theorem \cite[Theorem 
17.57]{MR2378491}. By choosing a sufficiently small time interval
together with our compactness result, we satisfy the assumptions of 
the fixed-point theorem and get a weak solution of the regularised problem.
\begin{definition}\label{def:schwache_loesung_reg}
  Let $0<\alpha<\kappa$, $0<T<\infty$, $I:=(0,T)$ and let 
  $(\mathbf{f},\, g,\, \mathbf{u}_0,\, \eta_0,\, \eta_1)$ be some 
  admissible data. Let $\epsilon_0 = \epsilon_0(\alpha,\eta_0)$ be as in 
  Section \ref{chapter:entreglin}, and $0<\epsilon <\epsilon_0$. We call 
  the couple $(\eta,\mathbf{u})$ with $\eta \in Y^I$ and $\mathbf{u}\in 
  X^I_{\randreg\eta}$ a \emph{weak solution of the $\epsilon$-regularised 
  problem} if $\lVert \eta \rVert_{L^\infty(I\times M)} <\kappa$, 
  $\eta(0,\cdot)=\eta_0$ on $M$, $\trregeta\mathbf{u} = \partial_t 
  \eta\,\boldsymbol\nu$ on $I\times M$ and 
  \begin{align}\label{eqn:schwache_loesung_reg}\begin{aligned}
  -&\int_I\intregta\mathbf{u}\cdot\partial_t\boldsymbol\varphi\;dx\,dt    
    - \frac{1}{2}\int_I\int_M (\partial_t\eta)\,
      (\partial_t\randreg\eta)\,b\,\gamma(\randreg\eta)\;dA\,dt\\
  & - 2\int_I\int_M \partial_t\eta\,\partial_t b\;dA\,dt
    + 2\,\int_I K(\eta,b)\;dt
    + 2\int_I\intregta \mathbf{D}\mathbf{u}
      :\mathbf{D}\boldsymbol\varphi\;dx\,dt\\
  &\quad+ \frac{1}{2}\int_I\intregta(\randreg\mathbf{u}\cdot \nabla)
      \mathbf{u}\cdot\boldsymbol\varphi\;dx\,dt
    - \frac{1}{2}\int_I\intregta(\randreg\mathbf{u}\cdot \nabla)\boldsymbol
      \varphi\cdot\mathbf{u}\;dx\,dt\\
  &\qquad = \int_I\int_M g\,b\;dA\,dt 
    + \int_I\intregta\mathbf{f}\cdot\boldsymbol\varphi\;dx\,dt 
    +\int_{\Omega_{\randreg\eta(0)}}\hspace{-0.7cm}
      \mathbf{u}_0^\epsilon\cdot \boldsymbol\varphi(0,\cdot)\;dx 
    +2\int_M \eta_1^\epsilon\,b(0,\cdot)\;dA\raisetag{2.91cm}
  \end{aligned}\end{align}
  holds for all $(b,\boldsymbol\varphi)\in T^I_{\randreg\eta}$.
\end{definition}
Now we can formulate the existence result for the $\epsilon$-regularised 
problem.
\begin{lemma}\label{lem:reg_solution}
  Let $0<T_0<\infty$ and let $(\mathbf{f},\, g,\, \mathbf{u}_0,\, 
  \eta_0,\, \eta_1)$ be some admissible data. Let
  \begin{align*}
    0<\alpha := \frac{\lVert \eta_0 \rVert_{L^\infty(M)}+ \kappa}{2} 
      < \kappa.
  \end{align*}
  Then a time $0<T\leq T_0$ exists such that for all $0<\epsilon 
  <\epsilon_0$ a weak solution of the $\epsilon$-regularised problem 
  $(\eta,\mathbf{u})$ exists on the interval $I=(0,T)$ and satisfies 
  $\lVert \eta \rVert_{L^\infty(I\times M)}\leq \alpha$ and for all 
  $0<T_1\leq T$
  \begin{align}\label{lem:eqn_esssup_reg}
    \operatorname*{esssup}_{t\in (0,T_1)}\sqrt{E(\randreg\eta,\eta,\mathbf{u},t)}
      &\leq \sqrt{E_0(\randreg\eta(0,\cdot),\eta_0, 
          \eta_1^\epsilon,\mathbf{u}_0^\epsilon)}\\
      &\qquad + \int_0^{T_1}\!\!\frac{1}{\sqrt{2}} \lVert
        \mathbf{f}(s,\cdot) \rVert_{L^2(\Omega_{\randreg\delta(s)})}
        + \frac{1}{2}\, \lVert g(s,\cdot) \rVert_{L^2(M)}\;ds.\notag
  \end{align}
  In particular $(\eta,\mathbf{u})$ is bounded independently of 
  $0<\epsilon<\epsilon_0$ in $Y^I\times X^I_{\randreg\eta}$. The time $T$ 
  can be chosen independently of the data if an upper bound is given on 
  the norms $\lVert \mathbf{f} \rVert_{L^2(I_0\times B_\kappa)}$, 
  $\lVert g \rVert_{L^2(I_0\times M)}$, $\lVert \eta_0\rVert_{H_0^2(M)}$, 
  $\lVert \eta_1 \rVert_{L^2(M)}$ and $\lVert \mathbf{u}_0 
  \rVert_{L^2(\Omega_{\eta_0})}$ as well as a strictly positive lower 
  bound on $\kappa - \lVert \eta_0 \rVert_{L^\infty(M)}$.
\end{lemma}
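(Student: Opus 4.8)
The plan is to combine the existence result for the decoupled, regularised and linearised problem (Lemma \ref{lem:existence_approx_entreglin}) with the Bohnenblust--Karlin fixed-point theorem and the compactness result (Lemma \ref{lem:kompaktheit}). First I would fix $0<T\le T_0$ (to be chosen later) and, for every $\delta\in C^0(\overline I\times M)$ with $\delta(0,\cdot)=\eta_0$ and $\lVert\delta\rVert_{L^\infty(I\times M)}\le\alpha$, and every $\mathbf v\in L^2(I\times\R^3)$, invoke Lemma \ref{lem:existence_approx_entreglin} to obtain a solution $(\eta,\mathbf u)$ of the decoupled problem. Because the solution need not be unique, this defines a set-valued map $\mathcal S\colon (\delta,\mathbf v)\mapsto\{(\eta,\mathbf u)\}$. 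Composing with the (single-valued, compact) regularisation $\mathcal R_\epsilon$ on the displacement variable and the projection-type map sending $(\eta,\mathbf u)$ to $(\eta,\mathbf u\circ\text{(extension)})$, one builds a set-valued self-map $\mathcal T$ on a suitable closed, bounded, convex set $\mathcal K$ in a Banach space into which $Y^I$ embeds compactly — e.g. $\mathcal K\subset C^0(\overline I\times M)\times L^2(I\times\R^3)$ consisting of pairs with the a priori bounds from \eqref{lem:eqn_esssup_reglin}, the constraint $\delta(0,\cdot)=\eta_0$ and $\lVert\delta\rVert_{L^\infty(I\times M)}\le\alpha$. The uniform energy bound in Lemma \ref{lem:existence_approx_entreglin} (which is independent of $\delta$, $\mathbf v$, $\epsilon$) is exactly what makes $\mathcal K$ invariant under $\mathcal T$, provided $T$ is small enough that the H\"older-norm control coming from $Y^I\hookrightarrow C^{0,1-\lambda}(\overline I;C^{0,2\lambda-1}(M))$ (Lemma \ref{lem:hoelder-einbettung}) together with $\eta(0,\cdot)=\eta_0$ keeps $\lVert\eta\rVert_{L^\infty(I\times M)}\le\alpha$; this is where the smallness of $T$ and the lower bound on $\kappa-\lVert\eta_0\rVert_{L^\infty(M)}$ enter, and also where the claimed independence of $T$ from the data (given bounds on the data) comes from.

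Next I would verify the hypotheses of Bohnenblust--Karlin: $\mathcal T$ has nonempty, convex, closed values and a closed graph, and $\mathcal T(\mathcal K)$ is relatively compact. Nonemptiness is Lemma \ref{lem:existence_approx_entreglin}; convexity of the value set is immediate since for fixed $(\delta,\mathbf v)$ the decoupled problem \eqref{eqn:schwache_loesung_entreglin} is linear in $(\eta,\mathbf u)$ (the nonlinearity has been frozen into $\mathbf v$) and the energy bound is convex, so the solution set is convex and closed in the weak topologies, hence closed in the ambient space. Relative compactness of $\mathcal T(\mathcal K)$ follows from the uniform $Y^I\times X^I_{\randreg\delta}$ bounds and the compact embedding $Y^I\hookrightarrow\hookrightarrow C^0(\overline I\times M)$, together with the compactness result Lemma \ref{lem:kompaktheit} applied to the $\mathbf u$-component (here one uses that the frozen convective coefficient $\randreg\mathbf v$ is smooth, so the a priori bounds and the structure of \eqref{eqn:schwache_loesung_entreglin} match the hypotheses of Lemma \ref{lem:kompaktheit} with $\mathbf v_n=\randreg\mathbf v_n$). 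The closed-graph property is the technical heart: given $(\delta_n,\mathbf v_n)\to(\delta,\mathbf v)$ in $\mathcal K$ and $(\eta_n,\mathbf u_n)\in\mathcal T(\delta_n,\mathbf v_n)$ with $(\eta_n,\mathbf u_n)\to(\eta,\mathbf u)$, one must pass to the limit in \eqref{eqn:schwache_loesung_entreglin}; the linear terms pass by the weak convergences from the a priori bound together with the Hanzawa-transform convergences (Lemma \ref{lem:inst.konv_hanzawa}, Lemma \ref{lem:fortsetzung_konvergenzen}), while the two convective terms — linear in $(\eta_n,\mathbf u_n)$ but with coefficient $\randreg\mathbf v_n$ converging strongly (indeed $\randreg\mathbf v_n\to\randreg\mathbf v$ in every $L^s(I\times\R^3)$ by the mollification) — pass by combining strong convergence of the coefficient with the weak $L^2(I;W^{1,13/7})$ convergence of $\mathbf u_n$, for which the strong $L^2$ convergence furnished by Lemma \ref{lem:kompaktheit} is exactly what is needed; Lemma \ref{moving_boundary_data} then secures $\trregdelta\mathbf u=\partial_t\eta\,\boldsymbol\nu$ in the limit, and the test-space $T^I_{\randreg\delta_n}$ must be shown to be "stable" under the convergence (any $(b,\boldsymbol\varphi)\in T^I_{\randreg\delta}$ can be transported, via the Piola transform and the basis-function construction of Proposition \ref{prop:approx_eig_ansatz}, to admissible test functions for the $n$-th problem).

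The fixed point $(\eta,\mathbf u)$ then satisfies \eqref{eqn:schwache_loesung_entreglin} with $\delta=\eta$ and $\mathbf v=\mathbf u$, i.e. it is a weak solution of the $\epsilon$-regularised problem in the sense of Definition \ref{def:schwache_loesung_reg}, provided one checks that at the fixed point the regularised coefficient reads $\randreg\eta$ and $\randreg\mathbf u$ respectively — which holds by construction of $\mathcal T$. The energy inequality \eqref{lem:eqn_esssup_reg} is inherited from \eqref{lem:eqn_esssup_reglin} by passing the data-dependent right-hand side through the convergences \eqref{eqn:modification_start_konvergenz}--\eqref{eqn:modification_epsilon}, using $\lVert\eta\rVert_{L^\infty(I\times M)}\le\alpha$ to control the moving-domain norm of $\mathbf f$, and the bound is uniform in $0<\epsilon<\epsilon_0$ since all constants in the a priori estimate are. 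Finally, the independence of $T$ from the data under the stated bounds follows because the only place $T$ had to be shrunk was to preserve $\lVert\eta\rVert_{L^\infty(I\times M)}\le\alpha$, and that shrinking depends only on $\sqrt{E_0}$, on $\lVert\mathbf f\rVert_{L^2}$, $\lVert g\rVert_{L^2}$ (through \eqref{lem:eqn_esssup_reglin}) and on the gap $\kappa-\lVert\eta_0\rVert_{L^\infty(M)}=\kappa-(2\alpha-\kappa)$, all of which are controlled by the listed quantities. The main obstacle is the closed-graph verification together with the invariance/compactness of $\mathcal K$: one must juggle three simultaneous limits (in $\delta_n$, in the frozen velocity $\mathbf v_n$, and in the solutions), keep the moving-domain integrals under control via the Hanzawa machinery of Section \ref{sec:moving_domains}, and correctly transfer test functions between the domains $\Omega_{\randreg\delta_n}$, which is precisely what Proposition \ref{prop:approx_eig_ansatz} and Lemma \ref{lem:kompaktheit} were built to handle.
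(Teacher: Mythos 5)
Your proposal follows essentially the same route as the paper: Bohnenblust--Karlin applied to the set-valued solution map of the decoupled, regularised, linearised problem on a closed convex subset of $C^0(\overline I\times M)\times L^2(I\times\R^3)$, with $T$ chosen small via the H\"older embedding of $Y^I$, non-emptiness from Lemma \ref{lem:existence_approx_entreglin}, convexity from linearity, relative compactness and the closed graph from Lemma \ref{lem:kompaktheit}, and Lemma \ref{moving_boundary_data} for the trace condition in the limit. The only small deviation is in the closed-graph step: the paper transfers test functions not via the Piola transform but by exploiting the structure of $T^I_{\randreg\delta}$ directly, testing with $(b,\frandregdeltan b)$ built from the extension operator of Lemma \ref{lem:fortsetzung_instationaer} and separately with $(0,\boldsymbol\varphi)$ supported away from the moving boundary (admissible for large $n$ by uniform convergence of the domains) — a detail, not a different method.
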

\begin{proof}
  We define $I_0 :=(0,T_0)$ and chose a $\delta \in 
  C^0(\overline{I_0}\times M)$ with $\delta = 0$ on $I_0\times 
  \partial M$,  \linebreak $\lVert \delta \rVert_{L^\infty(I_0\times M)}\leq \alpha$, 
  $\delta(0,\cdot) = \eta_0$ on $M$ and a $\mathbf{v}\in 
  L^2(I_0\times\R^3)$. The associated weak solution $(\eta,\mathbf{u})$ 
  of the regularised, decoupled and linearised problem from 
  Lemma \ref{lem:existence_approx_entreglin} satisfies the energy inequality 
  \eqref{lem:eqn_esssup_reg} and, by the coercivity of the Koiter 
  energy, the estimate
  \begin{align*}
    \lVert \eta \rVert_{Y^{I_0}}
      + \lVert \mathbf{u} \rVert_{X^{I_0}_{\randreg\delta}} 
        \leq c_0.
  \end{align*}  
  In here, the constant can be chosen independent of the admissible data, 
  $\delta$, $\mathbf{v}$ and $\epsilon$, if there exits an upper bound on 
  the norms $\lVert f \rVert_{L^2(I_0\times B_\kappa)}$, $\lVert g 
  \rVert_{L^2(I_0\times M)}$, $\lVert \eta_0\rVert_{H_0^2(M)}$, 
  $\lVert \eta_1 \rVert_{L^2(M)}$ and $\lVert \mathbf{u}_0 
  \rVert_{L^2(\Omega_{\eta_0})}$ and as long as $\delta$ satisfies 
  $\delta(0,\cdot) = \eta_0$ and $\lVert \delta 
  \rVert_{L^\infty(I_0\times M)}\leq \alpha$. Extending $\mathbf{u}$ 
  by zero in the spatial direction, we also get the estimate 
  $\lVert \mathbf{u} \rVert_{L^2(I_0\times \R^3)}\leq \sqrt{T_0}\, c_0$.
  Hence, we set $c_1:= \max\{c_0,\sqrt{T_0}\,c_0\}$. By 
  Lemma \ref{lem:hoelder-einbettung}, the embedding $Y^{I_0} 
  \hookrightarrow C^{0,1-\theta}(\overline{I_0}; C^{0}(M))$ is linear 
  and continuous for some $\frac{1}{2}<\theta<1$ with an operator norm 
  denoted by $c_2$. Since $\lVert \eta_0 \rVert_{L^\infty(M)}< \kappa$, 
  we can choose $0<T \leq T_0$ which satisfies
  \begin{align}\label{eqn:lem:existence_approx_reg_T}
    c_1\,c_2\,T^{1-\theta} 
      < \frac{\kappa - \lVert \eta_0 \rVert_{L^\infty(M)}}{2}
  \end{align}
  and depends only on $c_1$, $c_2$, and a positive lower bound on 
  $\kappa - \lVert \eta_0 \rVert_{L^\infty(M)}$. We set $I:=(0,T)$ 
  and notice that this particular choice of $T$, together with the 
  initial condition $\eta(0,\cdot)= \eta_0$ and the $\theta$-H\"older-continuity 
  of $\eta$, implies $\lVert \eta \rVert_{L^{\infty}(I\times M)}< \alpha$.
  We define the space
  \begin{align*}
    Z:=C(\overline{I}\times M)\times L^2(I\times \R^3)
  \end{align*}
  and the non-empty, closed, convex subset
  \begin{align*}
    D:=\left\{ (\delta,\mathbf{v})\in Z\;\left|\; \begin{aligned} 
        &\delta = 0 \text{ on } I\times \partial M,\; 
        \delta(0,\cdot) = \eta_0,\\
        &\lVert \delta \rVert_{L^\infty(I\times M)}\leq \alpha,\,
        \lVert \mathbf{v}\rVert_{L^2(I\times \R^3)}\leq c_1\end{aligned}
      \right\}.\right.
  \end{align*}
  We define $F:D\subset Z \rightarrow 2^Z$ as the set-valued 
  map which assigns to $(\delta,\mathbf{v})\in D$ the set 
  of all weak solutions $(\eta,\mathbf{u})$ of the decoupled, regularised 
  and linearised problem on the interval $I$ to the admissible data 
  $(\mathbf{f},\, g,\, \mathbf{u}_0,\, \eta_0,\, \eta_1)$ and the 
  functions $\delta$ and $\mathbf{v}$, which satisfy for all $0<T_1\leq T$ 
  the inequality \eqref{lem:eqn_esssup_reglin} and the estimates 
  \begin{align}\label{eqn:fixpunkt:2}
    \lVert \eta \rVert_{L^\infty(I\times M)}\leq \alpha,
    \qquad \lVert \mathbf{u}\rVert_{L^2(I\times \R^3)}\leq c_1,
  \end{align}
  where $\mathbf{u}$ is again extended by $\boldsymbol 0$ in the spatial 
  direction. Taking into account the boundary and initial conditions 
  of the weak solution, $F$ maps $D$ into its power set, $F(D)\subset 
  2^D$. To show that $F$ has a fixed point, we use the theorem of 
  Bohnenblust-Karlin, see \cite[Theorem 17.57]{MR2378491}. Therefore, 
  we have to check prerequisites, i.\,e. we have to show that for 
  all $(\delta, \mathbf{v})\in D$ the set $F(\delta,\mathbf{v})$ is 
  non-empty and convex, the graph of $F$ is closed, and the image of 
  $F$ is relatively compact in $Z$.
  
  By our choice of $T$, Lemma \ref{lem:existence_approx_entreglin} 
  implies that for all $(\delta, \mathbf{v})\in D$ the set 
  $F(\delta,\mathbf{v})$ is non-empty. By the linearity of the decoupled, 
  regularised and linearised problem and the coercivity and bilinearity 
  of the Koiter energy, some straightforward computations show that 
  $F(\delta,\mathbf{v})$ is convex. To show the relative compactness of 
  $F(D)$ in $Z$, we take a sequence $(\eta_n,\mathbf{u}_n)_{n\in \N} 
  \subset F(D)$. Thus, there exists a sequence 
  $(\delta_n,\mathbf{v}_n)_{n\in\N}\subset D$ with 
  $(\eta_n,\mathbf{u}_n)\in F(\delta_n,\mathbf{v}_n)$. Since $\epsilon$ 
  is fixed, by Proposition \ref{prop:reg_rand:wohldef} 
  $(\randreg\delta_n)_{n\in \N}$ is bounded in $C^4(\overline{I}\times M)$. 
  Using the Arzela-Ascoli theorem, we get, for a subsequence, 
  \begin{alignat}{2}\label{eqn:lem_reg_konv_reg}\begin{aligned}
    \randreg\delta_n &\rightarrow \xi
      &\quad\quad\quad &\text{in } C^2(\overline{I}\times M). 
  \end{aligned}\end{alignat}
  By \eqref{eqn:def_epsilon_0}, we have 
  $\lVert \randreg\delta_n \rVert_{L^{\infty} (I\times M)}, 
  \lVert \xi \rVert_{L^\infty (I\times M)} \leq 
  \frac{\alpha + \kappa}{2}$. Therefore we can choose a uniform constant 
  in the Korn-type inequality. Using the coercivity of the Koiter 
  energy, the energy inequality \eqref{lem:eqn_esssup_reglin} implies 
  the uniform estimate
  \begin{align}\label{eqn:fixpunkt:3}
    \lVert \eta_n \rVert_{Y^I} + \lVert \mathbf{u}_n 
      \rVert_{X^I_{\randreg\delta_n}} 
        \leq c_3.
  \end{align}
  As in \eqref{eqn:approx_konv_eta_n} and \eqref{eqn:approx_konv_u_n} for 
  a subsequence we deduce
  \begin{alignat}{2}\label{eqn:lem_reg_konv_eta_n}\begin{aligned}
    \eta_n &\overset{\ast}{\rightharpoondown} \eta
      &\quad &\text{weakly-* in } L^\infty(I,H_0^2(M)) 
        \text{ and uniformly in } \overline{I}\times M,\\
    \partial_t\eta_n &\overset{\ast}{\rightharpoondown} \partial_t \eta 
      &&\text{weakly-* in } L^\infty(I,L^2(M)).
  \end{aligned}\end{alignat}
  for some $\eta \in Y^I$, and 
  \begin{alignat}{2}\label{eqn:lem_reg_konv_u_n}\begin{aligned}
    \mathbf{u}_n &\overset{\ast}{\rightharpoondown} \mathbf{u} 
      \quad\quad&&\text{weakly-* in } L^\infty(I;L^2(\R^3)),\\
    \nabla\mathbf{u}_n &\rightharpoonup \nabla \mathbf{u}
      &&\text{weakly in } L^2(I;L^{13/7}(\R^3)),\\
    \mathbf{D}\mathbf{u}_n &\rightharpoonup \mathbf{D}\mathbf{u}
      &&\text{weakly in } L^2(I;L^2(\R^3)),
  \end{aligned}\end{alignat}
  for some $\mathbf{u}\in X^I_{\xi}$, whereby the functions are extended 
  spatially by zero. Hence, we emphasize again that the symbols 
  $\nabla \mathbf{u}$ and $\mathbf{D}\mathbf{u}$ have their usual meaning 
  only on the set $\Omega_\xi^I$. Since the weak solutions satisfy the 
  identity \eqref{eqn:schwache_loesung_entreglin}, we can use the 
  compactness result (Lemma \ref{lem:kompaktheit}) to obtain the strong 
  convergences
  \begin{align}\label{eqn:lem_reg_konv_komp}
    \partial_t\eta_n \rightarrow \partial_t\eta 
      \quad\text{in } L^2(I\times M),\qquad 
    \mathbf{u}_n \rightarrow \mathbf{u} 
      \quad \text{in } L^2(I\times \R^3).
  \end{align}
  By \eqref{eqn:lem_reg_konv_eta_n} and \eqref{eqn:lem_reg_konv_komp}, a 
  subsequence of $(\eta_n,\mathbf{u}_n)_{n\in\N}$ converges towards 
  $(\eta, \mathbf{u})$ in $Z$, therefore $F(D)$ is relatively compact in $Z$.
  It remains to show that $F$ has a closed graph. To this end,  we consider    
  some sequences $(\delta_n, \mathbf{v}_n)_{n\in \N}\subset D$, 
  $(\eta_n,\mathbf{u}_n)_{n\in\N} \subset D$ with 
  $(\eta_n,\mathbf{u}_n)\in F(\delta_n,\mathbf{v}_n)$ and
  \begin{align}\label{eqn:lem_reg_konv_graph}\begin{alignedat}{4}
    \delta_n &\rightarrow \delta 
      &\quad&\text{in } C^0(\overline{I}\times M),&\qquad
    \eta_n &\rightarrow \eta 
      &\quad&\text{in } C^0(\overline{I}\times M),\\
    \mathbf{v}_n &\rightarrow \mathbf{v} 
      &&\text{in } L^2(I\times \R^3),&\qquad
    \mathbf{u}_n &\rightarrow \mathbf{u} 
      &&\text{in } L^2(I\times \R^3)
  \end{alignedat}\end{align}
  for a $(\delta,\mathbf{v})\in Z$ and a $(\eta,\mathbf{u})\in Z$. We will 
  prove that $(\delta,\mathbf{v}),\,(\eta,\mathbf{u})\in D$ and 
  $(\eta,\mathbf{u})\in F(\delta,\mathbf{v})$. Since $D$ is closed, 
  $(\delta,\mathbf{v}),\,(\eta,\mathbf{u})\in D$ follows. With the same 
  arguments as above, we find a subsequence such that 
  \eqref{eqn:lem_reg_konv_reg}, \eqref{eqn:lem_reg_konv_eta_n}, 
  \eqref{eqn:lem_reg_konv_u_n} and \eqref{eqn:lem_reg_konv_komp} hold.
  By the properties of the regularisation operators, 
  \eqref{eqn:lem_reg_konv_reg} and \eqref{eqn:lem_reg_konv_graph} imply
  $\xi = \randreg\delta$ and 
  \begin{alignat}{2}\label{eqn:lem_reg_konv_reg_v_n}\begin{aligned}
    \randreg\mathbf{v}_n &\rightarrow
      \randreg\mathbf{v} &&\text{in } C^0(\overline{I}\times \R^3).
  \end{aligned}\end{alignat}
  Furthermore, from $(\eta_n,\mathbf{u}_n)\in F(\delta_n,
  \mathbf{v}_n)$ and our convergences through the lower semi-continuity 
  of the norms and the continuity of the Koiter energy  we can deduce that 
  for all $0<T_1\leq T$ the couple $(\eta, \mathbf{u})$ satisfies the 
  energy inequality \eqref{lem:eqn_esssup_reglin} and the estimates
  \begin{align*}
    \lVert \eta \rVert_{L^\infty(I\times M)}\leq \alpha,
    \qquad \lVert \mathbf{u}\rVert_{L^2(I\times \R^3)}\leq c_1.
  \end{align*}
  Furthermore, $(\eta_n,\mathbf{u}_n)\in F(\delta_n,\mathbf{v}_n)$ 
  implies $\eta_n(0,\cdot) = \eta_0$ on $M$ and $\trregdeltan \mathbf{u}_n 
  = \partial_t \eta_n\, \boldsymbol{\nu}$ on $I\times M$. By the uniform 
  convergence of $\eta_n$ and Lemma \ref{moving_boundary_data}, we also 
  have $\eta(0,\cdot) = \eta_0$ and $\trregdelta\mathbf{u} = \partial_t 
  \eta\,\boldsymbol{\nu}$. To finally show the identity 
  \eqref{eqn:schwache_loesung_entreglin} for $(\eta, \mathbf{u})$ and all 
  test functions $(b,\boldsymbol\varphi)\in T^I_{\randreg\delta}$, we 
  will again use the fact that $(\eta_n,\mathbf{u}_n)\in 
  F(\delta_n,\mathbf{v}_n)$. Therefore, 
  \begin{align}\label{eqn:approx_graph_weak_sol}\begin{aligned}
  -&\int_I\intrepsn\mathbf{u}_n\cdot\partial_t\boldsymbol\varphi_n\;dx\,dt    
    - \frac{1}{2}\int_I\int_M (\partial_t\eta_n)\,
      (\partial_t\randreg\delta_n)\,b_n\,\gamma(\randreg\delta_n)\;dA\,dt\\
   &- 2\int_I\int_M \partial_t\eta_n\,
      \partial_t b_n\;dA\,dt
    + 2\int_I K(\eta_n,b_n)\;dt
    + 2\int_I\intrepsn \mathbf{D}\mathbf{u}_n
      :\mathbf{D}\boldsymbol\varphi_n\;dx\,dt\\
  &+ \frac{1}{2}\int_I\intrepsn(\randreg\mathbf{v}_n\cdot \nabla)
      \mathbf{u}_n\cdot\boldsymbol\varphi_n\;dx\,dt
    - \frac{1}{2}\int_I\intrepsn(\randreg\mathbf{v}_n\cdot \nabla)\boldsymbol
      \varphi_n\cdot\mathbf{u}_n\;dx\,dt\\
  &\quad= \int_I\int_M g\,b_n\;dA\,dt 
    + \int_I\intreps\mathbf{f}\cdot\boldsymbol\varphi_n\;dx\,dt
    + \int_{\Omega_{\randreg\delta_n(0)}}\hspace{-0.8cm}
      \mathbf{u}_0^\epsilon\cdot \boldsymbol\varphi_n(0,\cdot)\;dx 
    +2\int_M \eta_1^\epsilon\,b_n(0,\cdot)\;dA\raisetag{2.2cm}
  \end{aligned}\end{align}
  holds for all $(b_n,\boldsymbol\varphi_n)\in T^I_{\randreg\delta_n}$. 
  Since we cannot use $(b,\boldsymbol\varphi)\in 
  T^I_{\randreg\delta}$ directly as a test function in 
  \eqref{eqn:approx_graph_weak_sol}, we have to use the special 
  structure of this space. Hence, we take $b\in H^1(I;L^2(M))\cap 
  L^3(I;H_0^2(M))$ with $b(T,\cdot)=0$. By Lemma 
  \ref{lem:fortsetzung_instationaer}, $(b, \frandregdeltan b)\in 
  T^I_{\randreg\delta_n}$. Taking into account the convergences from 
  Lemma \ref{lem:fortsetzung_konvergenzen} and Lemma 
  \ref{lem:fortsetzung_konvergenzen_stationaer} as well as 
  \eqref{eqn:lem_reg_konv_reg}, \eqref{eqn:lem_reg_konv_eta_n}, 
  \eqref{eqn:lem_reg_konv_u_n}, \eqref{eqn:lem_reg_konv_komp} and 
  \eqref{eqn:lem_reg_konv_reg_v_n}, we can pass to the limit $n\rightarrow
  \infty$ in the identity \eqref{eqn:approx_graph_weak_sol} tested with 
  $(b, \frandregdeltan b)$ and obtain that $(\eta, \mathbf{u})$ satisfies
  \eqref{eqn:schwache_loesung_entreglin} for $(b, \frandregdeltan b)$.
  Due to our definition of $T^I_{\randreg\delta}$, it remains to show 
  that $(\eta,\mathbf{u})$ satisfies the identity 
  \eqref{eqn:schwache_loesung_entreglin} for all test functions 
  $(0,\boldsymbol\varphi)$ with $\boldsymbol\varphi\in W_{\randreg\delta}$, 
  $\boldsymbol\varphi=\boldsymbol 0$ in a neighbourhood of the moving 
  boundary, $\operatorname{div}\boldsymbol\varphi= 0$ and 
  $\boldsymbol\varphi(T,\cdot)=\boldsymbol 0$. For such a test function 
  $(0,\boldsymbol\varphi)$, the uniform convergence of $\randreg\delta_n$ 
  towards $\randreg \delta$ implies $supp\,\boldsymbol\varphi\subset 
  \Omega_{\randreg\delta_n}^I$ for $n$ big enough. Therefore, silently 
  extending this function by zero in the spatial direction, we have that 
  $(0,\boldsymbol\varphi)\in T^I_{\randreg\delta_n}$ is an admissible 
  test function for the identities \eqref{eqn:approx_graph_weak_sol} for 
  $n$ big enough. Using again the convergences from above, we can pass to the 
  limit $n\rightarrow \infty$ to show that $(\eta,\mathbf{u})$ is a weak 
  solution of the decoupled, regularised and linearised problem, 
  i.\,e.\ $(\eta,\mathbf{u})\in F(\delta,\mathbf{v})$. Hence, 
  $F$ has a closed graph and we can use theorem of Bohnenblust-Karlin, 
  \cite[Theorem 17.57]{MR2378491}), to obtain a fixed point and therefore 
  a weak solution of the regularised problem. The energy inequality 
  \eqref{lem:eqn_esssup_reg} then follows from the definition of the map 
  $F$.
\end{proof}
\subsection{Limiting Process}\label{kap:main}
Taking weak solutions of the regularised problem to the parameter 
$\epsilon$, we can now pass to the limit $\epsilon\rightarrow 0$ to 
obtain a weak solution for our problem 
\eqref{eqn:stokes}--\eqref{eqn:gln:anfangswert}.
%
\begin{proof} (of Theorem \ref{main_theorem}) We choose some 
$T_0\in (0,\infty)$ and set $I_0:=(0,T_0)$. Further we 
  set $\alpha := \frac{1}{2}(\lVert \eta_0 \rVert_{L^\infty(M)}+ \kappa)$, 
  $\epsilon_n:=\frac{1}{n}$ and take $n\in\N$ big enough so that 
  $\epsilon_n< \epsilon_0$ (with $\epsilon_0=\epsilon_0(\alpha,\eta_0)$ 
  as in Section \ref{chapter:entreglin}) holds. By Lemma 
  \ref{lem:reg_solution}, we get the existence of an 
  time interval $I=(0,T)$ with $0<T\leq T_0$ independently of $n$ and weak 
  solutions $(\eta_n,\mathbf{u}_n)$ of the regularised problem to the 
  regularization parameter $\epsilon_n$ fulfilling $\lVert \eta_n 
  \rVert_{L^\infty(I\times M)}\leq \alpha$ and \eqref{lem:eqn_esssup_reg} 
  for all $0<T_1\leq T$. Using the coercivity of $K$ and 
  \eqref{eqn:modification_epsilon} we deduce the estimate
  \begin{align}\label{eqn:apriori_entreglin_zitiert}
    \lVert \eta_n \rVert_{Y^I}
      + \lVert \mathbf{u}_n \rVert_{X^I_{\randregn\eta_n}} 
      \leq c
  \end{align}
  with a constant $c$ independent of $n$. This, together with 
  the compact embedding $Y^I \hookrightarrow\hookrightarrow 
  C^0(\overline{I}\times M)$, implies that
  \begin{alignat}{2}\label{eqn:bew_main_konv_eta_n}\begin{aligned}
    \eta_n &\overset{\ast}{\rightharpoondown} \eta
      &\quad &\text{weakly-* in } L^\infty(I,H_0^2(M))
      \text{ and uniformly in } \overline{I}\times M,\\
    \partial_t\eta_n &\overset{\ast}{\rightharpoondown} \partial_t \eta 
      &&\text{weakly-* in } L^\infty(I,L^2(M))
  \end{aligned}\end{alignat}
  for a subsequence and for an $\eta\in Y^I$. Since by Remark \ref{bem:randreg} 
  $\randregn\eta_n$ is uniformly bounded in 
  $\widetilde{Y}^I$, we similarly get a subsequence with 
  \begin{alignat}{2}\label{eqn:bew_main_konv_regeta_n}\begin{aligned}
    \randregn\eta_n &\overset{\ast}{\rightharpoondown} \zeta
      &\quad &\text{weakly-* in } L^\infty(I,H^2(M)) 
        \text{ and uniformly in } \overline{I}\times M,\\
    \partial_t\randregn\eta_n &\overset{\ast}{\rightharpoondown} \partial_t\zeta 
      &&\text{weakly-* in } L^\infty(I,L^2(M)).
  \end{aligned}\end{alignat}
  Taking Proposition \ref{prop:reg_rand:wohldef} into account, we have
  \begin{align*}
    \lVert \randregn\eta_n - \eta\rVert_{L^\infty(I\times M)}
      &\leq \lVert \eta_n -\eta\rVert_{L^\infty(I\times M)}
        + \lVert \randregn\eta - \eta\rVert_{L^\infty(I\times M)}
        + 2\,(\epsilon_n)^\frac{1}{2}.
  \end{align*}
  This implies $\zeta = \eta$, by the convergence properties of the 
  regularization operator from Lemma \ref{lem:reg_rand:konv}. As usual, 
  to treat the convective term in the fluid part, we need 
  further information from the parabolic embeddings. By our choice of 
  $\epsilon_0$ we have  $\lVert \randregn\eta_n 
  \rVert_{L^\infty(I\times M)} \leq \frac{\alpha+\kappa}{2}<\kappa$.
  Hence, using our Korn-type inequality and the Sobolev embeddings, the 
  sequence $(\mathbf{u}_n)_{n\in\N}$ is uniformly bounded in 
  $L^2(I;L^{11/2}(\Omega_{\randregn\eta_n(t)}))$ and in 
  $L^2(I;W^{1,r}(\Omega_{\randregn\eta_n(t)}))$ for all $1\leq r<2$.
  The parabolic embedding (see \cite[Korollar 2.10]{MR1409366}) and 
  the H\"older interpolation (see \cite[Proposition 3.1]{MR1230384}) 
  yield
  \begin{align*}
    L^\infty(I,L^2(\Omega_{\delta_n(t)}))\cap
      L^2(I,L^{11/2}(\Omega_{\randregn\eta_n(t)}))
    &\hookrightarrow L^q(I,L^q(\Omega_{\randregn\eta_n(t)}))
  \end{align*}
  for $q=16/5$. Extending $\mathbf{u}_n$, $\nabla \mathbf{u}_n$ 
  and $\mathbf{D}\mathbf{u}_n$ by zero to $I\times \R^3$, we get, for 
  a further subsequence, the convergences
  \begin{alignat}{2}\label{eqn:bew_main_konv_u_n}\begin{aligned}
    \mathbf{u}_n &\overset{\ast}{\rightharpoondown} \mathbf{u} 
      \quad&&\text{weakly-* in } L^\infty(I;L^2(\R^3)),\\
    \mathbf{u}_n &\rightharpoonup \mathbf{u} 
      \quad&&\text{weakly in } L^q(I;L^q(\R^3))
      \text{ and weakly in } L^2(I;L^{11/2}(\R^3)),\\
    \nabla\mathbf{u}_n &\rightharpoonup \nabla \mathbf{u}
      &&\text{weakly in } L^2(I;L^{20/11}(\R^3)),\\
    \mathbf{D}\mathbf{u}_n &\rightharpoonup \mathbf{D}\mathbf{u}
      &&\text{weakly in } L^2(I;L^2(\R^3)),
  \end{aligned}\end{alignat}
  for an $\mathbf{u}\in X^I_\eta$ which is also extended by zero in an 
  appropriate fashion. Using the compactness result (Lemma 
  \ref{lem:kompaktheit}), we deduce that
  \begin{align}\label{eqn:bew_main_konv_komp}
    \partial_t\eta_n \rightarrow \partial_t\eta 
      \quad\text{in } L^2(I\times M),\qquad 
    \mathbf{u}_n \rightarrow \mathbf{u} 
      \quad\text{in } L^2(I\times \R^3).
  \end{align}
  Again by H\"older-Interpolation, from \eqref{eqn:bew_main_konv_u_n}, 
  \eqref{eqn:bew_main_konv_komp} we deduce the 
  strong convergences in $L^2(I;L^5(\R^3))$ and $L^3(I;L^3(\R^3))$ for 
  the sequence $(\mathbf{u}_n)_{n\in\N}$. With the properties of the 
  smoothing operator (see \cite[Proposition II.2.25]
  {MR2986590} and \cite[Th{\'e}or\`{e}me 1.8.1]{droniou}), this implies 
  the convergences
  \begin{align}\label{eqn:bew_main_konv_regu}
    \randregn\mathbf{u}_n \rightarrow \mathbf{u} 
      \quad\text{in } L^3(I\times \R^3),\qquad 
    \randregn\mathbf{u}_n \rightarrow \mathbf{u} 
      \quad\text{in } L^2(I;L^5(\R^3)).
  \end{align}
  Since $(\eta_n,\mathbf{u}_n)$ are weak solutions to the regularised 
  problem, we have $\eta_n(0,\cdot) = \eta_0$ and 
  $\trregnetan \mathbf{u}_n = \partial_t \eta_n\, \boldsymbol{\nu}$.
  By the uniform convergence from \eqref{eqn:bew_main_konv_eta_n}
  and Lemma \ref{moving_boundary_data}, $\eta(0,\cdot) = \eta_0$ and 
  $\treta \mathbf{u} = \partial_t \eta\, \boldsymbol{\nu}$ follows. Also, 
  due to the lower semi-continuity of the norms and the Koiter energy, 
  the energy estimates \eqref{eqn:modification_epsilon} of the 
  regularised solutions imply the energy estimate \eqref{lem:eqn_esssup}
  for all $0<T_1\leq T$. To show the integral identity 
  \eqref{eqn:schwache_formulierung_def} for all $(b,\boldsymbol\varphi) 
  \in T^I_\eta$, we have to use the special structure of the space of test 
  functions. Let $b\in H^1(I;L^2(M))\cap L^3(I;H_0^2(M))$ with 
  $b(T,\cdot)=0$. Then, using the properties of our extension operator 
  $\frandregnetan$, i.\,e.\ Lemma \ref{lem:fortsetzung_instationaer}, we 
  have $(b, \frandregnetan b) \in T^I_{\randregn\eta_n}$. Hence for all 
  $n\in \N$ the tuple $(b, \frandregnetan b)$ is an admissible test 
  function for the corresponding integral identity of the regularised 
  weak solution \eqref{eqn:schwache_loesung_reg}. By Lemma 
  \ref{lem:fortsetzung_konvergenzen}, $\frandregnetan b$ converges to 
  $\feta b$ in $L^\infty(I,L^4(B_\alpha))$. Thanks to the convergence 
  of $\randregn\mathbf{u}_n \rightarrow \mathbf{u}$ in $L^2(I;L^5(\R^3))$ 
  by \eqref{eqn:bew_main_konv_regu} and the weak convergence of $\nabla
  \mathbf{u}_n \rightharpoonup \nabla\mathbf{u}$ in $L^2(I;L^{20/11}(\R^3))$
  by \eqref{eqn:bew_main_konv_u_n}, we get for the first part of the 
  convective term
  \begin{align*}
    \int_I\intregntan(\randregn\mathbf{u}_n\cdot \nabla)
      \mathbf{u}_n\cdot\frandregnetan b\;dx\,dt
    \rightarrow
    \int_I\intetat (\mathbf{u}\cdot \nabla)
      \mathbf{u}\cdot\feta b\;dx\,dt.
  \end{align*}
  In here, we also used the spatial extension by zero. Arguing similarly 
  for the rest of the terms in \eqref{eqn:schwache_loesung_reg} tested 
  with $(b, \frandregnetan b)$, i.\,e.\ taking into account the 
  convergences 
  \eqref{eqn:bew_main_konv_eta_n}--\eqref{eqn:bew_main_konv_regu} as well 
  as the convergences from Lemma \ref{lem:fortsetzung_konvergenzen} and 
  Lemma \ref{lem:fortsetzung_konvergenzen_stationaer}, the integral 
  identity \eqref{eqn:schwache_formulierung_def} holds for all 
  $(b, \frandregdeltan b)$ with $b\in H^1(I;L^2(M))\cap L^3(I;H_0^2(M))$ 
  and $b(T,\cdot)=0$. By the definition of our space of test functions, 
  it only remains to show the identity \eqref{eqn:schwache_formulierung_def} 
  for all $(0,\boldsymbol\varphi)$ with $\boldsymbol\varphi\in W_{\eta}$, 
  $\boldsymbol\varphi=\boldsymbol 0$ in a neighbourhood of the moving 
  boundary, $\operatorname{div}\boldsymbol\varphi= 0$ and $\boldsymbol\varphi(T,\cdot)
  =\boldsymbol 0$. For such a test function $(0,\boldsymbol\varphi)$, the 
  uniform convergence of $\randregn\eta_n$ towards $\eta$ implies 
  $supp\,\boldsymbol\varphi\subset \Omega_{\randregn\eta_n}^I$ for $n$ 
  big enough. Hence, silently extending this function by zero in the 
  spatial direction, we have that $(0,\boldsymbol\varphi)\in  
  T^I_{\randregn\eta_n}$ is an admissible test function for the 
  regularised identities \eqref{eqn:schwache_loesung_reg} for 
  $n$ big enough. Using again the convergences 
  \eqref{eqn:bew_main_konv_eta_n}--\eqref{eqn:bew_main_konv_regu}, we 
  can pass to the limit $n\rightarrow \infty$ to show that 
  $(\eta,\mathbf{u})$ is a weak solution to our problem on the interval 
  $I=(0,T)$.
  
  To extend this local existence to an global result, we observe 
  that by the coercivity of the Koiter energy, the energy 
  inequality \eqref{lem:eqn_esssup} implies the estimate
  \begin{align}\label{eqn:apriori_fortsetzen}
    \lVert \eta \rVert_{Y^I}
      + \lVert \mathbf{u} \rVert_{X^I_{\eta}} 
        \leq c,
  \end{align}
  where the constant $c$ only depends on $\lVert 
  \mathbf{u}_0\rVert_{L^2(\Omega_{\eta_0})}$, $\lVert \eta_1 
  \rVert_{L^2(M)}$, $\lVert \mathbf{f} 
  \rVert_{L^2((0,\infty)\times B_\kappa)}$ and $\lVert g 
  \rVert_{L^2((0,\infty)\times M)}$. By the 
  compatibility condition $\treta \mathbf{u} = 
  \partial_t\eta\,\boldsymbol\nu$ and $\lVert \eta 
  \rVert_{L^\infty(I\times M)} \leq \alpha <\kappa$, the estimate 
  \eqref{eqn:apriori_fortsetzen} implies that the  quintuple 
  $(\mathbf{f}(\cdot - s),\, g(\cdot - s),\, \mathbf{u}(s),\, \eta(s),\, 
  \partial_t\eta(s))$ is an admissible data for almost all $s\in I$. 
  Repeating the fist part of the proof, we get for almost all 
  $s\in I$ a local weak solution to this data, whereby the length of 
  the time interval $(0,\widetilde T)$ can be chosen independent of $s$
  by Lemma \ref{lem:reg_solution}. Choosing an 
  $s_0\in I$ outside of an appropriate null set close enough to $T$ and 
  shifting the local weak solution by $s_0$, we get the existence of a 
  weak solution $(\eta^0,\mathbf{u}^0)$ on the time interval $(s_0,T+ 
  \widetilde T /2)$. Considering the initial conditions and an 
  the alternative integral identity \eqref{lem:komp_equation_neu}, we 
  see that the combined function (again denoted by $(\eta,\mathbf{u})$) 
  is a weak solution to \eqref{eqn:stokes}--\eqref{eqn:gln:anfangswert} 
  on the time interval $(0,T + \widetilde T / 2)$. Since \eqref{lem:eqn_esssup} 
  holds for $(\eta^0,\mathbf{u}^0)$ as well as for $(\eta,\mathbf{u})$ 
  (for clarity we denote the initial energy for $(\eta^0,\mathbf{u}^0)$ by 
  $E_0(s_0)$ and notice that $E_0(s_0)\leq \operatorname{esssup}_{t\in 
  (0,T)} E(t)$), we have
  \begin{align*}
    \operatorname*{esssup}_{t\in (s_0,T + \widetilde T / 2)}
        \sqrt{E(t)}
      &\leq \sqrt{E_0(s_0)}  + \frac{1}{\sqrt{2}} \int_0^{T + \widetilde T / 2-s_0} \lVert 
        \mathbf{f}(s-s_0,\cdot) \rVert_{L^2(\Omega_{\eta^0(s)})}\;ds\\
        &\qquad + \frac{1}{2}\, 
          \int_0^{T + \widetilde T / 2-s_0} \lVert g(s-s_0,\cdot) \rVert_{L^2(M)}\;ds\\
      &\leq \sqrt{E_0} + \int_{0}^{T_1}\!\! \frac{1}{\sqrt{2}} \lVert 
        \mathbf{f}(s,\cdot) \rVert_{L^2(\Omega_{\eta(s)})}
        + \frac{1}{2} \lVert g(s,\cdot) 
          \rVert_{L^2(M)}\;ds.
  \end{align*}
  Hence, the combined solution also fulfils the energy inequality 
  \eqref{lem:eqn_esssup} and therefore the estimate 
  \eqref{eqn:apriori_fortsetzen} with the same constant. 
  Iterating this procedure, we obtain a maximal time $0<T^\ast\leq 
  \infty$ such that for all $0< T < T^\ast$ a weak solution 
  $(\eta,\mathbf{u})$ of \eqref{eqn:stokes}--\eqref{eqn:gln:anfangswert} 
  exists on the interval $I:=(0,T)$, fulfilling the energy inequality 
  \eqref{lem:eqn_esssup}. Since by \eqref{eqn:apriori_fortsetzen} the 
  norms of the initial data for the extension stay uniformly bounded, 
  Lemma \ref{lem:reg_solution} implies that $T^\ast < \infty$ is only 
  possible if the norm of the displacement converges to $\kappa$. Since 
  small enough data implies a uniform bound for the displacement (see 
  \eqref{eqn:lem:existence_approx_reg_T}), we have $T^\ast = \infty$ 
  for this case.
\end{proof}

\section{Appendix}\label{appendix}
For the convenience of the reader, we include the proof of the compactness 
result. As already mentioned, we will need a modified Ehrling's Lemma.
\begin{lemma}\label{lem:ehrling}
  Let $0<\alpha<\kappa$, $N\in \N$ and, for the second inequality,
  $\delta \in C^4(M)$ with
  $\lVert \delta \rVert_{L^\infty(M)}<\kappa$.  Moreover, for
  $\boldsymbol\varphi\in X(\Omega)$, we extend
  $\Tdelta\boldsymbol\varphi$ by $\boldsymbol 0$ to $\R^3$. Then for
  all $\varepsilon >0$ there exists a constant $c>0$ such that for
  all $\mathbf{v}\in V_2(\Omega_\eta)$,
  $\tilde{\mathbf{v}} \in V_2(\Omega_{\tilde\eta})$ and all
  $\eta, \tilde\eta \in H^2(M)$ with
  $\lVert \eta\rVert_{H^2(M)} + \lVert \tilde\eta\rVert_{H^2(M)} \leq
  N$
  and $\lVert \eta \rVert_{L^\infty(M)}\leq \alpha$,
  $\lVert \tilde \eta \rVert_{L^\infty(M)}\leq \alpha$ it holds that
  \begin{align*}
    &\sup_{\lVert b\rVert_{L^2(M)}\leq 1} 
      \Big(\int_{\Omega_\eta}\hspace{-0.2cm} \mathbf{v}\cdot \feta b\;dx
        - \int_{\Omega_{\tilde \eta}}\hspace{-0.2cm} \tilde{\mathbf{v}}\cdot \ftildeeta b\;dx
        + \int_{M}\big(\treta\mathbf{v}\cdot \boldsymbol\nu 
          - \trtildeeta\tilde{\mathbf{v}}\cdot \boldsymbol\nu\big)\,b\;dA\Big)\\
      &\quad\leq c\,\sup_{\lVert b \rVert_{H_0^2(M)}\leq 1}
          \Big(\int_{\Omega_\eta}\hspace{-0.2cm} \mathbf{v}\cdot\feta b\;dx
          - \int_{\Omega_{\tilde \eta}}\hspace{-0.2cm} \tilde{\mathbf{v}}\cdot \ftildeeta b\;dx 
          + \int_{M}\big(\treta\mathbf{v}\cdot \boldsymbol\nu 
          - \trtildeeta\tilde{\mathbf{v}}\cdot \boldsymbol\nu\big)\,b\;dA\Big)\\
        &\qquad\quad+ \varepsilon\,\big(\lVert \mathbf{v}\rVert_{V_2(\Omega_\eta)}
        + \lVert \tilde{\mathbf{v}}\rVert_{V_2(\Omega_{\tilde \eta})}\big)
  \end{align*}
  as well as
  \begin{align*}
    &\sup_{\lVert \boldsymbol\varphi\rVert_{H_M(\Omega)}\leq 1} 
      \Big(\int_{\Omega_\eta}\hspace{-0.2cm}\mathbf{v}\cdot\Tdelta\boldsymbol\varphi\;dx
        - \int_{\Omega_{\tilde \eta}}\hspace{-0.2cm} \tilde{\mathbf{v}}
          \cdot\Tdelta\boldsymbol\varphi\;dx\Big)\\
      &\quad\leq c\,\sup_{\lVert \boldsymbol\varphi\rVert_{X(\Omega)}\leq 1}
          \Big(\int_{\Omega_\eta}\hspace{-0.2cm}\mathbf{v}\cdot\Tdelta\boldsymbol\varphi\;dx
        - \int_{\Omega_{\tilde \eta}}\hspace{-0.2cm} \tilde{\mathbf{v}}
          \cdot\Tdelta\boldsymbol\varphi\;dx\Big)
        + \varepsilon\,\big(\lVert \mathbf{v}\rVert_{V_2(\Omega_\eta)}
        + \lVert \tilde{\mathbf{v}}\rVert_{V_2(\Omega_{\tilde \eta})}\big).
  \end{align*}
\end{lemma}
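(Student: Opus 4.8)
The plan is to argue by contradiction in the now-standard style for compensated-compactness / Ehrling arguments. Suppose the first inequality fails. Then for some $\varepsilon>0$ and some fixed $N\in\N$, $\alpha<\kappa$ there exist sequences $\eta_n,\tilde\eta_n\in H^2(M)$ with $\lVert\eta_n\rVert_{H^2(M)}+\lVert\tilde\eta_n\rVert_{H^2(M)}\le N$, $\lVert\eta_n\rVert_{L^\infty(M)},\lVert\tilde\eta_n\rVert_{L^\infty(M)}\le\alpha$, and $\mathbf{v}_n\in V_2(\Omega_{\eta_n})$, $\tilde{\mathbf{v}}_n\in V_2(\Omega_{\tilde\eta_n})$ such that, denoting by $L_n(\cdot)$ the linear functional $b\mapsto \int_{\Omega_{\eta_n}}\mathbf{v}_n\cdot\mathcal{F}_{\eta_n}b - \int_{\Omega_{\tilde\eta_n}}\tilde{\mathbf{v}}_n\cdot\mathcal{F}_{\tilde\eta_n}b + \int_M(\mathrm{tr}_{\eta_n}\mathbf{v}_n\cdot\boldsymbol\nu - \mathrm{tr}_{\tilde\eta_n}\tilde{\mathbf{v}}_n\cdot\boldsymbol\nu)\,b$, we have
\[
\sup_{\lVert b\rVert_{L^2(M)}\le 1} L_n(b)
> n\,\sup_{\lVert b\rVert_{H_0^2(M)}\le 1} L_n(b)
+ \varepsilon\big(\lVert\mathbf{v}_n\rVert_{V_2(\Omega_{\eta_n})}+\lVert\tilde{\mathbf{v}}_n\rVert_{V_2(\Omega_{\tilde\eta_n})}\big).
\]
By homogeneity in $(\mathbf{v}_n,\tilde{\mathbf{v}}_n)$ we normalise $\lVert\mathbf{v}_n\rVert_{V_2(\Omega_{\eta_n})}+\lVert\tilde{\mathbf{v}}_n\rVert_{V_2(\Omega_{\tilde\eta_n})}=1$; then the left-hand side stays bounded (by the uniform continuity of $\mathcal{F}_\eta$ from Lemma~\ref{lem:fortsetzung_stationaer}, the trace bound of Definition~\ref{def:treta} via Lemma~\ref{lem:einbettung_omega_eta}, and the uniform Korn inequality of Lemma~\ref{lem:korn_type_inequality}, all with constants depending only on $N,\alpha$), so necessarily $\sup_{\lVert b\rVert_{H_0^2(M)}\le 1}L_n(b)\to 0$.

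Next I extract convergent subsequences. Using $\lVert\eta_n\rVert_{H^2(M)}\le N$ and compact embedding $H^2(M)\hookrightarrow\hookrightarrow C^0(M)$, pass to a subsequence with $\eta_n\rightharpoonup\eta$, $\tilde\eta_n\rightharpoonup\tilde\eta$ weakly in $H^2(M)$ and uniformly in $M$, with $\lVert\eta\rVert_{L^\infty},\lVert\tilde\eta\rVert_{L^\infty}\le\alpha$. Choosing one common cut-off $\beta$ for all Hanzawa transforms, Lemma~\ref{lem:hanzawa_konvergence} and Lemma~\ref{lem:konv_trafo} apply. Transform $\mathbf{v}_n$ and $\tilde{\mathbf{v}}_n$ to the fixed reference domain $\Omega$ via the Piola transform $\mathcal{T}_{\eta_n}^{-1},\mathcal{T}_{\tilde\eta_n}^{-1}$: by Lemma~\ref{lem:einbettung_omega_eta} and the uniform Korn inequality the transformed functions are bounded in $W^{1,r}(\Omega)$ for $r<2$, hence (subsequence) converge weakly in $W^{1,r}(\Omega)$ and strongly in $L^r(\Omega)$; transforming back and using Lemma~\ref{lem:konv_trafo} identifies the limits as $\mathbf{v}\in V_2(\Omega_\eta)$, $\tilde{\mathbf{v}}\in V_2(\Omega_{\tilde\eta})$, and gives $\mathbf{v}_n\chi_{\Omega_{\eta_n}}\to\mathbf{v}\chi_{\Omega_\eta}$ in $L^r(\R^3)$ (similarly with tilde). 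Because $\mathcal{F}_{\eta_n}b\to\mathcal{F}_\eta b$ and $\mathrm{tr}_{\eta_n}\to\mathrm{tr}_\eta$ converge in the appropriate senses (Lemma~\ref{lem:fortsetzung_konvergenzen_stationaer}, Remark~\ref{grundlagen:eigenschaften_gamma}) uniformly for $b$ in the unit ball of $H_0^2(M)$, the vanishing of $\sup_{\lVert b\rVert_{H_0^2}\le1}L_n(b)$ passes to the limit: the limiting functional $L$ with $\mathbf{v},\tilde{\mathbf{v}},\eta,\tilde\eta$ satisfies $L(b)=0$ for all $b\in H_0^2(M)$. Since $H_0^2(M)$ is dense in $L^2(M)$ and $L$ is continuous on $L^2(M)$ (again by the uniform bounds), $L\equiv 0$ on $L^2(M)$, so $\sup_{\lVert b\rVert_{L^2}\le1}L_n(b)\to 0$ as well. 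But then the displayed inequality forces $1=\lVert\mathbf{v}_n\rVert_{V_2}+\lVert\tilde{\mathbf{v}}_n\rVert_{V_2}\to$ something $\le \varepsilon^{-1}\cdot 0 =0$ on the relevant side — more precisely the chain $\sup_{L^2}L_n \le n\sup_{H_0^2}L_n + \varepsilon\cdot 1$ is inconsistent once we show the left side tends to $0$ and is simultaneously forced to stay $\ge \varepsilon$; I would instead rephrase the failure hypothesis as strict inequality $\sup_{L^2}L_n > n\sup_{H_0^2}L_n + \varepsilon$ and derive $\varepsilon\le \lim\sup_{L^2}L_n = 0$, the contradiction. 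The second inequality is handled identically, replacing the pair $(\mathcal{F}_\eta b,\mathrm{tr}_\eta\mathbf{v}\cdot\boldsymbol\nu)$-functional by $\boldsymbol\varphi\mapsto\int_{\Omega_\eta}\mathbf{v}\cdot\mathcal{T}_{\delta}\boldsymbol\varphi - \int_{\Omega_{\tilde\eta}}\tilde{\mathbf{v}}\cdot\mathcal{T}_\delta\boldsymbol\varphi$, the role of $H_0^2(M)\hookrightarrow\hookrightarrow L^2(M)$ being played by $X(\Omega)\hookrightarrow\hookrightarrow H_M(\Omega)$ (compact Rellich embedding on the fixed Lipschitz domain $\Omega$), and using that $\mathcal{T}_\delta$ is a fixed continuous operator independent of $n$ once $\delta\in C^4(M)$ is fixed; here only $\eta_n,\tilde\eta_n$ vary, so the Piola-transform convergence of $\mathbf{v}_n,\tilde{\mathbf{v}}_n$ suffices and no convergence of the extension operator is needed.

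The main obstacle is the passage to the limit in the functional $L_n$ \emph{uniformly over the unit ball of the pivot space} $H_0^2(M)$ (resp.\ $X(\Omega)$): one must know that $\mathcal{F}_{\eta_n}$, the traces $\mathrm{tr}_{\eta_n}$, and the indicator-weighted fields $\mathbf{v}_n\chi_{\Omega_{\eta_n}}$ all converge in norms strong enough that products converge and weakly enough that the bounded pieces do not spoil it — this is exactly what Lemma~\ref{lem:fortsetzung_konvergenzen_stationaer}, Lemma~\ref{lem:konv_trafo} and Remark~\ref{grundlagen:eigenschaften_gamma} were set up to provide, and the only care needed is bookkeeping of exponents $r<2$ so that Hölder pairings close (e.g.\ $\mathbf{v}_n\in L^{r}$, $\mathcal{F}_{\eta_n}b\in W^{1,r'}\subset L^\infty$ by Sobolev on $B_\alpha\subset\R^3$ with $r'>3$, which forces $r>3/2$; since $V_2$ embeds into $W^{1,13/7}$ by Korn and $13/7>3/2$, this is fine). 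Everything else is a routine contradiction-compactness wrap-up.
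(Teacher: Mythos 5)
Your argument is, in substance, the paper's own proof: negate the estimate with constant $n$, normalise $\lVert\mathbf{v}_n\rVert_{V_2}+\lVert\tilde{\mathbf{v}}_n\rVert_{V_2}=1$, pull the velocities back to $\Omega$ to extract strong $L^2(\R^3)$ limits, pass to the limit in the functional uniformly over the unit ball of the larger space, and use density of the smaller space to force the limit functional to vanish, contradicting $\varepsilon>0$; the paper simply composes with $\Psi_{\eta_n}$ rather than using the Piola transform, which avoids the extra Jacobian factors. Two citations should be corrected rather than the structure: the strong $L^2(M)$ convergence of $\tretan\mathbf{v}_n\cdot\boldsymbol\nu$ (needed for the trace term, which Lemma \ref{lem:fortsetzung_konvergenzen_stationaer} and Remark \ref{grundlagen:eigenschaften_gamma} do not give) follows from boundedness in $H^{\frac{2}{5},\frac{5}{3}}(M)$ via Lemma \ref{lem:korn_type_inequality} and the compact Sobolev embedding into $L^2(M)$; and for the second inequality the property of $X(\Omega)$ actually used is its density in $H_M(\Omega)$ (Lemma \ref{lem:density_part_normal_trace}), not the compactness of the embedding.
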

\begin{proof}
  Assume that the first claim is 
  wrong. Then there exist $\varepsilon>0$ and bounded sequences 
  $(\eta_n)_{n\in \N}, (\tilde\eta_n)_{n\in \N}\subset H^2(M)$, 
  $(\mathbf{v}_n)_{n\in\N}\subset V_2(\Omega_{\eta_n})$ and 
  $(\tilde{\mathbf{v}}_n)_{n\in \N} \subset V_2(\Omega_{\tilde\eta_n})$, 
  which satisfy $\lVert \eta_n \rVert_{L^\infty(M)} \leq \alpha$, 
  $\lVert \tilde \eta_n \rVert_{L^\infty(M)}\leq \alpha$ and, after some 
  scaling, 
  \begin{align*}
    \lVert \mathbf{v}_n\rVert_{V_2(\Omega_{\eta_n})} + \lVert 
    \tilde{\mathbf{v}}_n\rVert_{V_2(\Omega_{\tilde \eta_n})} = 1
  \end{align*}
  as well as
  \begin{align}\label{lem:erhling_1}\begin{aligned}
    &\sup_{\lVert b\rVert_{L^2(M)}\leq 1} 
      \Big(\int_{\Omega_{\eta_n}}\hspace{-0.2cm} \mathbf{v}_n\cdot \fetan b\;dx
        - \int_{\Omega_{{\tilde \eta}_n}}\hspace{-0.2cm} \tilde{\mathbf{v}}_n\cdot \ftildeetan b\;dx
        + \int_{M}\big(\tretan\mathbf{v}_n\cdot \boldsymbol\nu 
          - \trtildeetan\tilde{\mathbf{v}}_n\cdot \boldsymbol\nu\big)\,b\;dA\Big)\\
      &\quad >\varepsilon + n\,\sup_{\lVert b \rVert_{H_0^2(M)}\leq 1}
          \Big(\int_{\Omega_{\eta_n}}\hspace{-0.2cm} \mathbf{v}_n\cdot\fetan b\;dx
          - \int_{\Omega_{\tilde \eta_n}}\hspace{-0.2cm} \tilde{\mathbf{v}}_n\cdot \ftildeetan b\;dx\\
      &\hspace*{4cm} + \int_{M}\big(\tretan\mathbf{v}_n\cdot \boldsymbol\nu 
          - \trtildeetan\tilde{\mathbf{v}}_n\cdot \boldsymbol\nu\big)\,b\;dA\Big).
      \raisetag{1.2cm}
  \end{aligned}\end{align}
  By the Korn-type inequality and Definition \ref{def:treta}, the 
  sequences $(\tretan\mathbf{v}_n)_{n\in\N}$, $(\trtildeetan 
  \tilde{\mathbf{v}}_n)_{n\in\N}$ are bounded in $H^{\frac{2}{5};
  \frac{5}{3}}(M)$. By Sobolev's embedding theorem we get a subsequence
  satisfying
  \begin{alignat*}{4}
    \tretan\mathbf{v}_n\cdot\boldsymbol{\nu} 
        &\rightarrow d &\quad&\text{in } L^2(M),
    \quad\quad&\eta_n &\rightharpoonup \eta 
        &\quad&\text{weakly in } H^2(M),\\
    \trtildeetan\tilde{\mathbf{v}}_n\cdot\boldsymbol{\nu} 
        &\rightarrow \tilde d &&\text{in } L^2(M),
    &\tilde\eta_n &\rightharpoonup \tilde\eta 
        &&\text{weakly in } H^2(M).
  \end{alignat*}
  In particular, $(\eta_n)_{n\in\N}$ and $(\tilde \eta_n)_{n\in \N}$ 
  converge uniformly in $M$, and therefore 
  $\lVert \eta\rVert_{L^\infty(M)}\leq \alpha$, 
  $\lVert \tilde \eta \rVert_{L^\infty(M)}\leq \alpha$.
  Taking, as usual, the cut-off function $\beta$ uniformly for all 
  transformations, by Lemma \ref{lem:einbettung_omega_eta} the sequences 
  $\mathbf{w}_n:=\mathbf{v}_n \circ \Psi_{\eta_n}$ and 
  $\tilde{\mathbf{w}}_n:=\tilde{\mathbf{v}}_n \circ \Psi_{\tilde\eta_n}$
  are bounded in $W^{1,\frac{3}{2}}(\Omega)$. By Sobolev's embedding, 
  for a subsequence we have $\mathbf{w}_n\rightarrow \mathbf{w}$ and 
  $\tilde{\mathbf{w}}_n \rightarrow \tilde{\mathbf{w}}$ in $L^{5/2}(\Omega)$. 
  We extend the functions $\mathbf{w}\circ \Psi_\eta^{-1}$, 
  $\mathbf{w}_n\circ \Psi_{\eta_n}^{-1}$  and $\mathbf{v}_n$ by 
  $\boldsymbol 0$ to $\R^3$ and get, using Lemma 
  \ref{lem:einbettung_omega_eta},
  \begin{align*}
    \lVert \mathbf{v}_n - \mathbf{w}\circ \Psi_\eta^{-1}\rVert_{L^2(\R^3)}
    &= \lVert \mathbf{w}_n\circ\Psi_{\eta_n}^{-1} - \mathbf{w}\circ \Psi_\eta^{-1}\rVert_{L^2(\R^3)}\\
    &\leq \lVert \big(\mathbf{w}_n - \mathbf{w}\big)\circ\Psi_{\eta_n}^{-1}\rVert_{L^2(\Omega_{\eta_n})}
      + \lVert \mathbf{w}\circ\Psi_{\eta_n}^{-1} - \mathbf{w}\circ \Psi_\eta^{-1}\rVert_{L^2(\R^3)}\\
    &\leq c\,\lVert \mathbf{w}_n - \mathbf{w}\rVert_{L^{5/2}(\Omega)}
      + \lVert \mathbf{w}\circ\Psi_{\eta_n}^{-1} - \mathbf{w}\circ \Psi_\eta^{-1}\rVert_{L^2(\R^3)}.
  \end{align*}
  Taking Lemma \ref{lem:konv_trafo} into account, we deduce that 
  $\mathbf{v}_n\rightarrow \mathbf{w}\circ \Psi_\eta^{-1}=: \mathbf{v}$ 
  in $L^2(\R^3)$. Analogously we get $\tilde{\mathbf{v}}_n\rightarrow 
  \tilde{\mathbf{w}}_n\circ \Psi_{\tilde\eta}^{-1}=: 
  \tilde{\mathbf{v}}$ in $L^2(\R^3)$. By Lemma 
  \ref{lem:fortsetzung_konvergenzen_stationaer}, the equation
  \begin{align*}
    \int_{\Omega_{\eta_n}}\hspace{-0.2cm} \mathbf{v}_n\cdot \fetan b\;dx
        - \int_{\Omega_{{\tilde \eta}_n}}\hspace{-0.2cm} \tilde{\mathbf{v}}_n\cdot \ftildeetan b\;dx
        + \int_{M}\big(\tretan\mathbf{v}_n\cdot \boldsymbol\nu 
          - \trtildeetan\tilde{\mathbf{v}}_n\cdot \boldsymbol\nu\big)\,b\;dA
  \end{align*}
  converges to
  \begin{align*}
    \int_{\Omega_{\eta}} \mathbf{v}\cdot \feta b\;dx
        - \int_{\Omega_{{\tilde \eta}}} \tilde{\mathbf{v}}\cdot \ftildeeta b\;dx
        + \int_{M}\big(d - \tilde d\big)\,b\;dA
  \end{align*}  
  uniformly with respect to $\lVert b\rVert_{L^2(M)}\leq 1$. Therefore the left-hand 
  side of \eqref{lem:erhling_1} is bounded and 
  \begin{align*}
  \sup_{\lVert b \rVert_{H_0^2(M)}\leq 1}
          \Big(\int_{\Omega_{\eta_n}}\hspace{-0.2cm} \mathbf{v}_n\cdot\fetan b\;dx
          - \int_{\Omega_{\tilde \eta_n}}\hspace{-0.2cm} \tilde{\mathbf{v}}_n\cdot \ftildeetan b\;dx
          + \int_{M}\big(\tretan\mathbf{v}_n\cdot \boldsymbol\nu 
          - \trtildeetan\tilde{\mathbf{v}}_n\cdot \boldsymbol\nu\big)\,b\;dA\Big)
  \end{align*}
  converges to $0$. Since $H_0^2(M)$ is dense in $L^2(M)$, the left-hand 
  side of \eqref{lem:erhling_1} converges to $0$ as well, a contradiction 
  to $\varepsilon>0$.

  Analogously, we assume that the second claim is wrong. Then there 
  exist $\varepsilon>0$ and bounded sequences $(\eta_n)_{n\in \N}, 
  (\tilde\eta_n)_{n\in \N}\subset H^2(M)$, $(\mathbf{v}_n)_{n\in\N} 
  \subset V_2(\Omega_{\eta_n})$ and $(\tilde{\mathbf{v}}_n)_{n\in \N} 
  \subset V_2(\Omega_{\tilde\eta_n})$, which satisfy  
  $\lVert \eta_n \rVert_{L^\infty(M)} \leq \alpha$, $\lVert \tilde \eta_n
  \rVert_{L^\infty(M)}\leq \alpha$ and, after some scaling, 
  \begin{align*}
    \lVert \mathbf{v}_n\rVert_{V_2(\Omega_{\eta_n})} + \lVert 
  \tilde{\mathbf{v}}_n\rVert_{V_2(\Omega_{\tilde \eta_n})} = 1
  \end{align*}
  as well as
  \begin{align}\label{eqn:ehrling_2}\begin{aligned}
    \sup_{\lVert \boldsymbol\varphi\rVert_{H_M(\Omega)}\leq 1} 
      &\Big(\int_{\Omega_{\eta_n}}\hspace{-0.2cm}\mathbf{v}_n\cdot\Tdelta\boldsymbol\varphi\;dx
        - \int_{\Omega_{\tilde \eta_n}}\hspace{-0.2cm} \tilde{\mathbf{v}}_n
          \cdot\Tdelta\boldsymbol\varphi\;dx\Big)\\
      &> \varepsilon + n\,\sup_{\lVert \boldsymbol\varphi\rVert_{X(\Omega)}\leq 1}
          \Big(\int_{\Omega_{\eta_n}}\hspace{-0.2cm}\mathbf{v}_n\cdot\Tdelta\boldsymbol\varphi\;dx
        - \int_{\Omega_{\tilde \eta_n}}\hspace{-0.2cm} \tilde{\mathbf{v}}_n
          \cdot\Tdelta\boldsymbol\varphi\;dx\Big).
  \end{aligned}\end{align}
  As in the first part of the proof, we find a subsequence with
  $\mathbf{v}_n \rightarrow \mathbf{v}$, $\tilde{\mathbf{v}}_n 
  \rightarrow \tilde{\mathbf{v}}$ in $L^2(\R^3)$. By the continuity
  of the Piola-transform (notice $\delta\in C^4(M)$), for all 
  $\boldsymbol\varphi\in H_M(\Omega)$ with $\lVert 
  \boldsymbol\varphi\rVert_{H_M(\Omega)}\leq 1$ it follows
  \begin{align*}
    \Big|\int_{\Omega_{\eta_n}}\hspace{-0.2cm}\mathbf{v}_n\cdot\Tdelta\boldsymbol\varphi\;dx
        &- \int_{\Omega_{\tilde \eta_n}}\hspace{-0.2cm} \tilde{\mathbf{v}}_n
          \cdot\Tdelta\boldsymbol\varphi\;dx
        - \int_{\R^3}\mathbf{v}\cdot\Tdelta\boldsymbol\varphi\;dx
        + \int_{\R^3} \tilde{\mathbf{v}}\cdot\Tdelta\boldsymbol\varphi\;dx
        \Big|\\
     &\leq \lVert \mathbf{v}_n - \mathbf{v}\rVert_{L^2(\R^3)}
      \lVert \Tdelta\boldsymbol\varphi\rVert_{L^2(\R^3)}
        + \lVert \tilde{\mathbf{v}}_n - \tilde{\mathbf{v}}\rVert_{L^2(\R^3)}
        \lVert \Tdelta\boldsymbol\varphi\rVert_{L^2(\R^3)}\\
     &\leq c\,\big(\lVert \mathbf{v}_n - \mathbf{v}\rVert_{L^2(\R^3)}
        + \lVert \tilde{\mathbf{v}}_n - \tilde{\mathbf{v}}\rVert_{L^2(\R^3)}\big)
        \lVert \boldsymbol\varphi\rVert_{L^2(\Omega)}\\
     &\leq c\,\big(\lVert \mathbf{v}_n - \mathbf{v}\rVert_{L^2(\R^3)}
        + \lVert \tilde{\mathbf{v}}_n - \tilde{\mathbf{v}}\rVert_{L^2(\R^3)}\big).
  \end{align*}
  Hence, the left-hand side of \eqref{eqn:ehrling_2} converges and is 
  therefore bounded. This implies 
  \begin{align*}
  \sup_{\lVert \boldsymbol\varphi\rVert_{X(\Omega)}\leq 1}
          \Big(\int_{\Omega_{\eta_n}}\mathbf{v}_n\cdot\Tdelta\boldsymbol\varphi\;dx
        - \int_{\Omega_{\tilde \eta_n}} \tilde{\mathbf{v}}_n
          \cdot\Tdelta\boldsymbol\varphi\;dx\Big)
          \rightarrow 0.
  \end{align*}
  Since by Lemma \ref{lem:density_part_normal_trace} the space $X(\Omega)$ 
  is dense in $H_M(\Omega)$, the left-hand side of \eqref{eqn:ehrling_2} 
  converges to zero as well, a contradiction to $\varepsilon > 0$.
\end{proof}
%
\begin{proof}[of Lemma \ref{lem:kompaktheit}] 
Using the extension operator from Section \ref{section:div_fortsetzung},
we observe
\begin{align*}
  \int_I\int_{\R^3}&|\mathbf{u}_n - \mathbf{u}|^2\;dx\,dt
    + 2\int_I\int_M |\partial_t\eta_n - \partial_t\eta|^2\;dA\,dt\\
  &= \int_I\intdeltan \mathbf{u}_n\cdot(\mathbf{u}_n - \fdeltan\partial_t\eta_n)\;dx\,dt 
    + \int_I\intdeltan \mathbf{u}_n\cdot\fdeltan\partial_t\eta_n\;dx\,dt\\
    &\quad\quad+ 2\int_I\int_M |\partial_t\eta_n|^2\;dA\,dt
    + \int_I\int_{\R^3} |\mathbf{u}|^2\;dx\,dt
    + 2\int_I\int_M |\partial_t\eta|^2\;dA\,dt\\
    &\quad\quad- 2\int_I\int_{\R^3}\mathbf{u}_n\cdot \mathbf{u}\;dx\,dt
    - 4\int_I\int_M \partial_t\eta_n\,\partial_t\eta\;dA\,dt.   
\end{align*}
By the assumption \eqref{lem:L^2_komp_vor_konv}, it therefore suffices 
to show 
\begin{align}\label{eqn:komp_1}
  \int_I\intdeltan \mathbf{u}_n\cdot(\mathbf{u}_n 
    - \fdeltan\partial_t\eta_n)\;dx\,dt
  \rightarrow 
    \int_I\intdelta \mathbf{u}\cdot(\mathbf{u} 
      - \fdelta\partial_t\eta)\;dx\,dt 
\end{align}
and
\begin{align}\label{eqn:komp_2}\begin{aligned}
  \int_I\intdeltan \mathbf{u}_n\cdot\fdeltan\partial_t\eta_n\;dx\,dt
    &+ 2\int_I\int_M |\partial_t\eta_n|^2\;dA\,dt\\
  \rightarrow &
    \int_I\intdelta \mathbf{u}\cdot\fdelta\partial_t\eta\;dx\,dt
      + 2\int_I\int_M |\partial_t\eta|^2\;dA\,dt.
\end{aligned}\end{align}
To this end, we will use equation \eqref{lem:komp_equation}. Since 
the functions which are constant in time are not admissible in 
\eqref{lem:komp_equation}, we will construct an alternative integral 
identity. We choose $\tau \in C^\infty(\R)$ with $\tau(t)=1$ for $t\leq 0$, 
$\tau(t) = 0$ for $t\geq 1$ and  $\tau'\leq 0$. For $\varepsilon>0$ and 
$s\in \R$ we set $\tau_\varepsilon^s:\R\rightarrow\R$, 
$\tau_\varepsilon^s(t):=\tau(\varepsilon^{-1}(t-s))$. Then 
$\tau_\varepsilon^s\in C^\infty(\R)$, $\tau_\varepsilon^s\leq 1$ and 
$\tau_\varepsilon^s(t)$ converges to $\chi_{t\leq s}$ for $\varepsilon 
\rightarrow 0$ and all $t\in \R$. Moreover, $-\tau'$ is a smoothing kernel 
and $-(\tau_\epsilon^s)'(t)=-\epsilon^{-1}\tau'(\varepsilon^{-1}(t-s))$ 
the corresponding smoothing operator. Taking 
$(b,\boldsymbol\varphi)\in T^I_{\delta_n}$, we deduce that 
$(\tau_\varepsilon^s\,b,\tau_\varepsilon^s\,\boldsymbol\varphi)
\in T^I_{\delta_n}$. Hence, we can use 
$(\tau_\varepsilon^s\,b,\tau_\varepsilon^s\,\boldsymbol\varphi)$ in 
\eqref{lem:komp_equation} and pass to the limit $\varepsilon \rightarrow 0$.
For example, by the dominated convergence theorem and the properties of 
the smoothing operator, for almost all $s\in I$ we have
\begin{align*}
  -\int_I\intdeltan\mathbf{u}_n\cdot\partial_t(\tau_\varepsilon^s\,
    \boldsymbol\varphi)\;dx\,dt
  &=-\int_I\intdeltan\mathbf{u}_n\cdot\big((\tau_\varepsilon^s)'\,
    \boldsymbol\varphi\big)\;dx\,dt 
    -\int_I\intdeltan\mathbf{u}_n\cdot\big(\tau_\varepsilon^s\,
    \partial_t\boldsymbol\varphi\big)\;dx\,dt\\
  &=-\int_I\frac{1}{\varepsilon}\,\tau'\Big(\frac{t-s}{\varepsilon}\Big)
      \intdeltan\mathbf{u}_n\cdot\boldsymbol\varphi\;dx\,dt
    -\int_I\tau_\varepsilon^s\intdeltan\mathbf{u}_n\cdot
      \partial_t\boldsymbol\varphi\;dx\,dt\\
  &\rightarrow
    \intdeltans\mathbf{u}_n(s,\cdot)\cdot\boldsymbol\varphi(s,\cdot)\;dx
    -\int_0^s\intdeltan\mathbf{u}_n\cdot\partial_t\boldsymbol\varphi\;dx\,dt.
\end{align*}
Arguing similarly for the remaining terms in \eqref{lem:komp_equation}, 
we get, for almost all $s\in I$ and all $(b,\boldsymbol\varphi)\in 
T^I_{\delta_n}$, the identity 
\begin{align}
  -&\int_0^s\intdeltan\mathbf{u}_n\cdot\partial_t\boldsymbol\varphi\;dx\,dt 
    - \frac{1}{2}\int_0^s\int_M (\partial_t\eta_n)\,(\partial_t\delta_n)\,b\,
        \gamma(\delta_n)\;dA\,dt\notag\\
  &- 2\int_0^s\int_M \partial_t\eta_n\,\partial_t b\;dA\,dt 
    + 2\int_0^s K(\eta_n,b)\;dt
    +2\int_0^s\intdeltan \mathbf{D}\mathbf{u}_n:
      \mathbf{D}\boldsymbol\varphi\;dx\,dt\notag\\
  &\quad+ \frac{1}{2}\int_0^s\intdeltan(\mathbf{v}_n\cdot \nabla)
      \mathbf{u}_n\cdot\boldsymbol\varphi\;dx\,dt 
    - \frac{1}{2}\int_0^s\intdeltan(\mathbf{v}_n\cdot \nabla)
      \boldsymbol\varphi\cdot\mathbf{u}_n\;dx\,dt\label{lem:komp_equation_neu}\\
  &\qquad= \int_0^s\int_M g\,b\;dA
    + \int_0^s\intdeltan\mathbf{f}\cdot\boldsymbol\varphi\;dx\,dt
    +\int_{\Omega_{\delta_n(0)}}\hspace{-0.5cm}
      \mathbf{u}_0^n\cdot \boldsymbol\varphi(0,\cdot)\;dx 
    +2\int_M \eta_1^n\,b(0,\cdot)\;dA\notag\\
  &\qquad\quad- \intdeltans\mathbf{u}_n(s,\cdot)
      \cdot\boldsymbol\varphi(s,\cdot)\;dx
    - 2\int_M \partial_t\eta_n(s,\cdot)
      \,b(s,\cdot)\;dA.\notag
\end{align}
Note that the requirements $b(T,\cdot)=0$ and 
$\boldsymbol\varphi(T,\cdot)=0$ for the functions 
$(b,\boldsymbol\varphi)\in T^I_{\delta_n}$ can be omitted in 
\eqref{lem:komp_equation_neu}.

To show \eqref{eqn:komp_2}, we take $b\in H^2_0(M)$ and extend it 
constantly in time. By Lemma \ref{lem:fortsetzung_instationaer}, the 
couple $(b,\fdeltan b)$ is admissible in \eqref{lem:komp_equation_neu}
and we have the estimate
\begin{align}\label{eqn:lem-komp-1}\begin{aligned}
  \lVert \fdeltan b\rVert_{H^1(I,L^2(B_\alpha))\cap 
    C(\overline{I},H^1(B_\alpha))\cap L^3(I,W^{1,3}(B_\alpha))} 
  &\leq c\,\lVert b \rVert_{H^1(I,L^2(M))\cap L^3(I,H_0^2(M))}\\
  &\leq c\,\lVert b \rVert_{H_0^2(M)},
\end{aligned}\end{align}
where the constant $c$ is independent of $n$. Considering the integrands 
with respect to time in \eqref{lem:komp_equation_neu} with 
$\boldsymbol\varphi = \fdeltan b$, by H\"older's inequality we get
%
%
\begin{align*}
  \big\lVert \intdeltan\mathbf{u}_n\cdot\partial_t\fdeltan b\;dx\, 
      \big\rVert_{L^{12/11}(I)}^{12/11}
  &\leq\int_I \lVert \mathbf{u}_n(t,\cdot)\rVert_{L^2(\Omega_{\delta_n(t)})}^{12/11}
    \lVert (\partial_t\fdeltan b)(t,\cdot)\rVert_{L^2(\Omega_{\delta_n(t)})}^{12/11}\;dt\\
  &\leq \lVert \mathbf{u}_n \rVert_{L^{12/5}(I,L^2(\Omega_{\delta_n(t)}))}^{12/11}
    \lVert \partial_t\fdeltan b\rVert_{L^2(I,L^2(\Omega_{\delta_n(t)}))}^{12/11}\\
  &\leq c\,\lVert \mathbf{u}_n \rVert_{L^{\infty}(I,L^2(\Omega_{\delta_n(t)}))}^{12/11}
    \lVert \fdeltan b\rVert_{H^1(I,L^2(B_\alpha))}^{12/11}.
\end{align*}
As usual, the convective term needs a special treatment. By the 
Korn-type inequality, the sequences $\mathbf{u}_n$, 
$\mathbf{v}_n$ are bounded in $L^\infty(I;L^2(\Omega_{\delta_n(t)}))\cap L^2(I;
W^{1,r}(\Omega_{\delta_n(t)}))$ for any $1\leq r< 2$.  Using Sobolev's 
embedding, $W^{1,r}(\Omega_{\delta_n(t)}) \hookrightarrow 
L^{\tilde r}(\Omega_{\delta_n(t)})$ is continuous for all 
$1\leq \tilde r < \frac{3r}{3-r}$. Hence, for $2>r>\frac{510}{263}$, 
the embeddings $W^{1,r}(\Omega_{\delta_n(t)}) 
\hookrightarrow L^{\frac{170}{31}}(\Omega_{\delta_n(t)})$ and 
$W^{1,r}(\Omega_{\delta_n(t)}) \hookrightarrow L^{\frac{14}{3}}
(\Omega_{\delta_n(t)})$ are continuous. Using the H\"olderinterpolation (see
\cite[Korollar 2.10]{MR1409366} and \cite[Kapitel 1, Proposition 3.1]
{MR1230384}), we have
\begin{align*}
  L^\infty(I,L^2(\Omega_{\delta_n(t)}))\cap L^2(I,L^\frac{170}{31}
    (\Omega_{\delta_n(t)}))
  &\hookrightarrow L^\frac{12}{5}(I,L^\frac{17}{4}(\Omega_{\delta_n(t)})),\\
  L^\infty(I,L^2(\Omega_{\delta_n(t)}))\cap L^2(I,L^\frac{14}{3}(\Omega_{\delta_n(t)}))
  &\hookrightarrow L^\frac{24}{7}(I,L^3(\Omega_{\delta_n(t)})).
\end{align*}
In particular, since $\delta_n$ is bounded in $\widetilde{Y}^I$ and 
$\lVert \delta_n \rVert_{L^\infty(I\times M)} < \alpha$, the appearing 
constants can be chosen independently of $n\in \N$. By H\"older's 
inequality (notice $\frac{4}{17} + \frac{35}{68} + \frac{1}{4} = 1$) 
we therefore get, for the first part of the convective term,
\begin{align*}
  \big\lVert \intdeltan(\mathbf{v}_n&\cdot \nabla)\mathbf{u}_n
    \cdot\fdeltan b\;dx\big\rVert_{L^{12/11}(I)}^{12/11}\\
  &\leq \int_I \lVert \mathbf{v}_n(t,\cdot)\rVert_{L^{17/4}
      (\Omega_{\delta_n(t)})}^{12/11}\,
  \lVert \nabla\mathbf{u}_n(t,\cdot)\rVert_{L^{68/35}
      (\Omega_{\delta_n(t)})}^{12/11}\,
  \lVert \fdeltan b(t,\cdot)\rVert_{L^4(\Omega_{\delta_n(t)})}^{12/11}\;dt\\
  &\leq \lVert \mathbf{v}_n\rVert_{L^{12/5}(I;L^{17/4}
      (\Omega_{\delta_n(t)}))}^{12/11}\,
    \lVert \nabla\mathbf{u}_n\rVert_{L^2(I;L^{68/35}
      (\Omega_{\delta_n(t)}))}^{12/11}\,
  \lVert \fdeltan b\rVert_{L^\infty(I;
      L^4(\Omega_{\delta_n(t)}))}^{12/11}\\
  &\leq c( \lVert \mathbf{v}_n\rVert_{\widetilde{X}^I_{\delta_n}},
    \lVert \mathbf{u}_n\rVert_{\widetilde{X}^I_{\delta_n}})\,
    \lVert \fdeltan b\rVert_{L^\infty(I;
      L^4(\Omega_{\delta_n(t)}))}^{12/11}.
\end{align*}
For the second part of the convective term, we get similarly
\begin{align*}
  \big\lVert \intdeltan(\mathbf{v}_n&\cdot \nabla)\fdeltan b 
      \cdot\mathbf{u}_n\;dx\big\rVert_{L^{12/11}(I)}^{12/11}\\
  &\leq \int_I \lVert \mathbf{v}_n(t,\cdot)\rVert_{L^{3}(\Omega_{\delta_n(t)})}^{12/11}\,
    \lVert \nabla \fdeltan b(t,\cdot)\rVert_{L^3(\Omega_{\delta_n(t)})}^{12/11}\,
    \lVert\mathbf{u}_n(t,\cdot)\rVert_{L^{3}(\Omega_{\delta_n(t)})}^{12/11}\;dt\\
  &\leq \lVert \mathbf{v}_n\rVert_{L^{\frac{24}{7}}(I,L^{3}(\Omega_{\delta_n(t)}))}^{\frac{12}{11}}\,
    \lVert \nabla \fdeltan b\rVert_{L^3(I,L^3(\Omega_{\delta_n(t)}))}^{\frac{12}{11}}\,
    \lVert\mathbf{u}_n\rVert_{L^\frac{24}{7}(I,L^{3}(\Omega_{\delta_n(t)}))}^{\frac{12}{11}}\\
  &\leq c( \lVert \mathbf{v}_n\rVert_{\widetilde{X}^I_{\delta_n}}, 
      \lVert \mathbf{u}_n\rVert_{\widetilde{X}^I_{\delta_n}})\,
    \lVert \nabla \fdeltan b\rVert_{L^3(I,L^3(\Omega_{\delta_n(t)}))}^{\frac{12}{11}}.
\end{align*}
With similar arguments for the remaining terms and taking into account 
\eqref{eqn:lem-komp-1}, as well as $\partial_t b = 0$, the
integrands with respect to time in \eqref{lem:komp_equation_neu}, with 
$\varphi = \fdeltan b$, are bounded in $L^{12/11}(I)$ uniformly with 
respect to $n\in\N$ and $b\in \overline{B_{1}(0;{H_0^2(M)})}$, where 
$\overline{B_{1}(0;{H_0^2(M)})}$ denotes the closed unit ball in 
$H_0^2(M)$. By our assumptions, 
\begin{align*}
  \int_{\Omega_{\delta_n(0)}}\hspace{-0.5cm}\mathbf{u}_0^n\cdot 
      \fdeltan b(0,\cdot)\;dx
  +2\int_M \eta_1^n\,b(0,\cdot)\;dA
\end{align*}
is also bounded uniformly with respect to $n\in\N$ and $b\in 
\overline{B_{1}(0;{H_0^2(M)})}$. The identity
\eqref{lem:komp_equation_neu} for $\boldsymbol \varphi = \fdeltan b$ can 
therefore be written as 
\begin{align}\label{lem:kompakt_glng_hoelder}
  \int_0^s f_{b,n}(t)\;dt + g_{b,n}=  \intdeltans\mathbf{u}_n(s,\cdot)
      \cdot\fdeltan b(s,\cdot)\;dx
    + 2\int_M \partial_t\eta_n(s,\cdot)\,b(s,\cdot)\;dA,
\end{align}
where $f_{b,n}\in L^{12/11}(I)$, $g_{b,n}\in \R$ are uniformly bounded with 
respect to $n\in \N$ and $b\in \overline{B_{1}(0;{H_0^2(M)})}$. By H\"older's 
inequality, for $s_1\leq s_2\in \overline{I}$ we get
\begin{align*}
  \Big|\int_0^{s_1} f_{b,n}\;dt - \int_0^{s_2} f_{b,n}\;dt\Big|
  \leq \int_{s_1}^{s_2} |f_{b,n}|\;dt
  \leq \lVert f_{b,n}\rVert_{L^{12/11}(I)}\,|s_2-s_1|^\frac{1}{12}, 
\end{align*}
i.\,e.\ $\int_0^s f_{b,n}\;dt \in C^{0,\frac{1}{12}}(\overline{I})$ 
is uniformly bounded with respect to $n\in\N$ and 
$b\in \overline{B_{1}(0;{H_0^2(M)})}$. We set
\begin{align*}
  c_{b,n}(s)&:= \intdeltans\mathbf{u}_n(s,\cdot)\cdot\fdeltan b(s,\cdot)\;dx
    + 2\int_M \partial_t\eta_n(s,\cdot)\,b(s,\cdot)\;dA,\\
  c_b(s)&:= \intdeltas\mathbf{u}(s,\cdot)\cdot\fdelta b(s,\cdot)\;dx
    + 2\int_M \partial_t\eta(s,\cdot)\,b(s,\cdot)\;dA.
\end{align*}
Considering \eqref{lem:kompakt_glng_hoelder}, the sequence $c_{b,n}\in 
C^{0,\frac{1}{12}}(\overline{I})$ is uniformly bounded with respect to 
$n\in\N$ and $b\in \overline{B_{1}(0;{H_0^2(M)})}$. In particular, by 
Arzela-Ascoli's theorem, for any fixed $b\in H_0^2(M)$ a subsequence of 
$(c_{b,n})_{n\in \N}$ converges uniformly on $\overline{I}$. Since Lemma 
\ref{lem:fortsetzung_konvergenzen} and our assumptions imply that 
$(c_{b,n})_{n\in\N}$ converges to $c_b$ weakly in $L^{2}(I)$, the sequence 
$c_{b,n}$ converges to $c_{b}$ uniformly on $\overline{I}$. We will show 
that 
\begin{align*}
  h_n(s):= \sup_{\lVert b\rVert_{H_0^2(M)}\leq 1} \big|c_{b,n}(s) - c_b(s)\big|
\end{align*}
converges uniformly to $0$ on $\overline{I}$. To this end, we recall 
that the sequence $(\partial_t\eta_n,\mathbf{u}_n)_{n\in \N}$ is  
bounded in $L^\infty(I,L^2(M))\times L^\infty(I;L^2(\Omega_{\delta_n(t)}))$,
and chose a countable dense subset $I_0$ of $I$ such that the 
functions $\mathbf{u}_n(s,\cdot)\in L^2(B_\alpha)$ and 
$\partial_t\eta_n(s,\cdot)\in L^2(M)$ are bounded in their respective 
norms for all $n\in \N$ and $s\in I_0$. Using a diagonal sequence argument, 
we get a (not further denoted) subsequence such that for all $s\in I_0$ 
we have $\partial_t\eta_n(s,\cdot)\rightharpoonup \eta_s^\ast$ weakly in 
$L^2(M)$ and $\mathbf{u}_n(s,\cdot) \rightharpoonup \mathbf{u}_s^\ast$ weakly in 
$L^2(B_\alpha)$. Since the embedding $H^1(M)\hookrightarrow L^2(M)$ is 
compact, by Schauder's theorem the embedding through the dual operator 
$(L^2(M))^\ast\hookrightarrow (H^1(M))^\ast$ is also compact. Therefore, 
$\partial_t\eta_n(s,\cdot) \rightarrow \eta_s^\ast$ 
in $(H^1(M))^\ast$ and, analogously, $(\mathbf{u}_n(s,\cdot))_{n\in \N}
\rightarrow \mathbf{u}_s^\ast$ in $(H^1(B_\alpha))^\ast$. By the 
estimate
\begin{align*}
  \Big|\intdeltans &\mathbf{u}_n(s,\cdot)\cdot\fdeltan b(s,\cdot)\;dx 
    -\intdeltas\mathbf{u}_s^\ast\cdot\fdelta b(s,\cdot)\;dx 
    + 2\int_M \big(\partial_t\eta_n(s,\cdot) 
        - \eta_s^\ast\big)\,b(s,\cdot)\;dA\Big|\\
    & \leq  \Big|\intdeltans\big(\mathbf{u}_n(s,\cdot) 
        -\mathbf{u}_s^\ast\big)\cdot\fdeltan b(s,\cdot)\;dx\Big|
      + \Big|\intdeltas\mathbf{u}_s^\ast\cdot\big(\fdeltan b(s,\cdot) 
        - \fdelta b(s,\cdot)\big)\;dx\Big|\\
      &\quad\quad+ 2\, \lVert \partial_t\eta_n(s,\cdot) 
        - \eta_s^\ast \rVert_{(H^1(M))^\ast}\,\lVert b(s,\cdot)\rVert_{H^1(M)}\\
    & \leq  \lVert \mathbf{u}_n(s,\cdot)
        -\mathbf{u}_s^\ast\rVert_{(H^1(B_\alpha))^\ast}
          \lVert \fdeltan b(s,\cdot)\rVert_{H^1(B_\alpha)}\\
      &\quad\quad+ \lVert\mathbf{u}_s^\ast\rVert_{L^2(B_\alpha)}\lVert\fdeltan b(s,\cdot) 
        - \fdelta b(s,\cdot)\rVert_{L^2(B_\alpha)}\\
      &\quad\quad+ 2\,\lVert \partial_t\eta_n(s,\cdot) 
        - \eta_s^\ast \rVert_{(H^1(M))^\ast}\,\lVert b(s,\cdot)\rVert_{H^1(M)},
\end{align*}
the estimate \eqref{eqn:lem-komp-1} and Lemma 
\ref{lem:fortsetzung_konvergenzen_stationaer} imply the convergence 
of $(c_{b,n})_{n\in \N}$ on $s\in I_0$, uniformly with respect to 
$b\in \overline{B_{1}(0;{H_0^2(M)})}$. Since we already identified the 
weak limit of $(c_{b,n})_{n\in \N}$, we also have, for the original 
sequence, $c_{b,n}\rightarrow c_{b}$ on $I_0$ uniformly with 
respect to $b\in \overline{B_{1}(0;{H_0^2(M)})}$. By the uniform bound on 
$c_{b,n}\in C^{0,\frac{1}{12}}(\overline{I})$, for $s, s'\in \overline{I}$ 
we get
\begin{align*}
  \big| c_{b,n}(s) - c_{b,m}(s)\big| 
    &\leq \big| c_{b,n}(s) - c_{b,n}(s')\big| 
      + \big| c_{b,n}(s') - c_{b,m}(s')\big| 
      + \big| c_{b,m}(s') - c_{b,m}(s)\big|\\
    &\leq c\,|s-s'|^\frac{1}{12} + \big| c_{b,n}(s') - c_{b,m}(s')\big|,
\end{align*}
where the constant $c$ is independent of $n,m\in \N$, $s,s'
\in \overline{I}$ and $b\in \overline{B_{1}(0;{H_0^2(M)})}$. Let 
$\varepsilon>0$. Since $\overline{I}$ is compact and $I_0$ dense in 
$\overline{I}$, we get a finite subset $I_0^\varepsilon$ of $I_0$, such 
that for all $s\in \overline{I}$ there exists $s'\in I_0^\varepsilon$ 
with $c\,|s-s'|^\frac{1}{12}<\frac{\varepsilon}{2}$. Since 
$I_0^\varepsilon$ is finite, the convergence from above implies 
$\big| c_{b,n}(s') - c_{b,m}(s')\big|<\frac{\varepsilon}{2}$ for all 
$s'\in I_0^\varepsilon$ and $n,m\geq N$ uniformly with respect to 
$b\in \overline{B_{1}(0;{H_0^2(M)})}$. Therefore we have the uniform 
convergence of $c_{b,n}(s)$ to $c_{b,m}(s)$ independently of 
$b\in \overline{B_{1}(0;{H_0^2(M)})}$ and therefore the uniform 
convergence of $h_n$ to $0$ on $\overline{I}$.

Again, let $\varepsilon>0$. By the definitions of $c_{b,n}$, $c_b$ 
and their linearity with respect to $b$, the compatibility condition 
$\trdeltan\mathbf{u}_n= \partial_t \eta_n\,\boldsymbol\nu$,  Lemma 
\ref{moving_boundary_data} and the Ehrling-type Lemma we get
\begin{align*}
  \Big|\int_I c_{\partial_t\eta_n,n}(t) &- c_{\partial_t\eta_n}(t)\;dt\Big|\\
  &\leq \int_I \sup_{\lVert b\rVert_{L^2(M)}\leq 1}\big( c_{b,n}(t) 
      - c_{b}(t)\big)\;dt\\
  &\leq c\,\int_I \sup_{\lVert b\rVert_{L^2(M)}\leq 1}\Big(
    \intdeltans\,\mathbf{u}_n(s,\cdot)\cdot\fdeltan b(s,\cdot)\;dx
      - \intdeltas\,\mathbf{u}(s,\cdot)\cdot\fdelta b(s,\cdot)\;dx\\
    &\qquad + \int_M \left(\trdeltan\mathbf{u}_n(s,\cdot)\cdot\boldsymbol\nu
        -\trdelta\mathbf{u}(s,\cdot)\cdot\boldsymbol\nu\right)\,b(s,\cdot)\;dA\Big)\;ds\\
  &\leq c\,\int_I \varepsilon\,\rho\,\big(\lVert \mathbf{u}_n(t,\cdot)\rVert_{V_2(\Omega_{\delta_n(t)})}
            + \lVert \mathbf{u}(t,\cdot)\rVert_{V_2(\Omega_{\delta(t)})}\big)\;dt\\
            &\quad\quad+ c\,\int_I \sup_{\lVert b \rVert_{H_0^2(M)}\leq 1}
              \big( c_{b,n}(t) - c_{b}(t)\big)\;dt\\
  &\leq \varepsilon\,c\,\big(\lVert \mathbf{u}_n\rVert_{L^2(I,V_2(\Omega_{\delta_n}(t)))}
            + \lVert \mathbf{u}\rVert_{L^2(I,V_2(\Omega_{\delta}(t)))}\big)
            + c\,\int_I h_n(t)\;dt.
\end{align*}
%
By the uniform convergence of $h_n$ to $0$, this implies 
$|\int_I c_{\partial_t\eta_n,n} - c_{\partial_t\eta_n}\;dt| \rightarrow 0$. 
We now consider the inequality
\begin{align*}
  \Big| \int_I\intdeltan &\mathbf{u}_n\cdot\fdeltan\partial_t\eta_n\;dx\,dt
    - \int_I\intdelta \mathbf{u}\cdot\fdelta\partial_t\eta\;dx\,dt\\
    &\qquad+ 2\int_I\int_M |\partial_t\eta_n|^2\;dA\,dt
    - 2\int_I\int_M |\partial_t\eta|^2\;dA\,dt\Big|\\
    &= \Big|\Big(\int_I\intdeltan \mathbf{u}_n\cdot\fdeltan\partial_t\eta_n\;dx 
        + 2\int_M |\partial_t\eta_n|^2\;dA\;dt\Big)\\
        &\quad\quad - \Big( \int_I \intdelta\mathbf{u}\cdot\fdelta \partial_t\eta_n\;dx
        + 2\int_M \partial_t\eta\,\partial_t\eta_n\;dA\,dt\Big)\\
        &\quad\quad + \int_I \intdelta\mathbf{u}\cdot\big(\fdelta \partial_t\eta_n 
            - \fdelta \partial_t\eta\big)\;dx\;dt
            + 2 \int_I\int_M \partial_t\eta\,
          \big(\partial_t\eta_n - \partial_t\eta)\;dA\,dt\Big|\\
    & \leq \Big|\int_I c_{\partial_t\eta_n,n} - c_{\partial_t\eta_n}\;dt\Big|
      + \Big|\int_I \intdelta\mathbf{u}\cdot\big(\fdelta \partial_t\eta_n 
          - \fdelta \partial_t\eta\big)\;dx\;dt\Big|\\
      &\quad\quad+ 2\,\Big| \int_I\int_M 
        \partial_t\eta\,\big(\partial_t\eta_n - \partial_t\eta)\;dA\,dt\Big|.
\end{align*}
The weak convergence of $\partial_t \eta_n$, Corollary 
\ref{kor:fortsetzung_instationaer_weak} and the convergence from above 
therefore imply \eqref{eqn:komp_2}.

To show \eqref{eqn:komp_1}, we argue similarly but have to use the 
vanishing boundary values of $\mathbf{u}$ to get a uniformly admissible 
test function. Since the sequences $(\mathbf{u}_n)_{n\in \N}\in L^\infty(I; 
L^2(\Omega_{\delta_n(t)}))$ and $(\partial_t\eta_n)_{n\in \N}\in L^2(I;L^2(M))$ 
are uniformly bounded and satisfy $\operatorname{div}\mathbf{u}_n= 0$ and 
$\trdeltan\mathbf{u}_n = \partial_t \eta_n\,\boldsymbol\nu$, Lemma 
\ref{lem:fortsetzung_stationaer_schwach} implies that 
$\mathbf{u}_n(t,\cdot) - (\fetan\partial_t\eta_n)(t,\cdot)$ 
is uniformly bounded in  \linebreak $H_M(\Omega_{\delta_n(t)})$ for all $n\in\N$ 
and almost all $t\in I$. Let $\varepsilon>0$. By Lemma 
\ref{lem:density_for_compactness}, there exist $\lambda_\epsilon >0$, 
$c_0>0$ and $\Psi_{t,n}\in H_M(\Omega_{\delta_n(t)})$ with
$supp\;\Psi_{t,n} \subset \Omega_{\delta_n(t)-\lambda_\epsilon}$, such 
that for all $n\in\N$ and almost all $t\in I$ it holds that $\lVert \Psi_{t,n} 
\rVert_{H_M(\Omega_{\delta_n(t)})}\leq c_0$ and
\begin{align}\label{eqn:komp_3}
  \lVert \mathbf{u}_n(t,\cdot) - (\fetan\partial_t\eta_n)(t,\cdot)
    - \Psi_{t,n}\rVert_{(H^\frac{1}{4}(\R^3))^\ast}<\varepsilon.
\end{align}
Similarly to Proposition \ref{prop:reg_rand:anfangswert}, we approximate 
the displacement $\delta$ ``uniformly'' from the inside by 
$\aussenrandlambda\delta\in C^4(\overline{I}\times M)$, i.\,e.\ 
$\aussenrandlambda\delta < \delta_n$ for $\lambda$ small enough and $n$ 
big enough, see \cite[Lemma 2.65]{phdeberlein} for the details. Then 
$\aussenrandlambda\delta$ and $\delta_n$ converge uniformly to $\delta$ 
on $\overline{I}\times M$. Hence, for a (in the following fixed) 
$\lambda>0$ small enough and all $n\in \N$ big enough, we have 
$\delta_n - \lambda_\varepsilon < \aussenrandlambda\delta \leq \delta_n$ 
and $\lVert \aussenrandlambda \delta \rVert_{L^\infty(I\times M)}<\kappa$.
Therefore, we get $supp\;\Psi_{t,n} \subset \Omega_{\delta_n(t)-\lambda_\varepsilon}
\subset \Omega_{\aussenrandlambda\delta(t)} \subset \Omega_{\delta_n(t)}$ 
for almost all $t\in I$.

We extend a function $\boldsymbol\varphi\in X(\Omega)$ constantly in time 
and transform it, using the Piola-transform, to 
$\Omega^I_{\aussenrandlambda\delta}$. Since the Piola-transform preserves 
vanishing boundary values, we can extend $\Tlambda\boldsymbol\varphi$ 
further continuously by $\boldsymbol 0$ to $I\times B_\kappa$. 
By our uniform approximation from the inside, we have $supp 
\Tlambda\boldsymbol\varphi \subset \Omega^I_{\aussenrandlambda\delta} 
\subset \Omega^I_{\delta_n}$ and the estimate
\begin{align}\label{eqn:komp_reference_embedding}
  \lVert \Tlambda\boldsymbol\varphi \rVert_{H^1(I;L^2(\Omega_{\delta_n(t)}))
    \cap L^{3}(I;W^{1,3}(\Omega_{\delta_n(t)}))
      \cap L^\infty(I;L^4(\Omega_{\delta_n(t)}))}
  \leq c\, \lVert\boldsymbol\varphi\rVert_{X(\Omega)},
\end{align}
where the constant $c$ depends, among others, on $\lambda$, but not on $n$. 
In particular, the couple $(0,\Tlambda\boldsymbol\varphi)$ is admissible 
for the identity \eqref{lem:komp_equation_neu}. 
We can now argue analogously to the proof of \eqref{eqn:komp_2}. Using 
\eqref{eqn:komp_reference_embedding} instead of \eqref{eqn:lem-komp-1}, 
the integrands with respect to time in \eqref{lem:komp_equation_neu}, 
tested by $(0,\Tlambda\boldsymbol\varphi)$, are bounded in $L^{12/11}(I)$ 
uniformly with respect to $n\in \N$ and $\boldsymbol\varphi \in 
\overline{B_{1}(0;{X(\Omega)})}$, where $\overline{B_{1}(0;{X(\Omega)})}$ 
denotes the closed unit ball in $X(\Omega)$. Also the integral over the 
initial data $\mathbf{u}_0^n$ is bounded independently of $n\in \N$ and 
$\boldsymbol\varphi \in \overline{B_{1}(0;{X(\Omega)})}$. The identity  
\eqref{lem:komp_equation_neu}, tested by $(0,\Tlambda\boldsymbol\varphi)$,
can therefore be written as
\begin{align}\label{lem:kompakt_glng_hoelder2}
  \int_0^s f_{\boldsymbol\varphi,\lambda,n}(t)\;dt 
      + g_{\boldsymbol\varphi,\lambda,n}
    =  \intdeltans\mathbf{u}_n(s,\cdot)\cdot
      \Tlambda\boldsymbol\varphi(s,\cdot)\;dx,
\end{align}
where $f_{\boldsymbol\varphi,\lambda,n} \in L^{12/11}(I)$
and  $g_{\boldsymbol\varphi,\lambda,n}\in \R$ are uniformly bounded with 
respect to $n\in \N$ and $\boldsymbol\varphi \in \overline{B_{1}(0;{X(\Omega)})}$. 
Hence, we get $\int_0^s f_{\boldsymbol\varphi,\lambda,n}(t)\;dt \in 
C^{0,\frac{1}{12}}(\overline{I})$. We set
\begin{align*}
  c^\lambda_{\boldsymbol\varphi,n}(s)&:=
    \intdeltans\mathbf{u}_n(s,\cdot)\cdot
        \Tlambda\boldsymbol\varphi(s,\cdot)\;dx,\\
  c^\lambda_{\boldsymbol\varphi}(s)&:=
    \intdeltas\mathbf{u}(s,\cdot)\cdot
        \Tlambda\boldsymbol\varphi(s,\cdot)\;dx
\end{align*}
and deduce that $c^\lambda_{\boldsymbol\varphi,n}$ in 
$C^{0,\frac{1}{12}}(\overline{I})$ is bounded independently of $n\in\N$ 
and $\boldsymbol\varphi \in \overline{B_{1}(0;{X(\Omega)})}$. For 
$\boldsymbol\varphi$ fixed, by Arzela-Ascoli's theorem and the weak 
convergence of $\mathbf{u}_n$ we get the uniform convergence of 
$c^\lambda_{\boldsymbol\varphi,n}$ to $c^\lambda_{\boldsymbol\varphi}$ on 
$\overline{I}$. With the same arguments as in the proof of 
\eqref{eqn:komp_2} we get that
\begin{align*}
  h_n^\lambda(s):= \sup_{\lVert \boldsymbol\varphi\rVert_X\leq 1}
  \Big| c^\lambda_{\boldsymbol\varphi,n}(s) 
    - c^\lambda_{\boldsymbol\varphi}(s)\Big|
\end{align*}
converges to $0$ uniformly in $\overline{I}$. Since the Piola-transform 
$\Tlambdat$ is an isomorphism between $H_M(\Omega)$ and 
$H_M(\Omega_{\aussenrandlambda\delta(t)})$, the property 
$supp\;\Psi_{t,n} \subset \Omega_{\aussenrandlambda\delta(t)}$ implies 
$\Psi_{t,n} \in H_M(\Omega_{\aussenrandlambda\delta(t)})$ and therefore
$\invTlambda\Psi_{t,n}\in H_M(\Omega)$. Let  $\widetilde \varepsilon>0$. 
By the definition of $c^\lambda_{\boldsymbol\varphi,n}$, 
$c^\lambda_{\boldsymbol\varphi}$ (particularly their linearity with 
respect to $\boldsymbol\varphi$) and Lemma \ref{lem:ehrling}, we get
%
%
\begin{align*}
  \Big|\int_I \intdeltan\mathbf{u}_n\cdot\Psi_{t,n}\;dx
      &- \intdelta\mathbf{u}\cdot\Psi_{t,n}\;dx\;dt\,\Big|\\
  &= \Big|\int_I c^\lambda_{\invTlambda\Psi_{t,n},n}(t) 
    - c^\lambda_{\invTlambda\Psi_{t,n}}(t)\;dt\,\Big|\\
  &\leq c\, \int_I \sup_{\lVert \boldsymbol\varphi\rVert_{H_M(\Omega)}\leq 1}
      \big(c^\lambda_{\boldsymbol\varphi,n}(t) - c^\lambda_{\boldsymbol\varphi}(t)\big)\;dt\\
  &\leq c \int_I \widetilde\varepsilon\,\big(\lVert \mathbf{u}_n(t,\cdot)
      \rVert_{V_2(\Omega_{\delta_n(t)})} + \lVert \mathbf{u}(t,\cdot)
          \rVert_{V_2(\Omega_{\delta(t)})}\big)\;dt\\
      &\quad\quad+ c\,\int_I \sup_{\lVert \boldsymbol\varphi\rVert_{X(\Omega)}\leq 1} 
        c^\lambda_{\boldsymbol\varphi,n}(t) - c^\lambda_{\boldsymbol\varphi}(t)\;dt\\
  &\leq \widetilde\varepsilon\,c\,\big(\lVert \mathbf{u}_n\rVert_{L^2(I,V_2(\Omega_{\delta_n}(t)))}
            + \lVert \mathbf{u}\rVert_{L^2(I,V_2(\Omega_{\delta}(t)))}\big)
            + c\,\int_I h^\lambda_n(t)\;dt.
\end{align*}
Using the uniform convergence of $h^\lambda_n$ and the bound on 
$\mathbf{u}_n$ in $X^I_{\delta_n}$, we therefore deduce 
\begin{align}\label{eqn:komp_4}
  \Big|\int_I \intdeltan\mathbf{u}_n\cdot\Psi_{t,n}\;dx
      - \intdelta\mathbf{u}\cdot\Psi_{t,n}\;dx\;dt\,\Big|
      \rightarrow 0.
\end{align}
Taking \eqref{eqn:komp_3} into account, we get
\begin{align*}
  \Big|\int_I\intdeltan \mathbf{u}_n&\cdot(\mathbf{u}_n 
      - \fdeltan\partial_t\eta_n)\;dx\,dt
    - \int_I\intdelta \mathbf{u}\cdot(\mathbf{u} 
      - \fdelta\partial_t\eta)\;dx\,dt\,\Big|\\
  &\leq \Big|\int_I\intdeltan \mathbf{u}\cdot(\mathbf{u}_n 
      - \fdeltan\partial_t\eta_n)\;dx\,dt
    - \int_I\intdelta \mathbf{u}\cdot(\mathbf{u} 
      - \fdelta\partial_t\eta)\;dx\,dt\,\Big|\\
    &\quad\quad +\Big|\int_I\intdeltan \mathbf{u}_n\cdot\Psi_{t,n}\;dx\,dt
    - \int_I\intdeltan \mathbf{u}\cdot\Psi_{t,n}\;dx\,dt\,\Big|\\
    &\quad\quad + \Big|\int_I\intdeltan \mathbf{u}_n\cdot(\mathbf{u}_n 
      - \fdeltan\partial_t\eta_n - \Psi_{t,n})\;dx\,dt\,\Big|\\
    &\quad\quad + \Big|\int_I\intdeltan \mathbf{u}\cdot(\mathbf{u}_n 
      - \fdeltan\partial_t\eta_n - \Psi_{t,n})\;dx\,dt\,\Big|\\
  &\leq \Big|\int_I\intdeltan \mathbf{u}\cdot(\mathbf{u}_n 
      - \fdeltan\partial_t\eta_n)\;dx\,dt
    - \int_I\intdelta \mathbf{u}\cdot(\mathbf{u} 
      - \fdelta\partial_t\eta)\;dx\,dt\,\Big|\\
    &\quad\quad +\Big|\int_I\intdeltan \mathbf{u}_n\cdot\Psi_{t,n}\;dx\,dt
    - \int_I\intdeltan \mathbf{u}\cdot\Psi_{t,n}\;dx\,dt\,\Big|\\
    &\quad\quad + c\,\lVert \mathbf{u}_n \rVert_{L^2(I;H^{\frac{1}{4}}(\R^3))}
      \lVert\mathbf{u}_n - \fdeltan\partial_t\eta_n - \Psi_{t,n}
        \rVert_{L^\infty(I;(H^\frac{1}{4}(\R^3))^\ast)}\\
    &\quad\quad + c\,\lVert \mathbf{u}\rVert_{L^2(I;H^{\frac{1}{4}}(\R^3))}
      \lVert \mathbf{u}_n - \fdeltan\partial_t\eta_n - \Psi_{t,n}
        \rVert_{L^\infty(I;(H^\frac{1}{4}(\R^3))^\ast)}\\
  &< \Big|\int_I\intdeltan \mathbf{u}\cdot(\mathbf{u}_n 
      - \fdeltan\partial_t\eta_n)\;dx\,dt
    - \int_I\intdelta \mathbf{u}\cdot(\mathbf{u} 
      - \fdelta\partial_t\eta)\;dx\,dt\,\Big|\\
    &\quad\quad +\Big|\int_I\intdeltan \mathbf{u}_n\cdot\Psi_{t,n}\;dx\,dt
    - \int_I\intdeltan \mathbf{u}\cdot\Psi_{t,n}\;dx\,dt\,\Big|\\
    &\quad\quad + \varepsilon\,c\,\lVert \mathbf{u}_n 
      \rVert_{L^2(I;H^{\frac{1}{4}}(\R^3))} + \varepsilon\,c\,
        \lVert \mathbf{u} \rVert_{L^2(I;H^{\frac{1}{4}}(\R^3))}.
\end{align*}
Since the extension by zero is continuous in $H^\frac{1}{4}$ (see 
\cite[Lemma A.3]{phdeberlein}), the convergences from \eqref{eqn:komp_4} 
and \eqref{lem:L^2_komp_vor_konv}, together with Lemma 
\ref{lem:fortsetzung_konvergenzen}, therefore imply \eqref{eqn:komp_1}, 
which finishes the proof.
\end{proof}
\section*{Acknowledgment}
This work has been partially supported by the DFG, namely within the
project C2 of the SFB/TR "Geometric Partial Differential Equations".


\end{document}